\newtheorem{theorem}{Theorem}[section]
\newtheorem{proposition}[theorem]{Proposition}
\newtheorem{lemma}[theorem]{Lemma}
\newtheorem{remark}[theorem]{Remark}
\numberwithin{equation}{section}
\newcommand\eh{\mathrm{e}_h}
\newcommand\rh{\mathrm{r}_h}
\newcommand\trh{\mathrm{\tilde r}_h}
\newcommand\rhB{\mathrm{r}_{h, \partial\Omega}}
\newcommand{\ds}{\displaystyle}
\newcommand{\N}{\mathbb{N}}
\newcommand{\Z}{\mathbb{Z}}
\newcommand{\C}{\mathbb{C}}
\newcommand{\R}{\mathbb{R}}
\newcommand{\Ocal}{\mathcal{O}}
\newcommand{\tn}{\textnormal}
\newcommand{\norm}[1]{\left\Vert#1\right\Vert}
\begin{document}
 %
 
% \begin{frontmatter}
 
 \title{Stability of an inverse problem for the discrete wave equation and convergence results
\footnote{Partially supported by the Agence Nationale de la Recherche (ANR, France),  Project CISIFS number NT09-437023, Fondecyt-CONICYT 1110290 grant
and the University Paul Sabatier (Toulouse 3), AO PICAN. This work was started and partially done while A. O. visited the University Paul Sabatier.}}

\author{Lucie Baudouin$^{1,2,}$\footnote{e-mail: {\tt baudouin@laas.fr}}\\
{\it\footnotesize $^{1}$ CNRS, LAAS, 7 avenue du colonel Roche, F-31400 Toulouse, France,}\\
{\it\footnotesize $^{2}$ Univ. de Toulouse, LAAS, F-31400 Toulouse, France.}\\
Sylvain Ervedoza$^{3,4,}$\footnote{e-mail: {\tt ervedoza@math.univ-toulouse.fr}}\\
{\it\footnotesize $^{3}$ CNRS, Institut de Math\'ematiques de Toulouse UMR 5219, F-31400 Toulouse, France,}\\
{\it\footnotesize $^{4}$ Univ. de Toulouse, IMT, F-31400 Toulouse, France.
}\\
Axel Osses$^{5,}$\footnote{e-mail: {\tt axosses@dim.uchile.cl}}\\
{\it\footnotesize $^{5}$ Departamento de Ingenier\'ia Matem\'atica and Centro de Modelamiento Matem\'atico}\\
{\it\footnotesize (UMI 2807 CNRS), FCFM Universidad de Chile, Casilla 170/3-Correo 3, Santiago, Chile.
}}

\maketitle
%
% \title{Stability of an inverse problem for the discrete wave equation and convergence results\tnoteref{t1}}
% \tnotetext[t1]{Work partly supported by the Math-AmSud project COSIP ``Control Systems and Identification Problems'',
% %Partially supported by the Agence Nationale de la Recherche (ANR, France),  Project CISIFS number NT09-437023, 
% Fondecyt-1110290, Conicyt-ACT1106 grants and the University Paul Sabatier (Toulouse 3), AO PICAN. This work was initiated while A. O. visited the University Paul Sabatier.}
% 
% \author[laas,univTlslaas]{Lucie Baudouin\corref{cor1}}
% \ead{baudouin@laas.fr}
% 
% \author[imt]{Sylvain Ervedoza}
% \ead{ervedoza@math.univ-toulouse.fr}
% 
% \author[dim]{Axel Osses}
% \ead{axosses@dim.uchile.cl}
% 
% 
% \address[laas]{CNRS, LAAS, 7 avenue du colonel Roche, F-31400 Toulouse, France ;}
% \address[univTlslaas]{Univ. de Toulouse, LAAS, F-31400 Toulouse, France ;}
% \address[imt]{Institut de Mathématiques de Toulouse ; UMR5219 ; Université de Toulouse ; CNRS ; \\
% UPS IMT, F-31062 Toulouse Cedex 9, France ;}
%% \address[univTlsimt]{Univ. de Toulouse, IMT, F-31400 Toulouse, France ;}
% \address[dim]{Departamento de Ingenier\'ia Matem\'atica and Centro de Modelamiento Matem\'atico\\
%  (UMI 2807 CNRS), FCFM Universidad de Chile, Casilla 170/3-Correo 3, Santiago, Chile.}

\begin{abstract}
Using uniform global Carleman estimates for  semi-discrete elliptic and hyperbolic equations, 
we study Lipschitz and logarithmic stability for the inverse problem of recovering a potential in a semi-discrete wave equation,
discretized by finite differences in a 2-d uniform mesh, from boundary or internal measurements. The discrete stability results, 
when compared with their continuous counterparts, include new terms depending on the discretization parameter $h$.
From these stability results, we design a numerical method to compute convergent approximations of the continuous potential.\\

\noindent{\bf Résumé}\\
A partir d'inégalités de Carleman pour des équations aux dérivées partielles dicrétisées elliptiques et hyperboliques, nous étudions la stabilité Lipschitz et logarithmique du problème inverse de détermination du potentiel dans une équation des ondes semi-discrétisée, par un schéma aux différences finies sur un maillage 2-d uniforme, à partir de mesures internes ou frontières.
Quand ils sont comparés avec leur contrepartie continue, les résultats de stabilité dans le cadre discret contiennent de nouveaux termes dépendants du pas $h$ du maillage utilisé. C'est à partir de ces résultats que nous décrivons une méthode numérique de  calcul d'approximations convergentes du potentiel continu.
\end{abstract}

%\begin{keyword}
%discrete Carleman estimates,  inverse problem, stability estimates, wave equation.\\
%
%\MSC 35R30 \sep 35L05 \sep 65M32 \sep 65M06.
%\end{keyword}

%\end{frontmatter}
 
%%%%%%%%%%%%%%%%%%%%%%%%%%%%%%
 \section{Introduction}\label{Sec-Intro}
%%%%%%%%%%%%%%%%%%%%%%%%%%%%%%
%  
The goal of this article is to study the convergence of an inverse problem for the wave equation, which consists in recovering a potential through the knowledge of the flux of the solution on a part of the boundary. This article follows the previous work \cite{BaudouinErvedoza11} on that precise topic in the 1-d case.
%
%%%%%%%%%%%%%%%%%%%%
\subsection{The continuous inverse problem}\label{SubSec-ContinuousIP}
%%%%%%%%%%%%%%%%%%%%

\noindent{\bf Setting.} We will first present the main features of the continuous inverse problem we consider in this article. Let $\Omega$ be a smooth bounded domain of $\R^d$, and for $T>0$, consider the wave equation: 
\begin{equation}
	\label{WaveEq-Q}
		\left\{\begin{array}{ll}
			\partial_{tt} y - \Delta y+ q y= f,  &\hbox{ in }  (0,T)\times \Omega,
				\\
			y  = f_{\partial}, & \hbox{ on } (0,T) \times \partial \Omega,
				\\
			y(0,\cdot)= y^0, \quad \partial_t y(0,\cdot)= y^1, & \hbox{ in }  \Omega.
		\end{array}
		\right.
\end{equation}
Here, $y= y(t,x)$ is the amplitude of the waves, $(y^0, y^1)$ is the initial datum, $q = q(x)$ is a potential, $f$ is a distributed source term and $f_{\partial}$ is a boundary source term.\\
In the following, we explicitly write down the dependence of the function $y$ solution of \eqref{WaveEq-Q} in terms of $q$ by denoting it $y[q]$ and similarly for the other quantities depending on $q$.\\
We assume that the initial datum $(y^0, y^1)$ and the source terms $f$ and $f_\partial$ are known. We also assume the additional knowledge of the flux 
\begin{equation}
	\label{Flux}
	\mathscr{M}[q] = \partial_\nu y[q] \quad \hbox{ on } \quad (0,T) \times \Gamma_0,
\end{equation}
where $\Gamma_0$ is a non-empty open subset of the boundary $\partial \Omega$ and $\nu$ is the unit outward normal vector on  $\partial \Omega$. Note that for this map to be well-defined, we need to give a precise functional setting: for instance, we may assume $(y^0, y^1) \in H^1(\Omega) \times L^2(\Omega)$, $f \in L^1((0,T) ; L^2(\Omega))$, $f_\partial \in H^1((0,T)\times \partial \Omega)$ and $\left.y^0\right|_{\partial \Omega} = f_\partial(t = 0)$ so that $\mathscr{M}$ is well-defined for all $q \in L^\infty(\Omega)$ and takes value in $L^2((0,T) \times \partial \Omega)$, see e.g. \cite{LasieckaLionsTriggiani}.\\
This article is about the recovering the potential $q$ from $\mathscr{M}[q]$. As usual when considering inverse problems, this topic can be decomposed into the following questions:
\begin{itemize}
	\item Uniqueness: Does the measurement $\mathscr{M}[q]$ uniquely determine the potential $q$?
	\item Stability: Given two measurements $\mathscr{M}[q^a]$ and $\mathscr{M}[q^b]$ which are close, are the corresponding potentials $q^a$ and $q^b$ close?
	\item Reconstruction: Given a measurement $\mathscr{M}[q]$, can we design an algorithm to recover the potential $q$?
\end{itemize}
Concerning the precise inverse problem we are interested in, the uniqueness result is due to \cite{BuKli81} and we shall focus on the stability properties of the inverse problem \eqref{WaveEq-Q}. The question of stability has attracted a lot of attention and is usually based on Carleman estimates. There are mainly two types of results:  Lipschitz stability results, see \cite{KazemiKlibanov,PuelYam96,PuelYam97,ImYamCom01,Baudouin01,ImYamIP03,BaudouinDeBuhanErvedoza,StefUhlmann09}, provided the observation is done on a sufficiently large part of the boundary and the time is large enough, or logarithmic stability results \cite{BellassouedIP04,BellaYam06} when the observation set does not satisfy any geometric requirement. We also mention the works \cite{BellaChoulli-JMPA09,dBO-IP10} for logarithmic stability of inverse problems for other related equations.

Below we present more precisely these two type of results, since our main goal will be to discuss discrete counterparts in these two cases.\\

\smallskip
\noindent{\bf Lipschitz stability results under the Gamma-conditions.}  Getting Lipschitz sta\-bi\-lity results for the continuous inverse problem usually requires the following assumptions, originally due to \cite{Ho}. We say that the triplet $(\Omega,\Gamma,T)$ satisfy the Gamma-conditions (see \cite{Lions}) if
\begin{itemize}
	\item $(\Omega,\Gamma)$ satisfies the geometric condition:
	\begin{equation}
		\label{Gamma-Condition}
		\exists x_0 \in \R^N \setminus \overline{\Omega}, \quad \{ x \in \partial \Omega, \hbox{ s.t. } (x-x_0)\cdot \nu(x) \geq 0 \} \subset \Gamma, 
	\end{equation}
	\item $T$ satisfies the lower bound:
	\begin{equation}
		\label{Time-Condition}
		T > \sup_{x \in \Omega} | x - x_0|.
	\end{equation}
\end{itemize}
In \cite{Baudouin01}, following the works \cite{ImYamIP01,Im02}, the next stability result was proved:
\begin{theorem}[\cite{Baudouin01}]
	\label{Thm-Lucie}
	Let $m >0$ and consider a potential $q^a \in L^\infty(\Omega)$ with 	$\norm{q^a}_{L^\infty(\Omega)} \leq m$,
	and assume for some $K>0$ the regularity condition
	\begin{equation}
		\label{SmoothnessCond}
		y[q^a] \in H^1(0,T; L^\infty(\Omega)) \quad \hbox{with}\quad \norm{y[q^a]}_{H^1(0,T; L^\infty(\Omega))} \leq K,
	\end{equation}
	where $y[q^a]$ denotes the solution of \eqref{WaveEq-Q} with potential $q^a$.
	Let us further assume that $(\Omega,\Gamma_0,T)$ satisfies the Gamma-conditions \eqref{Gamma-Condition}--\eqref{Time-Condition} 
	and the following positivity condition
	\begin{equation}
		\label{PositivityCond}
		\exists \alpha_0 > 0, \quad \inf_{x\in\Omega} |y^0(x) | \geq \alpha_0.
	\end{equation}
	Then there exists a constant $C >0$ depending on $m,\, K$ and $\alpha_0$ such that for all $q^b \in L^\infty(\Omega)$ satisfying $\norm{q^b}_{L^\infty(\Omega)} \leq m$, we have $\mathscr{M}[q^a]- \mathscr{M}[q^b] \in H^1(0,T; L^2(\Gamma_0))$ and 
	\begin{equation}
		\label{Stab-Lucie}
		\frac{1}{C} \norm{q^a - q^b}_{L^2(\Omega)} \leq \norm{\mathscr{M}[q^a]- \mathscr{M}[q^b]}_{H^1(0,T; L^2(\Gamma_0))} \leq C \norm{q^a - q^b}_{L^2(\Omega)}.
	\end{equation}
	Besides, if $\omega$ is a neighborhood of $\Gamma_0$, i.e. for some $\delta >0$, 
	$ \{ x \in \Omega, \, d(x, \Gamma_0) < \delta\} \subset \omega$,
	we also have $\partial_t y[q^a] - \partial_t y[q^b] \in H^1((0,T) \times \omega)$ and
	\begin{eqnarray}
	\label{Stab-Lucie-Interne}
		\frac{1}{C} \norm{q^a - q^b}_{L^2(\Omega)} \leq \norm{\partial_t y[q^a] - \partial_t y[q^b]}_{H^1((0,T) \times \omega)} \leq C \norm{q^a - q^b}_{L^2(\Omega)}.
	\end{eqnarray}
\end{theorem}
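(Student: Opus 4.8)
The plan is to prove the two ``direct'' inequalities in \eqref{Stab-Lucie} and \eqref{Stab-Lucie-Interne} by well-posedness and hidden-regularity estimates, and the two ``observability'' inequalities by the Bukhgeim--Klibanov method built on a global Carleman estimate for the wave operator $\partial_{tt}-\Delta$. I would start with the reduction. Set $y^a=y[q^a]$, $y^b=y[q^b]$, $u=y^a-y^b$ and $g=q^b-q^a$, so that $\norm{g}_{L^\infty(\Omega)}\le 2m$. Subtracting the two copies of \eqref{WaveEq-Q} gives
\[
	\partial_{tt}u-\Delta u+q^b u=g\,y^a\ \hbox{ in }(0,T)\times\Omega,\qquad u=0\ \hbox{ on }(0,T)\times\partial\Omega,\qquad u(0,\cdot)=\partial_t u(0,\cdot)=0,
\]
and $z:=\partial_t u$ solves the same equation with source $g\,\partial_t y^a$, homogeneous lateral data, and --- evaluating the equation for $u$ at $t=0$ and using $u(0,\cdot)=\partial_t u(0,\cdot)=0$, hence $\Delta u(0,\cdot)=0$ ---
\[
	z(0,\cdot)=0,\qquad \partial_t z(0,\cdot)=\partial_{tt}u(0,\cdot)=g\,y^0 .
\]
The whole argument rests on this last identity: together with the positivity \eqref{PositivityCond}, it is what will let me recover $g$, i.e.\ $q^a-q^b$, in $L^2(\Omega)$ from a weighted estimate of $\partial_t z$ near $t=0$.

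Next I would set up the weight. Extend $u$ evenly and $z$ oddly to $t\in(-T,T)$ (the lateral trace of $z$ and its normal derivative extend accordingly, and $z$ now solves a wave equation on $Q:=(-T,T)\times\Omega$ whose source is $g$ times the odd extension of $\partial_t y^a$, with unchanged relevant norms). With $x_0$ from \eqref{Gamma-Condition}, put $\psi(t,x)=|x-x_0|^2-\beta t^2$ and $\varphi=e^{\lambda\psi}$; by \eqref{Time-Condition} one may fix $\beta\in(0,1)$ so that $\max_{\overline\Omega}\varphi(\pm T,\cdot)<\min_{\overline\Omega}\varphi(0,\cdot)$ and the convexity conditions for a Carleman estimate for $\partial_{tt}-\Delta$ hold. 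Pick a cutoff $\chi=\chi(t)$, equal to $1$ near $t=0$ and compactly supported in $(-T,T)$, and set $\zeta=\chi z$, so that $\zeta$ and $\partial_t\zeta$ vanish at $t=\pm T$ while $\partial_t\zeta(0,\cdot)=g\,y^0$ and $\zeta=0$ on $(-T,T)\times\partial\Omega$. Applying the global Carleman estimate to $\zeta$ gives, for $\lambda$ and $s$ large,
\[
	s\int_Q e^{2s\varphi}\big(|\partial_t\zeta|^2+|\nabla\zeta|^2\big)+s^3\int_Q e^{2s\varphi}|\zeta|^2\ \le\ C\int_Q e^{2s\varphi}\big|\partial_{tt}\zeta-\Delta\zeta\big|^2+Cs\int_{(-T,T)\times\Gamma_0}e^{2s\varphi}|\partial_\nu\zeta|^2,
\]
the boundary term over $\partial\Omega\setminus\Gamma_0$ having the favorable sign by \eqref{Gamma-Condition} and being discarded. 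Since $\partial_{tt}\zeta-\Delta\zeta=-q^b\zeta+\chi g\,\partial_t y^a+(\chi''z+2\chi'\partial_t z)$, the first term is absorbed by the $s^3$-term, the commutator is supported near $t=\pm T$ where $\varphi$ is exponentially smaller than $\varphi(0,\cdot)$, and, using $\varphi(t,\cdot)\le\varphi(0,\cdot)$,
\[
	\int_Q e^{2s\varphi}\chi^2 g^2|\partial_t y^a|^2\ \le\ \norm{\partial_t y^a}_{L^2(0,T;L^\infty(\Omega))}^2\int_\Omega e^{2s\varphi(0,x)}g(x)^2\,dx\ \le\ K^2\int_\Omega e^{2s\varphi(0,x)}g(x)^2\,dx .
\]

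The heart of the proof is the Bukhgeim--Klibanov identity. Since $\partial_t\zeta(-T,\cdot)=0$ and $\zeta(0,\cdot)=0$ (so $\nabla\zeta(0,\cdot)=0$),
\[
	\int_\Omega e^{2s\varphi(0,x)}|g\,y^0|^2\,dx=\int_{-T}^0\!\!\int_\Omega\partial_t\big(e^{2s\varphi}|\partial_t\zeta|^2\big)=\int_{-T}^0\!\!\int_\Omega\big(2s\varphi_t e^{2s\varphi}|\partial_t\zeta|^2+2e^{2s\varphi}\partial_t\zeta\,\partial_{tt}\zeta\big).
\]
I would substitute $\partial_{tt}\zeta=\Delta\zeta-q^b\zeta+\chi g\,\partial_t y^a+(\chi''z+2\chi'\partial_t z)$, integrate the term $\int e^{2s\varphi}\partial_t\zeta\,\Delta\zeta$ by parts in $x$ (no boundary contribution since $\partial_t\zeta=0$ on $\partial\Omega$) and the resulting $\int e^{2s\varphi}\nabla\partial_t\zeta\cdot\nabla\zeta$ by parts in $t$ (boundary terms vanish: at $t=-T$ because $\partial_t\zeta=0$, at $t=0$ because $\nabla\zeta=0$). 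One is then left with a sum of integrals over $Q$ of $s$ times bounded weights times quadratic forms in $(\zeta,\partial_t\zeta,\nabla\zeta)$, plus source and commutator contributions. Each quadratic-form integral is $\le\eps$ times the left-hand side of the Carleman estimate, the small coefficient being obtained by taking $\lambda$ large and/or by splitting $(-T,0)=(-T,-\delta)\cup(-\delta,0)$ and using that $\varphi_t$ vanishes at $t=0$ (so $|\varphi_t|\le C\delta$ on $(-\delta,0)$) while $\varphi\le\varphi(0,\cdot)-c\delta^2$ on $(-T,-\delta)$; the commutator terms are supported near $t=\pm T$ and are bounded by $e^{-cs}\norm{z}_{H^1(Q)}^2$; the source term is $\le\eps(\hbox{Carleman LHS})+Cs^{-1}K^2\int_\Omega e^{2s\varphi(0,\cdot)}g^2$. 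Feeding in the Carleman estimate (whose right-hand side is itself $\le CK^2\int_\Omega e^{2s\varphi(0,\cdot)}g^2+Cs\int_{(-T,T)\times\Gamma_0}e^{2s\varphi}|\partial_\nu\zeta|^2$ up to an exponentially small remainder), using $|g\,y^0|^2\ge\alpha_0^2 g^2$ by \eqref{PositivityCond}, the energy bound $\norm{z}_{H^1(Q)}\le CK\norm{g}_{L^2(\Omega)}$ to control the remainders, and finally choosing $\lambda$ large and then $s$ large so that all the $\int_\Omega e^{2s\varphi(0,\cdot)}g^2$ terms on the right are absorbed, one arrives at
\[
	\int_\Omega e^{2s\varphi(0,x)}g(x)^2\,dx\ \le\ Cs\int_{(-T,T)\times\Gamma_0}e^{2s\varphi}|\partial_\nu\zeta|^2 .
\]
As $\varphi(0,\cdot)\ge d_0>0$ on $\overline\Omega$ (since $x_0\notin\overline\Omega$) and $\varphi$ is bounded on $(-T,T)\times\Gamma_0$, and as $\partial_\nu\zeta=\chi\,\partial_t\partial_\nu u=\chi\,\partial_t(\mathscr{M}[q^a]-\mathscr{M}[q^b])$ on $(0,T)\times\Gamma_0$ with the corresponding reflection on $(-T,0)$, fixing $s$ yields $\norm{g}_{L^2(\Omega)}\le C\norm{\partial_t(\mathscr{M}[q^a]-\mathscr{M}[q^b])}_{L^2((0,T)\times\Gamma_0)}\le C\norm{\mathscr{M}[q^a]-\mathscr{M}[q^b]}_{H^1(0,T;L^2(\Gamma_0))}$, which is the left inequality of \eqref{Stab-Lucie}. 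The delicate points are precisely this absorption bookkeeping --- obtaining coefficients strictly below the Carleman left-hand side, whence the order ``$\lambda\to\infty$, then $s\to\infty$'' and the localization of the $\varphi_t$-weighted terms near $t=0$ --- and making the integrations by parts licit, which I would handle by first proving the estimate for smooth $q^a,q^b$, where $u$ and $z$ have all the regularity used and the constant depends only on $m,K,\alpha_0$, and then passing to general $q^a,q^b\in L^\infty(\Omega)$ by a density argument using the continuous dependence of $y[q]$ and $\mathscr{M}[q]$ on $q$.

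Finally, the upper bound in \eqref{Stab-Lucie} is the direct estimate: $u$ and $z=\partial_t u$ solve wave equations with potential $q^b$, sources bounded in $L^1(0,T;L^2(\Omega))$ by $CK\norm{g}_{L^2(\Omega)}$, and data $(0,0)$, $(0,g\,y^0)$ respectively, so by well-posedness and the hidden regularity of the normal derivative (see \cite{LasieckaLionsTriggiani}) one gets $\mathscr{M}[q^a]-\mathscr{M}[q^b]=\partial_\nu u\in H^1(0,T;L^2(\Gamma_0))$ with the stated bound. For \eqref{Stab-Lucie-Interne}, the upper bound follows the same way from the energy estimate $\norm{z}_{H^1((0,T)\times\Omega)}\le CK\norm{g}_{L^2(\Omega)}$, and the lower bound is obtained by re-running the Bukhgeim--Klibanov argument above with the interior-observation variant of the Carleman estimate --- legitimate because $\omega$ is a neighborhood of $\Gamma_0\supset\{x\in\partial\Omega:\ (x-x_0)\cdot\nu(x)\ge 0\}$ --- in which the boundary term is replaced by $s^3\int_{(-T,T)\times\omega}e^{2s\varphi}|\zeta|^2$, with $\zeta=\chi z$ and $z=\partial_t y[q^a]-\partial_t y[q^b]$.
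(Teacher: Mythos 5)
Your proposal is correct and follows essentially the same route as the source: the paper quotes this theorem from \cite{Baudouin01} without reproving it, but the Bukhgeim--Klibanov scheme you describe (pass to $z=\partial_t(y[q^a]-y[q^b])$ with $\partial_t z(0)=(q^b-q^a)y^0$, extend oddly in time, truncate, apply the global hyperbolic Carleman estimate with weight $e^{\lambda(|x-x_0|^2-\beta t^2)}$, and absorb the source and commutator terms using \eqref{PositivityCond} and \eqref{SmoothnessCond}) is precisely the argument of \cite{Baudouin01} and is the one the paper itself reproduces for the discrete analogue in Section~\ref{CSIP}. The only point where your bookkeeping is slightly less sharp than the standard one is the absorption of the $s\varphi_t$-weighted term, which is usually handled by a Cauchy--Schwarz with weight $\sqrt{s}$ (yielding the $\tau^{1/2}$ factor as in \eqref{Est-t=0}) rather than by making the coefficient small via $\lambda$ or a time splitting, but your variant can be made to work.
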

\begin{remark}
	Note that in Theorem \ref{Thm-Lucie}, we do not give assumptions on the smoothness of the data $y^0, \, y^1, \, f, \, f_\partial$ directly. They rather appear through the bound $K$ in \eqref{SmoothnessCond} in an intricate way. 
	Also note that estimate \eqref{Stab-Lucie-Interne} is not written in \cite{Baudouin01}, but the proof of \eqref{Stab-Lucie-Interne} follows line to line the one of \eqref{Stab-Lucie}.\\
\end{remark}

\smallskip
\noindent{\bf Logarithmic stability results under weak geometric condition.} Let us now explain what can be done when the geometric part  \eqref{Gamma-Condition} of the Gamma conditions is not satisfied. In this case, to our knowledge, the best result available is due to \cite{BellassouedIP04}. Below, we state a slightly improved version of it:
\begin{theorem}[\cite{BellassouedIP04}, revisited]
	\label{ThmBellassoued}
	Assume that there exist an open subset $\Gamma_1 \subset \partial \Omega$ of the boundary $\partial \Omega$ and an open subset $\Ocal$ of $\Omega$ such that:
	\begin{itemize}
		\item $\Gamma_0 \subset \Gamma_1$ and $(\Omega$, $\Gamma_1)$ satisfies the condition \eqref{Gamma-Condition}; 
		\item $\Ocal$ contains a neighborhood of $\Gamma_1$ in $\Omega$, i.e. for some $\delta >0$, 
		\begin{equation}
			\label{Ocal-Condition} 
			 \{ x \in \Omega, d(x, \Gamma_1) < {\delta} \} \subset \Ocal.
		\end{equation}
	\end{itemize}
	Let $q^a $ be a potential lying in the class $\Lambda(Q,m)$ defined for $Q \in L^{ \infty}(\Ocal)$ and $m>0$ by
	\begin{equation}
		\label{Class-Pot-Bellassoued}
		\Lambda (Q,m) = \{ q\in L^\infty(\Omega),   \hbox{ s.t. } q|_{\Ocal} = Q \ \hbox{ and } \ \norm{q}_{L^{\infty}(\Omega)} \leq m \}.
	\end{equation}
	Let $y^0 \in H^1(\Omega)$ satisfying the positivity condition \eqref{PositivityCond} and assume that $y[q^a]$ satisfies the regularity condition
	\begin{equation}
		\label{Ass-Smoothness-Bel}
		y[q^a] \in H^1(0,T; L^\infty(\Omega)) \cap W^{2,1}(0,T; L^2(\Omega)). 
	\end{equation}
	Let $\alpha>0$ and $M>0$. 
	Then there exists $C>0$ such that for $T>0$ large enough, for all $q^b \in \Lambda (Q, m)$ satisfying
	\begin{equation}
		\label{Error-W-1-2-+}
		q^a - q^b \in H^1_0(\Omega)  \hbox{ and } \norm{q^a- q^b}_{H^1_0(\Omega)} \leq M,
	\end{equation}
	we have $\mathscr{M}[q^a] - \mathscr{M}[q^b] \in H^1(0,T; L^2(\Gamma_0))$ and
	\begin{equation}
		\label{Stability-Bellassoued}
		\norm{q^a - q^b }_{L^2(\Omega)}  \leq C \left[ \log\left( 2+ \frac{C}{\norm{\mathscr{M}[q^a] - \mathscr{M}[q^b]}_{H^1(0,T; L^2(\Gamma_0))} }\right)\right]^{-\frac{1}{1+\alpha}}.
	\end{equation}
	Besides, the constant $C$ depends on $m$ in \eqref{Class-Pot-Bellassoued}, $M$ in \eqref{Error-W-1-2-+}, $\alpha_0$ in \eqref{PositivityCond}, a priori bounds on $\norm{y^0}_{H^1(\Omega)} + \norm{y[q^a]}_{H^1(0,T; L^\infty(\Omega)) \cap W^{2,1}(0,T; L^2(\Omega))}$ and the geometric setting $(\Gamma_0, \Gamma_1, \Ocal, \Omega)$.
\end{theorem}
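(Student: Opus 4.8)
The plan is to combine the Bukhgeim--Klibanov method with a quantitative unique continuation estimate carrying a logarithmic loss, in the spirit of Robbiano. Set $p:=q^a-q^b$ and $u:=y[q^a]-y[q^b]$. Subtracting the two copies of \eqref{WaveEq-Q} (recall that $y^0,y^1,f,f_\partial$ are common to both) shows that $u$ solves $\partial_{tt}u-\Delta u+q^a u=-\,p\,y[q^b]$ in $(0,T)\times\Omega$ with vanishing initial and lateral data, and $\partial_\nu u=\mathscr M[q^a]-\mathscr M[q^b]=:\mathscr M$ on $(0,T)\times\Gamma_0$. Since $q^a,q^b\in\Lambda(Q,m)$ we have $p\equiv 0$ on $\Ocal$, so $u$ solves the \emph{homogeneous} equation $\partial_{tt}u-\Delta u+Q\,u=0$ on $(0,T)\times\Ocal$. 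Differentiating the equation in $t$ and using $u(0,\cdot)=0$ gives $\partial_{tt}u(0,\cdot)=-\,p\,y^0$, whence by the positivity condition \eqref{PositivityCond}, $\alpha_0|p|\le|\partial_{tt}u(0,\cdot)|$ a.e. in $\Omega$; it thus suffices to dominate $\|p\|_{L^2(\Omega)}$ through such a mechanism, after extending $u$ in time by reflection so that $t=0$ lies in the interior of the time interval.

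\smallskip\noindent\textbf{Step 1 (Lipschitz stability with respect to an interior collar of $\Gamma_1$).}
Pick $\delta'\in(0,\delta)$ and set $\omega:=\{x\in\Omega:\ d(x,\Gamma_1)<\delta'\}$, which is a neighborhood of $\Gamma_1$ in $\Omega$ with $\overline\omega\cap\Omega\subset\Ocal$ by \eqref{Ocal-Condition}. Since $(\Omega,\Gamma_1,T)$ satisfies the Gamma-conditions \eqref{Gamma-Condition}--\eqref{Time-Condition} for $T$ large and \eqref{Ass-Smoothness-Bel} implies \eqref{SmoothnessCond}, the interior estimate \eqref{Stab-Lucie-Interne} of Theorem~\ref{Thm-Lucie}, applied with $\Gamma_1$ in place of $\Gamma_0$ and with this $\omega$, yields a constant $C$ depending only on $m$, $\alpha_0$, the bound in \eqref{Ass-Smoothness-Bel}, and the geometry, such that $\|p\|_{L^2(\Omega)}\le C\,\|\partial_t u\|_{H^1((0,T)\times\omega)}$. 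Everything is thus reduced to estimating $u$, together with one time derivative, on the interior region $(0,T)\times\omega$, knowing only the Cauchy data $(u,\partial_\nu u)=(0,\mathscr M)$ on $(0,T)\times\Gamma_0$ and that $u$ solves the homogeneous equation on the ``corridor'' $(0,T)\times\Ocal$, which joins $\Gamma_0\subset\Gamma_1$ to $\omega$ inside $\Ocal$.

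\smallskip\noindent\textbf{Step 2 (quantitative unique continuation with a logarithmic modulus).}
Here the weak geometry of $\Gamma_0$ is felt and the main work lies. Localizing to $(0,T)\times\Ocal$, I would apply a Fourier--Bros--Iagolnitzer transform in time (convolving a smooth time-cutoff of $u$ against a Gaussian of width $h$), which turns $\partial_{tt}$ into a second elliptic derivative and makes the transformed function an approximate solution of the elliptic equation $\partial_s^2U+\Delta U-Q\,U=(\text{small})$, the error being $\Ocal(e^{-c/h})$ from the Gaussian tails and the cutoff; one then invokes interpolation inequalities of Lebeau--Robbiano type for this elliptic operator, obtained from global \emph{elliptic} Carleman estimates, which require no geometric condition beyond the fact that the corridor $\Ocal$ connects the Cauchy-data set $\Gamma_0$ to $\omega$. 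Together with an a priori bound $\|u\|_{H^2((0,T)\times\Ocal')}\le E$ for some $\Ocal'$ with $\omega\subset\Ocal'\subset\Ocal$ — which follows from well-posedness of \eqref{WaveEq-Q}, the regularity \eqref{Ass-Smoothness-Bel}, local hyperbolic regularity on $\Ocal$ (where the equation is homogeneous), and the bound $\|p\|_{H^1_0(\Omega)}\le M$ of \eqref{Error-W-1-2-+}, so that $E$ depends only on $m,M,\alpha_0$ and the a priori data — one obtains an estimate of the interpolation form
\begin{equation*}
	\|\partial_t u\|_{H^1((0,T')\times\omega)}\ \le\ C\,\frac{E}{\big(\log\!\big(2+E/\|\mathscr M\|_{H^1(0,T;L^2(\Gamma_0))}\big)\big)^{\sigma}}
\end{equation*}
for some $T'<T$ and some $\sigma\in(0,1)$; the small loss $\alpha$ in \eqref{Stability-Bellassoued} comes precisely from iterating the interpolation inequality along a finite chain of balls covering the corridor, which forces $\sigma=1/(1+\alpha)$. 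Chaining with Step~1 and using $E\le C(m,M,\alpha_0,\dots)$ gives \eqref{Stability-Bellassoued}; the membership $\mathscr M\in H^1(0,T;L^2(\Gamma_0))$ is a hidden-regularity statement for \eqref{WaveEq-Q} as in Theorem~\ref{Thm-Lucie}.

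\smallskip\noindent\textbf{Main obstacle.}
The serious part is Step~2: producing the logarithmic interpolation estimate with \emph{explicit} dependence of every constant on the a priori data and the geometry, ensuring the FBI error terms are genuinely exponentially small and do not pollute the bound, and checking that $\Ocal$ indeed links $\Gamma_0$ to $\omega$ so that the elliptic propagation can be carried out — while arranging that a single ``$T$ large enough'' simultaneously serves the Gamma-condition for $\Gamma_1$ in Step~1 and the time-localization in Step~2. Conceptually, the fact that we only observe the flux on $\Gamma_0$, strictly smaller than the lit set $\Gamma_1$, is exactly what degrades Lipschitz into logarithmic stability.
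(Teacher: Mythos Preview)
Your overall architecture is correct and matches the paper: apply the interior Lipschitz estimate \eqref{Stab-Lucie-Interne} with $\omega$ a collar of $\Gamma_1$ to reduce to bounding $\|\partial_t u\|_{H^1((0,T')\times\omega)}$, then use an FBI transform in time to convert the hyperbolic problem on the corridor $\Ocal$ into an elliptic one and extract a logarithmic-type estimate from Cauchy data on $\Gamma_0$. The paper does exactly this, applying the machinery to $\zeta=\partial_t(y[q^a]-y[q^b])$.

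There is, however, a genuine gap in Step~2 concerning the exponent. You propose a \emph{Gaussian} FBI kernel and then attribute the exponent $\sigma=1/(1+\alpha)$ to ``iterating the interpolation inequality along a finite chain of balls.'' This is not how the exponent arises. With the Gaussian kernel $e^{-\xi^2}$, the FBI error terms decay like $e^{-c/h}$ while the elliptic Carleman/interpolation estimates produce factors $e^{C/h}$, and balancing these against the a priori bound $E$ gives a logarithmic estimate with a \emph{fixed} exponent --- this is exactly Bellassoued's original result with $\alpha=1$, and no finite iteration of three-spheres inequalities improves it (iteration only multiplies H\"older exponents, which does not push $\sigma$ toward $1$). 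To obtain \emph{arbitrary} $\alpha>0$ as stated, the paper uses the Lebeau--Robbiano kernel $F(z)=\frac{1}{2\pi}\int e^{iz\xi}e^{-\xi^{2n}}\,d\xi$ with $n$ chosen so that $1/(2n-1)<\alpha$; the sharper decay $|F(z)|\lesssim e^{-c|z|^{1/\gamma}}$ with $\gamma=1-1/(2n)$ is precisely what produces the exponent $\gamma>1/(1+\alpha)$ after optimization in the FBI parameter.

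A secondary methodological difference: the paper deliberately \emph{avoids} iterated three-spheres inequalities by constructing a single global weight $\varphi(s,x)=\exp(\mu(\psi(x)-s^2))$ on the whole corridor, with $\psi$ built so that the level sets connect $\Gamma_0$ to $\omega$ and $\psi$ drops strictly on the cutoff region $\mathscr{C}$ where the spatial truncation lives. One global elliptic Carleman estimate then does the job in one shot. Your chain-of-balls route is viable in principle (and is what \cite{Phung09,BellassouedIP04} do), but it is not the mechanism that delivers the improved exponent.
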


To be more precise, \cite{BellassouedIP04} states the previous result with $\alpha = 1$ and under slightly stronger geometric and regularity conditions. Since Theorem \ref{ThmBellassoued} states a slightly better result than the one in \cite{BellassouedIP04}, we will prove it in Section \ref{AEC}. Similarly as in \cite{BellassouedIP04}, we will work on the difference $y[q^a]- y[q^b]$ and use the Fourier-Bros-Iagoniltzer transform which links solutions of the wave equation with solutions of an elliptic PDE, but instead of considering the usual Gaussian transform as in \cite{BellassouedIP04} (see also \cite{Robbiano91,Robbiano95}), we will consider the one used in \cite{LebRob97} (see also \cite{BellaYam06,Phung09}). We will thus be led to prove a quantified unique continuation result for an elliptic PDE, which we derive using a classical Carleman estimate (\cite{Hormander-III}). Nevertheless, we will do it in a somewhat different way as the one in \cite{Robbiano95,Phung09} by constructing one global weight which allows to prove Theorem \ref{ThmBellassoued} without the use of iterated three spheres inequalities. The proof of Theorem \ref{ThmBellassoued} will then be completed by the use of  the stability estimates \eqref{Stab-Lucie-Interne}.
\\

\smallskip
\noindent{\bf Objectives.} Our goal is to derive counterparts of Theorem \ref{Thm-Lucie} and Theorem \ref{ThmBellassoued} for the finite-difference space approximations of the wave equation discretized on a uniform mesh.
In order to give precise statements, we need to introduce several notations listed in the next section. For simplicity of notations, we make the choice of focusing on the unit square in the $2$-d case 
\begin{equation}
	\label{Omega}
	\Omega = (0,1)^2,
\end{equation}
though our methodology applies similarly in the case of the $d$-dimensional domains of rectangular form $\Omega = \Pi_{j = 1}^d [a_j, b_j]$ (still discretized on a uniform mesh).
Note that, even if we stated Theorems~\ref{Thm-Lucie} and \ref{ThmBellassoued} for smooth bounded domains, both Theorems also hold in the case of a domain $\Omega = (0,1)^2$.

%
%
 %%%%%%%%%%%%%%%%%%%%%%%%%%%%%%
\subsection{Some notations in the discrete framework}\label{SubSec-Notations}
%%%%%%%%%%%%%%%%%%%%%%%%%%%%%%
%
Here, we introduce the notations corresponding to the case of a finite-difference discretization of the wave equation on a uniform mesh. 
Let $N \in \mathbb{N}$ be the number of interior points in each direction, and $h = 1/(N+1)$ the mesh size.
All the notations introduced in the discrete setting will be indexed by the parameter $h>0$ to avoid confusion with the continuous case.\\

\noindent{\bf Discrete domains.}
We introduce the following  (see also an illustration in Figure~\ref{Fig-Notations}):
\begin{equation}\label{esph}
	\begin{array}{l}	
	\Omega_h =\{h,2h,\dots, Nh \}^2, \quad \overline{\Omega_h} = \{0, h,2h,\dots, Nh,1 \}^2,
	\\
	 \partial \Omega_h = \left(\left( \{0\} \cup \{1\}\right) \times \{h, \dots, Nh \}\right) \cup \left( \{h, \dots, Nh\} \times \left( \{0\} \cup \{1\}\right) \right),
	 \\
	 \Gamma_{h,1}^- =  \{0\} \times \{h, \dots, Nh \}, \qquad  \Gamma_{h,2}^- =    \{h, \dots, Nh \} \times \{0\},
	  \\
	 \Gamma_{h,1}^+ =  \{1\} \times \{h, \dots, Nh \}, \qquad  \Gamma_{h,2}^+ =    \{h, \dots, Nh \} \times \{1\},
	  \\	
	 \Gamma_{h}^- =\Gamma_{h,1}^-\cup\Gamma_{h,2}^-,\qquad
	 \Gamma_{h}^+ =\Gamma_{h,1}^+\cup\Gamma_{h,2}^+,\qquad
	 \partial \Omega_h =  \Gamma_{h}^- \cup  \Gamma_{h}^+,
	 \\
	 \Omega_{h,1}^- =\Omega_h\cup\Gamma_{h,1}^-,\qquad
	 \Omega_{h,2}^- =\Omega_h\cup\Gamma_{h,2}^-,\qquad 
	 \Omega_{h}^- =\Omega_{h,1}^-\cap\Omega_{h,2}^-.
	\end{array}
\end{equation}
Note that this naturally introduces two representations of the discrete set $\overline{\Omega_h}$. We will use alternatively $x_h \in \overline{\Omega_h}$ or $(i,j) \in \llbracket 0,N+1\rrbracket^2$ (where $\llbracket a,b\rrbracket = [a,b] \cap \mathbb N$) to denote the point $x_h = (ih, jh)$, the advantage of the first writing being its consistency with the continuous model.\\
\begin{figure}
	\begin{center}
		\includegraphics[width=7cm]{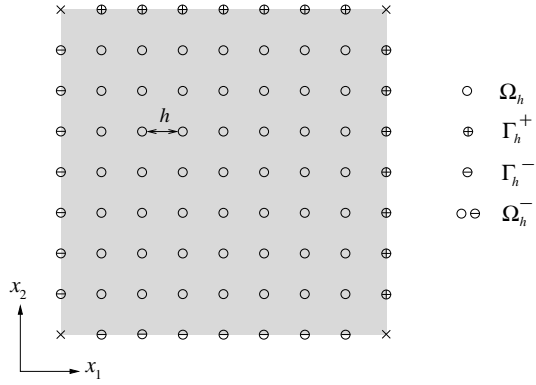}
	\end{center}
	\caption{Main discrete notations in $\Omega=(0,1)\times(0,1)$.}
	\label{Fig-Notations}
\end{figure}

\noindent{\bf Discrete integrals.}
By analogy with the continuous case, if we denote by 
$f_h = (f(x_h))_{x_h \in\Omega_h}$, respectively $f_h = (f(x_h))_{x_h \in\Omega_{h,1}^-}$, $f_h = (f(x_h))_{x_h \in\Omega_{h,2}^-}$, a discrete function, we will use the following shortcuts:
\begin{equation}\label{intf}
	 \ds\int_{\Omega_h} f_h =  \ds\int_{\Omega_h} f_{i,j} = h^2 \ds\sum_{i,j=1}^{N} f_{i,j} \, ;
	\,
	 \ds\int_{\Omega_{h,1}^-} f_h  = h^2 \ds\sum_{i=0}^{N}\sum_{j=1}^{N} f_{i,j} \, ;
	 \,
	\ds\int_{\Omega_{h,2}^-} f_h  = h^2 \ds\sum_{i=1}^{N}\sum_{j=0}^{N} f_{i,j}.
\end{equation}
One should notice that if these symbols are applied to continuous functions or products of discrete and continuous functions, they have to be understood as the corresponding Riemann sums. 

When considering integrals on the boundary $\partial \Omega_h$, we use the natural scale for the boundary and we define, for $f_h$ a discrete function on $\partial \Omega_h$,
\begin{equation}
	\label{int-f-bord}
	 \ds\int_{\partial \Omega_h} f_h = h \ds\sum_{x_h \in \partial \Omega_h} f(x_h). 
\end{equation}

\smallskip
\noindent{\bf Subsets.}
In several places, we will consider open subsets $\Ocal, \omega \subset \Omega$ and we then note $\Ocal_h = \Ocal \cap \Omega_h$, $\overline{\Ocal_h} = \{ x \in \overline{\Omega}, \, d(x, \Ocal) \leq  h \} \cap \overline{\Omega_h}$, $\Ocal_{h,k}^- = \{x \in \overline{\Omega},\, \exists \epsilon \in [0,h], \, x + \epsilon e_k \in \Ocal\}\cap \Omega_{h,k}^-$, and similarly for the sets $\omega_h$, $\overline{\omega_h}$ and $\omega_{h,k}^-$ (notice that these sets are always non-empty for $h$ small enough). Integrals on these discrete approximations of open subsets of $\Omega$ are given for $f_h$ discrete functions on $\overline{\Ocal_h}$ as follows:
\begin{equation}\label{intf-subset}
	 \ds\int_{\Ocal_h} f_h =  \ds\int_{\Omega_h} f_h {\bf 1}_{\Ocal_h}, 
	\quad
	 \ds\int_{\Ocal_{h,k}^-} f_h  = \ds\int_{\Omega_{h,k}^-} f_h {\bf 1}_{\Ocal_{h,k}^-},
\end{equation}
and similarly for the integrals on $\omega_h$, $\omega_{h,k}^-$. 
\\
When considering open subsets $\Gamma$ of the boundary $\partial \Omega$, we will similarly set $\Gamma_{h} = \Gamma\cap \partial \Omega_h$, and  the integrals on these discrete approximations of subsets of the boundary will be given by 
$$
	  \ds\int_{\Gamma_h} f_h  = \ds\int_{\partial \Omega_h} f_h {\bf 1}_{\Gamma_h}.
$$

\smallskip
\noindent{\bf Discrete $L^p$-spaces.}
We also define in a natural way a discrete version of the $L^p(\Omega )$-norms as follows: for $p\in[1, \infty)$, we introduce $L^p_h(\Omega_h )$ (respectively $L^p_h(\Omega_{h,1}^-)$) the space of discrete functions $f_h = (f_{i,j})_{i,j \in \llbracket 1,N\rrbracket}$, 
(respectively $i \in \llbracket 0,N\rrbracket, \,j \in \llbracket 1,N\rrbracket$) endowed with the norms
\begin{equation}
	\label{NormLp}
	\norm{f_h}_{L^p_h(\Omega_{h})}^p = 	\ds\int_{\Omega_{h}} |f_h|^p  
	\quad \hbox{\Big(resp. } \norm{f_h}_{L^p_h(\Omega_{h,1}^-)}^p = 	\ds\int_{\Omega_{h,1}^-} |f_h|^p \,\hbox{\Big)},
\end{equation}
and, for $p = \infty$, 
$
	\norm{f_h}_{L^\infty_h(\Omega_{h})} = 	\sup_{ i,j \in \llbracket 1,N\rrbracket} |f_{i,j}| 
	$, $\hbox{(resp. } \norm{f_h}_{L^\infty_h(\Omega_{h,1}^-)} = 	\sup_{i \in \llbracket 0,N\rrbracket;j \in \llbracket 1,N\rrbracket} |f_{i,j}| \hbox{)}.
$
\\
We define the spaces $L^p_h(\Omega_{h,2}^-)$, $L^p_h(\Ocal_h)$ and $L^p_h(\omega_h)$ for open subsets $\Ocal, \omega \subset \Omega$ in a similar way.
We also define discrete norms on parts of the boundary: if $\Gamma$ is an open subset of $\partial \Omega$, 
the space $L^p_h(\Gamma_h)$, ($p \in [1, \infty)$) is the set of discrete functions $f_h$ defined on $\Gamma_h$ endowed with the norm 
$$
	\norm{f_h}_{L^p_h(\Gamma_h)}^p = \ds\int_{\Gamma_{h}} |f_h|^p.
$$

\smallskip
\noindent{\bf Discrete operators.}
We approximate the Laplace operator by the $5$-points finite-difference approximation: $\forall (i,j) \in \llbracket 1, N\rrbracket^2$,
\begin{equation}
	\label{Delta-h}
	(\Delta_h v_h)_{i,j} = \frac{1}{h^2} \left(v_{i+1,j}+ v_{i,j+1} + v_{i-1,j} + v_{i,j}- 4 v_{i,j} \right).
\end{equation}
Besides the discrete Laplacian $\Delta_h$, let us also introduce the following discrete operators:
\begin{gather*}
	(m_{h,1} v_h)_{i,j} =  \dfrac{v_{i+1,j} + 2 v_{i,j}+v_{i-1,j}}{4} ~; \quad
	(m_{h,2} v_h)_{i,j} =  \dfrac{v_{i,j+1} + 2 v_{i,j}+v_{i,j-1}}{4} ~; 
	 \\
	 (m^+_{h,1} v_h)_{i,j}  = (m_{h,1}^- v_h )_{i+1,j} = 
	 \dfrac{v_{i+1,j} + v_{i,j}}{2}   ~;
	\quad
	 (m^+_{h,2} v_h)_{i,j}  = (m_{h,1}^- v_h )_{i+1,j} = 
	 \dfrac{v_{i,j} + v_{i,j+1}}{2}   ~;
	\\
	(\partial_{h,1} v_h)_{i,j} = \dfrac{v_{i+1,j} -v_{i-1,j}}{2h}~;\quad
	(\partial_{h,2} v_h)_{i,j} = \dfrac{v_{i,j+1} -v_{i,j-1}}{2h}~; 
	\quad 
	\nabla_h = \left( \begin{array}{c} \partial_{h,1} \\ \partial_{h,2} \end{array} \right)~; 
	\\
	(\partial^+_{h,1} v_h)_{i,j} = (\partial^-_{h,1} v_h)_{i+1,j} = \dfrac{v_{i+1,j} -v_{i,j}}{h}  ~;\quad
	(\partial^+_{h,2} v_h)_{i,j} = (\partial^-_{h,2} v_h)_{i,j+1} = \dfrac{v_{i,j+1} -v_{i,j}}{h}  ~;
	\\
	(\Delta_{h,1} v_h)_{i,j}  = \dfrac{v_{i+1,j} - 2 v_{i,j}+v_{i-1,j}}{h^2}~;\quad
	(\Delta_{h,2} v_h)_{i,j}  = \dfrac{v_{i,j+1} - 2 v_{i,j}+v_{i,j-1}}{h^2}.
\end{gather*}
We finally introduce the following semi-discrete wave operator:
$$
	\Box_h = \partial_{tt} - \Delta_h = \partial_{tt} - \Delta_{h,1} - \Delta_{h,2}. 
$$

\smallskip
\noindent{\bf Spaces of more regularity.} We will use the space $H^1_h(\Omega_h)$ of discrete functions $f_h$ defined on $\overline{\Omega_h}$ endowed with the norm
$$
	\norm{f_h}_{H^1_h(\Omega_h)}^2 =  \norm{f_h}_{L^2_h(\overline{\Omega_h})}^2 + \sum_{k = 1,2} \norm{\partial_{h,k}^+ f_h}^2_{L^2_h(\Omega_{h,k}^-)}.
$$
We also denote $H^1_{0,h}(\Omega_h)$ the set of functions $f_h$  defined on $\overline{\Omega_h}$  and vanishing on $\partial \Omega_h$ endowed with the above norm.

Note down that $H^1_h(\Omega_h)$ and $H^1_{0,h}(\Omega_h)$ denote spaces of functions defined on $\overline{\Omega_h}$. We decided to slightly abuse the notations by denoting them that way, since the topology of these spaces is strong enough to define the trace operators.

Similarly, when $\omega$ is a non-empty open subset of $\Omega$, we denote by $H^1_h(\omega_h)$ the set of discrete functions $f_h$ defined in $\overline{\omega_h}$ endowed with the norm 
$$
	\norm{f_h}_{H^1_h(\omega_h)}^2 =  \norm{f_h}_{L^2_h(\overline{\omega_h})}^2 + \sum_{k = 1,2} \norm{\partial_{h,k}^+ f_h}^2_{L^2_h(\omega_{h,k}^-)}.
$$

We finally introduce $H^2_h(\Omega_h)$ the set of discrete functions $f_h$ defined on $\overline{\Omega_h}$ endowed with the norm 
$$
	\norm{f_h}_{H^2_h(\Omega_h)}^2 = \norm{f_h}_{H^1_h(\Omega_h)}^2 +  \norm{\Delta_{h,1} f_h}_{L^2_h({\Omega_h})}^2+  \norm{\Delta_{h,2} f_h}_{L^2_h({\Omega_h})}^2+ \norm{\partial_{h,1}^+ \partial_{h,2}^+ f_h}_{L^2(\Omega_h^-)}^2.
$$
Besides, with an abuse of notations, we will often denote $L^2(0,T;H^1_h(\Omega_h)) \cap H^1(0,T; L^2_h(\Omega_h))$ by $H^1_h((0,T)\times \Omega_h)$  and the space $H^2(0,T;L^2_h(\Omega_h)) \cap H^1(0,T; H^1_h(\Omega_h))\cap L^2(0,T;H^2_h(\Omega_h))$ by $H^2_h((0,T)\times \Omega_h)$.\\

\smallskip
\noindent{\bf Extension and restriction operators.}
Finally, we shall explain how to compare discrete functions with continuous ones. In order to do so, we introduce extension and restriction operators.

The first one extends discrete functions by continuous piecewise affine functions and is denoted by $\eh$. To be more precise, if $f_h$ is a discrete function $(f_{i,j})_{i,j \in \llbracket 0,N+1\rrbracket }$, 
the extension $\eh (f_h)$ is defined on $[0,1]^2$ for $(x_1,x_2) \in [ih,(i+1)h]\times [jh, (j+1)h]$ by
\begin{multline}\label{eh}
	\eh(f_h)(x_1,x_2) = 
			\ds \left(1- \frac{x_1-ih}{h} \right)\left(1- \frac{x_2-jh}{h} \right)  f_{i,j}
			+
			 \left( \frac{x_1-ih}{h} \right)\left(1- \frac{x_2-jh}{h} \right) f_{i+1,j}
			\\
			\ds +
			 \left(1- \frac{x_1-ih}{h} \right)\left( \frac{x_2-jh}{h} \right) f_{i,j+1}
			+ \left(\frac{x_1-ih}{h} \right)\left(\frac{x_2-jh}{h} \right)f_{i+1,j+1}  .
\end{multline}
This extension presents the advantage of being naturally in $H^1(\Omega)$.
The second extension operator is the piecewise constant extension $\eh^0(f_h)$, defined for discrete functions $f_h = (f_{i,j})_{i,j \in \llbracket 1,N\rrbracket}$ by
\begin{equation}\label{eh0}
	\begin{array}{ll}
	\eh^0 (f_h) = f_{i,j}  \quad &\hbox{ on } [(i-1/2)h, (i+1/2)h[\times  [(j-1/2)h, (j+1/2)h[, \quad i,j \in \llbracket 1,N\rrbracket,
	\smallskip\\
	\eh^0 (f_h) = 0  \quad &\hbox{ elsewhere} .
	\end{array}
\end{equation}
This one is natural when dealing with functions lying in $L^2({\Omega})$ as $\norm{ \eh^0 (f_h)}_{L^2(\Omega)} = \norm{ f_h}_{L^2_h(\Omega_h)}$.
Also note that easy (but tedious) computations show that $\eh(f_h)$ converge to $f$ in $L^2(\Omega)$ if and only if $\eh^0(f_h)$ converge to $f$ in $L^2(\Omega)$.

We finally introduce restriction operators $\rh$, $\trh$ and $\rhB$ where $\rh$ is defined for continuous function $f \in C(\overline{\Omega})$ by 
$$
	\rh(f) = f_h \ \tn{ given by } f_{i,j} = f(ih,jh), \quad \forall i,j \in \llbracket 1, N \rrbracket,
$$
 $\trh$ for functions $f \in L^2({\Omega})$ by 
$$
	\trh(f) = f_h \ \tn{ given by } 
	\left\{
		\begin{array}{l}
		\ds f_{i,j} = \frac{1}{h^2} \underset{\substack{ |x_1 - ih| \leq h/2 \\ |x_2 - jh| \leq h/2 }}\iint f(x_1,x_2) \, dx_1 dx_2, \quad \forall i,j \in \llbracket 1, N \rrbracket,
		\smallskip\\
		\ds 
		f_{i,j} = \frac{1}{2 h^2} \underset{\substack{ |x_1 - ih| \leq h/2 \\ |x_2 - jh| \leq h/2 \\ (x_1, x_2) \in \Omega}}\iint f(x_1,x_2) \, dx_1 dx_2, \quad \forall x_h = (ih,jh) \in \partial \Omega_h
		\end{array}
	\right.
$$
and $\rhB$ for functions $f_\partial \in L^2(\partial \Omega)$ by 
$$
	\rhB(f_\partial)(x_h) = \frac{1}{h}
	\underset{\substack{|x - x_h| \leq h/2,\\x \in \partial \Omega}}\int f_\partial(x) d\sigma\ \tn{ for } x_h \in \partial \Omega_h.
$$

%
%%%%%%%%%%%%%%%%%%%%%%%%%%%%%%
\subsection{The semi-discrete inverse problem and main results}\label{SubSec-MainResults}
%%%%%%%%%%%%%%%%%%%%%%%%%%%%%%
%
We discretize the usual 2-d wave equation on $\Omega = (0,1)^2$ using the finite difference method on a uniform mesh of mesh size $h>0$. Using the above notations, this leads to the following equation:
 \begin{equation}
	\label{DisWaveEq}
	\left\{
		\begin{array}{ll}
			\partial_{tt} y_h - \Delta_h y_h + q_h y_h= f_h & \hbox{ in } (0,T) \times \Omega_h, 
			\\
			y_h = f_{\partial,h} & \hbox{ on } (0, T) \times \partial \Omega_h, 
			\\
			y_h(0)= y_{h}^0, \quad\partial_t y_h(0)  = y_{h}^1 & \hbox{ in } \Omega_h.
		\end{array}
	\right.
\end{equation}  
Here, $y_h(t,x_h)$ is an approximation of the solution $y$ of \eqref{WaveEq-Q} in $(t,x_h)$, $\Delta_h$ approximates the Laplace operator and we assume that $(y_{h}^0,y_{h}^1)$ are the initial sampled data $(y^0, y^1)$ at $x_h$, and the source terms $f_{\partial,h}\in L^2(0,T;L^2_h(\partial\Omega_h))$ and $ f_h \in L^1(0,T; L^2_h(\Omega_h))$ are discrete approximations of the boundary and source terms $f_\partial$ and $f$.

Our main goal is to establish the {\it convergence} of the discrete inverse problems for \eqref{DisWaveEq} toward the continuous one for \eqref{WaveEq-Q} in the sense developed in \cite{BaudouinErvedoza11}. Let us rapidly present what kind of results should be expected. 

The natural idea to compute an approximation of the potential $q$ in \eqref{WaveEq-Q} from the boundary measurement $\mathscr{M}[q]$ is to try to find a discrete potential $q_h$ such that the measurement 
\begin{equation}
	\label{Flux-h}
	\mathscr{M}_h [q_h] = 
	 \partial_{\nu} \eh(y_h [q_h]) \quad \hbox{ on } \quad (0,T) \times \Gamma_0 
\end{equation}
where $y_h[q_h]$ is the solution of \eqref{DisWaveEq}, and $\eh$ is the piecewise affine extension defined in \eqref{eh}, approximates $\mathscr{M}[q]$ defined in \eqref{Flux}. We are thus asking the following:
\begin{quote}
 if one finds a sequence $q_h$ of discrete potentials such that $\mathscr{M}_h[q_h]$ converges towards $\mathscr{M}[q]$ as $h\to 0$ (in a suitable topology), can we guarantee that the sequence $q_h$ converges (in a suitable topology) towards $q$ ?
\end{quote}
As it is classical in numerical analysis - this is the so-called Lax theorem for the convergence of numerical schemes - such result can be achieved using the consistency and the uniform stability of the problem. In our context, even if the consistency requires some work, the stability issue is much more intricate since even in the continuous case it is based on Carleman estimates. Here, \textit{stability} refers to the possibility of getting bounds of the form
\begin{equation}
	\label{Stability?}
	\norm{\eh^0(q_h^a - q_h^b)}_{*} \leq C \norm{\mathscr{M}_h[q_h^a] - \mathscr{M}_h[q_h^b] }_{\#},
\end{equation}
where  $\eh^0$ is the piecewise constant extension defined in \eqref{eh0}, and the norms $\norm{\cdot}_{*}$ and $\norm{\cdot}_{\#}$ have to be precised, for some positive constant $C$ \emph{independent} of $h$.

As we already pointed out in \cite{BaudouinErvedoza11} in the 1-d case, a stability estimate of the form \eqref{Stability?} is far from obvious and actually, instead of getting an estimate like \eqref{Stability?}, we proposed a slightly modified observation operator $\widetilde{\mathscr{M}_h}$ for which we prove uniform stability estimates and the convergence of the inverse problem.

Hence the main difficulty in obtaining convergence results is to derive suitable stability estimates for the discrete inverse problem under consideration. We will thus state convergence results for the discrete inverse problems in the forthcoming Theorem~\ref{Thm-Cvg}, while the main part of the article focuses on the proof of stability estimates for the discrete inverse problem set on \eqref{DisWaveEq} stated hereafter in Theorems~\ref{TCWE1} and \ref{TCWElog}.

\subsubsection{Discrete stability results}

\smallskip
\noindent{\bf Discrete Lipschitz stability.} 
Since we assumed $\Omega = (0,1)^2$, the condition \eqref{Gamma-Condition} will be satisfied by a set $\Gamma_0 \subset \partial \Omega$ if and only if $\Gamma_0$ contains two consecutive edges, and in this case the time $T$ in \eqref{Time-Condition} can be taken to be any $T > \sqrt{2}$. Thus, with no loss of generality, when the Gamma-conditions \eqref{Gamma-Condition}--\eqref{Time-Condition} are satisfied, we can focus on the study of the case
\begin{equation}
	\label{Gamma-+-T-sqrt2}
	\Omega = (0,1)^2, \quad \Gamma_0 \supset \Gamma_+ = (\{1\} \times (0,1)) \cup ((0,1) \times \{1\} ), \quad T > \sqrt{2}.
\end{equation}
When the measurement is done on a part of the boundary $\Gamma_0$ satisfying the  above conditions, 
we will prove the following counterpart of Theorem \ref{Thm-Lucie}:
\begin{theorem}[Lipschitz stability under Gamma-conditions]\label{TCWE1}
	Assume that $(\Omega, \Gamma_0,T)$ satisfy the configuration \eqref{Gamma-+-T-sqrt2}.
	Let $m>0$, $K>0$, $\alpha_0>0$, and $q_h^a \in L^\infty_{h}(\Omega_h)$ with $\norm{q_h^a}_{L^\infty_h(\Omega_h)} \leq m$.
	Assume also that $y^{0}_h$ and the solution $y_h[q_h^a]$ of \eqref{DisWaveEq} with potential $q_h^a$ satisfy
	\begin{equation}
		\label{RegDiscrete}
				\inf_{\Omega_h} |y^{0}_h|\geq \alpha_0
		\quad \hbox{ and }\quad
		\norm{ y_h[q_h^a]}_{H^1 (0,T;L_h^{\infty}(\Omega_h)) } \leq K.
	\end{equation}
	Then there exists a constant $C=C(T,m, K, \alpha_0)>0$ independent of $h$ such that for all $ q_h^b\in L^\infty_{h}(\Omega_h)$ with $\norm{q_h^b}_{L^\infty_h(\Omega_h)} \leq m$, the following 	uniform stability estimate holds:
	\begin{multline}
		\norm{q_h^a - q_h^b}_{L^2_h(\Omega_h)} 
		\leq
		C \norm{\mathscr{M}_h[q_h^a] - \mathscr{M}_h[q_h^b]}_{H^1(0,T;L^2_h(\Gamma_{0,h}))} 
		\label{UniformStability-3}
		\\
		+ Ch \sum_{k=1,2} \norm{\partial_{h,k}^+\partial_{tt} y_{h}[q_h^a] - \partial_{h,k}^+\partial_{tt} y_h[q_h^b]}_{L^2(0,T;L^2_h(\Omega_{h,k}^-))}%\nonumber
	\end{multline}
	where $y_h[q_h^b]$ is the solution of \eqref{DisWaveEq} with potential $q_h^b$.
	\\
	Similarly, if $\omega$ is a neighborhood of $\Gamma_+$, i.e. there exists $\delta>$ such that
	\begin{equation}
		\label{Configuration-Lipschitz-distributed}
		 ((1, 1- \delta) \times (0,1)) \cup ((0,1) \times (1-\delta, 1)  ) \subset \omega,
	\end{equation}
	then there exists a constant $C=C(T,m, K, \alpha_0,\delta)>0$ independent of $h$ such that for all $ q_h^b\in L^\infty_{h}(\Omega_h)$ with $\norm{q_h^b}_{L^\infty_h(\Omega_h)} \leq m$, the following uniform stability estimate holds:
	\begin{multline}
		\norm{q_h^a - q_h^b}_{L^2_h(\Omega_h)} 
		\leq 
		C  \norm{\partial_t y_{h}[q_h^a] - \partial_t y_h[q_h^b]}_{H^1(0,T;L^2_h(\omega_h))}
		\\
		 + C \sum_{k = 1,2} \norm{\partial_{h,k}^+ \partial_t y_{h}[q_h^a] - \partial_{h,k}^+ \partial_t y_h[q_h^b]}_{L^2(0,T;L^2_h(\omega_{k,h}^-))} 
		\label{UniformStability-3Loc}
		\\
		+ Ch \sum_{k=1,2} \norm{\partial_{h,k}^+\partial_{tt} y_{h}[q_h^a] - \partial_{h,k}^+\partial_{tt} y_h[q_h^b]}_{L^2(0,T;L^2_h(\Omega_{h,k}^-))}.
	\end{multline}
\end{theorem}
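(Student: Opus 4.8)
The proof is a Bukhgeim--Klibanov argument running parallel to the continuous proof of Theorem~\ref{Thm-Lucie}, with the classical hyperbolic Carleman estimate replaced by its uniform (in $h$) semi-discrete analogue for $\Box_h$, the novelty being that the latter carries a genuinely discrete remainder on its right-hand side that cannot be absorbed.

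\textbf{Reduction and the positivity trick.} Set $u_h = y_h[q_h^a] - y_h[q_h^b]$. Subtracting the two copies of \eqref{DisWaveEq} and writing $q_h^a y_h[q_h^a] - q_h^b y_h[q_h^b] = q_h^b u_h + (q_h^a - q_h^b) y_h[q_h^a]$, one sees that $u_h$ solves $\Box_h u_h + q_h^b u_h = (q_h^b - q_h^a) y_h[q_h^a]$ in $(0,T)\times\Omega_h$, with $u_h = 0$ on $(0,T)\times\partial\Omega_h$, $u_h(0) = \partial_t u_h(0) = 0$, and $\mathscr{M}_h[q_h^a] - \mathscr{M}_h[q_h^b] = \partial_\nu\eh(u_h)$ on $(0,T)\times\Gamma_0$. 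Differentiating once in time, $p_h := \partial_t u_h$ solves $\Box_h p_h + q_h^b p_h = (q_h^b - q_h^a)\partial_t y_h[q_h^a]$, vanishes on $(0,T)\times\partial\Omega_h$, and, since $u_h(0) = 0$ forces $\Delta_h u_h(0) = 0$, satisfies $p_h(0) = 0$ and $\partial_t p_h(0) = \partial_{tt}u_h(0) = (q_h^b - q_h^a) y_h^0$. Extend $u_h$ evenly and $p_h$ oddly to $t\in(-T,T)$; the extension of $p_h$ still solves the same equation with an $L^\infty$ right-hand side and still vanishes on the lateral boundary. Let $\psi(t,x) = |x - x_0|^2 - \beta t^2 + C$ and $\varphi = e^{\la\psi}$ be the Carleman weight attached to \eqref{Gamma-+-T-sqrt2}: $x_0\in\R^2\setminus\overline\Omega$ chosen so that \eqref{Gamma-Condition} holds with $\Gamma = \Gamma_+$, $\beta<1$ close to $1$, and $\la,\,C$ large; since $T>\sqrt2 > \sup_{x\in\Omega}|x-x_0|$ one may also fix $T'\in(0,T)$ with $\max_{T'\le|t|\le T,\ x\in\overline\Omega}\varphi < C_0 := \min_{x\in\overline\Omega}\varphi(0,\cdot)$. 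From $\inf_{\Omega_h}|y_h^0|\ge\alpha_0$ and the formula for $\partial_t p_h(0)$ we get $|q_h^a - q_h^b|\le\alpha_0^{-1}|\partial_t p_h(0,\cdot)|$ on $\Omega_h$, hence
\[
	\norm{q_h^a - q_h^b}_{L^2_h(\Omega_h)}^2 \le \frac{e^{-2sC_0}}{\alpha_0^2}\ds\int_{\Omega_h}e^{2s\varphi(0,\cdot)}\,|\partial_t p_h(0,\cdot)|^2,
\]
so it suffices to control the weighted norm of $\partial_t p_h$ on $\{t=0\}$.

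\textbf{Semi-discrete Carleman estimate and energy at $t=0$.} We apply to $\chi(t)p_h$, $\chi$ a time cut-off equal to $1$ on $(-T',T')$ and supported in $(-T,T)$, the uniform semi-discrete Carleman estimate for $\Box_h$ (valid for $s$ large and $sh$ below a fixed threshold), which controls on its left-hand side $s\ds\int_{-T}^T\!\ds\int_{\Omega_h}e^{2s\varphi}(|\partial_t v_h|^2 + |\nabla_h v_h|^2) + s^3\ds\int_{-T}^T\!\ds\int_{\Omega_h}e^{2s\varphi}|v_h|^2$ by $C\ds\int_{-T}^T\!\ds\int_{\Omega_h}e^{2s\varphi}|\Box_h v_h|^2$, the boundary observation $Cs\ds\int_{-T}^T\!\ds\int_{\Gamma_{0,h}}e^{2s\varphi}|\partial_\nu^h v_h|^2$, and the extra semi-discrete remainder $Ch^2 s^3\sum_k\ds\int_{-T}^T\!\ds\int_{\Omega_{h,k}^-}e^{2s\varphi}|\partial_{h,k}^+\partial_t v_h|^2$ (absent in the continuous case, and not absorbable because of the power of $s$). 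Taking $v_h = \chi p_h$ and writing out $\Box_h v_h$: the zeroth order contribution $q_h^b v_h$ is absorbed using $\norm{q_h^b}_{L^\infty_h}\le m$ and the $s^3$ term on the left; the source $(q_h^b - q_h^a)\partial_t y_h[q_h^a]$ is estimated using $\norm{y_h[q_h^a]}_{H^1(0,T;L^\infty_h)}\le K$ and the behaviour of $\varphi$ near $t=0$, and is absorbed into the left-hand side for $s$ large exactly as in the continuous proof of Theorem~\ref{Thm-Lucie}; the time-commutator terms $\chi''p_h + 2\chi'\partial_t p_h$, supported in $\{T'\le|t|\le T\}$ where $e^{2s\varphi}\le e^{2sC_0}e^{-cs}$ for some $c>0$, are controlled by the uniform-in-$h$ energy estimate for $p_h$, which with $p_h(0)=0$, $\partial_t p_h(0)=(q_h^b-q_h^a)y_h^0$ gives $\norm{p_h(t)}_{H^1_h(\Omega_h)}^2 + \norm{\partial_t p_h(t)}_{L^2_h(\Omega_h)}^2 \le C(m,K,T)\norm{q_h^a - q_h^b}_{L^2_h(\Omega_h)}^2$, so these terms are $\le Ce^{-cs}\norm{q_h^a - q_h^b}_{L^2_h(\Omega_h)}^2$; finally, expanding $\partial_t(\chi p_h) = \chi\partial_{tt}u_h + \chi'\partial_t u_h$, the Carleman remainder produces (for $s$ fixed, bounding $e^{2s\varphi}$ by a constant) the term $Ch^2\sum_k\ds\int_{-T}^T\!\ds\int_{\Omega_{h,k}^-}|\partial_{h,k}^+\partial_{tt}u_h|^2$ plus one of the previous, absorbable, type. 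Together with a discrete energy identity on $(-T,0)\times\Omega_h$ — integrate in $t$ the derivative of $\ds\int_{\Omega_h}e^{2s\varphi}|\partial_t p_h|^2$, use the equation to express $\partial_{tt}p_h$, and use discrete integration by parts in space (legitimate since $p_h = \partial_t p_h = 0$ on $\partial\Omega_h$), the discrete product rule producing only the same type of $h^2$-remainder — one bounds $\ds\int_{\Omega_h}e^{2s\varphi(0,\cdot)}|\partial_t p_h(0,\cdot)|^2$ by the Carleman left-hand side plus the cap at $t=-T$ (treated as above). Collecting everything, choosing $s$ large but fixed so that all the $e^{-cs}\norm{\cdot}^2$ and source contributions are absorbed, and then taking $h$ small, one arrives at
\[
	\norm{q_h^a - q_h^b}_{L^2_h(\Omega_h)}^2 \le C\ds\int_{-T}^T\!\ds\int_{\Gamma_{0,h}}e^{2s\varphi}\,|\partial_\nu^h p_h|^2 + Ch^2\sum_k\ds\int_{-T}^T\!\ds\int_{\Omega_{h,k}^-}|\partial_{h,k}^+\partial_{tt}u_h|^2.
\]

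\textbf{Conclusion and internal observation.} Undoing the even/odd extensions halves each time interval; bounding $e^{2s\varphi}$ by a constant on $(-T,T)\times\Gamma_{0,h}$, identifying $\partial_\nu^h p_h$ on $\Gamma_{0,h}$ with $\partial_t$ of the discrete normal derivative of $\eh(u_h)$, i.e.\ with $\partial_t(\mathscr{M}_h[q_h^a] - \mathscr{M}_h[q_h^b])$, and taking square roots yields \eqref{UniformStability-3}. For \eqref{UniformStability-3Loc}, since $\omega$ is a neighbourhood of $\Gamma_+$ in the sense of \eqref{Configuration-Lipschitz-distributed}, pick a spatial cut-off $\theta$ equal to $1$ on $\Omega\setminus\omega$ and vanishing near $\Gamma_+$; then $\theta p_h$ vanishes on all of $\partial\Omega_h$ with zero discrete normal derivative on $\Gamma_{+,h}$, so the Carleman estimate applied to $\theta p_h$ has no boundary term and the spatial commutator $[\Box_h,\theta]p_h$ is supported in $\omega_h$, which replaces the boundary observation by $\ds\int_0^T\!\ds\int_{\omega_h}e^{2s\varphi}(|p_h|^2 + |\partial_t p_h|^2 + |\nabla_h p_h|^2)$; recalling $p_h = \partial_t u_h$ and undoing the extension produces the first two groups of terms in \eqref{UniformStability-3Loc}, the $h$-remainder being unchanged.

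\textbf{Main obstacle.} The delicate points are the uniform-in-$h$ semi-discrete Carleman estimate for $\Box_h$ itself and the bookkeeping of the discrete remainders: one must check that every $h$-error created, both inside the Carleman estimate and inside the discrete energy identity at $t=0$ — where the discrete Leibniz rule and the averaging operators $m_{h,k}$ generate $O(h^2)$ corrections of higher differential order — collapses to the single quantity $h\sum_k\norm{\partial_{h,k}^+\partial_{tt}u_h}_{L^2(0,T;L^2_h(\Omega_{h,k}^-))}$, and that the absorption of the potential, source and commutator terms can be carried out at a value of $s$ that is large but fixed, hence compatible with the discrete validity range $sh\le c_0$ once $h$ is small enough.
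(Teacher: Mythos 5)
Your proposal is correct and follows essentially the same route as the paper: the Bukhgeim--Klibanov scheme applied to $\partial_t$ of the difference of solutions, odd extension in time, a time cut-off, the uniform semi-discrete Carleman estimate (with its non-absorbable $h^2$-remainder and the restriction $\tau h\le\varepsilon$), the weighted estimate at $t=0$ combined with the positivity of $y_h^0$, and absorption of the potential, source and commutator terms at a large but fixed value of the Carleman parameter; the distributed case via a spatial cut-off is also exactly how the paper proceeds. The only cosmetic discrepancy is that the remainder in the paper's Carleman estimate \eqref{CarlemD} carries a factor $\tau h^2$ rather than $\tau^3h^2$, which does not affect your argument.
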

When comparing Theorem \ref{TCWE1} with Theorem \ref{Thm-Lucie}, one immediately sees that estimate \eqref{UniformStability-3} is a reinforced version of \eqref{Stab-Lucie} due to the additional term
\begin{equation}
	\label{PenalizationTerm}
	Ch \sum_{k=1,2} \norm{\partial_{h,k}^+\partial_{tt} y_{h}[q_h^a] - \partial_{h,k}^+\partial_{tt} y_h[q_h^b]}_{L^2(0,T;L^2_h(\Omega_{h,k}^-))}.
\end{equation}
This was already observed in \cite{BaudouinErvedoza11} for the corresponding 1-d inverse problems, and is remanent from the fact that observability estimates for the discrete wave equations do not hold uniformly if they are not suitably penalized, see \cite{InfZua,Zua05Survey,ErvZuaCime}. Note in particular that as $h\to 0$ and under suitable convergence assumptions, this term vanishes and allows to recover the left hand side inequality of \eqref{Stab-Lucie} by passing to the limit in \eqref{UniformStability-3}. 
Theorem \ref{TCWE1} is proved in Section \ref{CSIP}. Following the proof of its continuous counterpart Theorem \ref{Thm-Lucie}, the main issue is to derive a discrete Carleman estimate for the wave operator (Theorem \ref{Thm-CarlemanDisc-Boundary}), as it was already done in \cite{BaudouinErvedoza11} in the 1-d setting. Though the proof of this discrete Carleman estimate is very close to the one in 1-d, the dimension 2 introduces new cross-terms involving discrete operators in space that require careful computations. Note however that our proof also applies in higher dimension when the domain is a cuboid discretized on uniform meshes as this  would involve similar terms. Actually, this has already been done in the context of elliptic equations, see \cite{BoyerHubertLeRousseau2}.\\

\smallskip
\noindent{\bf Discrete logarithmic stability.} 
Since we limit ourselves to the case $\Omega = (0,1)^2$, we may assume that $\Gamma_0$ is a (non-empty) subset of one edge and that the counterpart of $\Gamma_1$ appearing in Theorem \ref{ThmBellassoued} satisfying the Gamma conditions \eqref{Gamma-Condition} is formed by two consecutive edges. Due to the invariance by rotation, with no loss of generality, we may thus assume:
\begin{equation}
	\label{Config-Bellassoued}
	\Omega = (0,1)^2, \quad \Gamma_0 \subset \{1\} \times (0,1), \quad \Gamma_1 = \Gamma_+ = (\{1\} \times (0,1)) \cup ((0,1) \times \{1\} ).
\end{equation}

\begin{theorem}[Logarithmic stability under weak geometric conditions]
	\label{TCWElog}
	Assume that the triplet  $(\Omega, \Gamma_0, \Gamma_1)$ satisfy the geometric configuration \eqref{Config-Bellassoued} and the existence of an open set $\Ocal \subset \Omega$ such that 
	\begin{itemize}
		\item $\Ocal$ contains a neighborhood $\omega$ of $\Gamma_1$ in $\Omega$, i.e. such that \eqref{Configuration-Lipschitz-distributed} holds.
		\item the potential $q_h$ is known on $\partial \Omega_h$ and in $\Ocal_h$, where it takes the value $Q_h \in L^{\infty}_h(\Ocal_h)$. 
	\end{itemize}
	Let $q^a_h $ be a potential lying in the class $\Lambda_h(Q_h,m)$ defined for $Q_h \in L^{ \infty}_h(\Ocal_h)$ and $m>0$ by
	\begin{equation}
		\label{Class-Pot-Bellassoued-h}
		\Lambda_h (Q_h,m) = \{ q_h\in L^\infty_h(\Omega_h), \hbox{ s.t. } q_h|_{\Ocal_h} = Q_h \ \hbox{ and } \ \norm{q_h}_{L^{\infty}_h(\Omega_h)} \leq m \}.
	\end{equation}
	Let $\alpha_0>0,M>0$ and $\alpha>0$. Assume also that $y^0_h \in H^1_{h}(\Omega_h)$ and the solution $y_h[q_h^a]$ of \eqref{DisWaveEq} with potential $q_h^a$ satisfy the conditions
	\begin{equation}
		\label{Ass-Smoothness-Bel-h}
		 \inf_{\Omega_h} |y^{0}_h|\geq \alpha_0
	\quad \hbox{ and }\quad
		y_h[q^a] \in H^1(0,T; L^\infty_h(\Omega_h)) \cap W^{2,1}(0,T; L^2_h(\Omega_h)). 
		\end{equation}
	Then there exist $C>0$ and $h_0>0$ such that for $T>0$ large enough, for all $h \in (0,h_0)$, for all $q_h^b \in \Lambda_h (Q_h, m)$ satisfying
	\begin{equation}
		\label{Error-W-1-2-+-h}
		q_h^a - q_h^b \in H^1_{0,h}(\Omega_h) \quad \hbox{ and }\quad \norm{q^a_h- q_h^b}_{H^1_{0,h}(\Omega_h)} \leq M,
	\end{equation}
	we have
	\begin{multline}
		\label{Stability-Bellassoued-h}
		\norm{q^a_h - q^b_h }_{L^2_h(\Omega_h)}  \leq Ch ^{1/(1+\alpha)} + C \left[ \log\left( 2+ \frac{C}{\norm{\mathscr{M}_h[q^a_h] - \mathscr{M}_h[q^b_h]}_{H^1(0,T; L^2(\Gamma_{0}))} }\right)\right]^{-\frac{1}{1+\alpha}}
		\\
		+ Ch \sum_{k=1,2} \norm{\partial_{h,k}^+\partial_{tt} y_{h}[q_h^a] - \partial_{h,k}^+\partial_{tt} y_h[q_h^b]}_{L^2(0,T;L^2_h(\Omega_{h,k}^-))}.
	\end{multline}
	Besides, the constant $C$ depends on the constants $m$, $M$ in \eqref{Error-W-1-2-+-h},  $\alpha_0$ in \eqref{Ass-Smoothness-Bel-h}, an a priori bound on $\norm{y^0_h}_{H^1_h(\Omega_h)} + \norm{y_h[q_h^a]}_{H^1(0,T; L^\infty_h(\Omega_h)) \cap W^{2,1}(0,T; L^2_h(\Omega_h))}$, and on the geometric configuration.
\end{theorem}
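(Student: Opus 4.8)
The plan is to mimic the proof of the continuous Theorem~\ref{ThmBellassoued} at the discrete level, using the discrete Lipschitz stability estimate \eqref{UniformStability-3Loc} as the discrete substitute for \eqref{Stab-Lucie-Interne}. The key point is that Theorem~\ref{TCWElog} should follow from Theorem~\ref{TCWE1} combined with a \emph{uniform} (in $h$) quantified unique continuation estimate for a semi-discrete elliptic equation: one needs to control $\norm{\partial_t y_h[q^a_h]-\partial_t y_h[q^b_h]}_{H^1(0,T;L^2_h(\omega_h))}$ (the internal observation term in the right-hand side of \eqref{UniformStability-3Loc}, together with its discrete-gradient companion) by a logarithmic function of the boundary flux difference $\norm{\mathscr{M}_h[q^a_h]-\mathscr{M}_h[q^b_h]}_{H^1(0,T;L^2(\Gamma_0))}$, up to terms of order $h^{1/(1+\alpha)}$ and the usual penalization term. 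Set $w_h = y_h[q^a_h]-y_h[q^b_h]$, which solves a semi-discrete wave equation $\Box_h w_h + q^a_h w_h = (q^b_h-q^a_h)y_h[q^b_h]$ with zero initial data and zero boundary data, and with $q^b_h-q^a_h$ supported away from $\Ocal$ (by the class condition $q_h\in\Lambda_h(Q_h,m)$); in particular $w_h$ satisfies the homogeneous semi-discrete wave equation in $(0,T)\times\Ocal_h$.

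First I would, as in \cite{BellassouedIP04,LebRob97}, apply a Fourier--Bros--Iagoniltzer (FBI) type transform in the time variable to $w_h$, turning the semi-discrete hyperbolic problem into a semi-discrete elliptic one in the variables $(s,x_h)$, where $s$ is a new real variable; since the transform acts only in time, it commutes with all the discrete spatial operators $\Delta_h$, $\partial_{h,k}^\pm$, $m_{h,k}^\pm$, so the discretization structure is untouched. One then obtains a function $v_h = v_h(s,x_h)$ that approximately solves a semi-discrete elliptic equation $-\Delta_h v_h - \partial_{ss} v_h + \text{(lower order)} = 0$ in a suitable region, with a source term controlled by the tails of the FBI transform (exponentially small in the FBI parameter) plus the boundary flux data transferred through the transform, plus a consistency defect of size $O(h)$ coming from the discrete wave operator. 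Next I would invoke the uniform semi-discrete elliptic Carleman estimate referred to in the abstract (of the type in \cite{BoyerHubertLeRousseau2}) with a single global weight — exactly as the authors announce they do for the continuous Theorem~\ref{ThmBellassoued} — to deduce a quantified unique continuation estimate: the $H^1_h$-norm of $v_h$ on $\omega$ is bounded by a small power of the data on $\Gamma_0$ plus the elliptic source, uniformly in $h$. Inverting the FBI transform (optimizing the FBI parameter against the observation size, which is the standard mechanism producing the logarithm) then yields the bound on the internal observation of $w_h$ in terms of $\log(2 + C/\norm{\mathscr{M}_h[q^a_h]-\mathscr{M}_h[q^b_h]})^{-1/(1+\alpha)}$, up to an extra $h^{1/(1+\alpha)}$ coming from the $O(h)$ elliptic defect, and an extra penalization term $Ch\sum_k\norm{\partial^+_{h,k}\partial_{tt}w_h}$ coming from applying \eqref{UniformStability-3Loc}. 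Plugging this into \eqref{UniformStability-3Loc} and absorbing constants gives \eqref{Stability-Bellassoued-h}. The a~priori bounds \eqref{Ass-Smoothness-Bel-h} and \eqref{Error-W-1-2-+-h} are used exactly as in the continuous case: the $W^{2,1}(0,T;L^2_h)$ regularity gives enough smoothness in time to justify the FBI transform and control remainders, while $\norm{q^a_h-q^b_h}_{H^1_{0,h}(\Omega_h)}\le M$ (a uniform discrete $H^1_0$ bound) is what lets one interpolate between the strong a~priori bound and the weak observation to close the logarithmic estimate with exponent $1/(1+\alpha)$ rather than a worse one.

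The main obstacle I expect is establishing the uniform-in-$h$ quantified elliptic unique continuation with the \emph{extra} $h$-dependent terms correctly tracked. In the continuous case the elliptic Carleman estimate is applied to an exact solution of an elliptic PDE; here $v_h$ only solves the semi-discrete elliptic equation up to a defect that has two sources — the FBI transform of the wave-operator consistency error (harmless, exponentially small) and, more seriously, the discretization of the Laplacian interacting with the Carleman weight. One must check that the semi-discrete elliptic Carleman estimate holds with Carleman parameters that can be chosen \emph{independently of $h$} on the relevant range (this is the content of the uniform semi-discrete elliptic estimate the authors rely on, cf.\ \cite{BoyerHubertLeRousseau2}), and that all the new discrete cross-terms — the ones the authors already flag as appearing in dimension~$2$ for the hyperbolic Carleman estimate, e.g.\ terms like $h\,\partial^+_{h,k}(\text{something})$ — can be absorbed, leaving precisely a residual of order $h$ times discrete $H^1$-type norms of $w_h$ (equivalently, after the FBI inversion, $h$ times second-time-derivative discrete-gradient norms of $w_h$), which is exactly the shape of the penalization term in \eqref{Stability-Bellassoued-h}. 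A secondary technical point is constructing the single global Carleman weight adapted to the geometry \eqref{Config-Bellassoued} (so that the pseudoconvexity/sub-ellipticity conditions of the discrete Carleman estimate hold on the right region, and the weight is compatible with both the FBI region and the observation set $\Gamma_0\subset\{1\}\times(0,1)$), thereby avoiding iterated three-spheres inequalities — this mirrors what the authors say they do in the continuous proof of Theorem~\ref{ThmBellassoued}, and the discrete version requires only that the same weight be used, since the discrete Carleman machinery is stable under the same weights for $h$ small.
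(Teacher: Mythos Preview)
Your overall architecture is exactly the paper's: FBI transform in time, a uniform semi-discrete elliptic Carleman estimate (from \cite{BoyerHubertLeRousseau2}) with a single global weight, then feed the resulting local-in-$\omega$ bound into the discrete Lipschitz estimate \eqref{UniformStability-3Loc}. Two points, however, are misdiagnosed and would lead you astray if you tried to write out the details.

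First, the extra term $Ch^{1/(1+\alpha)}$ does \emph{not} come from an ``$O(h)$ elliptic defect'' or from discrete cross-terms in the elliptic Carleman estimate. The semi-discrete elliptic Carleman estimate of \cite{BoyerHubertLeRousseau2} has no additive penalization term; it is a clean estimate of the same form as in the continuous case, the only difference being the restriction $\tau h \le \varepsilon_0$ on the Carleman parameter. In the FBI argument one takes $\tau$ linear in the FBI parameter $\lambda$, so this forces $\lambda \le \varepsilon_*/h$. The optimal choice of $\lambda$ is $\lambda_0 \sim \log(2+\rho_h)$ with $\rho_h = \mathscr{D}/\norm{\partial_{h}^-\zeta_h}_{L^2(\Gamma_{0,h})}$; when $\lambda_0 > \varepsilon_*/h$ you are stuck with $\lambda = \varepsilon_*/h$, and plugging this into the estimate $\norm{\zeta_h}_{H^1_h(\omega_h)} \le C\lambda^{-\gamma}\mathscr{D} + \cdots$ gives $Ch^{\gamma}\mathscr{D}$, hence the $h^{1/(1+\alpha)}$ term (since $\gamma > 1/(1+\alpha)$). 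So the new $h$-term is a \emph{saturation} effect of the parameter range, not a consistency residual.

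Second, a minor but real point: you should apply the FBI machinery to $\zeta_h = \partial_t\big(y_h[q_h^a]-y_h[q_h^b]\big)$, extended oddly to $(-T,T)$, not to $w_h$ itself. The internal observation in \eqref{UniformStability-3Loc} is the $H^1_h((0,T)\times\omega_h)$-norm of $\partial_t y_h[q_h^a]-\partial_t y_h[q_h^b]$, and the FBI/elliptic step delivers an $H^1_h$ bound on whatever function you feed it; feeding $w_h$ would only give control of $w_h$ in $H^1_h$, one derivative short. The source term for $\zeta_h$ is $(q_h^b-q_h^a)\partial_t y_h[q_h^a]$, which vanishes on $\Ocal_h$ as you note, and the $H^2_h$ bound $\mathscr{D}$ on $\zeta_h$ is where the assumptions $y_h[q_h^a]\in W^{2,1}(0,T;L^2_h)$, $y^0_h\in H^1_h$, and $\norm{q_h^a-q_h^b}_{H^1_{0,h}}\le M$ are actually used (via energy estimates on $\partial_t\zeta_h$), exactly as in the continuous proof of Theorem~\ref{ThmBellassoued}. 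The penalization term $Ch\sum_k\norm{\partial_{h,k}^+\partial_{tt}(y_h[q_h^a]-y_h[q_h^b])}$ then comes solely from \eqref{UniformStability-3Loc}, as you say.
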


When compared with the corresponding continuous result of Theorem \ref{ThmBellassoued}, the stability estimate \eqref{Stability-Bellassoued-h} contains two extra terms: the penalization term \eqref{PenalizationTerm} and the new term $C h^{1/(1+\alpha)}$. 

The proof of \eqref{Stability-Bellassoued-h}, given in Section~\ref{AEC}, follows the same path as in the continuous case and combines the stability results obtained in the case  where the Gamma conditions are satisfied with stability results obtained for solutions of the wave equation through a Fourier-Bros-Iagoniltzer transform and a Carleman estimate for elliptic operators due to  \cite{BoyerHubertLeRousseau,BoyerHubertLeRousseau2}.
Hence, the penalization term \eqref{PenalizationTerm} is remanent from Theorem \ref{TCWE1}. But the term $C h^{1/(1+\alpha)}$ comes from the fact that the parameters within the discrete Carleman estimates cannot be made arbitrarily large and should be at most at the order of $1/h$. This fact has already been observed in several articles in the elliptic case, see \cite{BoyerHubertLeRousseau,BoyerHubertLeRousseau2,ErvGournay}. We also refer to \cite{KlibanovSantosa} for a previous work related to the convergence of the quasi-reversibility method.

\subsubsection{Discrete convergence results}
The stability results of the previous Theorems \ref{TCWE1} and \ref{TCWElog} suggest to introduce the observation operators $\widetilde{\mathscr{M}_h}= \widetilde{\mathscr{M}_h}\{ y^0_h,y^1_h,f_h, f_{\partial, h} \} $ defined for $h>0$ by
\begin{equation}
	\label{DiscreteFh}
	\begin{array}{lcll}
		\widetilde{\mathscr{M}_h} :  	&L^\infty_{h} (\Omega_h) &\to &  L^2(0,T;L^2(\Gamma_{0}))\times L^2((0,T)\times\Omega) \\
		&q_h &\mapsto &\big(\partial_{\nu} \eh(y_{h}[q_h]) , h\nabla_x \eh( \partial_{tt} y_{h}[q_h]) \big),
	\end{array}
\end{equation}
where $y_h[q_h]$ is the solution of \eqref{DisWaveEq} with potential $q_h$ and data $ y^0_h,\, y^1_h,\, f_h, \, f_{\partial, h}$ and $\eh$ is the piecewise affine extension defined in \eqref{eh}.
Corresponding to the case $h = 0$, we introduce its continuous analogous $\widetilde{\mathscr{M}_0} = \widetilde{\mathscr{M}_0}\{y^0,y^1,f,f_\partial\}$:
\begin{equation}\label{contF}
	\begin{array}{lcll}
		\widetilde{\mathscr{M}_0}: 	&L^\infty (\Omega) &\to&  L^2(0,T;L^2(\Gamma_0))\times L^2((0,T)\times\Omega) \\
		& q &\mapsto&\big(\partial_\nu y[q], 0 \big), 
	\end{array}
\end{equation}
where $y[q]$ is the solution of \eqref{WaveEq-Q}. Recall that according to \cite{LasieckaLionsTriggiani}, this map $\widetilde{\mathscr{M}_0}$ is well defined on $L^\infty(\Omega)$ for data 
\begin{equation}
	\label{SmallerPossibleClass}
	\begin{array}{rr}
	(y^0, y^1,f, f_\partial) \in H^1(\Omega) \times L^2(\Omega)\times L^1((0,T) ; L^2(\Omega))\times H^1((0,T)\times \partial \Omega) \\
	\hfill{} \hbox{ with } \left.y^0\right|_{\partial \Omega} = f_\partial(t = 0),
	\end{array}
\end{equation}
that we shall always assume in the following.

Remark that with these notations, the quantities 
$$
	\norm{\mathscr{M}_h[q_h^a] - \mathscr{M}_h[q_h^b]}_{H^1((0,T); L^2_h(\Gamma_{0,h}))} + h \sum_{k=1,2}\norm{\partial_{h,k}^+ \partial_{tt} y_h[q_h^a] -\partial_{h,k}^+ \partial_{tt} y_h[q_h^b]  }_{L^2((0,T) \times \Omega_{h,k}^-)}
$$
and 
$$
	\norm{\widetilde{\mathscr{M}_h}[q_h^a] -\widetilde{\mathscr{M}_h}[q_h^b]}_{ H^1(0,T;L^2(\Gamma_0))\times L^2((0,T)\times\Omega)}
$$
are equivalent, uniformly with respect to the parameter $h>0$.
Hence the stability results in Theorems \ref{TCWE1} and \ref{TCWElog} easily recast into stability results for $\widetilde{\mathscr{M}_h}$.
\\
Our convergence result is then the following: 

\begin{theorem}[Convergence of the inverse problem]
	\label{Thm-Cvg}
		Let $q \in H^1\cap L^\infty(\Omega) $ and assume that we know $q_\partial = q|_{\partial \Omega}$. 
		Let the data $(y^0,y^1, f, f_\partial)$ follow conditions \eqref{SmallerPossibleClass} and  
		the positivity condition $\inf_{\overline{\Omega} } |y^0| \geq \alpha_0>0$.
	Furthermore, assume that the trajectory $y[q]$ solution of \eqref{WaveEq-Q} satisfies
	\begin{equation}
		\label{ConditionOnYq-1}
		y[q] \in H^2(0,T; H^1(\Omega)) \cap H^1(0,T; H^2(\Omega) ).
	\end{equation}
	We can construct discrete sequences $(y^0_h, y^1_h, f_h, f_{\partial,h})$, such that if we assume either 
	\begin{quote}
	$\bullet$	 $(\Omega,  \Gamma_0,T)$ satisfy the configuration \eqref{Gamma-+-T-sqrt2}, and in this case we define \\
	$
	X_h = L^\infty_h(\Omega_h),
	$
	\end{quote}
 or 
	\begin{quote}
	$\bullet$	$(\Omega,  \Gamma_0,  \Gamma_+)$ satisfy the configuration \eqref{Config-Bellassoued}, $T>0$ is large enough, $q$ is known on $\mathcal{O}$, neighborhood of $\Gamma_+$, and takes the value $q|_{\mathcal O} = Q$, 
	and we define 
	\begin{multline*}
	X_h = \{q_h \in L^\infty_h(\Omega_h) \hbox{ s.t. } q_h|_{ \mathcal O_h} = \trh(Q),  \\
	\hbox{ and $q_h$, extended on } \partial \Omega_h \hbox{ by }  q_h|_{\partial \Omega_h} = \rhB(q_\partial), \hbox{ belongs to } H^1_{h}(\Omega_h) \}, 
	\end{multline*}
	that we endow with the $L^\infty(\overline{\Omega}_h) \cap H^1_h(\Omega_h)$-norm,
	\end{quote}
	then\\
	- there exists a sequence $(q_h)_{h>0}\in X_h$ of potentials such that 
		\begin{equation}
			\label{Condition-For-Conv}
			\limsup_{ h \to 0}\norm{q_h}_{X_h} <\infty, 
		\quad\hbox{ and }\quad \lim_{h \to 0} \norm{\widetilde{\mathscr{M}_h}
		[q_h] - \widetilde{\mathscr{M}_0}
		[q]}_{ H^1(0,T;L^2(\Gamma_0))\times L^2((0,T)\times\Omega)} = 0,
		\end{equation}
	- for all sequence $(q_h)_{h>0}\in X_h$ of potentials satisfying \eqref{Condition-For-Conv}, we have
	$$
		 \lim_{h \to 0} \norm{\eh^0 (q_h)-q }_{L^2(\Omega)} = 0.
	$$	
\end{theorem}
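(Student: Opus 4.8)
\textbf{Proof plan for Theorem~\ref{Thm-Cvg}.}
The plan is to deduce the convergence result from the stability Theorems~\ref{TCWE1} and \ref{TCWElog} reformulated for $\widetilde{\mathscr{M}_h}$, combined with a consistency analysis of the discretization of \eqref{WaveEq-Q} by \eqref{DisWaveEq}, in the spirit of the Lax-type argument sketched in the introduction. First I would fix the discrete data: set $y^0_h = \trh(y^0)$ (or $\rh(y^0)$) on $\Omega_h$ extended by $\rhB(q_\partial)$-compatible boundary values, $y^1_h = \trh(y^1)$, $f_h = \trh(f(t,\cdot))$, $f_{\partial,h} = \rhB(f_\partial(t,\cdot))$, chosen so that $\eh^0$ or $\eh$ of these converge to the corresponding continuous data in the topologies dictated by \eqref{SmallerPossibleClass}; in particular the positivity condition $\inf_{\overline\Omega}|y^0|\geq\alpha_0$ transfers to $\inf_{\Omega_h}|y^0_h|\geq\alpha_0/2$ for $h$ small, and the $L^\infty$ bound on $q$ gives $\|q_h\|_{L^\infty_h(\Omega_h)}\leq m$ for the natural candidate $q_h$.

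For the first bullet (existence of a recovering sequence), I would take $q_h = \trh(q)$ in the Gamma-condition case, and in the weak-geometry case $q_h = \trh(q)$ modified so that $q_h|_{\Ocal_h} = \trh(Q)$ exactly and $q_h|_{\partial\Omega_h} = \rhB(q_\partial)$; since $q \in H^1\cap L^\infty(\Omega)$ these belong to $X_h$ with $\limsup_h\|q_h\|_{X_h}<\infty$. The core of this step is a \emph{consistency estimate}: writing $z_h = y_h[q_h] - \rh(y[q])$ (or the analogous restriction), $z_h$ solves \eqref{DisWaveEq} with a right-hand side consisting of truncation errors $\Delta_h \rh(y[q]) - \rh(\Delta y[q])$, $q_h\rh(y[q]) - \rh(qy[q])$, plus data errors, all of which are $O(h)$ in the relevant norms precisely because of the regularity \eqref{ConditionOnYq-1}. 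Applying discrete energy estimates for \eqref{DisWaveEq} (uniform in $h$, standard since $\Delta_h$ is symmetric negative and $q_h$ is bounded) in the $H^2_h((0,T)\times\Omega_h)$-type norm, one gets $\|z_h\|\to 0$, hence $\partial_\nu\eh(y_h[q_h]) \to \partial_\nu y[q]$ in $L^2(0,T;L^2(\Gamma_0))$ and $h\nabla_x\eh(\partial_{tt}y_h[q_h]) \to 0$ in $L^2((0,T)\times\Omega)$, which is exactly the second assertion of \eqref{Condition-For-Conv}; the first assertion is immediate.

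For the second bullet, let $(q_h)\in X_h$ be any sequence satisfying \eqref{Condition-For-Conv}, and let $q_h^a$ be the reference sequence built above with $y^0_h, y^1_h, f_h, f_{\partial,h}$ fixed. The regularity \eqref{RegDiscrete} resp. \eqref{Ass-Smoothness-Bel-h} for $q_h^a$ follows from \eqref{ConditionOnYq-1} via the consistency estimate of the previous step and a discrete Sobolev embedding $H^1_h(\Omega_h)\hookrightarrow L^\infty_h(\Omega_h)$ (in 2-d this embedding is \emph{not} uniform in $h$ but only costs a $|\log h|$ factor, which is harmless against the a priori $H^1$-type bounds, or one may instead use the stronger regularity of $y[q]$ directly). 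Apply Theorem~\ref{TCWE1} (configuration \eqref{Gamma-+-T-sqrt2}) or Theorem~\ref{TCWElog} (configuration \eqref{Config-Bellassoued}) with the pair $(q_h^a, q_h^b) = (q_h^a, q_h)$: using the equivalence of norms noted after \eqref{SmallerPossibleClass},
\[
\|q_h^a - q_h\|_{L^2_h(\Omega_h)} \leq C\,\bigl\|\widetilde{\mathscr{M}_h}[q_h^a]-\widetilde{\mathscr{M}_h}[q_h]\bigr\| \quad\text{resp.}\quad \leq Ch^{1/(1+\alpha)} + C\Bigl[\log\bigl(2+C\|\widetilde{\mathscr{M}_h}[q_h^a]-\widetilde{\mathscr{M}_h}[q_h]\|^{-1}\bigr)\Bigr]^{-1/(1+\alpha)} + (\text{penalization}).
\]
By the triangle inequality in the observation space, $\|\widetilde{\mathscr{M}_h}[q_h^a]-\widetilde{\mathscr{M}_h}[q_h]\| \leq \|\widetilde{\mathscr{M}_h}[q_h^a]-\widetilde{\mathscr{M}_0}[q]\| + \|\widetilde{\mathscr{M}_0}[q]-\widetilde{\mathscr{M}_h}[q_h]\|$, and both summands tend to $0$: the first by the consistency of the first bullet, the second by hypothesis \eqref{Condition-For-Conv}. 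The penalization term $h\sum_k\|\partial^+_{h,k}\partial_{tt}y_h[q_h^a]-\partial^+_{h,k}\partial_{tt}y_h[q_h]\|$ is controlled by $\|h\nabla_x\eh(\partial_{tt}y_h[q_h^a])\| + \|h\nabla_x\eh(\partial_{tt}y_h[q_h])\|$ up to equivalence, and the first goes to $0$ by consistency while the second is the second component of $\widetilde{\mathscr{M}_h}[q_h]$, which converges to $0$ (the second component of $\widetilde{\mathscr{M}_0}[q]$) by \eqref{Condition-For-Conv}. Hence $\|q_h^a-q_h\|_{L^2_h(\Omega_h)}\to 0$ in both cases. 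Finally, $\|\eh^0(q_h)-q\|_{L^2(\Omega)} \leq \|\eh^0(q_h-q_h^a)\|_{L^2(\Omega)} + \|\eh^0(q_h^a)-q\|_{L^2(\Omega)} = \|q_h-q_h^a\|_{L^2_h(\Omega_h)} + \|\eh^0(q_h^a)-q\|_{L^2(\Omega)}$, and the second term vanishes since $q_h^a = \trh(q)$ (plus an $O(h)$ modification on $\Ocal_h\cup\partial\Omega_h$ of measure $O(h)$) approximates $q\in L^\infty$. This proves the claim.

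\textbf{Main obstacle.} The delicate point is the consistency estimate in the second paragraph: one must bound the truncation errors of the $5$-point Laplacian and of the potential term, and propagate them through the discrete wave equation in the \emph{strong} $H^2_h((0,T)\times\Omega_h)$-norm that is needed to make sense of $h\nabla_x\eh(\partial_{tt}y_h)$ and of the penalization terms, which forces the use of \eqref{ConditionOnYq-1} and careful discrete energy identities; a secondary technical nuisance is the non-uniformity of the 2-d discrete Sobolev embedding $H^1_h\hookrightarrow L^\infty_h$ needed to verify \eqref{RegDiscrete}/\eqref{Ass-Smoothness-Bel-h} for the reference sequence, which is circumvented by deriving the $L^\infty_h$-bound directly from the continuous regularity of $y[q]$ rather than from the discrete $H^1_h$-bound.
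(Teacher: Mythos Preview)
Your overall strategy is the paper's: a Lax-type argument combining the uniform stability Theorems~\ref{TCWE1}/\ref{TCWElog} (recast for $\widetilde{\mathscr{M}_h}$) with a consistency step. Your second bullet --- applying the stability estimate to the pair $(q_h^a,q_h)$, bounding $\|\widetilde{\mathscr{M}_h}[q_h^a]-\widetilde{\mathscr{M}_h}[q_h]\|$ by the triangle inequality, and concluding via $\|\eh^0(q_h)-q\|\le \|\eh^0(q_h-q_h^a)\|+\|\eh^0(q_h^a)-q\|$ --- is exactly what the paper does (Theorems~4.1--4.2).

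The gap is in the consistency step. Your construction $f_h=\trh(f)$, $f_{\partial,h}=\rhB(f_\partial)$ and the ansatz $z_h=y_h[q_h]-\rh(y[q])$ would force you to control the truncation error $\Delta_h\rh(y[q])-\rh(\Delta y[q])$ (and a boundary mismatch, since $\rh(y[q])|_{\partial\Omega_h}\neq\rhB(f_\partial)$ in general) in norms strong enough to yield convergence of $\partial_\nu\eh(y_h)$ in $H^1(0,T;L^2(\Gamma_0))$ and uniform $H^1(0,T;L^\infty_h)$-bounds. With only $y[q]\in H^1(0,T;H^2(\Omega))$ this truncation error carries no rate, and propagating it through the $H^2_h$-level energy estimates you invoke is precisely the obstruction. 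The paper sidesteps this entirely (Lemma~4.3): it picks an auxiliary \emph{known} potential $\tilde q$ with $\tilde q|_{\partial\Omega}=q_\partial$, sets $\tilde y_h=\trh(y[\tilde q])$, and then \emph{defines} $f_h:=\partial_{tt}\tilde y_h-\Delta_h\tilde y_h+\tilde q_h\tilde y_h$ and $f_{\partial,h}:=\tilde y_h|_{\partial\Omega_h}$. By construction $\tilde y_h$ is an \emph{exact} solution of the discrete equation with potential $\tilde q_h$, so no Laplacian truncation error ever appears; one then studies $z_h=y_h[q_h]-\tilde y_h$, which satisfies a discrete wave equation with homogeneous boundary data and source $(\tilde q_h-q_h)\tilde y_h$, whose regularity is read off directly from $q-\tilde q\in H^1_0\cap L^\infty$ and $\tilde y\in H^2(0,T;H^1)\cap H^1(0,T;H^2)$.

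A second, related issue: your route to the uniform bound $\|y_h[q_h^a]\|_{H^1(0,T;L^\infty_h)}\le K$ via ``$H^1_h\hookrightarrow L^\infty_h$ up to $|\log h|$'' is not harmless here, since $K$ enters the stability constant $C(T,m,K,\alpha_0)$ of Theorem~\ref{TCWE1}. The paper obtains this bound through a discrete elliptic-regularity lemma (Lemma~4.4) giving $\partial_t z_h\in C([0,T];H^2_h\cap H^1_{0,h})$ uniformly, after which the \emph{uniform} embedding $H^2_h(\Omega_h)\hookrightarrow L^\infty_h(\Omega_h)$ in 2-d yields the required $L^\infty_h$-bound.
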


Let us briefly comment the assumptions of Theorem \ref{Thm-Cvg}, which might seem much stronger compared to the ones for the stability results in Theorems \ref{TCWE1} and \ref{TCWElog}. This is due to the consistency of the inverse problem, detailed in Lemma \ref{Lem-Consistency-Main}, which requires to find discrete potentials such that the corresponding solutions of the discrete wave equation \eqref{DisWaveEq} belongs to $H^1(0,T; L^\infty(\Omega))$. But this class is not very natural for the wave equation, and we will thus rather look for the class $H^1(0,T; H^2(\Omega))$, which embeds into $H^1(0,T; L^\infty(\Omega))$ according to Sobolev's embeddings (since $\Omega\subset \mathbb R^2$). This is actually the only place in the article which truly depends on the dimension. 

It may also seem surprising to assume the knowledge of $q$ on the boundary even in the configuration \eqref{Gamma-+-T-sqrt2}, for which Theorem \ref{TCWE1} applies with only an $L^\infty_h(\Omega_h)$-norm on the potential. This is actually due to the fact that the knowledge of $q_{|\partial \Omega}$ is hidden in the regularity assumptions on $y[q]$. Indeed, if $y[q]$ is smooth and satisfies \eqref{WaveEq-Q}, we may write $\partial_{tt} y(0, x) = \Delta y^0(x)- q(x) y^0 (x) + f(0,x)$ for all $x \in \Omega$ and in particular $x \in \partial \Omega$, whereas $\partial_{tt} y(0,x) = \partial_{tt} f_\partial(0,x)$ for $x \in \partial \Omega$. In particular, since $y^0$ does not vanish on the boundary, these two identities imply that $q_{|\partial \Omega}$ can be immediately deduced from the knowledge of $y^0, \, f$ and $f_\partial$ for sufficiently smooth solutions, see Remark \ref{Rem-Reg-Y[Q]}.

Details on the derivation of Theorem \ref{Thm-Cvg} are given in Section \ref{SecCV}, with a particular emphasis on the related consistency issues. In particular, Lemma \ref{Lem-Consistency-Main} explains how to derive the discrete data $y^0_h$, $y^1_h$, $f_h$ and $f_{\partial,h}$ from the data $y^0$, $y^1$, $f$, $f_\partial$ and $\left.q\right|_{\partial\Omega}$.

\subsection{Outline}
Section~\ref{AHC} will be devoted to the establishment  of a uniform semi-discrete hyperbolic Carleman estimates in two-dimensions,
including the boundary observation case in Theorem \ref{Thm-CarlemanDisc-Boundary} and the distributed observation
case in Theorem~\ref{Thm-CarlemanDisc-Distributed}. We will then derive from these tools the discrete stability result of Theorem~\ref{TCWE1}.
In Section~\ref{AEC}, we will present a revisited version of Theorem~\ref{ThmBellassoued} based on a global elliptic Carleman estimate and follow the same strategy to establish the discrete stability result of Theorem \ref{TCWElog}, that relies  on a global uniform semi-discrete elliptic Carleman estimate due to \cite{BoyerHubertLeRousseau2}. Finally, Section~\ref{SecCV} will gather the proof of Theorem~\ref{Thm-Cvg}, some informations about the Lax type argument, and  a detailed discussion about consistency issues.

%
%%%%%%%%%%%%%%%%%%%%%%%%%%%%%%
\section{Application of hyperbolic Carleman estimates}\label{AHC}
%%%%%%%%%%%%%%%%%%%%%%%%%%%%%%
 In this section, we discuss uniform Carleman estimates for the 2-d space semi-discrete wave operator discretized using the finite difference method and applications to stability issues for discrete wave equations. These discrete results are closely related to the study of the 1-d space semi-discrete wave equation one can read in \cite{BaudouinErvedoza11}. Actually, our methodology (here and in \cite{BaudouinErvedoza11}) goes back to the articles \cite{BoyerHubertLeRousseau,BoyerHubertLeRousseau2} where uniform Carleman estimates were derived for elliptic operators.
%
%
%%%%%%%%%%%%%%
\subsection{Discrete Carleman estimates for the wave equation in a square}\label{DCE}
%%%%%%%%%%%%%
%
The proofs of the results stated here will be presented in Sections \ref{2.3} and \ref{Sec-Carleman-Distributed-Proofs}.\\
Recall that we assume the geometric configuration
\begin{equation}
	\label{Configuration-Lipschitz-boundary}
	\Omega = (0,1)^2,\quad \Gamma_0 \supset \Gamma_+ = (\{1\} \times (0,1)) \cup ((0,1) \times \{1\} ).
\end{equation}

\noindent{\bf Carleman weight functions.} Let $a >0$, $x_a = (-a,-a) \notin \overline\Omega = [0,1]^2$,  and $\beta \in (0,1)$.  In $[-T,T]\times [0,1]^2$, we define the weight functions $\psi =\psi(t,x)$ and $\varphi =\varphi(t,x)$ 
as
\begin{equation}
	\psi (t,x)= | x-x_a|^2 -\beta t^2+c_0,
	\qquad ~\varphi(t,x) = e^{\mu \psi(t,x)},
	\label{varphi}
\end{equation}
where $c_0>0$ is such that $\psi\geq 1$ on $[-T,T]\times[0,1]^2$ and $\mu \geq 1$ is a parameter. \\

\noindent{\bf Uniform discrete Carleman estimates: the boundary case.} One of the main results of this article is the following:
\begin{theorem}
	\label{Thm-CarlemanDisc-Boundary}
	Assume the configuration \eqref{Configuration-Lipschitz-boundary} for $\Omega$ and $\Gamma_+$.
	Let $a>0,\, \beta \in (0,1)$ in \eqref{varphi} and $T>0$.
	There exist $\tau_0\geq 1$, $\mu\geq 1$, $\varepsilon >0$, $h_0>0$ and a constant $C=C(\tau_0,\mu, T, \varepsilon,\beta) >0$ independent of $h>0$ such that for all $ h\in (0,h_0)$ 
and $ \tau\in \left(\tau_{0},\varepsilon/ h\right)$, for all $w_h$ satisfying
	\begin{equation}
		\label{Assumption-W}
		\left\lbrace
			\begin{array}{ll}
				\Box_h w_h \in L^2(-T,T; L^2_h({\Omega_h})),
				\\
				w_{0,j}(t) = w_{N+1,j}(t) = w_{i,0}(t) = w_{i,N+1}(t) = 0 &\quad \forall  t \in (-T,T), \, i,j\in \llbracket 0, N+1\rrbracket, 
				\\
				w_{i,j}(\pm T) = \partial_t w_{i,j}(\pm T ) = 0 &\quad \forall  i,j\in \llbracket 0, N+1\rrbracket,
			\end{array}
		\right.
	\end{equation}
	we have 
	\begin{gather}
		\tau \int_{-T}^{T} \int_{\Omega_{h}} e^{2\tau\varphi_h}| \partial_t w_h|^2 \, dt
		+
		 \tau\sum_{k=1,2} \int_{-T}^{T} \int_{\Omega_{h,k}^-} e^{2\tau\varphi_h} |\partial_{h,k}^+ w_h |^2\, dt
		+ \tau^3\int_{-T}^{T}\int_{\Omega_{h}} e^{2\tau\varphi_h}| w_h |^2 \, dt
		\nonumber \\
		\leq C\int_{-T}^{T}\int_{\Omega_{h}}e^{2\tau\varphi_h}| \Box_h w_h|^2 \, dt  + 
		C\tau\sum_{k=1,2} \int_{-T}^{T} \int_{\Gamma_{h,k}^+} e^{2\tau\varphi_h} \left| \partial_{h,k}^- w_h\right|^2 \, dt
		\label{CarlemD}\\
		+ C\tau h^2\sum_{k=1,2} \int_{-T}^{T}  \int_{\Omega_{h,k}^-} e^{2\tau\varphi_h} |\partial_{h,k}^+\partial_t w_h|^2\, dt, \nonumber
	\end{gather}
	where $\varphi_h$ is defined as the approximation of $\varphi$ given by  $\varphi_{h}(t) = \rh\varphi(t)$ for $t \in [0,T]$.
	\\
	Besides, if $w_h(0,x_h) = 0 $ for all $x_h \in \Omega_h$, we also have
	\begin{multline}
		\label{Est-t=0}
		\tau^{1/2} \int_{\Omega_h} e^{2 \tau \varphi_h (0)} |\partial_t w_h(0,x_h)|^2 
		\leq
		 C\int_{-T}^{T}\int_{\Omega_{h}}e^{2\tau\varphi_h}| \Box_h w_h|^2 \, dt  
		\\
		+ 
		C\tau\sum_{k=1,2} \int_{-T}^{T} \int_{\Gamma_{h,k}^+} e^{2\tau\varphi_h} \left| \partial_{h,k}^- w_h\right|^2 \, dt
		+ C\tau h^2\sum_{k=1,2} \int_{-T}^{T}  \int_{\Omega_{h,k}^-} e^{2\tau\varphi_h} |\partial_{h,k}^+\partial_t w_h|^2\, dt.
	\end{multline}
\end{theorem}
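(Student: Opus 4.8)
The plan is to follow the classical Carleman machinery — conjugation, splitting into symmetric and antisymmetric parts, integration by parts — but carried out in the semi-discrete setting, mirroring the one-dimensional argument of \cite{BaudouinErvedoza11}. First I would introduce the conjugated variable $z_h = e^{\tau \varphi_h} w_h$, so that $e^{\tau\varphi_h} \Box_h w_h$ becomes $\mathcal{L}_h z_h$ for a conjugated operator $\mathcal{L}_h$ whose leading part is $\partial_{tt} - \Delta_h$ plus first-order terms carrying powers of $\tau$ and derivatives of $\varphi$. The crucial point — and the reason the weight in \eqref{varphi} is chosen radial and quadratic in $x$ with a negative quadratic term in $t$ — is that $\varphi$ satisfies the pseudoconvexity / subellipticity condition relative to $\Box$ uniformly on $[-T,T]\times[0,1]^2$: one has $|\nabla_x \psi| = 2|x - x_a| \geq 2a > 0$ away from $x_a \notin \overline\Omega$, and $\partial_{tt}\psi = -2\beta$, $\Delta_x \psi = 2d$, which after multiplication by $\mu$ and exponentiation gives the needed positivity of the Hörmander bracket for $\mu$ and $\tau$ large. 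I would then split $\mathcal{L}_h z_h = \mathcal{L}_h^+ z_h + \mathcal{L}_h^- z_h$ into (formally) self-adjoint and skew-adjoint parts, expand $\|\mathcal{L}_h z_h\|^2 \geq 2\langle \mathcal{L}_h^+ z_h, \mathcal{L}_h^- z_h\rangle$, and compute the cross term by discrete integration by parts (summation by parts) in both the $t$-variable and each space direction $k = 1,2$.

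The discrete integration by parts is where the genuinely new work lies. Each summation by parts in the $x_k$ direction produces, besides the continuous-looking "dominant" terms $\tau^3 |z_h|^2$, $\tau|\partial_t z_h|^2$, $\tau|\partial_{h,k}^+ z_h|^2$ with positive sign (using the sign of the derivatives of $\psi$ and the uniform lower bound on $|\nabla_x \psi|$), two kinds of parasitic contributions: (i) boundary terms at $x_k \in \{0,1\}$, which because $w_h$ vanishes on $\partial\Omega_h$ reduce to the flux term $\tau \int \int_{\Gamma_{h,k}^+} e^{2\tau\varphi_h}|\partial_{h,k}^- w_h|^2$ on the "good" part $\Gamma_+$ of the boundary, with the contribution on the "bad" part $\Gamma_-$ having the correct sign thanks to $(x-x_a)\cdot\nu \leq 0$ there (this is exactly the discrete analogue of the multiplier argument that makes the $\Gamma$-condition work), and (ii) the discrete-specific remainder terms of order $h^2\tau^3$, $h^2\tau$ (times squared discrete derivatives of $z_h$), coming from the fact that $\partial_{h,k}^+\partial_{h,k}^- \neq (\partial_{h,k})^2$ and that products do not obey an exact Leibniz rule; these are the terms that, after undoing the conjugation, generate the last right-hand side term $C\tau h^2 \sum_k \iint_{\Omega_{h,k}^-} e^{2\tau\varphi_h}|\partial_{h,k}^+\partial_t w_h|^2$ in \eqref{CarlemD}. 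The point is that all these discrete remainders carry a factor $h^2\tau^2$ relative to the dominant terms, hence can be absorbed provided $\tau h \leq \varepsilon$ — this is precisely the origin of the upper restriction $\tau < \varepsilon/h$ and is the structural reason such estimates cannot hold with $\tau$ arbitrarily large. The 2-d feature (absent in 1-d) is a collection of mixed cross-terms coupling the $k=1$ and $k=2$ differences; these have to be grouped and shown either to vanish by symmetry or to be controlled by the dominant terms, which requires careful but routine bookkeeping of the finite-difference identities.

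Once the cross-term identity is established and all parasitic terms are absorbed for $\mu$ fixed large enough and then $\tau_0$ large enough, one obtains \eqref{CarlemD} with $z_h$ in place of $e^{\tau\varphi_h}w_h$; translating back via $w_h = e^{-\tau\varphi_h} z_h$ and noting that the derivatives of $e^{-\tau\varphi_h}$ produce only lower-order (in $\tau$) contributions with the same exponential weight gives the stated form. For the endpoint estimate \eqref{Est-t=0}, the idea is standard: assuming $w_h(0,\cdot)=0$, one picks up from the time integration by parts an additional boundary term at $t=0$ of the form $\tau^{1/2}\int_{\Omega_h} e^{2\tau\varphi_h(0)}|\partial_t w_h(0,\cdot)|^2$ with a favorable sign (this is why the weight $\psi$ is maximal at $t=0$ in the $t$-direction, being concave in $t$), or alternatively one derives it a posteriori by an energy-type argument: since $w_h(0) = 0$, the $L^2_h$-norm of $w_h$ near $t=0$ controls $\int_0^\delta\|\partial_t w_h\|^2$ from below in a weighted sense, and combining with the already-proven \eqref{CarlemD} applied on a subinterval yields the claimed $\tau^{1/2}$-weighted trace bound. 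I expect the main obstacle to be item (ii) above: isolating the $O(h^2)$ discrete commutator terms in the two-dimensional summation-by-parts and verifying that each is dominated — with a constant independent of $\tau,h,\mu$ in the relevant regime — by $C\tau h^2 |\partial_{h,k}^+\partial_t w_h|^2$ plus an absorbable fraction of the left-hand side; the bookkeeping is heavier than in 1-d precisely because of the new mixed $k=1,2$ terms, but conceptually it is the same mechanism already handled for elliptic operators in \cite{BoyerHubertLeRousseau2}.
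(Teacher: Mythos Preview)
Your strategy is correct and matches the paper's approach closely: conjugation $v_h = e^{\tau\varphi_h}w_h$, splitting the conjugate operator into an (almost) symmetric part $\mathscr{L}_{h,1}$, an (almost) antisymmetric part $\mathscr{L}_{h,2}$, and a bounded remainder $\mathscr{R}_h$, then estimating the cross product $\int \mathscr{L}_{h,1}v_h\,\mathscr{L}_{h,2}v_h$ by discrete integration by parts; the boundary terms, the $\tau h \leq \varepsilon$ restriction, and the extra $\tau h^2|\partial_{h,k}^+\partial_t w_h|^2$ term all arise exactly as you describe, and the new 2-d mixed terms are handled via a dedicated discrete identity (Lemma~\ref{LemIPP2}). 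For the trace estimate \eqref{Est-t=0}, the paper does not extract a $t=0$ boundary term from the main cross-product computation (the integrations there run over $(-T,T)$ with vanishing endpoints), but rather performs a separate a posteriori computation: one multiplies $\mathscr{L}_{h,1}v_h$ by $\partial_t v_h$ on $(-T,0)$ only, integrates by parts in time to isolate $\tfrac12\int_{\Omega_h}|\partial_t v_h(0)|^2$, and then controls the remaining terms by Cauchy--Schwarz with a $\sqrt{\tau}$ weight and the already-established bound on $\int|\mathscr{L}_{h,1}v_h|^2$ from Proposition~\ref{PropDecompo}---this is close in spirit to your second suggestion.
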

The proof of Theorem \ref{Thm-CarlemanDisc-Boundary} will be given later in Section \ref{2.3}. It is very similar to the one of \cite[Theorem 2.2]{BaudouinErvedoza11} but more intricate.
The continuous counterpart of Theorem \ref{Thm-CarlemanDisc-Boundary} is given in \cite[Theorem 2.1 and Theorem 2.10]{BaudouinDeBuhanErvedoza}, and very close versions of it can be found in \cite{ImYamIP01,Im02}. However, two main differences with respect to the corresponding continuous Carleman estimates appear: \\
\indent $\bullet$ The parameter $\tau$ is limited from above by the condition $\tau h \leq \varepsilon$: this restriction on the range of the Carleman parameter always appear in discrete Carleman estimates, see \cite{BoyerHubertLeRousseau,BoyerHubertLeRousseau2,BaudouinErvedoza11,ErvGournay}. This is related to the fact that the conjugation of discrete operators with the exponential weight behaves as in the continuous case only for $\tau h$ small enough, since for instance
	$$
		e^{ \tau \varphi} \partial_h (e^{-\tau \varphi}) \simeq - \tau \partial_x \varphi \quad \hbox{only for $\tau h$ small enough.}
	$$
\indent $\bullet$ There is an extra term in the right hand-side of \eqref{CarlemD}, namely 
	\begin{equation}
		\label{Additional-Term-Carl}
		\tau h^2\sum_{k=1,2} \int_{-T}^{T}  \int_{\Omega_{h,k}^-} e^{2\tau\varphi_h} |\partial_{h,k}^+\partial_t w_h|^2\, dt, 
	\end{equation}
	that cannot be absorbed by the left hand-side terms of \eqref{CarlemD}. This is not a surprise as this term already appeared in the Carleman estimates obtained for the waves in the 1-d case, see \cite[Theorem 2.2]{BaudouinErvedoza11}, and also in the multiplier identity \cite{InfZua}. As it has been widely studied in the context of the control of discrete wave equations (see e.g. the survey articles \cite{Zua05Survey,ErvZuaCime}), this term is needed since the discretization process creates spurious frequencies that do not travel at the velocity prescribed by the continuous dynamics (see also \cite{Tref}). 
	Also note that this additional term only concerns the high-frequency part of the solutions, since the operators $h \partial_{h,1}^+$, $h\partial_{h,2}^+$ are of order $1$ for frequencies of order $1/h$, whereas it can be absorb by the right hand-side of \eqref{CarlemD} for scale $\mathcal{O}(1/h^{1- \varepsilon})$ for all $\varepsilon >0$ by choosing $h$ sufficiently small.\\

\noindent{\bf Uniform discrete Carleman estimates: the distributed case.} The usual assumption in the distributed case for getting Carleman estimates in the continuous setting (see \cite{Im02}) is that the observation set $\omega$ is a neighborhood of a part of the boundary satisfying the Gamma condition~\eqref{Gamma-Condition}. Since in our geometric setting $\Omega = (0,1)^2$, with no loss of generality we may assume that there exists $\delta >0$ such that \eqref{Configuration-Lipschitz-distributed} holds.
Under these conditions, we show:
\begin{theorem}
	\label{Thm-CarlemanDisc-Distributed}
	Assume the configuration \eqref{Configuration-Lipschitz-distributed} for $\omega$. 
	We then set 
	$$
		\omega_h = \Omega_h \cap \omega, \quad \omega_{h,k}^- = \Omega_{h,k}^- \cap \overline{\omega}, \quad k \in \{1, 2\}.
	$$
	Let $a>0,\, \beta \in (0,1)$ in \eqref{varphi} and $T>0$.
	There exist $\tau_0\geq 1$, $\mu\geq 1$, $\varepsilon >0$, $h_0>0$ and a constant $C=C(\tau_0,\mu, T, \varepsilon,\beta) >0$ independent of $h>0$ such that for all $ h\in (0,h_0)$ 
and $ \tau\in \left(\tau_{0},\varepsilon/ h\right)$, for all $w_h$ satisfying \eqref{Assumption-W},
	\begin{gather}
		\tau \int_{-T}^{T} \int_{\Omega_{h}} e^{2\tau\varphi_h}| \partial_t w_h|^2 \, dt
		+
		 \tau\sum_{k=1,2} \int_{-T}^{T} \int_{\Omega_{h,k}^-} e^{2\tau\varphi_h} |\partial_{h,k}^+ w_h |^2\, dt
		+ \tau^3\int_{-T}^{T}\int_{\Omega_{h}} e^{2\tau\varphi_h}| w_h |^2 \, dt
		\nonumber \\
		\leq C\int_{-T}^{T}\int_{\Omega_{h}}e^{2\tau\varphi_h}| \Box_h w_h|^2 \, dt   
		+ C\tau h^2\sum_{k=1,2} \int_{-T}^{T}  \int_{\Omega_{h,k}^-} e^{2\tau\varphi_h} |\partial_{h,k}^+\partial_t w_h|^2\, dt 
		\label{CarlemD-distributed}\\
		C\tau\int_{-T}^{T}\int_{\omega_{h}} e^{2\tau\varphi_h}| \partial_t w_h |^2 \, dt
		+
		C \tau\sum_{k=1,2} \int_{-T}^{T} \int_{\omega_{h,k}^-} e^{2\tau\varphi_h} |\partial_{h,k}^+ w_h |^2\, dt
		+ 
		C\tau^3\int_{-T}^{T}\int_{\omega_{h}} e^{2\tau\varphi_h}| w_h |^2 \, dt,
		 \nonumber
	\end{gather}
	where $\varphi_{h}(t) = \rh\varphi(t)$ for $t \in [0,T]$. Besides, if $w_h(0,x_h) = 0 $ for all $x_h \in \Omega_h$, the term 
	$$\tau^{1/2} \ds\int_{\Omega_h} e^{2 \tau \varphi_h (0)} |\partial_t w_h(0,x_h)|^2 $$ is also bounded by the right hand side of \eqref{CarlemD-distributed}.
\end{theorem}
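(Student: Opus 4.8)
The plan is to derive Theorem~\ref{Thm-CarlemanDisc-Distributed} from Theorem~\ref{Thm-CarlemanDisc-Boundary} by a cut-off argument, exactly mirroring the classical passage from a boundary Carleman estimate to a distributed one in the continuous setting. The geometric point is that the weight $\psi(t,x) = |x-x_a|^2 - \beta t^2 + c_0$ with $x_a = (-a,-a)$ has its (spatial) gradient $\nabla_x\psi = 2(x-x_a)$ pointing, on the two far edges $\Gamma_+ = (\{1\}\times(0,1))\cup((0,1)\times\{1\})$, strictly \emph{outward}, so the ``good'' boundary terms on $\Gamma_{h,k}^+$ appearing in the right-hand side of \eqref{CarlemD} carry the right sign. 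Since $\omega$ contains a neighborhood of $\Gamma_+$ by \eqref{Configuration-Lipschitz-distributed}, the idea is to replace the observation on $\Gamma_+$ by an observation on $\omega$.

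First I would pick a smooth spatial cut-off function $\chi$ with $\chi \equiv 1$ on a neighborhood of $\{1\}\times[0,1]\cup[0,1]\times\{1\}$ (i.e. on the strip $((1-\delta',1)\times(0,1))\cup((0,1)\times(1-\delta',1))$ for some $\delta' < \delta$) and $\chi$ supported in $\omega$, and set $\chi_h = \rh\chi$. Apply Theorem~\ref{Thm-CarlemanDisc-Boundary} to $w_h$ itself to control the interior left-hand side by $\int e^{2\tau\varphi_h}|\Box_h w_h|^2$, the boundary term on $\Gamma_{h,k}^+$, and the high-frequency remainder $\tau h^2 \sum_k \int e^{2\tau\varphi_h}|\partial_{h,k}^+\partial_t w_h|^2$. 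The task then reduces to estimating $\tau\sum_k \int_{\Gamma_{h,k}^+} e^{2\tau\varphi_h}|\partial_{h,k}^- w_h|^2$ by the distributed terms on $\omega$. Here one writes the boundary term via a discrete integration by parts / summation-by-parts identity in the $k$-th direction, inserting the cut-off $\chi_h$ (which equals $1$ near $\Gamma_{h,k}^+$): a term like $\tau\int_{\Gamma_{h,k}^+} e^{2\tau\varphi_h}|\partial_{h,k}^-w_h|^2$ is rewritten as $\tau\int_{\Omega_{h,k}^-}\partial_{h,k}^+\big(\chi_h e^{2\tau\varphi_h}(\cdot)\big)$-type sums, producing (i) interior terms supported in $\mathrm{supp}\,\chi_h \subset \overline\omega$ of the form $\tau\int_{\omega_h}e^{2\tau\varphi_h}|w_h|^2$, $\tau\sum_k\int_{\omega_{h,k}^-}e^{2\tau\varphi_h}|\partial_{h,k}^+w_h|^2$, a term $\tau\int_{\omega_h}e^{2\tau\varphi_h}|\Delta_h w_h|^2$ which is controlled by $\tau\int_{\omega_h}e^{2\tau\varphi_h}(|\Box_h w_h|^2 + |\partial_t w_h|^2)$ using the equation, and (ii) commutator terms coming from $\partial_{h,k}^+(\chi_h)$, $\partial_{h,k}^+(e^{2\tau\varphi_h})$, which are lower order in $\tau$ (the $e^{2\tau\varphi_h}$ differentiation giving the expected factor $\tau\partial_{h,k}^+\varphi_h$, valid since $\tau h \leq \varepsilon$, cf. the estimate $e^{\tau\varphi}\partial_h(e^{-\tau\varphi}) \simeq -\tau\partial_x\varphi$ quoted after Theorem~\ref{Thm-CarlemanDisc-Boundary}), plus discrete-specific $h^2$ remainders of the type $\tau h^2\sum_k\int e^{2\tau\varphi_h}|\partial_{h,k}^+\partial_t w_h|^2$ that are kept on the right-hand side as in \eqref{CarlemD-distributed}. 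Absorbing the interior $\omega_h$-supported first-order term $\tau\sum_k\int_{\omega_{h,k}^-}e^{2\tau\varphi_h}|\partial_{h,k}^+ w_h|^2$ requires allowing it on the right-hand side — which is precisely what \eqref{CarlemD-distributed} does.

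An alternative, and probably cleaner, route is to apply Theorem~\ref{Thm-CarlemanDisc-Boundary} directly to $v_h := \chi_h w_h$, with $\chi_h$ a discrete cut-off equal to $1$ on a neighborhood of $\Omega_h\setminus\omega_h$ (so that $v_h = w_h$ there, and the left-hand side of \eqref{CarlemD} applied to $v_h$ dominates the $\Omega_h\setminus\omega_h$ part of the left-hand side of \eqref{CarlemD-distributed}) and supported away from $\Gamma_+$ enough that the boundary term $\int_{\Gamma_{h,k}^+}e^{2\tau\varphi_h}|\partial_{h,k}^- v_h|^2$ vanishes — here one must be careful, since $\omega$ touches $\Gamma_+$, so instead $\chi_h$ should be $1$ away from $\omega$ and the boundary contribution of $v_h$ is then handled by the above summation-by-parts on $\Gamma_{h,k}^+$. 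Then $\Box_h v_h = \chi_h\Box_h w_h + [\Box_h,\chi_h]w_h$, and the commutator $[\Box_h,\chi_h]w_h$ involves $\partial_{h,k}^\pm\chi_h$ and $\Delta_{h,k}\chi_h$, hence is supported in $\overline{\omega_h}$ and bounded (using the equation to trade $\Delta_h w_h$ for $\Box_h w_h - \partial_{tt}w_h$) by $C(\int_{\omega_h}e^{2\tau\varphi_h}(|\partial_t w_h|^2 + |w_h|^2) + \sum_k\int_{\omega_{h,k}^-}e^{2\tau\varphi_h}|\partial_{h,k}^+ w_h|^2)$ plus an $h$-weighted remainder; dividing by $\tau$ would be needed but here one keeps the full weight since \eqref{CarlemD-distributed} allows a factor $\tau$ in front of the $\omega$-terms. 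The statement $\tau^{1/2}\int_{\Omega_h}e^{2\tau\varphi_h(0)}|\partial_t w_h(0)|^2$ under the extra hypothesis $w_h(0,\cdot)=0$ is obtained from \eqref{Est-t=0} applied to $v_h$ (noting $v_h(0,\cdot)=0$ as well) by the same commutator bookkeeping.

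The main obstacle I expect is the careful handling of the discrete summation-by-parts on the boundary slice $\Gamma_{h,k}^+$ together with the non-commutativity of the discrete weight: unlike the continuous case, $e^{\tau\varphi_h}$ does not commute exactly with $\partial_{h,k}^\pm$, so every integration by parts generates genuinely discrete cross-terms (products of values at neighboring nodes, and $O(h)$ and $O(h^2)$ corrections) that must be checked to be either absorbable by the left-hand side of \eqref{CarlemD-distributed} for $\tau$ large and $\tau h \leq \varepsilon$, or of the exact form of the retained right-hand-side remainder $\tau h^2\sum_k\int e^{2\tau\varphi_h}|\partial_{h,k}^+\partial_t w_h|^2$ in \eqref{CarlemD-distributed}. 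Ensuring the cut-off is chosen on the correct discrete strips (compatible with the sets $\omega_{h,k}^-$ defined from $\overline\omega$) and that no uncontrolled boundary contribution on $\Gamma_h^-$ or at $t=\pm T$ survives — which is guaranteed by \eqref{Assumption-W} — is the bookkeeping-heavy part, but it is conceptually routine given Theorem~\ref{Thm-CarlemanDisc-Boundary}.
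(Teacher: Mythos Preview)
Your ``alternative'' route---apply Theorem~\ref{Thm-CarlemanDisc-Boundary} to $\chi_h w_h$---is exactly the paper's approach, but you overcomplicate it at two points.

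First, the cut-off: choose $\chi\in C^\infty(\overline\Omega;[0,1])$ with $\chi\equiv 1$ on $\Omega\setminus\{d(x,\Gamma_+)<\delta/2\}$ and $\chi$ vanishing in a neighborhood of $\Gamma_+$. This is possible precisely \emph{because} $\omega$ contains a $\delta$-neighborhood of $\Gamma_+$ by \eqref{Configuration-Lipschitz-distributed}; your worry ``since $\omega$ touches $\Gamma_+$'' has it backwards. With this choice, for $h$ small $\chi_h w_h$ vanishes on the two layers nearest $\Gamma_{h,k}^+$, so $\partial_{h,k}^-(\chi_h w_h)=0$ there and the boundary term in \eqref{CarlemD} vanishes \emph{identically}. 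No summation-by-parts on $\Gamma_{h,k}^+$ is needed, and your entire first approach (apply \eqref{CarlemD} to $w_h$ and then estimate the boundary term by interior ones) can be discarded.

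Second, the commutator: the discrete Leibniz rule gives
\[
\Box_h(\chi_h w_h)=\chi_h\Box_h w_h-2\nabla_h\chi_h\cdot\nabla_h w_h-\Delta_h\chi_h\,(2m_h w_h-w_h),
\]
and $2m_{h,k}w_h-w_h$ is a zeroth-order average of $w_h$. So the commutator involves only $\nabla_h w_h$ and $w_h$, \emph{not} $\Delta_h w_h$; there is no need to ``trade $\Delta_h w_h$ for $\Box_h w_h-\partial_{tt}w_h$''. Since $\nabla_h\chi_h,\Delta_h\chi_h$ are supported in $\omega$ for $h$ small, the commutator is bounded by $\int_{\omega_h}e^{2\tau\varphi_h}|w_h|^2+\sum_k\int_{\omega_{h,k}^-}e^{2\tau\varphi_h}|\partial_{h,k}^+w_h|^2$, which already sits on the right of \eqref{CarlemD-distributed} with spare powers of $\tau$. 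One then adds the $\omega$-localized terms to both sides to pass from $\chi_h^2|\partial_t w_h|^2$ etc.\ to the full integrals over $\Omega_h$. The $t=0$ estimate follows from \eqref{Est-t=0} for $\chi_h w_h$ by the same bookkeeping.
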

Of course, Theorem \ref{Thm-CarlemanDisc-Distributed} shares the same features as Theorem \ref{Thm-CarlemanDisc-Boundary}. Actually, Theorem~\ref{Thm-CarlemanDisc-Distributed} is a corollary of Theorem \ref{Thm-CarlemanDisc-Boundary}, and we postpone its proof to Section \ref{Sec-Carleman-Distributed-Proofs}.
%
%%%%%%%%%%%%%%%%%%%%
\subsection{Proof of the discrete Carleman estimate - boundary case}\label{2.3}
%%%%%%%%%%%%%%%%%%%%
%
\begin{proof}[Proof of Theorem \ref{Thm-CarlemanDisc-Boundary}]
The proof of estimate \eqref{CarlemD}  is long and follows the same lines as \cite[Theorem 2.2]{BaudouinErvedoza11}. 
In particular, the main idea is to work on the conjugate operator
\begin{equation}
	\label{L-h-ConjugateOperator}
	\mathscr{L}_h v_h:= e^{\tau\varphi_h}  \Box_h  (e^{-\tau\varphi_h} v_h).
\end{equation}
The precise computation of $\mathscr{L}_h$  already involves tedious computations summed up below:
\begin{proposition}\label{Prop-B}
	The conjugate operator $\mathscr{L}_h$ can be written in the following way:
	\begin{align}\label{ConjugateOperator}
		& \mathscr{L}_h v_h =
		  \partial_{tt}v_h - 2\tau\mu \, \varphi\, \partial_t \psi\, \partial_t v_h
		+  \tau^2\mu^2\varphi^2 \left( \partial_t \psi \right)^2 v_h 
		-\tau\mu^2\varphi\left( \partial_t \psi \right)^2v_h  
		-\tau\mu\varphi( \partial_{tt} \psi) v_h
		\\
		 & -  \sum_{k=1,2} (1+A_{0,k})\Delta_{h,k} v_h +~2 \tau\mu  \sum_{k=1,2} A_{1,k}\partial_{h,k} v_h
		-  \sum_{k=1,2} (\tau^2\mu^2  A_{2,k} - \tau\mu^2 A_{3 ,k}- \tau\mu A_{4,k})v_h,\nonumber
	\end{align}
	where the coefficients $A_{\ell,k}$ are given, for $(t,x_h) \in (-T,T) \times \Omega_h$ and $\tn{e}^1 = (1, 0)$, $\tn{e}^2 = (0,1)$, by
	\begin{align}
		A_{1,k} (t,x_h)	
		&= \dfrac 12\int_{-1}^1 \left[\varphi \partial_{x_k}\psi\right](t, x_h+\sigma h \tn{e}^k) \dfrac{e^{-\tau\varphi(t, x_h+\sigma h \tn{e}^k)}}{e^{-\tau\varphi(t,x_h)}}\,d\sigma, 
		  \label{A1}\\
		A_{2,k}(t,x_h)
		&= \int_{-1}^1 (1-|\sigma|) \left[ \varphi^2 (\partial_{x_k}\psi)^2\right](t,x_h+\sigma h \tn{e}^k) \dfrac{e^{-\tau\varphi(t,x_h+\sigma h \tn{e}^k)}}{e^{-\tau\varphi(t,x_h)}}\,d\sigma,
		\label{A2}\\
		A_{3,k}(t,x_h) &= \int_{-1}^1 (1-|\sigma|) \left[\varphi (\partial_{x_k}\psi)^2 \right](t,x_h+\sigma h \tn{e}^k) \dfrac{e^{-\tau\varphi(t,x_h+\sigma h \tn{e}^k)}}{e^{-\tau\varphi(t,x_h)}}\,d\sigma,
		\label{A3}\\
		A_{4,k}(t,x_h) &= \int_{-1}^1 (1-|\sigma|) \left[ \varphi\partial_{x_k x_k}\psi\right](t,x_h+\sigma h \tn{e}^k) \dfrac{e^{-\tau\varphi(t,x_h+\sigma h \tn{e}^k)}}{e^{-\tau\varphi(t,x_h)}}\,d\sigma,\label{A4}
		\\
		A_{0,k} &= \dfrac {h^2}2 (\tau^2\mu^2  A_{2,k} - \tau\mu^2 A_{3,k}- \tau\mu A_{4,k}).
	\end{align}
	In particular, these functions $A_{\ell,k}$ defined on $[0,T] \times \Omega_h$ can be extended on $[0,T] \times \overline{\Omega}$ in a natural way by the formulas \eqref{A1}--\eqref{A4} and satisfy the following property: setting
	$$
		f_{0,k} = 0, \quad f_{1,k} = \varphi \partial_{x_k} \psi, \quad f_{2,k} = \varphi^2  (\partial_{x_k}\psi)^2, \quad f_{3,k} = \varphi (\partial_{x_k}\psi)^2, \quad f_{4,k} =  \varphi\partial_{x_k x_k}\psi,
 	$$
	for some constants $C_\mu$ depending on $\mu$ but independent of $\tau$ and $h$, we have
	\begin{equation}
		\label{ProxC2}
		\norm{A_{\ell, k} - f_{\ell,k}}_{C^2([0,T] \times \overline{\Omega})} \leq C_\mu \tau h, \qquad \forall \ell \in \{0,\ldots,4\},\, \forall k \in \{1, 2\}.
	\end{equation}
\end{proposition}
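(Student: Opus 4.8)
The plan is to compute the conjugate operator
$\mathscr{L}_h v_h=e^{\tau\varphi_h}\Box_h(e^{-\tau\varphi_h}v_h)=e^{\tau\varphi_h}\partial_{tt}(e^{-\tau\varphi_h}v_h)-\sum_{k=1,2}e^{\tau\varphi_h}\Delta_{h,k}(e^{-\tau\varphi_h}v_h)$
by treating the time part and each spatial direction separately. The time part involves no discrete subtlety: since $t\mapsto\varphi(t,x)$ is smooth, Leibniz' rule gives $e^{\tau\varphi}\partial_{tt}(e^{-\tau\varphi}v)=\partial_{tt}v-2\tau(\partial_t\varphi)\partial_t v+(\tau^2(\partial_t\varphi)^2-\tau\,\partial_{tt}\varphi)v$, and substituting $\partial_t\varphi=\mu\varphi\,\partial_t\psi$ and $\partial_{tt}\varphi=\mu^2\varphi(\partial_t\psi)^2+\mu\varphi\,\partial_{tt}\psi$ produces exactly the first line of \eqref{ConjugateOperator}.

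For a fixed direction, say $k=1$, the computation at $x_h=(ih,jh)$ reduces to the scalar identity
\[
	e^{\tau\varphi_{i,j}}\bigl(\Delta_{h,1}(e^{-\tau\varphi_h}v_h)\bigr)_{i,j}=\frac{1}{h^2}\bigl(\alpha_+v_{i+1,j}-2v_{i,j}+\alpha_-v_{i-1,j}\bigr),\qquad \alpha_\pm=\frac{e^{-\tau\varphi(x_h\pm h\tn{e}^1)}}{e^{-\tau\varphi(x_h)}},
\]
and similarly for $k=2$. The key algebraic manipulation is the elementary splitting
\[
	\alpha_+v_{i+1,j}-2v_{i,j}+\alpha_-v_{i-1,j}=\tfrac{\alpha_++\alpha_-}{2}\bigl(v_{i+1,j}-2v_{i,j}+v_{i-1,j}\bigr)+(\alpha_++\alpha_--2)v_{i,j}+\tfrac{\alpha_+-\alpha_-}{2}\bigl(v_{i+1,j}-v_{i-1,j}\bigr),
\]
which rewrites $-e^{\tau\varphi}\Delta_{h,1}(e^{-\tau\varphi}v_h)$ as a combination of $-\Delta_{h,1}v_h$, $\partial_{h,1}v_h$ and $v_h$ with coefficients $\tfrac{\alpha_++\alpha_-}{2}$, $-\tfrac{\alpha_+-\alpha_-}{h}$ and $-\tfrac{\alpha_++\alpha_--2}{h^2}$. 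These coefficients are then identified by applying to $g=e^{-\tau\varphi}$, along the first coordinate line, the exact Taylor formulas $\frac{g(x+h)-g(x-h)}{2h}=\frac12\int_{-1}^1 g'(x+\sigma h)\,d\sigma$ and $\frac{g(x+h)-2g(x)+g(x-h)}{h^2}=\int_{-1}^1(1-|\sigma|)\,g''(x+\sigma h)\,d\sigma$, together with $\partial_{x_1}(e^{-\tau\varphi})=-\tau\mu\varphi\,\partial_{x_1}\psi\,e^{-\tau\varphi}$ and $\partial_{x_1}^2(e^{-\tau\varphi})=(\tau^2\mu^2\varphi^2(\partial_{x_1}\psi)^2-\tau\mu^2\varphi(\partial_{x_1}\psi)^2-\tau\mu\varphi\,\partial_{x_1x_1}\psi)e^{-\tau\varphi}$. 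From this one reads off $-\tfrac{\alpha_+-\alpha_-}{h}=2\tau\mu A_{1,1}$ and $\tfrac{\alpha_++\alpha_--2}{h^2}=\tau^2\mu^2A_{2,1}-\tau\mu^2A_{3,1}-\tau\mu A_{4,1}$ with $A_{\ell,1}$ as in \eqref{A1}--\eqref{A4}, while $A_{0,1}=\tfrac{\alpha_++\alpha_--2}{2}=\tfrac{h^2}{2}(\tau^2\mu^2A_{2,1}-\tau\mu^2A_{3,1}-\tau\mu A_{4,1})$ is then automatic; adding the $k=1,2$ contributions yields \eqref{ConjugateOperator}. Since $\varphi$ and $\psi$ are explicit smooth functions of $(t,x)\in\R\times\R^2$, the integrands in \eqref{A1}--\eqref{A4} make sense verbatim for any $x\in[0,1]^2$, which provides the claimed $C^2$ extension.

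For the proximity estimate \eqref{ProxC2}, I would write, with $\rho_1\equiv\tfrac12$ and $\rho_\ell(\sigma)=1-|\sigma|$ for $\ell\ge2$ (both of unit mass on $[-1,1]$, so that $f_{\ell,k}=\int_{-1}^1\rho_\ell\,f_{\ell,k}\,d\sigma$),
\[
	A_{\ell,k}(t,x)-f_{\ell,k}(t,x)=\int_{-1}^1\rho_\ell(\sigma)\Bigl(f_{\ell,k}(t,x+\sigma h\tn{e}^k)\,e^{\tau(\varphi(t,x)-\varphi(t,x+\sigma h\tn{e}^k))}-f_{\ell,k}(t,x)\Bigr)\,d\sigma,
\]
and Taylor-expand the integrand: $f_{\ell,k}(t,x+\sigma h\tn{e}^k)-f_{\ell,k}(t,x)=O(h)$ and $\varphi(t,x)-\varphi(t,x+\sigma h\tn{e}^k)=O(h)$, so the exponential factor equals $1+O(\tau h)$ because $\tau h\le\varepsilon$; these combine into an $O(\tau h)$ bound with a constant governed by finitely many derivatives of the smooth weight $\varphi=e^{\mu\psi}$, hence depending on $\mu$ but not on $\tau,h$. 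Differentiating the integrand up to twice in $(t,x)$ only brings down additional factors $-\tau\bigl(\partial^\gamma\varphi(t,x+\sigma h\tn{e}^k)-\partial^\gamma\varphi(t,x)\bigr)=O(\tau h)$, or derivatives of $f_{\ell,k}$ which are $O(1)$, so every resulting product stays $O(\tau h)$ when $\tau h\le\varepsilon$; this gives \eqref{ProxC2} for $\ell\ge1$. For $\ell=0$, $f_{0,k}=0$ and $A_{0,k}=\tfrac{h^2}{2}(\tau^2\mu^2A_{2,k}-\tau\mu^2A_{3,k}-\tau\mu A_{4,k})$ has $C^2$ norm $O(\tau^2h^2)=O(\tau h)$, since $A_{2,k},A_{3,k},A_{4,k}$ and their derivatives are bounded uniformly for $\tau h\le\varepsilon$ and $\tau^2h^2\le\varepsilon\,\tau h$.

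The only real difficulty is the bookkeeping: organizing the many terms produced by $\mathscr{L}_h$ and carefully tracking which of them carry factors of $h$, $\tau h$ or $(\tau h)^2$. There is no genuine analytic obstacle — the two Taylor formulas with integral remainder and the constraint $\tau h\le\varepsilon$ do all the work, and the two space directions never interact since each $\Delta_{h,k}$ acts on a single coordinate line.
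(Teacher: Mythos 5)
Your proposal is correct and follows essentially the same route as the paper, which delegates the proof to the 1-d computations of \cite{BaudouinErvedoza11}: conjugate each $\Delta_{h,k}$ separately along its coordinate line, split the result into second-difference, centered-difference and zero-order parts via the $\frac{\alpha_++\alpha_-}{2}$ / $\frac{\alpha_+-\alpha_-}{2}$ decomposition, identify the coefficients through the exact Taylor formulas with integral remainder, and obtain \eqref{ProxC2} from $\tau h\leq\varepsilon$. Your explicit remark that the proximity estimates require $\tau h$ bounded is consistent with how the paper uses \eqref{ProxC2} (always in the regime $\tau\in(\tau_0,\varepsilon/h)$), so there is no gap.
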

The proof of Proposition \ref{Prop-B} can be easily deduced from the detailed one in \cite[Propositions 2.7, 2.8 and Lemma 2.9, 2.10]{BaudouinErvedoza11} and the details are left to the reader. Note in particular that \eqref{ProxC2} implies for all $(\ell, k) \in \llbracket  0,4 \rrbracket \times \{1,2\}$, 
\begin{multline*}
	\norm{A_{\ell, k} - \rh f_{\ell,k}}_{L^\infty((0,T);L^\infty_h( \Omega_h))} 
	+ \sum_{k' = 1,2} \norm{ \partial_{h,k'}^+ A_{\ell,k} - \rh \partial_x f_{\ell,k} }_{L^\infty((0,T);L^2_h(\Omega_{h,k'}^-))} 
	\\
	+ \norm{\Delta_{h} A_{\ell, k} - \rh \Delta f_{\ell,k}}_{L^\infty((0,T);L^\infty_h( \Omega_h))} \leq C_\mu \tau h.
\end{multline*}
Afterwards, one step of the usual way to prove a Carleman estimate is to split $\mathscr{L}_h$ into two operators $\mathscr{L}_{h,1}$ and $\mathscr{L}_{h,2}$, that, roughly speaking, corresponds to a decomposition into a self-adjoint part and a skew-adjoint one. To be more precise, using the notations 
$$
	A_2 = A_{2, 1} + A_{2,2}, \qquad A_3 = A_{3,1} + A_{3,2}, \qquad A_4 = A_{4,1} + A_{4,2}, 
$$
we set
\begin{eqnarray}
	\mathscr{L}_{h,1}v_h
		&=&
		\partial_{tt}v_h  -  \sum_{k=1,2} (1+A_{0,k})\Delta_{h,k} v_h 
		+  \tau^2\mu^2 \left(\varphi^2 \left( \partial_t \psi \right)^2 
		-  A_{2} \right)v_h\,,
	\label{P1}\\
	\mathscr{L}_{h,2}v_h
		&=&
		(\alpha_1-1) \tau \mu\left( \varphi \partial_{tt} \psi - A_{4} \right) v_h
		- \tau\mu^2 \left( \varphi |\partial_t \psi|^2 -  A_{3} \right)v_h \nonumber \\
		&&\hspace{4.5cm}- 2 \tau\mu  \left( \varphi \partial_t \psi  \partial_t v_h - \sum_{k=1,2} A_{1,k} \partial_{h,k} v_h\right)\, ,
	\label{P2}\\
	\mathscr{R}_h v_h &=&  \alpha_1 \tau \mu\left(\varphi \partial_{tt} \psi - A_{4} \right)v_h\, , \label{Rh} \quad \hbox{ with } \alpha_1  = \frac{\beta + 1}{\beta + 2},
\end{eqnarray}
so that we have $\mathscr{L}_{h,1}v+\mathscr{L}_{h,2}v=\mathscr{L}_h v+\mathscr{R}_h v$. Here, $\mathscr{R}_h$ will be considered as a lower order perturbation of no interest and the letter $\mathscr{R}$ states for ``reminder''. More precisely, all our computations will be based on the following straightforward estimate:
\begin{multline}\label{double}
	\int_{-T}^{T} \int_{\Omega_h}| \mathscr{L}_{h,1}v_h| ^2\, dt + 	\int_{-T}^{T} \int_{\Omega_h}| \mathscr{L}_{h,2}v_h | ^2\, dt
	+
	 2\int_{-T}^{T} \int_{\Omega_h}\mathscr{L}_{h,1}v_h\,  \mathscr{L}_{h,2}v_h \, dt
	\\
	 \leq 
	2 \int_{-T}^{T} \int_{\Omega_h} | \mathscr{L}_h v_h|^2 \, dt 
	+
	2 \int_{-T}^{T} \int_{\Omega_h} |\mathscr{R}_h v|^2  \, dt.
\end{multline}
In particular, we claim the following proposition, proved in Appendix \ref{Sec-Proof-Prop-Decompo}:
\begin{proposition}\label{PropDecompo}
	For any $T>0$, there exist $\mu\geq 1$, $\tau_0\geq 1 $, $\varepsilon_0 >0$ and a constant $C_0>0$ such that for all 
	$\tau \in (\tau_0, \varepsilon_0/h)$, for all $v_h$ satisfying $v_{0,j} = v_{N+1,j} = v_{i,0} = v_{i,N+1} = 0$ 
	and $v_{i,j}(\pm T) = \partial_t v_{i,j}(\pm T ) = 0, \forall i,j \in \llbracket 0, N+1\rrbracket $, 
	\begin{multline}
		 \tau \int_{-T}^{T} \int_{\Omega_{h}}| \partial_t v_h|^2 \, dt
		+  \tau\sum_{k=1,2} \int_{-T}^{T} \int_{\Omega_{h,k}^-} \ |\partial_{h,k}^+ v_h|^2\, dt
		+	\tau^3 \int_{-T}^{T} \int_{\Omega_{h}}|v_h|^2\, dt
		+ \int_{-T}^{T} \int_{\Omega_{h}}| \mathscr{L}_{h,1}v_h| ^2\, dt
 		\label{decompo}  \\
		\leq 
		C_0 \int_{-T}^{T} \int_{\Omega_{h}} |\mathscr{L}_h v_h|^2 \, dt
		+
		C_0 \tau\sum_{k=1,2} \int_{-T}^{T} \int_{\Gamma_{h,k}^+}  \left| \partial_{h,k}^- v_h\right|^2 \, dt
		%\\
		+ C_0 \tau h^2 \sum_{k=1,2} \int_{-T}^{T}  \int_{\Omega_{h,k}^-} |\partial_{h,k}^+\partial_t v_h|^2\, dt
	\end{multline}
	where the operators $\mathscr{L}_h$ and $\mathscr{L}_{h,1}$ are defined by \eqref{L-h-ConjugateOperator} and \eqref{P1}.
\end{proposition}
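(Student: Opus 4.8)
\emph{Strategy of proof.} The plan is to follow the discrete multiplier method of \cite[Theorem~2.2]{BaudouinErvedoza11}, adapting the classical continuous Carleman computation for the wave operator while keeping precise track of all the correction terms created by the discretization. The starting point is inequality \eqref{double}: it suffices to expand the cross term $2\int_{-T}^{T}\int_{\Omega_h}\mathscr{L}_{h,1}v_h\,\mathscr{L}_{h,2}v_h$ by discrete integration by parts in $t$ and in $x_1,x_2$, and to organize the result as a sum of (i) ``dominant'' nonnegative terms reproducing, up to a multiplicative constant, the quantity $\tau\int\int|\partial_t v_h|^2+\tau\sum_k\int\int_{\Omega_{h,k}^-}|\partial_{h,k}^+v_h|^2+\tau^3\int\int|v_h|^2$ of \eqref{decompo}; (ii) boundary terms supported on $\Gamma_{h,k}^-$ and $\Gamma_{h,k}^+$; (iii) the genuinely discrete term $\tau h^2\sum_k\int_{-T}^{T}\int_{\Omega_{h,k}^-}|\partial_{h,k}^+\partial_t v_h|^2$; and (iv) a collection of remainder terms, each either of lower order in $\tau$ or carrying a factor $\tau h$. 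Since $v_h$ vanishes on $\partial\Omega_h$ and $v_h(\pm T)=\partial_t v_h(\pm T)=0$, all contributions at $t=\pm T$ disappear, and the only surviving spatial boundary terms are those built from the one-sided/centered differences of $v_h$ across $\partial\Omega_h$.

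\emph{The dominant terms.} These are produced exactly as in the continuous setting. Integrating by parts in $t$ the product of $\partial_{tt}v_h$ (from $\mathscr{L}_{h,1}$) against $-2\tau\mu\varphi\,\partial_t\psi\,\partial_t v_h$ (the leading term of $\mathscr{L}_{h,2}$) yields $\tau\mu\int\int\partial_t(\varphi\,\partial_t\psi)\,|\partial_t v_h|^2$; a discrete summation by parts in $x_k$ of $-(1+A_{0,k})\Delta_{h,k}v_h$ against $2\tau\mu\,A_{1,k}\partial_{h,k}v_h$ produces $\tau\mu\int\int(\text{weight})\,|\partial_{h,k}^+v_h|^2$ up to corrections, together with the $\Gamma_{h,k}^\pm$ boundary terms; and pairing the zeroth-order part $\tau^2\mu^2(\varphi^2(\partial_t\psi)^2-A_2)v_h$ with the same leading term of $\mathscr{L}_{h,2}$ gives, after one $t$-integration by parts, the $\tau^3$-order contribution in $|v_h|^2$. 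Collecting these with the remaining cross-contributions between the zeroth-order and first-order parts, the leading quadratic form in $(\partial_t v_h,\partial_{h,1}^+v_h,\partial_{h,2}^+v_h,v_h)$ has coefficients that, at leading order in $\tau$, coincide with those of the continuous H\"ormander pseudoconvexity form for $\psi(t,x)=|x-x_a|^2-\beta t^2+c_0$. Since $\beta\in(0,1)$ and $x_a=(-a,-a)\notin\overline\Omega$, this form is positive definite once $\mu$ is large enough, exactly as in \cite{BaudouinDeBuhanErvedoza,BaudouinErvedoza11}; this fixes $\mu$. The $\Gamma_{h,k}^-$ boundary terms have the favourable sign because $\partial_{x_k}\psi=2(x_k+a)>0$ on $\overline\Omega$, hence on the entering face $x_k=0$ they can be dropped, while on $\Gamma_{h,k}^+$ ($x_k=1$) they are carried to the right-hand side; this is the only place the geometric configuration \eqref{Configuration-Lipschitz-boundary} is used.

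\emph{Absorbing the discrete corrections.} With $\mu$ fixed, the remaining task is the bookkeeping of error terms, and this is where the discretization truly enters. Two sources appear. First, every product of derivatives of $\psi$ arising in the continuous computation is here replaced by a coefficient $A_{\ell,k}$, and by \eqref{ProxC2} and its corollary one has $\norm{A_{\ell,k}-\rh f_{\ell,k}}_{L^\infty}\le C_\mu\tau h$ together with the analogous bounds on the discrete gradient and Laplacian; replacing $A_{\ell,k}$ by $\rh f_{\ell,k}$ therefore costs only $C_\mu\tau h$ times the dominant quantities. Second, the discrete Leibniz rule and discrete summation by parts do not close exactly: telescoping sums of $v_{i+1}w_{i+1}-v_iw_i$ type and the identity relating $\Delta_{h,k}$ to $(\partial_{h,k}^+)^*\partial_{h,k}^+$ generate corrections carrying an extra factor $h\,\partial_{h,k}^+$, again of size $O(\tau h)$ relative to the dominant terms. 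Choosing first $\tau_0$ large enough to absorb the genuinely lower-order remainders (in particular $2\int\int|\mathscr{R}_h v_h|^2\lesssim\tau^2\int\int|v_h|^2$, coming from \eqref{double} and \eqref{Rh}), and then $\varepsilon_0$ small enough so that the condition $\tau h\le\varepsilon_0$ makes every $O(\tau h)$ correction absorbable by the positive left-hand side, one obtains \eqref{decompo}, keeping $\int\int|\mathscr{L}_{h,1}v_h|^2$ on the left (it is literally present in \eqref{double}) and discarding $\int\int|\mathscr{L}_{h,2}v_h|^2\ge 0$.

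\emph{The unavoidable term and the main obstacle.} The one term that cannot be absorbed is $\tau h^2\sum_k\int_{-T}^{T}\int_{\Omega_{h,k}^-}|\partial_{h,k}^+\partial_t v_h|^2$: it appears in the cross term between $\partial_{tt}v_h$ in $\mathscr{L}_{h,1}$ and $\sum_k A_{1,k}\partial_{h,k}v_h$ in $\mathscr{L}_{h,2}$ (equivalently, between $\Delta_{h,k}v_h$ and $\varphi\,\partial_t\psi\,\partial_t v_h$), where commuting the discrete space difference past the second time derivative and summing by parts leaves a term of the form $h^2\,\partial_{h,k}^+\partial_t v_h$ against $\tau\,(\text{weight})\,\partial_{h,k}^+\partial_t v_h$, i.e. exactly $\tau h^2|\partial_{h,k}^+\partial_t v_h|^2$ with a single power of $\tau$, precisely as in the 1-d estimate \cite[Theorem~2.2]{BaudouinErvedoza11} and as predicted by the multiplier identity \cite{InfZua}; it is therefore kept on the right-hand side of \eqref{decompo}. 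I expect the main obstacle to be the organization of the two-dimensional mixed cross terms, namely the products of $\Delta_{h,k}v_h$ with $A_{1,k'}\partial_{h,k'}v_h$ for $k\ne k'$: in the continuous setting these integrate by parts cleanly, but the non-commuting discrete shifts in the two directions force careful tracking to confirm that every resulting correction is either of the dominant type or an admissible $O(\tau h)$ or $\tau h^2$ remainder. Once this is verified, the argument proceeds exactly as in the one-dimensional case.
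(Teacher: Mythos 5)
Your proposal follows essentially the same route as the paper's proof in Appendix~\ref{Sec-Proof-Prop-Decompo}: start from \eqref{double}, expand the cross product $\int\mathscr{L}_{h,1}v_h\,\mathscr{L}_{h,2}v_h$ term by term via the discrete integration-by-parts formulas, recover the continuous pseudoconvexity form at leading order using \eqref{ProxC2}, drop the favourably signed boundary terms on $\Gamma_{h,k}^-$ (since $\partial_{x_k}\psi>0$), absorb the $\mathcal{O}_\mu(\tau h)$ corrections under $\tau h\le\varepsilon_0$ and the $\mathscr{R}_h$ remainder for $\tau$ large, and keep the $\tau h^2|\partial_{h,k}^+\partial_t v_h|^2$ term on the right. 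You also correctly identify both the origin of that unabsorbable term (the cross product of $\partial_{tt}v_h$ with $\sum_k A_{1,k}\partial_{h,k}v_h$) and the genuinely two-dimensional difficulty, namely the mixed terms $\Delta_{h,k}v_h\cdot A_{1,\ell}\partial_{h,\ell}v_h$ for $k\ne\ell$ handled by the paper via Lemma~\ref{LemIPP2}.
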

The proof of Proposition \ref{PropDecompo} is the core of the derivation of the discrete Carleman estimate and consists in estimating from below the cross-product $\int_{-T}^{T} \int_{\Omega_h}\mathscr{L}_{h,1}v_h\, \mathscr{L}_{h,2}v_h \, dt$ in \eqref{double}. This is done in two steps: Computation of the cross-product and computations of the leading order terms coefficients in front of $v_h,\, \partial_t v_h, \partial_{h,k}^+ v_h$. The proof of Proposition \ref{PropDecompo} is given in Appendix \ref{Sec-Proof-Prop-Decompo}.

Actually, this closely follows the proof of \cite[Lemma 2.11]{BaudouinErvedoza11} corresponding to the 1-d case. The main novelties with respect to \cite[Lemma 2.11]{BaudouinErvedoza11} are the following ones:\\
\indent $\bullet$ Some computations in the cross-product of $\mathscr{L}_{h,1}v_h $ and $ \mathscr{L}_{h,2}v_h$ are new since the term $(\alpha_1-1) \tau \mu( \varphi \partial_{tt} \psi -\sum_{k} A_{4,k} ) v_h$ in $\mathscr{L}_{h,2}$ in \eqref{P2} vanishes in dimension $1$. Actually, the coefficient $\alpha_1$ is chosen in some range that depends on the dimension $d$ of the space variable and is required to belong to $(2\beta/(\beta+d), 2/(\beta + d))$. Hence, since $d=1$ in \cite{BaudouinErvedoza11}, we chose $\alpha_1 = 1$ to simplify the computations.\\
\indent $\bullet$ There are also new cross-products involving integration by parts of discrete derivatives in different directions. In particular, besides the 1-d integration by parts formula in \cite[Lemma~2.6]{BaudouinErvedoza11} that we recall in \ref{Sec-IPP}, we will need the following specific 2-d formula:
	 \begin{lemma}[discrete integration by part formula]
		\label{LemIPP2}
		Let $v_h,g_h$ be discrete functions depending on the variable $x_h \in [0,1]^2$ such that $v_h=0$ on the boundary of the square. Then we have the following identity:
		\begin{multline}\label{New-1}
			 \int_{\Omega_h} g_h \, \Delta_{h,1} v_h\,  \partial_{h,2} v_h ~=~
				  \dfrac 12\int_{\Omega_{h,1}^-} |\partial_{h,1}^+ v_h|^2 \partial_{h,2} (m_{h,1}^+ g_h) 
		  		- \int_{\Omega_{h,1}^-} \partial_{h,1}^+ v_h\,  m_{h,1}^+ (\partial_{h,2} v_h)\,  \partial_{h,1}^+ g_h\\
				 - \frac{h^2}{4} \int_{\Omega_{h}^-} |\partial_{h,1}^+ \partial_{h,2}^+ v_h|^2 \partial_{h,2}^+(m_{h,1}^+ g_h).
		\end{multline}	
	\end{lemma}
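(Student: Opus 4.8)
The plan is to reduce the identity \eqref{New-1} to three elementary discrete calculus rules, applied successively in the $x_1$– and then the $x_2$–direction. These are: the one–dimensional discrete integration–by–parts formula of \cite[Lemma~2.6]{BaudouinErvedoza11} (recalled in \ref{Sec-IPP}); the discrete Leibniz rule
\[
	\partial_{h,k}^+(f_h g_h) = (\partial_{h,k}^+ f_h)\,(m_{h,k}^+ g_h) + (m_{h,k}^+ f_h)\,(\partial_{h,k}^+ g_h),
\]
together with the commutations $\partial_{h,1}^+\partial_{h,2} v_h = \partial_{h,2}\partial_{h,1}^+ v_h$ and $\partial_{h,1}^+\partial_{h,2}^+ v_h = \partial_{h,2}^+\partial_{h,1}^+ v_h$; and the discrete ``chain rule'' for the centered difference,
\[
	w_h\,\partial_{h,2} w_h = \frac12\,\partial_{h,2}(w_h^2) - \frac{h^2}{4}\,\partial_{h,2}^-\bigl((\partial_{h,2}^+ w_h)^2\bigr),
\]
which is checked pointwise by expanding both sides. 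Throughout, each discrete boundary term that appears vanishes: since $v_h = 0$ on $\partial([0,1]^2)$, the quantities $\partial_{h,2} v_h$, $\partial_{h,1}^+ v_h$ and $\partial_{h,1}^+\partial_{h,2}^+ v_h$ all vanish on the corresponding portions of the boundary of the square.

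\emph{Step 1.} Write $\Delta_{h,1} v_h = \partial_{h,1}^-(\partial_{h,1}^+ v_h)$ and integrate by parts in $x_1$, transferring $\partial_{h,1}^-$ onto $g_h\,\partial_{h,2} v_h$; the boundary contributions on $\Gamma_{h,1}^- \cup \Gamma_{h,1}^+$ vanish because $\partial_{h,2} v_h$ does there. This yields
\[
	\int_{\Omega_h} g_h\,\Delta_{h,1} v_h\,\partial_{h,2} v_h
	= -\int_{\Omega_{h,1}^-} \partial_{h,1}^+ v_h \;\partial_{h,1}^+\!\bigl(g_h\,\partial_{h,2} v_h\bigr).
\]
\emph{Step 2.} Expand $\partial_{h,1}^+(g_h\,\partial_{h,2} v_h)$ with the discrete Leibniz rule and use $\partial_{h,1}^+\partial_{h,2} v_h = \partial_{h,2}\partial_{h,1}^+ v_h$. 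The right–hand side becomes
\[
	-\int_{\Omega_{h,1}^-} \partial_{h,1}^+ v_h\; m_{h,1}^+(\partial_{h,2} v_h)\;\partial_{h,1}^+ g_h
	\;-\;\int_{\Omega_{h,1}^-} \partial_{h,1}^+ v_h\;(m_{h,1}^+ g_h)\;\partial_{h,2}(\partial_{h,1}^+ v_h),
\]
the first integral being exactly the second term in \eqref{New-1}.

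\emph{Step 3.} It remains to handle the last integral. Set $w_h = \partial_{h,1}^+ v_h$ and $G_h = m_{h,1}^+ g_h$. The discrete chain rule gives $w_h\,\partial_{h,2} w_h = \frac12\partial_{h,2}(w_h^2) - \frac{h^2}{4}\partial_{h,2}^-((\partial_{h,2}^+ w_h)^2)$, hence
\[
	-\int_{\Omega_{h,1}^-} w_h\,G_h\,\partial_{h,2} w_h
	= -\frac12\int_{\Omega_{h,1}^-} G_h\,\partial_{h,2}(w_h^2)
	+ \frac{h^2}{4}\int_{\Omega_{h,1}^-} G_h\,\partial_{h,2}^-\!\bigl((\partial_{h,2}^+ w_h)^2\bigr).
\]
Integrating by parts in $x_2$ in each term (the boundary contributions vanishing since $w_h$, hence $\partial_{h,2}^+ w_h$, vanishes on the top and bottom edges of the square) turns this into
\[
	\frac12\int_{\Omega_{h,1}^-} |w_h|^2\,\partial_{h,2} G_h
	- \frac{h^2}{4}\int_{\Omega_h^-} |\partial_{h,2}^+ w_h|^2\,\partial_{h,2}^+ G_h.
\]
Recalling that $w_h = \partial_{h,1}^+ v_h$, $G_h = m_{h,1}^+ g_h$ and $\partial_{h,2}^+\partial_{h,1}^+ v_h = \partial_{h,1}^+\partial_{h,2}^+ v_h$, this is exactly the sum of the first and third terms in \eqref{New-1}; adding the contribution of Step~2 completes the proof.

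The only genuinely delicate point is the bookkeeping of the discrete index ranges, that is, checking that after each summation by parts the integrals are carried over the correct sets $\Omega_h$, $\Omega_{h,1}^-$ and $\Omega_h^-$, and that the various discrete boundary sums really do cancel; the conceptual content is minimal, since it uses only the homogeneous Dirichlet condition on $v_h$ and the three calculus identities above.
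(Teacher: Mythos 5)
Your proof is correct, and it follows exactly the route the paper intends: the authors leave the proof to the reader as an "easy consequence" of the 1-d integration-by-parts formulas of Appendix~\ref{Sec-IPP}, and your Steps~1--3 are precisely \eqref{IPP4} in the $x_1$-direction, the discrete Leibniz rule, and (the pointwise form of) \eqref{IPP3} in the $x_2$-direction, with the boundary terms correctly killed by the homogeneous Dirichlet condition. Nothing to add.
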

	Though the formula \eqref{New-1} cannot be found as it is in \cite{BaudouinErvedoza11}, it can be easily deduced from the integration by parts formula in Appendix \ref{Sec-IPP} and the proof is left to the reader.
\\

Furthermore, if we assume $v_h(0)=0$ in $\Omega_h$, we can compute  the following cross-product (it is a straightforward modification of the computations in \cite[p.586]{BaudouinErvedoza11}):
\begin{multline*}
	\int_{-T}^0 \int_{\Omega_h} \partial_t v_h\,  \mathscr{L}_{h,1} v_h \, dt  
	= 
	\frac{1}{2} \int_{\Omega_h} |\partial_t v_h(0)|^2 
	- \frac{1}{2}  \sum_{k = 1,2}  \int_{-T}^0\int_{\Omega_{h,k}^-} m_{h,k}^+ (\partial_t A_{0,k}) \, |\partial_{h,k}^+ v_h|^2 \, dt
	\\
	+\sum_{k  = 1, 2} \int_{-T}^0 \int_{\Omega_{h,k}^-} \partial_{h,k}^+ A_{0,k}\, \partial_{h,k}^+ v_h\, m_{h,k}^+( \partial_t v_h )\, dt 
	- \frac{\tau^2 \mu^2}{2} \int_{-T}^0 \int_{\Omega_h} |v_h|^2 \partial_t \Big(\varphi^2 \left( \partial_t \psi \right)^2 - A_{2} \Big) \, dt.
\end{multline*}
Therefore, based on Proposition \ref{Prop-B}, we easily get
\begin{multline*}
	\int_{\Omega_h} |\partial_t v_h(0)|^2  
	\leq 
	\frac{C}{\sqrt{\tau}} \int_{-T}^0 \int_{\Omega_h} |\mathscr{L}_{h,1} v_h|^2 \, dt  + C \sqrt{\tau} \int_{-T}^0 \int_{\Omega_h} |\partial_t v_h|^2 \, dt  
	\\
	+~C_\mu \tau h \sum_{k = 1,2}  \int_{-T}^0\int_{\Omega_{h,k}^-}  |\partial_{h,k}^+ v_h|^2 \, dt + C_\mu \tau h \sum_{k = 1,2}  \int_{-T}^0\int_{\Omega_{h,k}^-}  |\partial_{t} v_h|^2 \, dt
	+ 
	C_\mu \tau^2  \int_{-T}^0 \int_{\Omega_h} |v_h|^2 \, dt.
\end{multline*}
As $\tau h \leq 1$, applying Proposition \ref{PropDecompo} then immediately yields
\begin{multline}
	\label{Est-t=0-v}
	\tau^{1/2}\int_{\Omega_h} |\partial_t v_h(0)|^2  \leq C \int_{-T}^{T} \int_{\Omega_{h}} |\mathscr{L}_h v_h|^2 \, dt
		+
		C \tau\sum_{k=1,2} \int_{-T}^{T} \int_{\Gamma_{h,k}^+}  \left| \partial_{h,k}^- v_h\right|^2 \, dt
		\\
		+ C \tau h^2 \sum_{k=1,2} \int_{-T}^{T}  \int_{\Omega_{h,k}^-} |\partial_{h,k}^+\partial_t v_h|^2\, dt.
\end{multline}

Finally, for $w_h$ satisfying \eqref{Assumption-W}, we set $v_h : = e^{\tau\varphi_h} w_h$. Remarking that by construction $\mathscr{L}_h v_h  = e^{\tau\varphi_h} \Box_h w_h$, we can apply directly Proposition \ref{PropDecompo}. We notice that for $\tau h \leq 1$,
\begin{align*}
	& |w_h|^2 e^{2 \tau\varphi_h}  \leq C_\mu |v_h|^2,
	\\
	&
	|\partial_t w_h|^2 e^{2 \tau\varphi_h} \leq C_\mu ( |\partial_t v_h|^2  +  |v_h|^2), 
	\quad
	|\partial_{h,k}^{+} w_h|^2 e^{2 \tau\varphi_h} \leq C_\mu( |\partial_{h,k}^+ v_h|^2 + C_\mu \tau^2 |m_{h,k}^+ v_h|^2), 
	\\
	&
	|\partial_{h,k}^+\partial_t v_h|^2 \leq C_\mu |\partial_{h,k}^+ \partial_t w_h|^2 e^{2 \tau \varphi_h} + C_\mu \tau^2 (|\partial_{h,k}^+ w_h|^2+|m_{h,k}^+ \partial_t w|^2) e^{2 \tau \varphi_h} + C_\mu \tau^4 |m_{h,k}^+ w|^2 e^{2 \tau \varphi_h},
\end{align*}
and $|\partial_{h,k}^- v_h|^2 \leq C_\mu  |\partial_{h,k}^- w_h|^2 e^{2 \tau\varphi_h}$ on the boundary $\Gamma_{h,k}^+$ as $w_h$ vanishes on $\partial \Omega_h$. We thus deduce Carleman estimate \eqref{CarlemD} for $\tau$ large enough and $\tau h$ small enough directly from \eqref{decompo}. Besides, when $w_h(0) = 0$ on $\Omega_h$, then $v_h(0)=0$ and $\partial_t v_h(0) = \partial_t w_h(0) e^{\tau \varphi_h(0)}$ on $\Omega_h$, hence we conclude \eqref{Est-t=0} from \eqref{Est-t=0-v}. 
\end{proof}
\subsection{Proof of the discrete Carleman estimate - distributed case}\label{Sec-Carleman-Distributed-Proofs}
\begin{proof}[Proof of Theorem \ref{Thm-CarlemanDisc-Distributed}]
	It can be deduced from Theorem \ref{Thm-CarlemanDisc-Boundary}. Indeed, under assumption \eqref{Configuration-Lipschitz-distributed}, it suffices to define a cut-off function $\chi\in C^\infty(\overline\Omega;[0,1])$ taking value $1$ on $\Omega \setminus  \{ x \in \Omega, \, d(x, \Gamma_0) < \delta/2\} $ and vanishing on the boundary $\Gamma_+ =  (\{1\} \times (0,1)) \cup ((0,1) \times \{1\} )$ and to apply the Carleman estimate \eqref{CarlemD} to $\chi_h w_h$ with $\chi_h = \rh (\chi)$: the boundary terms in \eqref{CarlemD} vanish by construction but we have
	$$
		\Box_h (\chi_h w_h) = \chi_h \Box_h w_h  - 2 \nabla_h \chi_h \nabla_h w_h - \Delta_h \chi_h (2 m_h w_h - w_h). 
	$$
	Using that $\chi \equiv 1$ on $\Omega \setminus  \{ x \in \Omega, \, d(x, \Gamma_0) < \delta/2\}$, one easily checks that for $h$ small enough, $\partial_h \chi_h$ and $\Delta_h \chi_h$ are supported on $\omega$. We thus readily obtain
	\begin{multline}
		\label{Carlem-Bound-to-Dist}
		\tau \int_{-T}^{T} \int_{\Omega_{h}} e^{2\tau\varphi_h}\chi_h^2 | \partial_t w_h|^2 \, dt
		+
		 \tau\sum_{k=1,2} \int_{-T}^{T} \int_{\Omega_{h,k}^-} e^{2\tau\varphi_h} |\partial_{h,k}^+ (\chi_h w_h) |^2\, dt
		+ \tau^3\int_{-T}^{T}\int_{\Omega_{h}} e^{2\tau\varphi_h} \chi_h^2 | w_h |^2 \, dt
		\\
		\leq C\int_{-T}^{T}\int_{\Omega_{h}}e^{2\tau\varphi_h} \chi_h^2 | \Box_h w_h|^2 \, dt  
		+ 
		C\int_{-T}^{T}\int_{\omega_{h}} e^{2\tau\varphi_h}\left( | \nabla_h w_h |^2 + |m_h w_h|^2 + |w_h|^2 \right)\, dt
		\\
		+
		 C\tau h^2\sum_{k=1,2} \int_{-T}^{T}  \int_{\Omega_{h,k}^-} e^{2\tau\varphi_h} |\partial_{h,k}^+\partial_t (\chi_h w_h)|^2\, dt. 
	\end{multline}
	One then easily checks that, for $\tau h$ small enough, 
	\begin{multline*}
		\int_{-T}^{T}\int_{\omega_{h}} e^{2\tau\varphi_h}\left( | \nabla_h w_h |^2 + |m_h(w_h)|^2 + |w_h|^2 \right)\, dt + \tau h^2\sum_{k=1,2} \int_{-T}^{T}  \int_{\Omega_{h,k}^-} e^{2\tau\varphi_h} |\partial_{h,k}^+\partial_t (\chi_h w_h)|^2\, dt
		\\
		\leq
		C \sum_{k=1,2} \int_{-T}^{T} \int_{\omega_{h,k}^-} e^{2\tau\varphi_h} |\partial_{h,k}^+ w_h |^2\, dt
		+ C\int_{-T}^{T}\int_{\omega_{h}} e^{2\tau\varphi_h}| w_h |^2 \, dt 
		\\
		+C  \tau h^2 \int_{-T}^{T}  \int_{\Omega_{h}} e^{2\tau\varphi_h}  | \partial_t w_h|^2\, dt 
		+ C \tau h^2\sum_{k = 1,2} \int_{-T}^{T}  \int_{\Omega_{h,k}^-} 
			e^{2\tau\varphi_h} |\partial_{h,k}^+\partial_t w_h|^2\, dt.
	\end{multline*}
	We thus conclude \eqref{CarlemD-distributed} only by adding the terms 
	$$
		\tau \int_{-T}^{T} \int_{\omega_{h}} e^{2\tau\varphi_h}| \partial_t w_h|^2 \, dt
		+ \tau\sum_{k=1,2} \int_{-T}^{T} \int_{\omega_{h,k}^-} e^{2\tau\varphi_h} |\partial_{h,k}^+ w_h |^2\, dt
		+ \tau^3\int_{-T}^{T}\int_{\omega_{h}} e^{2\tau\varphi_h}| w_h |^2 \, dt
	$$
	on both sides of \eqref{Carlem-Bound-to-Dist} and by taking $\tau$ large enough.
\end{proof}

%%%%%%%%%%%%%
\subsection{Proof of the uniform Lipschitz stability result
}\label{CSIP}
%%%%%%%%%%%%%
%%
As said in the introduction, Theorem \ref{TCWE1} is a consequence of the Carleman estimates in Theorems~\ref{Thm-CarlemanDisc-Boundary} and \ref{Thm-CarlemanDisc-Distributed}. Its statement is very similar to the one of \cite[Theorem 3.1]{BaudouinErvedoza11} in the 1-d case. With respect to the stability estimates obtained in the continuous case in \cite{Baudouin01} (see also \cite{ImYamIP01,BaudouinDeBuhanErvedoza}), there is the additional term \eqref{PenalizationTerm} which is remanent from \eqref{Additional-Term-Carl} corresponding to some non-standard penalization of the discrete inverse problems. 
\begin{proof}[Proof of Theorem \ref{TCWE1}] Let us begin with the identity
$$		
	\sum_{k=1,2} \int_{-T}^{T} \int_{\Gamma_{h,k}^+}  \left| \partial_{h,k}^- y_h[q^a_h] - \partial_{h,k}^- y_h[q^b_h]\right|^2 \, dt
		=  \norm{\partial_{\nu} \eh(y_h [q^a_h])-\partial_{\nu} \eh(y_h [q^b_h])}_{H^1(0,T;L^2(\Gamma_+))}^2, 
$$
that allows to end the proof of Theorem \ref{TCWE1} as soon as we obtain the stability estimate \eqref{UniformStability-3} with $\norm{\mathscr{M}_h[q_h^a] - \mathscr{M}_h[q_h^b]}_{H^1(0,T;L^2(\Gamma_0))} $ replaced by  
$$
	\left(\sum_{k=1,2} \int_{-T}^{T} \int_{\Gamma_{h,k}^+}  \left| \partial_{h,k}^- y_h[q^a_h] - \partial_{h,k}^- y_h[q^b_h]\right|^2 \, dt\right)^{1/2}.
$$
Since the proof follows the one of \cite[Theorem 3.1]{BaudouinErvedoza11}, we only sketch the main steps required.

 $\bullet$ \textit{Step 1. Energy estimates.} We first write classical energy estimates in the context of the semi-discrete wave equation in $\Omega_h$, like the one written in \cite[Lemma 3.3]{BaudouinErvedoza11}, and apply them to $z_h = \partial_t (y_h[q_h^b] - y_h[q_h^a]) $ that satisfies
$$
	\left\{
		\begin{array}{ll}
			\partial_{tt} z_h - \Delta_h z_h + q_h^b z_h= (q_h^b - q_h^a)\partial_t y_h[q_h^a], & \hbox{ in } (0,T) \times \Omega_h, 
			\\
			z_h =0, & \hbox{ on } (0, T) \times \partial \Omega_h, 
			\\
			(z_h(0), \partial_t z_h(0) ) = \big(0, (q_h^b - q_h^a) y_h^0\big), & \hbox{ in } \Omega_h.
		\end{array}
	\right.
$$
We thus get a constant $C = C(T,m)>0$ independent of $h$ and such that for all $t\in(0,T)$,
\begin{equation}
	\label{Energy-Uniform}
 	\norm{\partial_h^+ z_h(t)}_{L_h^2(\Omega_h^-)} +  \norm{\partial_t z_h(t)}_{L_h^2(\Omega_h)} +  \norm{z_h(t)}_{L^2_h(\Omega_h)} 
	\leq CK  \norm{q_h^a-q_h^b}_{L_h^2(\Omega_h)},
\end{equation}
where $ \norm{y_h[q_h^a]}_{H^1(0,T; L^\infty_h(\Omega_h))} \leq K$.

$\bullet$ \textit{Step 2. Choice of the Carleman weight.}
Since we assumed $T>\sqrt 2$, we can find $a >0 $ and $\beta \in (0,1)$ such that
$$
		\beta T^2 > \sup_{x \in \Omega}|x-x_a|^2 - \inf_{x \in \Omega} |x-x_a|^2 = 2 + 4a.
$$
Therefore, we can choose $\eta > 0$ such that the Carleman weight function $\psi $ defined in \eqref{varphi} satisfies
\begin{equation}
	\label{WeightBiggerAtT=0}
			 \sup_{ |t | \in (T-\eta, T),\, x \in \Omega} \psi(t,x) \leq \inf_{x \in \Omega} \psi(0,x).
\end{equation}
We then choose $a$ and $\beta$ as above in the Carleman weight \eqref{varphi}, and choose $\mu$, $\tau_0$, $\varepsilon>0$ such that Theorem \ref{Thm-CarlemanDisc-Boundary} holds.

$\bullet$ \textit{Step 3. Extension and truncation.} 
We extend the equation in $z_h$ on $(-T,T)$, setting $z_h(t)=-z_h(-t)$ for all $t \in (-T, 0)$. We also extend $\partial_t y_h[q_h^a]$ as an odd function on $(-T,T)$. We define the cut-off function $\chi\in C^{\infty}(\mathbb{R};[0,1])$ such that $\chi(\pm T) = \partial_t \chi (\pm T) = 0$ and  $\chi(t) =1$  for all $t\in [-T+\eta,T-\eta]$.
Then $~w_h=\chi z_h~$ fulfills the assumptions of Theorem~\ref{Thm-CarlemanDisc-Boundary} and satisfies the following equation:
$$%\begin{equation}
	\left\{\begin{array}{ll}
	%	\label{SDWE4}
		\partial_{tt} w_h - \Delta_h w_h + q_h^b w_h= \partial_{tt} \chi z_h + 2 \partial_t \chi \partial_t z_h + (q_h^b- q_h^a)\partial_t y_h[q_h^a], & \hbox{ in } (-T,T) \times \Omega_h, 
			\\
			w_h =0, & \hbox{ on } (-T, T) \times \partial \Omega_h, 
			\\
			(w_h(0), \partial_t w_h(0) ) = (0, (q_h^a - q_h^b) y_h^0), & \hbox{ in } \Omega_h,
			\\
			w_h(\pm T)= 0, \quad\partial_t w_h(\pm T)= 0, &\hbox{ in } \Omega_h.
	\end{array}\right.
$$%\end{equation}

$\bullet$ \textit{Step 4. Using the Carleman estimate.} We apply Carleman estimates \eqref{Est-t=0} and \eqref{CarlemD} to $w_h$ and, using the expression of $\partial_t w_h(0)$ and Assumption \eqref{RegDiscrete}, we get, for all $\tau \in(\tau_0, \varepsilon/h)$,
\begin{multline}
\label{Stab-Almost-Done}
	\sqrt{\tau} \int_{\Omega_h} e^{\tau \varphi_h(0)} |q_h^a-q_h^b|^2 + \tau^3 \int_{-T}^T  \int_{\Omega_{h}} e^{\tau \varphi_h} | w_h|^2 \, dt 
	\leq 
	C\int_{-T}^{T}\int_{\Omega_{h}}e^{2\tau\varphi_h}| \Box_h w_h|^2 \, dt  
	\\
	+ 
	C\tau\sum_{k=1,2} \int_{-T}^{T} \int_{\Gamma_{h,k}^+} e^{2\tau\varphi_h} \left| \partial_{h,k}^- w_h\right|^2 \, dt
	+
	 C\tau h^2\sum_{k=1,2} \int_{-T}^{T}  \int_{\Omega_{h,k}^-} e^{2\tau\varphi_h} |\partial_{h,k}^+\partial_t w_h|^2\, dt 
\end{multline}
The end of the proof finally consists in estimating the term containing $\Box_h w_h$:
\begin{multline}\label{multfin}
	\int_{-T}^T \int_{\Omega_{h}}e^{2\tau\varphi_h}| \Box_h w_h|^2 \, dt 
	\leq 
	C \int_{-T}^T \int_{\Omega_{h}}e^{2\tau\varphi_h}| q_h^b w_h|^2 \, dt 
	\\+
	C \int_{|t| \in (T-\eta, T)} \int_{\Omega_{h}}e^{2\tau\varphi_h}(|\partial_t z_h|^2 + |z_h|^2) \, dt  
	+
	C \int_{-T}^T \int_{\Omega_{h}}e^{2\tau\varphi_h}| (q_h^a - q_h^b)\partial_t y[q_h^a]|^2 \, dt. 
\end{multline}
The first term of the right hand side of \eqref{multfin} can be absorbed by the left hand-side of \eqref{Stab-Almost-Done} as $q_h^b$ is of bounded $L^\infty_h(\Omega_h)$-norm. 
In the second term, we bound the weight function by its supremum on $[T- \eta, T]$ and then use the energy bound \eqref{Energy-Uniform} 
on $z_h$. This can then be absorbed by the left hand-side of \eqref{Stab-Almost-Done} due to the comparison 
\eqref{WeightBiggerAtT=0} of the weight at time $0$ and on $(T- \eta, T)$. 
Finally, since the weight function is maximal at $t = 0$, the last term can be bounded by 
$C  \int_{\Omega_h} e^{2\tau \varphi_h(0)} |q_h^a-q_h^b|^2$ due to the assumption~\eqref{RegDiscrete} 
and thus it can also be absorbed by the left hand-side of \eqref{Stab-Almost-Done}. 
Therefore, taking $\tau$ large enough completes the proof of Theorem~\ref{TCWE1} 
in the case of a boundary observation \eqref{UniformStability-3}. 
The case of a distributed observation can be deduced similarly from Theorem~\ref{Thm-CarlemanDisc-Distributed} 
stating a Carleman estimate for a distributed observation.
\end{proof}

%%%%%%%%%%%%%%%%%%%%%%%%%%%%%%%
\section{Application of elliptic Carleman estimates}\label{AEC}
%%%%%%%%%%%%%%%%%%%%%%%%%%%%%%
%
 %%%%%%%%%%%%%
\subsection{Logarithmic stability estimate in the continuous case}\label{LogSE}
%%%%%%%%%%%%%

The goal of this section is to prove Theorem \ref{ThmBellassoued}. Actually, it is a direct consequence of the following result, similar to the ones in \cite{LebRob97,Phung09}:
\begin{theorem}\label{Thm-Est-By-FBI}
	Let $\Gamma_0$ be a non-empty open subset of $\partial \Omega$ and let $\omega$ be a smooth connected open subset of $\Omega$ such that $\partial \omega \cap \partial \Omega$ is an open neighborhood of $\Gamma_0$.
	Let $m>0$ and $q \in L^\infty(\Omega)$ satisfying $\norm{q}_{L^\infty}\leq m$.
	Let $\mathscr{D} >0$ and $R_0 >0$, and assume that $\zeta = \zeta(t,x)$ solves the wave equation 
		\begin{equation}
			\label{Eq-zeta-Thm}
			\left\{
				\begin{array}{ll}
					\partial_{tt} \zeta - \Delta \zeta + q \zeta = f,  &\qquad \hbox{ in } (-T,T) \times \Omega,  
					\\
					 \zeta = 0  &\qquad  \hbox{ on } (-T,T) \times \partial \Omega,
				\end{array}
			\right.
		\end{equation}
	for some $f \in L^{1}(-T,T; L^2(\Omega))$ satisfying
	\begin{equation}
		\label{Cond-Support-F}
		f = 0 \quad \text{ in }	(-T,T) \times \{x \in \Omega, \, d(x, \omega) < R_0\},
	\end{equation}
	and satisfies $\zeta \in H^2((-T,T) \times \Omega)$ with
	$%\begin{equation}	\label{Est-A-Priori-Xi}
		\norm{\zeta}_{H^2((-T,T) \times \Omega)} \leq \mathscr{D}.
	$%\end{equation}
	
	Let $\alpha >0$. There exists $T_0>0$ such that for any $T\geq T_0$, there exists a constant $C = C(T)>0$ such that 
	\begin{equation}
		\label{Est-on-Ocal-zeta}
		\norm{\zeta }_{H^1((-T/8,T/8)\times \omega)} 
		\leq
		C \mathscr{D}\left[\log\left( 2 + \frac{\mathscr{D}}{\norm{\partial_\nu \zeta}_{L^2((-T,T)\times \Gamma_0)}} \right)\right]^{-\frac{1}{1+\alpha}}.
	\end{equation}
\end{theorem}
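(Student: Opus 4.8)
The plan is to follow the Fourier--Bros--Iagolnitzer (FBI) strategy of \cite{LebRob97,Phung09}: transport the estimate for the wave equation \eqref{Eq-zeta-Thm} into a \emph{quantified unique continuation} estimate for an elliptic operator, handled by a single global Carleman weight (in the spirit announced just before Theorem~\ref{ThmBellassoued}), and then invert the transform. \emph{Step 1 (FBI transform).} Fix $\chi\in C^\infty_c((-T,T);[0,1])$ with $\chi\equiv 1$ on $(-T/2,T/2)$, a large parameter $\lambda\geq 1$, and set for $(x,y)\in\Omega\times\mathbb R$
\[
	v_\lambda(x,y)=\sqrt{\frac{\lambda}{2\pi}}\int_{\mathbb R}e^{-\frac{\lambda}{2}(iy-t)^2}\chi(t)\zeta(t,x)\,dt .
\]
Since $\partial_{yy}e^{-\frac{\lambda}{2}(iy-t)^2}=-\partial_{tt}e^{-\frac{\lambda}{2}(iy-t)^2}$, two integrations by parts in $t$ together with \eqref{Eq-zeta-Thm} show that $v_\lambda$ solves the \emph{elliptic} equation $\partial_{yy}v_\lambda+\Delta_x v_\lambda-q v_\lambda=g_\lambda$ in $\Omega\times\mathbb R$, with $v_\lambda=0$ on $\partial\Omega\times\mathbb R$ and
\[
	g_\lambda(x,y)=-\sqrt{\frac{\lambda}{2\pi}}\int_{\mathbb R}e^{-\frac{\lambda}{2}(iy-t)^2}\big(\chi''\zeta+2\chi'\partial_t\zeta+\chi f\big)(t,x)\,dt .
\]
Fix $0<Y<T/2$ and let $\mathcal C=\omega\times(-Y,Y)$. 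From $|e^{-\frac{\lambda}{2}(iy-t)^2}|=e^{\frac{\lambda}{2}y^2}e^{-\frac{\lambda}{2}t^2}$ and $\norm{\zeta}_{H^2((-T,T)\times\Omega)}\le\mathscr D$ one checks: the $\chi',\chi''$ part of $g_\lambda$ is $\mathcal O(e^{-c\lambda})\mathscr D$ on $\mathcal C$ because $\chi'$ is supported where $|t|\ge T/2>Y$; the $\chi f$ part of $g_\lambda$ vanishes on $\mathcal C$ by the support condition \eqref{Cond-Support-F} since $\omega\subset\{d(\cdot,\omega)<R_0\}$; and $\norm{v_\lambda}_{H^2(\mathcal C)}\le Ce^{c\lambda}\mathscr D$, $\norm{\partial_\nu v_\lambda}_{L^2(\Gamma_0\times(-Y,Y))}\le Ce^{c\lambda}\norm{\partial_\nu\zeta}_{L^2((-T,T)\times\Gamma_0)}$. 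Thus on $\mathcal C$ the elliptic source is exponentially small while $v_\lambda$ carries, on $\Gamma_0$, a controlled amount of Cauchy data (and $v_\lambda=0$ there).

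\emph{Step 2 (global elliptic Carleman estimate).} The geometric hypothesis ($\partial\omega\cap\partial\Omega$ an open neighborhood of $\Gamma_0$) lets one build a single weight $\Phi\in C^\infty(\overline{\mathcal C})$ with $\nabla\Phi\neq0$ on $\overline{\mathcal C}$ (hence pseudoconvex for the Laplacian) whose level sets separate a target slab $\omega\times(-Y/8,Y/8)$ from the data-free part of $\partial\mathcal C$, namely $\omega\times\{|y|\ge Y/2\}$, $(\partial\omega\cap\Omega)\times(-Y,Y)$ and $((\partial\omega\cap\partial\Omega)\setminus\Gamma_0)\times(-Y,Y)$, while on $\Gamma_0\times(-Y,Y)$, where Cauchy data is available, $\Phi$ may be arbitrary. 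Applying a classical elliptic Carleman estimate \cite{Hormander-III} to $\theta v_\lambda$, $\theta$ a cutoff equal to $1$ near the target and supported in $\mathcal C$, and absorbing $q v_\lambda$ and the commutator terms produced by $\theta$, one obtains for every large $\tau$
\[
	\norm{v_\lambda}_{H^1(\mathrm{target})}\le Ce^{-c_1\tau}\norm{v_\lambda}_{H^2(\mathcal C)}+Ce^{c_2\tau}\big(\norm{\partial_\nu v_\lambda}_{L^2(\Gamma_0\times(-Y,Y))}+\norm{g_\lambda}_{L^2(\mathcal C)}\big).
\]
Because the target abuts the data-free part of $\partial\Omega$ (the complement of $\Gamma_0$ in $\partial\omega\cap\partial\Omega$), the gain $c_1$ degenerates there; choosing $\Phi$ within a one-parameter family of such weights and optimizing $\tau$ turns this into a \emph{logarithmic} interpolation inequality
\[
	\norm{v_\lambda}_{H^1(\mathrm{target})}\le C\norm{v_\lambda}_{H^2(\mathcal C)}\Big[\log\Big(2+\frac{\norm{v_\lambda}_{H^2(\mathcal C)}}{\norm{\partial_\nu v_\lambda}_{L^2(\Gamma_0\times(-Y,Y))}+\norm{g_\lambda}_{L^2(\mathcal C)}}\Big)\Big]^{-\sigma},
\]
where $\sigma>0$ can be pushed arbitrarily close to $1$ at the expense of $C$.

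\emph{Step 3 (inverse FBI and optimization).} Fourier inversion in $t$ --- equivalently, comparing $v_\lambda$ on $\{y=0\}$ with the Gauss transform of $\zeta$ --- recovers $\zeta$ on $(-T/8,T/8)\times\omega$ from $v_\lambda$ on the target slab, up to a high-frequency remainder which, thanks to $\zeta\in H^2$, is $\mathcal O(\lambda^{-1}\mathscr D)$; the reconstruction itself costs a factor $e^{c_3\lambda}$, so that
\[
	\norm{\zeta}_{H^1((-T/8,T/8)\times\omega)}\le Ce^{c_3\lambda}\norm{v_\lambda}_{H^1(\mathrm{target})}+C\lambda^{-1}\mathscr D .
\]
Inserting Steps~1--2, fixing $Y$ small so that the constants $c,c_1,c_2,c_3$ are suitably ordered, and finally taking $\lambda\sim\varepsilon_0\log\big(2+\mathscr D/\norm{\partial_\nu\zeta}_{L^2((-T,T)\times\Gamma_0)}\big)$ with $\varepsilon_0$ small, the first term is absorbed into the second, which is of order $\mathscr D\,[\log(2+\mathscr D/\norm{\partial_\nu\zeta}_{L^2((-T,T)\times\Gamma_0)})]^{-\sigma}$; taking $\sigma=1/(1+\alpha)$ yields \eqref{Est-on-Ocal-zeta}. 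The lower bound $T\ge T_0$ enters both because $\chi$ must live on a large enough interval for the $\chi'$ part of $g_\lambda$ to be exponentially small, and because the weight $\Phi$ must fit in a cylinder of half-height $Y\sim T$.

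The hard part is Step~2: exhibiting one global, pseudoconvex weight that carries the Cauchy data on $\Gamma_0$ and yet degenerates only mildly near the data-free portion of $\partial\Omega$, so that the unique continuation is quantitatively logarithmic with exponent arbitrarily close to $1$; tracking the exponential $\lambda$-losses through this estimate and through the FBI inversion, so that the balances in Step~3 genuinely close, is the bookkeeping on which the whole argument rests.
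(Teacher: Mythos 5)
Your overall architecture (FBI transform to an elliptic problem, one global Carleman weight, inversion and optimization of $\lambda\sim\log\rho$) is the same as the paper's, but there is a genuine gap that prevents the proposal from proving the theorem for arbitrary $\alpha>0$: you use the plain Gaussian kernel $e^{-\frac{\lambda}{2}(iy-t)^2}$. The final logarithmic exponent is \emph{not} governed by the constant $\sigma$ in your elliptic interpolation inequality (which you claim "can be pushed arbitrarily close to $1$" — the Carleman machinery actually yields an exponential-versus-exponential trade-off in $\tau$, and the logarithm only appears after optimizing $\lambda$). It is governed by the polynomial rate of the FBI reconstruction error: after setting $\lambda\sim\log(2+\rho)$, the dominant term is the remainder $\lambda^{-\gamma}\mathscr D\sim\mathscr D[\log(2+\rho)]^{-\gamma}$, where $\gamma$ is the rate at which one time-derivative of regularity is converted into decay in $\lambda$. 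For the Gaussian, $|1-e^{-\xi^2/(2\lambda)}|\le |\xi|/\sqrt{2\lambda}$, so $\gamma=1/2$; and since the $H^1(\omega)$ estimate on $\zeta$ forces you to run the reconstruction on $\partial_t\zeta$ and $\nabla\zeta$, which under the hypothesis $\zeta\in H^2$ have only one time-derivative left, your remainder is $\mathcal O(\lambda^{-1/2})\mathscr D$, not the $\mathcal O(\lambda^{-1})\mathscr D$ you claim (that would need $\zeta\in H^3$). You therefore recover only the exponent $-1/2$, i.e.\ the case $\alpha=1$ of Bellassoued's original result. The paper's proof hinges precisely on replacing the Gaussian by the Lebeau--Robbiano kernel $F(z)=\frac{1}{2\pi}\int e^{iz\xi}e^{-\xi^{2n}}d\xi$, whose rescaled Fourier transform satisfies $|1-e^{-(\xi/\lambda^\gamma)^{2n}}|\le|\xi|/\lambda^\gamma$ with $\gamma=1-1/(2n)$ arbitrarily close to $1$, while still enjoying the Gaussian-like decay $|F_\lambda(z)|\le C\lambda^\gamma e^{-c_1\lambda|z|^{1/\gamma}}$ in the sector $|\Im z|\le c_2|\Re z|$ needed to make the $\chi'$-terms exponentially small. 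Without this kernel your argument cannot reach $1/(1+\alpha)$ for $\alpha<1$.

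A secondary gap: your single transform $v_\lambda$ evaluated at $y=0$ is a weighted time-average of $\zeta$ centered at $t=0$; to recover $\zeta$ on the whole interval $(-T/8,T/8)$ you need the family of translated transforms $v_{a,\lambda}$ with centers $a\in(-T/4,T/4)$, a Carleman estimate uniform in $a$, and — since the elliptic estimate controls $v_{a,\lambda}$ only on a slab $|s|\le\epsilon_0$ while the reconstruction uses its trace at $s=0$ — the holomorphy of $a+{\bf i}s\mapsto v_{a,\lambda}(s,x)$ together with the Cauchy formula to bound $\norm{v_{a,\lambda}(0,\cdot)}_{L^2((-T/8,T/8)_a\times\omega)}$ by $\sup_a\norm{v_{a,\lambda}}_{L^2((-\epsilon_0,\epsilon_0)\times\omega)}$. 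This step is absent from your Step~3 and does not follow from Fourier inversion alone.
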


Indeed, let us first show how Theorem \ref{Thm-Est-By-FBI} implies Theorem \ref{ThmBellassoued}. 
\begin{proof}[Proof of Theorem \ref{ThmBellassoued}]
	 The idea is to apply Theorem \ref{Thm-Est-By-FBI} to $\zeta = \partial_t (y[q^a] - y [q^b])$, which satisfies the wave equation
		\begin{equation}
			\label{Eq-zeta}
			\left\{
				\begin{array}{ll}
					\partial_{tt} \zeta - \Delta \zeta + q^b \zeta = (q^b - q^a) \partial_t y[q^a],  & \qquad(t,x) \in (0,T) \times \Omega, 
					\\
					 \zeta = 0  &\qquad (t,x) \in (0,T) \times \partial \Omega,
					 \\
					 \zeta(0,x) = 0 , \partial_t \zeta(0,x) =(q^b-q^a)(x)y^0(x), &\qquad x \in  \Omega.
				\end{array}
			\right.
		\end{equation}
	Extending $\zeta$ as an odd function on $(-T, T)$, using the classical energy estimates on $\partial_t \zeta$, the fact that $\partial_t \zeta$ is continuous at $t = 0$ by construction, and recalling assumption \eqref{Error-W-1-2-+} on $q^a-q^b$, we easily get:
		\begin{align}
			%\lefteqn{
			\norm{ \zeta }_{H^2((-T,T) \times \Omega)} 
			%}
			\nonumber
			%\\
			& \leq  C_m \left( \norm{(q^a-q^b) y^0}_{H^1_0(\Omega)}  
					+ \norm{(q^a -q^b) y^1}_{L^2(\Omega)} 	
					+ \norm{(q^a -q^b) \partial_{t} y[q^a]}_{W^{1,1}(0,T;L^2(\Omega))} \right) 
			\nonumber
			\\
			& \leq  C_m m \left(\norm{y^0}_{H^1(\Omega)} + \norm{y^1}_{L^2(\Omega)} + \norm{\partial_t y[q^a]}_{W^{1,1}(0,T;L^2(\Omega))} \right)+ C_m M \norm{y^0}_{L^\infty(\Omega)}
			\nonumber
			\\ 
			& \leq C_m (m+M)  \norm{y[q^a]}_{W^{2,1}(0,T;L^2(\Omega))\cap H^1(0,T;L^\infty(\Omega))}
			+ C_m m \norm{y^0}_{H^1(\Omega)} = \mathscr{D}. 
		\end{align}
	Since the potentials $q^a$ and $q^b$ coincide on $\Ocal$ by \eqref{Class-Pot-Bellassoued}, and because of \eqref{Ocal-Condition}, 
	the source term $f=(q^a - q^b) \partial_t y[q^a]$ extended to an odd function on $(-T,0)$, satisfies \eqref{Cond-Support-F} for $R_0 = \delta/2$ 
	and $\omega =  \{ x \in \Omega, d(x, \Gamma_1) < \delta/2 \} $. Applying Theorem~\ref{Thm-Est-By-FBI}, we obtain:
	\begin{equation*}
		\label{Est-on-Ocal}
		\norm{\partial_t y[q^a] -\partial_t y[q^b] }_{H^1((-T/8,T/8)\times \omega)} 
%		\\
		\leq
		\mathscr{D}\left[\log\left( 2 + \frac{\mathscr{D}}{\norm{\partial_\nu \partial_t y[q^a] - \partial_\nu \partial_t  y[q^b]}_{L^2((-T,T)\times \Gamma_0)}} \right)\right]^{-\frac{1}{1+\alpha}}.
	\end{equation*}
	Because $\omega = \{ x \in \Omega,\, d(x, \Gamma_1) < \delta/2\} $ satisfies the condition \eqref{Ocal-Condition} and is thus a neighborhood of a boundary satisfying the Gamma-condition \eqref{Gamma-Condition}, the use of estimate \eqref{Stab-Lucie-Interne} of Theorem \ref{Thm-Lucie} then completes the proof of Theorem \ref{ThmBellassoued}.
\end{proof} 
Let us now focus on the proof of Theorem \ref{Thm-Est-By-FBI}. As we said in the introduction, this result follows from a suitable use of a Fourier-Bros-Iagoniltzer (FBI) transform to reduce the hyperbolic problem to an elliptic problem and on an elliptic Carleman estimate.\\
As in \cite{LebRob97,Phung09}, we use a FBI transform with a ``Gaussian-polynomial'' kernel: this ingredient allows us to improve the exponent in \eqref{Est-on-Ocal-zeta} to any $\alpha >0$ instead of only $\alpha = 1$ as in \cite{BellassouedIP04}. 
\\
Also, our proof shortcuts the one in \cite{Phung09} by using a global Carleman estimate for the elliptic equation, allowing to get rid of the iterated three spheres inequalities in \cite{Phung09} (see also \cite{BellassouedIP04}). Though this does not yield any particular improvement on the result in the continuous setting, we will follow the same strategy in the semi-discrete case and that way, we will manage to avoid the iterated use of  three spheres inequalities in the discrete setting, which would induce tedious discussions.
\begin{proof}[Proof of Theorem \ref{Thm-Est-By-FBI}]
	The proof is rather long and can be split into several steps. Along this proof, the constants written in large caps may depend on the parameter $n \in \N$ and $T>0$ and are independent of the other parameters. But constants with small caps, that will be numbered $c_0$, $c_1$, ($\dots$) have the additional property that they do not depend on the time parameter $T$ either.

$\bullet$ {\it Step 1. The Fourier Bros Iagoniltzer kernel.}
		In this step, we introduce the FBI kernel following \cite[p.473]{LebRob97}.
		Let us set $n\in \N^*$ such that $1/({2n-1}) < \alpha$ and  
		$
			\gamma = 1-1/(2n)
		$
		(that guarantees 
		$1/(1+ \alpha) <  \gamma  < 1$).
		Introduce a function $F$ defined on $\C$ as follows:
		\begin{equation}
			\label{F-kernel-Bel}
			F(z) = \frac{1}{2\pi} \int_{-\infty}^\infty e^{ i z \xi} e^{- \xi^{2n}} \, d\xi.
		\end{equation}
		According to \cite{LebRob97}, this function $F$ is even, holomorphic on $\C$ and satisfies, for some positive constants $C_0$, $c_0$, $c_1$, $c_2$: 
		\begin{equation}
			\label{Estimate-F-Bel}
			\left\{
				\begin{array}{ll}
					|F(z)| + |F'(z)| \leq C_0 \exp\left(c_0 |\Im(z)|^{1/\gamma}\right) , \quad & \forall z \in \C,
					\\
					|F(z)| \leq C_0 \exp\left(- c_1 |z|^{1/\gamma} \right), \quad &\forall z \in \C \hbox{ with } |\Im(z)|\leq c_2 |\Re(z)|,
				\end{array}	
			\right.
		\end{equation}
	 	Then, for $\lambda \geq 1$, we introduce
		$$%\begin{equation}\label{F-Lambda-kernel-Bel}
			F_\lambda(z) = \lambda^{\gamma} F(\lambda^{\gamma} z),
		$$%\end{equation}
		which, due to \eqref{Estimate-F-Bel}, satisfies the following estimates:
		\begin{equation}
			\label{Estimate-F-lambda-Bel}
			\left\{
				\begin{array}{ll}
					|F_\lambda(z)| + |F_\lambda'(z)| \leq C_0 \lambda^{2\gamma} \exp\left(c_0 \lambda |\Im(z)|^{1/\gamma}\right) , \quad & \forall z \in \C,
					\\
					|F_\lambda (z)| \leq C_0 \lambda^{\gamma} \exp\left(- c_1 \lambda |z|^{1/\gamma} \right), \quad &\forall z \in \C \hbox{ with } |\Im(z)|\leq c_2 |\Re(z)|. 
				\end{array}	
			\right. 
		\end{equation}
	 	Let us remark that $F$ defined by \eqref{F-kernel-Bel} is the inverse Fourier transform of $\xi \mapsto e^{- \xi^{2n}}$ so that $F_\lambda$ is an approximation of the identity as $\lambda \to \infty$. 
		Finally, notice that by construction, the Fourier transform of $F_\lambda(t)$ is 
		\begin{equation}\label{Fourier-F-lambda}
			\mathcal{F} (F_\lambda) (\xi ) = \mathcal{F}(F) \left( \frac{\xi}{\lambda^{\gamma}} \right) = \exp \left( - \left( \frac{\xi}{\lambda^{\gamma}} \right)^{2n} \right).
		\end{equation}

$\bullet$ {\it Step 2. The Fourier-Bros-Iagoniltzer transform.}
		Let $\zeta$ be the solution of \eqref{Eq-zeta-Thm}.  We introduce a cut-off function $\eta \in C^\infty([-T,T];[0,1])$ such that
		$$
			\eta (t) = 
			\left\{
				\begin{array}{ll}
					1 \quad & \hbox{ if } |t| \leq T/2, 
					\\
					0 \quad & \hbox{ if } |t| \geq 3T/4.
				\end{array}
			\right.
		$$
	We define the FBI transform of $\zeta$ for $s \in \R$, $a \in [-T/4, T/4]$ and $x \in \Omega$ by
		\begin{equation}
			\label{FBI-transform-Bel}
			v_{a,\lambda} (s,x) = \int_\R F_\lambda (a + {\bf i}s - t) \eta(t) \zeta(t,x) \, dt, 
		\end{equation}
	where ${\bf i}$ denotes the imaginary unit.	 Since 
		$
			\partial_s v_{a, \lambda} (s,x) = {\bf i} \int_\R F_\lambda (a + {\bf i} s - t)\, \partial_t (\eta(t) \zeta(t,x) )\, dt,
		$
		using integration by parts, one easily checks that $v_{a, \lambda}$ solves the elliptic equation
		$$%\begin{equation}	\label{EllipticEq-v-a-lambda-Bel}
			\left\{
				\begin{array}{ll}
					(-\partial_{ss} - \Delta_x +q ) v_{a,\lambda} = f_{a, \lambda} \quad &\hbox{ in } \R \times \Omega, 
					\\
					v_{a, \lambda} = 0 \quad & \hbox{ on } \R \times \partial\Omega,
				\end{array}
			\right.
		$$%\end{equation}
		where $f_{a, \lambda}$ is defined as $f_{a, \lambda} = f_{a, \lambda,1} + f_{a,\lambda,2}$, with (since $\zeta$ satisfies \eqref{Eq-zeta-Thm})
		\begin{eqnarray*}
			%\label{SourceTerm-f-a-lambda-Bel}
	f_{a, \lambda,1} (s,x) &=& \displaystyle\int_\R F_\lambda (a + {\bf i} s - t) \left(2 \eta'(t) \partial_t \zeta (t,x)+ \eta''(t) \zeta(t,x)\right) \, dt,
					\\	
	\quad f_{a, \lambda,2} (s,x) &=& \displaystyle \int_\R F_\lambda (a + {\bf i} s - t) \eta(t) f(t,x) \, dt.
		\end{eqnarray*}
		
		On the one hand, using that $2  \eta' \partial_t \zeta + \eta'' \zeta$ is supported in $\{(t,x) \in (-T,T) \times \Omega \hbox{ s.t. } |t| \geq T/2 \}$ and 
		the second estimate in \eqref{Estimate-F-lambda-Bel} on the kernel $F_\lambda$, we have
		\begin{equation}
			\label{Est-SourceTerm-Bel-1}
				\norm{f_{a, \lambda,1}}^2_{L^\infty(-3, 3;L^2 (\Omega))} 
				\leq 
				C \lambda^{2\gamma} e^{ -2c_1 \lambda  (T/2)^{1/\gamma}} \norm{\zeta}_{H^1((-T,T)\times \Omega)}^2
				\leq
				C \lambda^{2\gamma} e^{ -2c_1 \lambda  (T/2)^{1/\gamma}} \mathscr{D}^2,
		\end{equation}
		for any $T > 12/c_2$, since $a \in [-T/4,T/4]$, $|t|\geq T/2$ and since we decided to work for $s\in [-3,3]$ and needed $|s| \leq c_2 |a-t|$ to apply \eqref{Estimate-F-lambda-Bel}.
		
		On the other hand, the first estimate in \eqref{Estimate-F-lambda-Bel} also yields, for $c_3 = 2\cdot 3^{1/\gamma} c_0$,
		\begin{equation}
			\label{Est-v-a-lambda-Bel}
			\norm{v_{a,\lambda}}_{H^1((-3, 3 )\times \Omega)}^2 
			\leq 
			C \lambda^{4\gamma} e^{c_3 \lambda} \norm{\zeta}_{H^1((-T,T) \times \Omega)}^2
			\leq
			C \lambda^{4\gamma} e^{c_3 \lambda} \mathscr{D}^2,
		\end{equation}
		and, similarly, 
		\begin{equation}
			\label{ObsTerm-Bel}
				\norm{ \partial_\nu v_{a,\lambda}}_{L^2((-3,3) \times \Gamma_0)}^2
				\leq 
				C  \lambda^{4\gamma} e^{c_3 \lambda } \norm{\partial_\nu  \zeta}_{L^2((-T,T )\times \Gamma_0)}^2.
		\end{equation}
	
$\bullet$ {\it Step 3. Estimating $v_{a,\lambda}$ by an observation on $(-3, 3) \times \Gamma_0$.} This step strongly relies on a Carleman estimate for the following elliptic problem:
		\begin{equation}
			\label{EllipticEq-Bel}
				\left\{
					\begin{array}{ll}
						(- \partial_{ss} - \Delta_x + q ) w = g  &\qquad \hbox{ in } (-3,3) \times \Omega, 
						\\
						w = 0 \quad &\qquad \hbox{ on } \partial ((-3, 3) \times \Omega).
					\end{array}
				\right.
		\end{equation}
		One of the most important points is to suitably choose the Carleman weight. First construct a smooth function $\psi_0 = \psi_0(x)$ on $\overline{\omega}$ such that 
		\begin{equation}
			\label{Psi-0}
			\left\{
				\begin{array}{l}
					\forall x \in \overline{\omega},\, \psi_0(x) \geq 0,
					\\
					\inf_{\overline\omega} \{|\nabla \psi_0| \} >0,
					\\
					\forall x \in \partial \omega \setminus \Gamma_0, \,
						\psi_0(x) = 0 \hbox{ and } \partial_\nu \psi_0(x) < 0,
					\\
					\norm{\psi_0}_{L^\infty(\omega)} \leq 1/2.
				\end{array}
			\right.
		\end{equation}
		Note that such a function $\psi_0$ exists according to the construction in \cite{FursikovImanuvilov} (see also \cite[Appendix III]{TWbook}). 
		We then extend this function $\psi_0$ as a smooth function $\psi$ on $\overline{\Omega}$ satisfying 
		$\norm{\psi}_{L^\infty(\Omega)} \leq 1$. By continuity, there exists a positive constant  $R \in (0, R_0)$ such that in the set 
		$$
				\omega_R = \{ x \in \Omega, \, d(x, \omega) < R\},
		$$
		where the source term $f$ vanishes by assumption \eqref{Cond-Support-F},
		we have 
		$
			\inf_{x\in \overline{\omega_R}} \{|\nabla \psi(x)| \} >0 	
		$
		and such that in the set 
		$$
			\mathscr{C} = \left\{ x \in \Omega, \  \frac{R}{2} < d(x, \omega) < R \right\},
		$$
		we have, as pictured in Figure~\ref{fig:weight},
		\begin{equation}
			\label{ConditionPsi-Bel}
			0 = \inf_{\overline\omega} \psi > \sup_{\overline{\mathscr{C}}} \psi.
		\end{equation}		
\begin{figure}[h]
\begin{center}
\scalebox{0.8}{\input{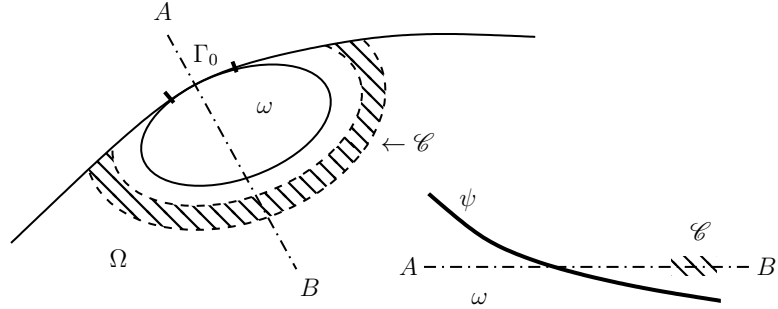}}
\end{center}
\caption{Construction of the weight function $\psi(x)$.
}
\label{fig:weight}
\end{figure}
		
		\noindent We finally define, for $\mu \geq 1$, 
		\begin{equation}
			\label{DefPhi-Bel}
			\varphi := \varphi(s,x) = \exp( \mu (\psi(x) - s^2)), \qquad (s,x) \in [-3, 3] \times \overline{\Omega}.
		\end{equation}
		According to \cite{Hormander-III} (see also \cite{FursikovImanuvilov,Robbiano95}) one has the following Carleman estimate  for \eqref{EllipticEq-Bel}:
		\begin{lemma}[An elliptic Carleman estimate]\label{Lemma-Carleman} 
			There exist $\mu \geq 1$ and a constant $C>0$ such that for all $\tau \geq 1$, for all $g\in L^2((-3, 3)\times \Omega)$ and $w$ solution of \eqref{EllipticEq-Bel} supported in $(-3, 3) \times \omega_R$, 
			\begin{multline}
				\label{Carleman-Ass-Bel}
					\tau^3 \norm{e^{\tau \varphi} w}_{L^2((-3, 3) \times \Omega)}^2 + \tau \norm{e^{\tau \varphi} \nabla_{s,x} w}_{L^2((-3, 3) \times \Omega)}^2 
					\\
					\leq C \norm{e^{ \tau \varphi} g}_{L^2((-3, 3) \times \Omega)}^2  + C \tau \norm{e^{\tau \varphi} \partial_\nu w}_{L^2((-3,3) \times \Gamma_0)}^2,
			\end{multline}
			where the constant $C$ can be taken uniformly with respect to $q \in L^\infty (\Omega)$ with $\norm{q}_{L^\infty} \leq m$.
		\end{lemma}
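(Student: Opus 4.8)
The estimate \eqref{Carleman-Ass-Bel} is a classical global Carleman estimate for the elliptic operator $-\partial_{ss}-\Delta_x+q$ on the cylinder $(-3,3)\times\Omega$, and the plan is to recall the main steps of its proof while checking that the weight and the geometry at hand meet the usual hypotheses. Writing $\phi(s,x)=\psi(x)-s^2$, so that $\varphi=e^{\mu\phi}$ and the weight used is the Fursikov--Imanuvilov one $e^{\tau\varphi}$, the only structural requirement on $\phi$ is the non-degeneracy $\nabla_{s,x}\phi\neq 0$ on the region where $w$ lives. Here $\nabla_{s,x}\phi(s,x)=(-2s,\nabla\psi(x))$, and since $\inf_{\overline{\omega_R}}|\nabla\psi|>0$ by the construction of $\psi$, we get $|\nabla_{s,x}\phi|\geq c_0>0$ on all of $(-3,3)\times\overline{\omega_R}$. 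This is precisely what is needed: for $\mu$ fixed large enough the pseudoconvexity (Hörmander sub-ellipticity) condition with respect to $-\partial_{ss}-\Delta_x$ is met on the support of $w$, and the potential $q$, being of lower order and bounded by $m$, does not affect it.

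The mechanics are the standard ones. First I would conjugate, setting $z=e^{\tau\varphi}w$ and computing $e^{\tau\varphi}(-\partial_{ss}-\Delta_x+q)(e^{-\tau\varphi}z)$, and split this conjugated operator into a formally self-adjoint part and a formally skew-adjoint part, up to zeroth-order terms (including $qz$) whose contribution is of size $O(\mu^2\tau)+O(m)$ times $\norm{z}_{L^2}$. Expanding the square of the $L^2$-norm of the conjugated equation and integrating the cross term between the self- and skew-adjoint parts by parts produces the Carleman quadratic form in the bulk plus boundary terms. By the pseudoconvexity estimate, valid for $\mu$ fixed large and $\tau\geq\tau_0$, this bulk form dominates $c(\tau^3\norm{z}_{L^2}^2+\tau\norm{\nabla_{s,x}z}_{L^2}^2)$ on the support of $w$; undoing the conjugation and absorbing, for $\tau$ large, the zeroth-order remainder and the term $\norm{e^{\tau\varphi}qw}_{L^2}\leq m\norm{e^{\tau\varphi}w}_{L^2}$ into the $\tau^3$-term yields the left-hand side of \eqref{Carleman-Ass-Bel}. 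In particular the constant $C$ is uniform over $\norm{q}_{L^\infty}\leq m$, since $q$ never enters except through that last bound.

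It remains to handle the boundary terms produced by the integrations by parts. Since $w$ is supported in $(-3,3)\times\overline{\omega_R}$, it vanishes together with its derivatives near $\{s=\pm 3\}$ and near the part of $\partial\omega_R$ interior to $\Omega$, so the only contributions come from $(-3,3)\times(\partial\Omega\cap\overline{\omega_R})$; choosing $R\in(0,R_0)$ small we may moreover assume this piece of $\partial\Omega$ sits inside the neighborhood $\partial\omega\cap\partial\Omega$ of $\Gamma_0$ on which, up to the extension, $\psi=\psi_0$. On $\partial\Omega$ the Dirichlet condition $w=0$ annihilates all tangential derivatives, so the boundary integrand reduces to a multiple of $-\tau\mu\,\varphi\,(\partial_\nu\psi)\,|\partial_\nu w|^2\,e^{2\tau\varphi}$. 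On $\partial\Omega\setminus\Gamma_0$ we have $\partial_\nu\psi\leq 0$ by the property $\partial_\nu\psi_0<0$ on $\partial\omega\setminus\Gamma_0$ together with continuity of the extension, so this term has the favorable sign and is discarded; on $\Gamma_0$ it is kept, producing the term $C\tau\norm{e^{\tau\varphi}\partial_\nu w}_{L^2((-3,3)\times\Gamma_0)}^2$ on the right-hand side.

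The hard part here is not conceptual but bookkeeping: one must pin down the choice of $R$ so that $(\partial\Omega\cap\overline{\omega_R})\setminus\Gamma_0$ indeed lies where $\partial_\nu\psi<0$, check that $\psi_0$ extends to $\overline\Omega$ while keeping $|\nabla\psi|\geq c_0$ on $\overline{\omega_R}$, and verify that the pseudoconvexity constants are fixed before $\tau$ and chosen uniformly in $q$. Since all of this is exactly the computation underlying the classical elliptic Carleman estimate (\cite{Hormander-III,FursikovImanuvilov,Robbiano95}), I would mostly invoke those references and only make explicit the verification that the present weight $\phi=\psi-s^2$ and the geometry $(\Omega,\omega,\Gamma_0,\omega_R)$ fit their hypotheses.
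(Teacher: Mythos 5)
Your proposal is correct and follows essentially the same route as the paper, which does not carry out the computation either but simply invokes the classical elliptic Carleman estimates of \cite{Hormander-III,FursikovImanuvilov,Robbiano95} for the weight $\varphi=e^{\mu(\psi(x)-s^2)}$, relying on $\inf_{\overline{\omega_R}}|\nabla\psi|>0$ and $\partial_\nu\psi<0$ on $\partial\omega\setminus\Gamma_0$ exactly as you do. Your sketch of the conjugation/splitting argument and of the sign analysis of the boundary terms is a correct expansion of what those references contain, so nothing is missing.
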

		Estimate \eqref{Carleman-Ass-Bel} has to be understood as a Carleman estimate with observation on $(-3,3) \times \Gamma_0$ and in $(-3,3) \times (\Omega \setminus \omega_R)$. But, as we assumed that $w$ is supported in $(-3, 3) \times \omega_R$, we simply omit the observation in $(-3,3) \times (\Omega \setminus \omega_R)$.
		
		Now,  introduce smooth cut-off functions $\chi_S = \chi_S(s)$ and $\chi_R = \chi_R(x)$ such that
		$$
			\chi_S (s) = 
			\left\{
				\begin{array}{ll}
					1 \quad & \hbox{ if } |s| \leq 2, 
					\\
					0 \quad & \hbox{ if } |s| \geq 3, 
				\end{array}
			\right.
			\quad \hbox{ and } 
					\ds \norm{\chi_S}_{W^{2, \infty}(\R)} \leq C,
		$$
		and 
		$$
			\chi_R (x) = 
			\left\{
				\begin{array}{ll}
					1 \quad & \hbox{ if } d(x, \omega) \leq R/2, 
					\\
					0 \quad & \hbox{ if } d(x, \omega) \geq R, 
				\end{array}
			\right.
			\quad \hbox{ and } 
					\ds \norm{\chi_R}_{W^{2, \infty}(\Omega)} \leq C.
		$$
		We can then define 
		\begin{equation}
			\label{Truncated-V-a-lambda-Bel}
			w_{a,\lambda} (s ,x ) = \chi_S(s) \chi_R(x) v_{a, \lambda}(s,x), \quad (s,x) \in \R \times \Omega
		\end{equation}
		which satisfies
		\begin{equation}
			\label{EllipticEq-w-a-lambda-Bel}
				\left\{
					\begin{array}{ll}
						(- \partial_{ss} - \Delta_x + q) w_{a,\lambda} = g_{a, \lambda} \quad &\hbox{ in } (-3,3) \times \Omega, 
						\\
						w_{a, \lambda} = 0 \quad & \hbox{ on }  \partial ((-3, 3) \times \Omega),
					\end{array}
				\right.
		\end{equation}
		where (using the fact that $f_{a, \lambda,2}$ vanishes in $\omega_R$  by assumption \eqref{Cond-Support-F})
		$$
			 g_{a, \lambda} = \chi_S \chi_R f_{a, \lambda,1} 
			- 2 \chi_R \partial_s \chi_S \partial_s v_{a,\lambda} 
			-\chi_R \partial_{ss} \chi_S v_{a, \lambda} 
			- 2 \chi_S \nabla \chi_R \nabla v_{a, \lambda} 
			- \chi_S \Delta \chi_R v_{a, \lambda}.
		$$	
		 Thus, Carleman estimate \eqref{Carleman-Ass-Bel} can be applied, and gives: for all $\tau \geq 1$,
		 \begin{multline*}
			%\label{Carleman-Use-0-Bel}
			\tau^3 \norm{e^{\tau \varphi} w_{a, \lambda}}_{L^2((-3, 3) \times \Omega)}^2 + \tau \norm{e^{\tau \varphi} \nabla_{s,x} w_{a, \lambda}}_{L^2((-3, 3) \times \Omega)}^2 
			\\
			\leq C \norm{e^{ \tau \varphi} g_{a,\lambda}}_{L^2((-3, 3) \times \Omega)}^2  + C \tau \norm{e^{\tau \varphi} \partial_\nu w_{a,\lambda}}_{L^2((-3,3) \times \Gamma_0)}^2.
		\end{multline*}
		Since $w_{a, \lambda} = v_{a,\lambda}$ on $(-1,1) \times \omega$ and $\norm{\chi_S \chi_R}_{W^{2,\infty}(\R \times \Omega)} \leq C$, we obtain
		 \begin{multline}
			\label{Carleman-Use-1-Bel}
			\tau^3 \norm{e^{\tau \varphi} v_{a, \lambda}}_{L^2((-1,1) \times \omega)}^2 + \tau \norm{e^{\tau \varphi} \nabla_{s,x} v_{a, \lambda}}_{L^2((-1,1) \times \omega))}^2 
			\\
			\leq C \norm{e^{ \tau \varphi} g_{a, \lambda}}_{L^2((-3, 3) \times \Omega)}^2  + C \tau \norm{e^{\tau \varphi} \partial_\nu v_{a,\lambda}}_{L^2((-3,3) \times \Gamma)}^2.
		\end{multline}

		Now, we estimate from below the left hand side and from above the right hand side of \eqref{Carleman-Use-1-Bel}. Notice first that according to \eqref{ConditionPsi-Bel}, we can choose $\epsilon_0 \in (0,1)$ such that 
		\begin{equation}
			\label{Cond-Ocal-C}
			\inf_{|s| \leq \epsilon_0,\, x \in \omega }  \varphi >  \sup_{|s| \leq 3,\ x \in \mathscr{C}} \varphi.
		\end{equation}
		In order to simplify notations, we set
		\begin{equation}
			\label{Notations-Sup-Inf-Bel}
			 \mathscr{I}_{\omega} = \inf_{|s| \leq \epsilon_0,\, x \in \omega }  \varphi, 
			 \quad 
			 \mathscr{S} = \sup_{|s| \leq 3,\, x \in \Omega} \varphi,
			 \quad 
			 \mathscr{S}_{(2,3)} = \sup_{|s| \in (2,3),\, x \in \Omega} \varphi,
			 \quad
			 \mathscr{S}_{\mathscr{C}} = \sup_{|s| \leq 3,\ x \in \mathscr{C}} \varphi.
		\end{equation}
		Remark that, similarly to \eqref{Cond-Ocal-C}, that writes now $ \mathscr{I}_{\omega} >  \mathscr{S}_{\mathscr{C}}$, using the explicit form of $\varphi$ and the fact that $\norm{\psi}_{L^\infty(\Omega)} \leq 1$,  
		we have
		\begin{equation}
			\label{Cond-Ocal-s}
			 \mathscr{I}_{\omega}> \mathscr{S}_{(2,3)}.
		\end{equation}
		
		Going back to \eqref{Carleman-Use-1-Bel}, on the one hand, for all $\tau \geq 1$, the left hand side satisfies,
		 \begin{equation}
			\label{Est-from-below-Bel}
			e^{2 \tau \mathscr{I}_{\omega} } \norm{v_{a, \lambda}}_{H^1((-\epsilon_0,\epsilon_0) \times \omega))}^2
			\leq 
			\tau^3 \norm{e^{\tau \varphi} v_{a, \lambda}}_{L^2((-1,1) \times \omega)}^2 + \tau \norm{e^{\tau \varphi} \nabla_{s,x} v_{a, \lambda}}_{L^2((-1,1) \times \omega)}^2.
		\end{equation}
		On the other hand, the first term of the right hand side in \eqref{Carleman-Use-1-Bel} can be estimated from above:
		\begin{equation}
			\label{Est-from-above-0-Bel}
			\norm{e^{ \tau \varphi} g_{a, \lambda}}_{L^2((-3, 3) \times \Omega)}^2
			\leq 
			e^{2 \tau \mathscr{S} } \norm{f_{a,\lambda,1}}_{L^2((-3, 3 )\times \Omega)}^2
	%		\\
			+ C \left(e^{2 \tau \mathscr{S}_{(2,3)} }  + e^{2 \tau \mathscr{S}_{\mathscr{C}}} \right)\norm{v_{a,\lambda}}_{H^1((-3, 3 )\times \Omega)}^2
		\end{equation}
		since $\partial_s \chi_S, \, \partial_{ss} \chi_S$ are supported in $\{s\in\mathbb R, \hbox { s.t. } |s| \in (2,3)\}$ and $\nabla \chi_R, \,  \Delta \chi_R$ are supported in $\mathscr{C}$.
		Plugging \eqref{Est-SourceTerm-Bel-1} and \eqref{Est-v-a-lambda-Bel} into \eqref{Est-from-above-0-Bel}, we obtain
		\begin{equation}
			\label{Est-from-above-Bel}
			\norm{e^{ \tau \varphi} g_{a, \lambda}}_{L^2((-3, 3) \times \Omega)}^2
			\leq 
			C  e^{2 \tau \mathscr{S} } \lambda^{2\gamma} e^{ -2 c_1 \lambda  (T/2)^{1/\gamma}} \mathscr{D}^2
			+ 
			C \left(e^{2 \tau \mathscr{S}_{(2,3)} }  + e^{2 \tau \mathscr{S}_{\mathscr{C}}} \right)	\lambda^{4\gamma} e^{c_3 \lambda} \mathscr{D}^2.
		\end{equation}
		Combining now estimates \eqref{Carleman-Use-1-Bel} with \eqref{Est-from-below-Bel}, \eqref{ObsTerm-Bel} and \eqref{Est-from-above-Bel}, we get
		\begin{multline}
			\label{Est-v-a-lambda-obs-Bel}
			e^{2 \tau \mathscr{I}_{\omega} } \norm{v_{a, \lambda}}_{H^1((-\epsilon_0,\epsilon_0) \times \omega)}^2
			\leq
			C  e^{2 \tau \mathscr{S} } \lambda^{2\gamma} e^{ -2 c_1 \lambda  (T/2)^{1/\gamma}} \mathscr{D}^2
			\\
			+ 
			C \left(e^{2 \tau \mathscr{S}_{(2,3)} }  + e^{2 \tau \mathscr{S}_{\mathscr{C}}} \right)	\lambda^{4\gamma} e^{c_3 \lambda} \mathscr{D}^2
			+ 
			C \tau e^{ 2 \tau \mathscr{S} } \lambda^{4\gamma} e^{c_3 \lambda } \norm{\partial_\nu  \zeta}_{L^2((-T,T )\times \Gamma_0)}^2.
		\end{multline}

$\bullet$ {\it Step 4. Estimating $\zeta$ from its FBI transform $v_{a, \lambda}$.}
		Writing $\zeta$ as follows,
		$$
			\zeta(t,x ) = \zeta(t,x) - v_{t, \lambda}(0,x) + v_{t,\lambda}(0,x),
		$$
		we obtain that, for $t \in (-T/8, T/8)$, 
		\begin{multline}
			\label{Est-y-v-a-lambda-Bel}
			\norm{\zeta}_{L^2((-T/8, T/8)\times \omega)} \leq
			 \norm{(t,x) \mapsto \zeta(t,x) - v_{t, \lambda}(0,x)}_{L^2((-T/8, T/8)\times \omega)}
			 \\
			 +  \norm{(a,x) \mapsto v_{a, \lambda}(0,x)}_{L^2((-T/8, T/8)\times \omega)}.
		\end{multline}
		As already detailed in  \cite{Phung09}, since $v_{t,\lambda} (0,x) = F_\lambda \star (\eta \zeta) (t)$, where the convolution is only in  the time variable, 
		we obtain, from \eqref{Fourier-F-lambda} the following estimate (notice $\eta=1$ in $(-T/8,T/8)$):
		\begin{align*}
			 \lefteqn{\norm{(t,x) \mapsto\zeta(t,x) - v_{t, \lambda}(0,x)}_{L^2((-T/8, T/8)\times \omega)}
			 =  
			 \norm{\eta \zeta-F_{\lambda}\star (\eta \zeta)}_{L^2((-T/8, T/8)\times \omega)}}
			 \\
			&  \leq  
			 \norm{ \left(1 - \mathcal{F}(F_\lambda) \right) \mathcal{F}(\eta \zeta)}_{L^2(\mathbb R\times \omega)}
			  \leq 
			  \norm{(\xi,x) \mapsto \frac{|\xi|}{\lambda^{\gamma}} |\mathcal{F}(\eta \zeta)(\xi,x)|}_{L^2(\mathbb R\times \omega)}
			 \\
			  & \leq 
			 \frac{C}{\lambda^{\gamma}} \norm{\eta \zeta}_{H^1(\mathbb R \times \omega)}
			 ~\leq ~\frac{C}{\lambda^{\gamma}} \norm{\zeta}_{H^1((-T,T) \times \omega)}.
		\end{align*}
		
		Besides, since $F_\lambda$ is holomorphic,  the map $a+ {\bf i} s \mapsto v_{a, \lambda}(s,x)$ is holomorphic in the variable $a+ {\bf i} s$ for all $\lambda$ and $x$, and the Cauchy formula implies that (see appendix of \cite{BellassouedIP04}, for some details)
		$$
			\norm{(a,x) \mapsto v_{a, \lambda}(0,x)}_{L^2((-T/8, T/8)\times \omega)} \leq C \sup_{a \in (-T/4, T/4)} \norm{v_{a, \lambda}}_{L^2((-\epsilon_0,\epsilon_0) \times \omega)}.
		$$
		Hence, from \eqref{Est-y-v-a-lambda-Bel}, combining the above estimates we get
		$$
			\norm{\zeta}_{L^2((-T/8, T/8)\times \omega)} \leq \frac{C}{\lambda^{\gamma}} \norm{\zeta}_{H^1((-T,T) \times \omega)}
			 + C \sup_{a \in (-T/4, T/4)} \norm{v_{a, \lambda}}_{L^2((-\epsilon_0,\epsilon_0) \times \omega)}.
		$$
		Having an estimate on $v_{a,\lambda}$ in $H^1((-\epsilon_0, \epsilon_0) \times \omega)$ at our disposal, we can apply the latter to $\partial_t \zeta$ and  $\nabla \zeta$ and obtain
		\begin{eqnarray}
			\norm{\zeta}_{H^1((-T/8, T/8)\times \omega)} 
			& \leq & \frac{C}{\lambda^{\gamma}} \norm{\zeta}_{H^2((-T,T) \times \omega)}
			 + C \sup_{a \in (-T/4, T/4)} \norm{v_{a, \lambda}}_{H^1((-\epsilon_0,\epsilon_0) \times \omega)}
			 \notag
			 \\
			 & \leq &
			 \frac{C}{\lambda^{\gamma}} \mathscr{D}
			 + C \sup_{a \in (-T/4, T/4)} \norm{v_{a, \lambda}}_{H^1((-\epsilon_0,\epsilon_0) \times \omega)}.
			\label{Link-y-v-a-lambda-Bel}			 
		\end{eqnarray}
			
$\bullet$ {\it Step 5. Concluding step.} Combining estimates \eqref{Est-v-a-lambda-obs-Bel} and \eqref{Link-y-v-a-lambda-Bel}, we have shown that for all $\lambda \geq 1$ and $\tau \geq 1$,
		\begin{multline}
			\label{Est-v-a-lambda-obs-f}
			\norm{\zeta}_{H^1((-T/8, T/8)\times \omega)}^2 
			\leq 
			\frac{C}{\lambda^{2\gamma}} \mathscr{D}^2
			+
			C  e^{2 \tau (\mathscr{S}- \mathscr{I}_{\omega} )}  \lambda^{2\gamma} e^{ -2 c_1 \lambda  (T /2)^{1/\gamma}} \mathscr{D}^2
			\\
			\hspace{-1ex}
			+ 
			C e^{ -2 \tau \mathscr{I}_{\omega}} \left(e^{2 \tau \mathscr{S}_{(2,3)} }  + e^{2 \tau \mathscr{S}_{\mathscr{C}}} \right)	\lambda^{4\gamma} e^{c_3 \lambda} \mathscr{D}^2
			+ 
			C \tau e^{ 2 \tau (\mathscr{S}- \mathscr{I}_{\omega}) } \lambda^{4\gamma} e^{c_3 \lambda } \norm{\partial_\nu  \zeta}_{L^2((-T,T )\times \Gamma_0)}^2.
		\end{multline}
		Recalling \eqref{Cond-Ocal-C} and \eqref{Cond-Ocal-s},
		we can choose the Carleman parameter $\tau$ as a linear function of the FBI parameter $\lambda$ by setting
		\begin{equation}
			\label{Tau(Lambda)}
			\tau  = \frac{ c_3 \lambda }{\mathscr{I}_{\omega}-\max\{ \mathscr{S}_{\mathscr{C}}, \mathscr{S}_{(2,3)} \} }. 
		\end{equation}
		With this choice, one should assume $\lambda \geq \lambda_*$, where
		$
			\lambda_* = \frac{1}{c_3} \left( \mathscr{I}_{\omega}-\max\{ \mathscr{S}_{\mathscr{C}}, \mathscr{S}_{(2,3)} \}\right), 
		$
		in order to guarantee \eqref{Est-v-a-lambda-obs-f} (since $\tau \ge 1$).
		Thereby, there exist positive constants $c_4, c_5, c_6$ such that for all $\lambda \geq \lambda_*$,
		\begin{eqnarray*}
			 e^{ -2 \tau \mathscr{I}_{\omega}} \left(e^{2 \tau \mathscr{S}_{(2,3)} }  + e^{2 \tau \mathscr{S}_{\mathscr{C}}} \right)	\lambda^{4\gamma} e^{c_3 \lambda}
			 & \leq &
			C e^{- c_4 \lambda},
			\\
			e^{2 \tau (\mathscr{S} - \mathscr{I}_{\omega} ) } \lambda^{2\gamma} e^{ -2 c_1 \lambda  (T /2)^{1/\gamma}}
			& \leq &
			C e^{\lambda (c_5 - 2 c_1 (T /2)^{1/\gamma})},
			\\
			 \tau e^{ 2 \tau (\mathscr{S} - \mathscr{I}_{\omega} ) } \lambda^{4\gamma} e^{c_3 \lambda }
			 & \leq & 
			 C e^{c_6 \lambda}.
		\end{eqnarray*}
		Obviously, there exists $T_0>0$ such that for all $T\geq T_0$,	$c_5 \leq c_1 (T /2)^{1/\gamma}$. Thus, estimate \eqref{Est-v-a-lambda-obs-f}  yields, for all $T\geq T_0$ and $\lambda \geq \lambda_*$,
		\begin{equation*}\label{AlmostOver-0}
			\norm{\zeta}_{H^1((-T/8, T/8)\times \omega)}^2 
			\leq 
			C  \mathscr{D}^2
			\left(\frac{1}{\lambda^{2\gamma}} +  e^{-c_4 \lambda} + e^{- \lambda c_1 (T /2)^{1/\gamma} } \right)
			+ 
			C  e^{c_6 \lambda } \norm{\partial_\nu  \zeta}_{L^2((-T,T )\times \Gamma_0)}^2
		\end{equation*}
		or, in a more concise form, for all $\lambda \geq \lambda_*$,
		\begin{equation}
			\label{AlmostOver}
			\norm{\zeta}_{H^1((-T/8, T/8)\times \omega)}
			\leq 
			\frac{C}{\lambda^{\gamma}} \mathscr{D} + C  e^{c_6 \lambda/2 } \norm{\partial_\nu  \zeta}_{L^2((-T,T )\times \Gamma_0)}.
		\end{equation}
		
		Finally, if we define the ratio ``data over measurement''
		$$
			~\rho=\dfrac{\mathscr{D}}{\norm{\partial_\nu  \zeta}_{L^2((-T,T )\times \Gamma_0)}} ~
		$$
		and the critical value
		\begin{equation}
			\label{Opt-Lambda-0}
			~\lambda_0 = \dfrac{1}{c_6} \log\left(2+ \rho \right),~
		\end{equation}
		taking $\lambda = \lambda_0$ if $\lambda_0 \geq \lambda_*$ we have
		$$
		\norm{\zeta}_{H^1((-T/8, T/8)\times \omega)}
			\leq C \mathscr{D} \left(\frac{1}{[\log(2+\rho)]^{\gamma}}+\frac{(2+\rho)^{1/2}}{\rho}\right).
		$$
		We can drop the second term of the right hand side since the first term dominates as $\rho\to\infty$ ($\rho$ is bounded from below by the continuity of the operator $z\mapsto \partial_\nu z$ from $H^2((-T,T)\times \Omega)$ to $L^2((-T,T) \times \partial \Omega)$).
		Otherwise, if $\lambda_0 <\lambda_*$, we take $\lambda = \lambda_*$ : In this case, $\rho \leq \exp( c_6 \lambda_* ) = C$, i.e. $\mathscr{D} \leq C \norm{\partial_\nu \zeta}_{L^2((-T,T) \times \Gamma_0)} $, so that \eqref{AlmostOver} with $\lambda = \lambda_*$ yields
		$$
			\norm{\zeta}_{H^1((-T/8, T/8)\times \omega)}
			\leq C\norm{\partial_\nu  \zeta}_{L^2((-T,T )\times \Gamma_0)} \leq C \frac{\mathscr{D}}{\rho}.
		$$
		This concludes the proof of \eqref{Est-on-Ocal-zeta} since $-\gamma<-1/(1+\alpha)$.	
\end{proof}

\begin{remark}\label{Rem-Phung09}
	When $f$ vanishes everywhere in $(0,T) \times \Omega$, no cut-off function $\chi_R$ is needed and one obtains the following quantification of unique continuation result due to \cite[Theorem F]{Phung09} (see also \cite{Robbiano95} for $\alpha = 1$): For all $T>0$ large enough, for all $\zeta \in H^2((-T,T) \times \Omega)$ solution of the wave equation \eqref{Eq-zeta-Thm} with $f = 0$, 
	$$
		\norm{\zeta }_{H^1((-T/8,T/8)\times \Omega)} 
		\leq
		C \norm{\zeta }_{H^2((-T,T)\times \Omega)} \left[\log\left( 2 + \frac{\norm{\zeta }_{H^2((-T,T)\times \Omega)}}{\norm{\partial_\nu \zeta}_{L^2((-T,T)\times \Gamma)}} \right)\right]^{-\frac{1}{1+\alpha}}, 
	$$
	or, equivalently, 
	\begin{equation*}
		\norm{\zeta }_{H^2((-T,T)\times \Omega)} \leq C \exp\left(C \Lambda^{1+\alpha}  \right) \norm{\partial_\nu \zeta}_{L^2((-T,T)\times \Gamma)},
		\hbox{ where }
		\Lambda = \frac{\norm{\zeta }_{H^2((-T,T)\times \Omega)} }{\norm{\zeta }_{H^1((-T/8,T/8)\times \Omega)} }.
	\end{equation*}
	Since $\zeta$ in that case is a solution of the wave equation with no source term, this last formulation can be written in terms of the initial data $(\zeta(0), \partial_t \zeta(0) ) = (\zeta^0, \zeta^1) \in H^2\cap H^1_0(\Omega) \times H^1_0(\Omega)$:
	\begin{equation*}
		\norm{ (\zeta^0, \zeta^1)}_{H^2\cap H^1_0(\Omega) \times H^1_0(\Omega)} \leq C \exp(C  \Lambda_0^{1+\alpha} ) \norm{\partial_\nu \zeta}_{L^2((-T,T)\times \Gamma)},
		 \\
		 \hbox{ where }  \Lambda_0 = \frac{ \norm{ (\zeta_0, \zeta_1)}_{H^2 \cap H^1_0 \times H^1_0} }{ \norm{ (\zeta_0, \zeta_1)}_{ H^1_0 \times L^2} }.
	\end{equation*}
\end{remark}
\subsection{Uniform stability in the semi-discrete case}
The goal of this section is to derive the semi-discrete counterpart of Theorem \ref{Thm-Est-By-FBI}. Similarly as in the continuous case, that will be the main ingredient for the proof of Theorem \ref{TCWElog}.
\\
As specified in the introduction, we limit ourselves to the case $\Omega = (0,1)^2$. We may thus assume that $\Gamma_0$ is a subset of one edge. Due to the invariance by rotation, with no loss of generality, we may further assume that this edge is $\{1\} \times (0,1)$.  
\\
We claim the following result: 
\begin{theorem}\label{Thm-Est-By-FBI-Discrete}
	Let $\Omega = (0,1)^2$ and $\Gamma_0$ be a non-empty open subset of the edge $\{1\} \times (0,1)$. Let $\omega$ be a connected open subset of $\Omega$ with Lipschitz boundary and assume that $\partial \omega \cap \partial \Omega$ is an open neighborhood of $\Gamma_0$. Also set $\omega_h = \omega \cap \Omega_h$.
	Let $m>0$ and $q_h \in L_h^\infty(\Omega_h)$ satisfying $\norm{q_h}_{L^\infty_h(\Omega_h)}\leq m$.
	Let $\mathscr{D} >0$ and $R_0 >0$, and assume that $\zeta_h$ is a solution of the wave equation 
		\begin{equation}
			\label{Eq-zeta-Thm-Discrete}
			\left\{
				\begin{array}{ll}
					\partial_{tt} \zeta_h - \Delta \zeta_h + q_h \zeta_h = f_h,  &\qquad \hbox{ in }  (-T,T) \times \Omega_h, 
					\\
					 \zeta_h = 0  &\qquad  \hbox{ on }  (-T,T) \times \partial \Omega_h,
				\end{array}
			\right.
		\end{equation}
	for some $f_h \in L^{1}(-T,T; L^2_h(\Omega_h))$ satisfying
	$f_h = 0 \text{ in }	(-T,T) \times \{x_h \in \Omega_h, \, d(x_h, \omega) < R_0\},~$	
	and satisfies $\zeta_h \in H_h^2((-T,T) \times \Omega_h)$ with
	$$
		\norm{\zeta_h}_{H^2_h((-T,T) \times \Omega_h)} \leq \mathscr{D}.
	$$
	for some $R_0>0$ and $\mathscr{D}$ independent of $h>0$.
	
	Let $\alpha >0$. There exist $T_0>0$ and $h_0>0$ such that for any $T\geq T_0$, there exists a constant $C$ independent of $h$ such that for all $h \in (0,h_0)$, 
	\begin{equation}
		\label{Est-on-Ocal-zeta-h}
		\norm{\zeta_h }_{H_h^1((-T/8,T/8)\times \omega_h)} 
		\leq
		C \mathscr{D}\left[\log\left( 2 + \frac{\mathscr{D}}{\norm{\partial_{h,2}^- \zeta_h}_{L^2((-T,T);L^2_h (\Gamma_{0,h}))}} \right)\right]^{-\frac{1}{1+\alpha}} \hspace{-3ex}+ C \mathscr{D} h^{1/(1+\alpha)}.
	\end{equation}
\end{theorem}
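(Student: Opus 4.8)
\emph{The plan} is to follow the proof of the continuous statement (Theorem~\ref{Thm-Est-By-FBI}) essentially line by line. A Fourier--Bros--Iagoniltzer (FBI) transform in the \emph{time} variable converts the semi-discrete hyperbolic problem \eqref{Eq-zeta-Thm-Discrete} into a semi-discrete \emph{elliptic} problem in $(s,x_h)$ --- continuous in $s$, discrete in $x_h$ --- to which we apply the uniform semi-discrete elliptic Carleman estimate of \cite{BoyerHubertLeRousseau2} in place of the continuous one of Lemma~\ref{Lemma-Carleman}. The only structural difference with the continuous argument, and the source of the extra term $C\mathscr{D}\,h^{1/(1+\alpha)}$ in \eqref{Est-on-Ocal-zeta-h}, is that the discrete Carleman parameter $\tau$ must obey $\tau\leq\varepsilon/h$, so that in the final optimization the FBI parameter $\lambda$ is capped at order $1/h$.

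Steps 1 and 2 of the continuous proof are unchanged: the FBI kernel $F_\lambda$ (hence the choice of $n$ and $\gamma$ with $1/(1+\alpha)<\gamma<1$, and the estimates \eqref{Estimate-F-lambda-Bel}, \eqref{Fourier-F-lambda}) involves no space variable, and setting $v_{a,\lambda,h}(s,x_h)=\int_\R F_\lambda(a+{\bf i}s-t)\,\eta(t)\,\zeta_h(t,x_h)\,dt$ one checks, using that the transform acts only in $t$ and therefore commutes with $\Delta_h$ and with multiplication by $q_h$, that $v_{a,\lambda,h}$ solves $(-\partial_{ss}-\Delta_h+q_h)v_{a,\lambda,h}=f_{a,\lambda,h,1}+f_{a,\lambda,h,2}$ on $\R\times\Omega_h$ with homogeneous boundary condition on $\R\times\partial\Omega_h$, where $f_{a,\lambda,h,2}$ still vanishes on the enlarged support set since $f_h$ does. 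The kernel bounds then give the exact discrete analogues of \eqref{Est-SourceTerm-Bel-1}, \eqref{Est-v-a-lambda-Bel} and \eqref{ObsTerm-Bel}, with $H^1_h$- and $L^2_h$-norms and the discrete normal derivative of $\zeta_h$ on $\Gamma_{0,h}$ replacing their continuous counterparts.

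For Step 3 we reuse the same weight: the continuous $\psi_0$ on $\overline\omega$ satisfying \eqref{Psi-0}, extended to $\psi$ on $\overline\Omega$, with $\varphi(s,x)=\exp(\mu(\psi(x)-s^2))$ sampled as $\varphi_h(s)=\rh\varphi(s)$, and the same annulus $\mathscr{C}$ and cut-offs $\chi_S(s)$, $\chi_R(x)$. Truncating $v_{a,\lambda,h}$ by $\chi_S(s)\,\rh(\chi_R)(x_h)$ as in \eqref{Truncated-V-a-lambda-Bel} produces $w_{a,\lambda,h}$, supported in $(-3,3)\times(\omega_R)_h$ and solving a semi-discrete elliptic equation whose right-hand side $g_{a,\lambda,h}$ gathers the transported source term and the commutators of $\Delta_h$ with $\chi_S\,\rh\chi_R$, the latter being supported, up to lower-order corrections absorbed for $\tau h$ small, in $\{|s|\in(2,3)\}$ and in $\mathscr{C}$. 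Applying the semi-discrete elliptic Carleman estimate of \cite{BoyerHubertLeRousseau2} (with constants uniform in $q_h$ with $\|q_h\|_{L^\infty_h(\Omega_h)}\le m$, but only for $\tau\in(\tau_0,\varepsilon/h)$) and estimating both sides exactly as in the continuous Step 3 yields the discrete counterpart of \eqref{Est-v-a-lambda-obs-Bel}, valid for $\tau\in(\tau_0,\varepsilon/h)$. Step 4 --- reconstructing $\zeta_h$ from $v_{a,\lambda,h}$ via the Fourier-in-time comparison \eqref{Fourier-F-lambda} and the Cauchy/holomorphy argument in $a+{\bf i}s$ --- involves only the time variable and carries over verbatim, giving the discrete analogue of \eqref{Link-y-v-a-lambda-Bel}.

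\emph{The main obstacle} is the concluding optimization (Step 5). As in \eqref{Tau(Lambda)} we wish to set $\tau=c_3\lambda/(\mathscr{I}_\omega-\max\{\mathscr{S}_{\mathscr{C}},\mathscr{S}_{(2,3)}\})$, but the constraint $\tau<\varepsilon/h$ now forces $\lambda<\varepsilon'/h$ for an $h$-independent $\varepsilon'>0$, which is compatible with $\lambda\ge\lambda_*$ as soon as $h<h_0$. Writing $\rho$ for the ratio of $\mathscr{D}$ to the $L^2((-T,T);L^2_h(\Gamma_{0,h}))$-norm of the boundary observation and $\lambda_0=\frac1{c_6}\log(2+\rho)$, I would distinguish three regimes: if $\lambda_0<\lambda_*$ the ratio $\rho$ is bounded and \eqref{AlmostOver} taken at $\lambda=\lambda_*$ already gives the bound; if $\lambda_*\le\lambda_0\le\varepsilon'/h$ one takes $\lambda=\lambda_0$ and recovers verbatim the logarithmic term of \eqref{Est-on-Ocal-zeta-h}; and if $\lambda_0>\varepsilon'/h$ one takes $\lambda=\varepsilon'/h$, so that on one hand $\lambda^{-\gamma}=(h/\varepsilon')^{\gamma}\le C\,h^{1/(1+\alpha)}$ (using $\gamma>1/(1+\alpha)$ and $h<1$), and on the other hand $\lambda_0>\varepsilon'/h$ forces $\rho$ to be exponentially large in $1/h$, so the measurement term $e^{c_6\lambda/2}\,\mathscr{D}/\rho=e^{c_6\varepsilon'/(2h)}\,\mathscr{D}/\rho$ is exponentially small in $1/h$, hence also bounded by $C\,\mathscr{D}\,h^{1/(1+\alpha)}$. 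Combining the three cases gives \eqref{Est-on-Ocal-zeta-h}. The delicate points are thus the $h$-uniformity of all the constants produced by the discrete elliptic Carleman estimate and by the commutator terms, and the bookkeeping of the cap $\lambda\lesssim1/h$, which is precisely what generates the additional $h^{1/(1+\alpha)}$ contribution.
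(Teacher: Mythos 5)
Your proposal follows the paper's proof almost exactly: FBI transform in time, the semi-discrete elliptic Carleman estimate of \cite{BoyerHubertLeRousseau2} in place of Lemma~\ref{Lemma-Carleman}, and a three-regime optimization in $\lambda$ capped at $O(1/h)$, which is precisely where the paper also gets the extra $C\mathscr{D}h^{1/(1+\alpha)}$ term (the paper obtains $Ch^\gamma\mathscr{D}$ with $\gamma>1/(1+\alpha)$ in the regime $\lambda_0\geq \varepsilon_*/h$, exactly as you do). Steps 1, 2, 4 and 5 of your plan are sound and match the paper.

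The one point you gloss over, and which would actually fail as written, is ``we reuse the same weight: the continuous $\psi_0$ on $\overline\omega$ satisfying \eqref{Psi-0}.'' In Theorem~\ref{Thm-Est-By-FBI} the set $\omega$ is assumed \emph{smooth}, and the Fursikov--Imanuvilov construction of $\psi_0$ with $\inf_{\overline\omega}|\nabla\psi_0|>0$ and $\psi_0=0$, $\partial_\nu\psi_0<0$ on $\partial\omega\setminus\Gamma_0$ relies on that. In Theorem~\ref{Thm-Est-By-FBI-Discrete} the set $\omega$ is only Lipschitz --- and in the intended application it is a neighborhood of the two consecutive edges $\Gamma_+$, hence has a corner --- so this construction does not apply directly; moreover the Carleman estimate of \cite{BoyerHubertLeRousseau2} requires the weight to be $C^p([-3,3]\times\overline\Omega)$ for $p$ large, which your weight would not be. The paper handles this by first enlarging $\omega$ to a set $\omega_{\mathrm r}\subset\{d(x,\omega)<R_0/2\}$ whose boundary off $\Gamma_+$ is smooth, and building the weight $\psi_{0,\mathrm r}$ on $\overline{\omega_{\mathrm r}}$ with the modified conditions \eqref{Psi-0-discrete} (vanishing on $\partial\omega_{\mathrm r}\setminus\Gamma_+$, negative normal derivative on $\partial\omega_{\mathrm r}\setminus\Gamma_0$). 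A second, related point you should at least acknowledge: the estimate of \cite[Theorem 1.4]{BoyerHubertLeRousseau2} is stated with observation on the boundary of the \emph{continuous} variable $s=\pm3$, not on the discrete boundary $\Gamma_{0,h}$; the paper's Theorem~\ref{Thm-Carl-BHLR} is a nontrivial adaptation whose validity rests on the sign condition $\partial_\nu\psi_{0,\mathrm r}<0$ on $\partial\omega_{\mathrm r}\setminus\Gamma_0$ and on re-examining the boundary terms in the proof of \cite{BoyerHubertLeRousseau2}. Neither issue changes the architecture of your argument, but both must be addressed for Step 3 to be legitimate.
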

Before proving Theorem \ref{Thm-Est-By-FBI-Discrete}, let us point out that it differs from Theorem \ref{Thm-Est-By-FBI} by the last term $h^{1/(1+\alpha)} \mathscr{D}$ in \eqref{Est-on-Ocal-zeta-h}. Nonetheless, this term vanishes in the limit $h \to 0$ and thus estimate \eqref{Est-on-Ocal-zeta} can be recovered from \eqref{Est-on-Ocal-zeta-h} when $h \to 0$.
But in particular, estimate \eqref{Est-on-Ocal-zeta-h} does not state a uniqueness result anymore, but rather an ``almost-uniqueness'' result: if $\partial_{h,2}^- \zeta_h$ vanishes on $(-T,T) \times \Gamma_{0,h}$ for some $\zeta_h$ satisfying the assumptions of Theorem \ref{Thm-Est-By-FBI-Discrete}, we only have that the norm of $\zeta_h$ in $H_h^1((-T/8,T/8)\times \omega_h)$ is smaller than $C h^{1/(1+\alpha)} \mathscr{D}$. Due to the definition of $\mathscr{D}$, this corresponds to the case where 
$$
	\norm{\zeta_h }_{H_h^1((-T/8,T/8)\times \omega_h)} \leq C h^{1/(1+\alpha)}\norm{\zeta_h}_{H^2_h((-T,T) \times \Omega_h)}, 
$$ 
i.e. functions that are localized outside $(-T/8,T/8)\times \omega_h$. This is completely consistent with the presence of spurious high-frequency modes that are localized, see \cite{Tref,Zua05Survey,ErvZuaCime}. We refer for instance to a counterexample due to O. Kavian: if $w_h$ denotes the discrete function given by $w_{i,j} = (-1)^i$ when $i = j$ and vanishing for $i \neq j$, the function $\zeta_h(t,x_h) = \exp(2{\bf i} t/h) w_h(x_h)$ is a solution of \eqref{Eq-zeta-Thm-Discrete} with $q_h = 0$ and $f_h = 0$ whose discrete normal derivative on $\{1\}\times (1/4,3/4)$ vanishes identically. 
\begin{proof}[Proof of Theorem \ref{Thm-Est-By-FBI-Discrete}] It follows the same steps as the one of Theorem \ref{Thm-Est-By-FBI}. More precisely, Steps~1, 2 and 4 involving the FBI transform in time are left unchanged, but Steps 3 and 5 need to be modified.
	Indeed, Step 3 in the proof of Theorem \ref{Thm-Est-By-FBI} is based on the Carleman estimate in Lemma \ref{Lemma-Carleman} and we should thus use a semi-discrete counterpart. Namely, we use the discrete Carleman inequality proved in \cite[Theorem 1.4]{BoyerHubertLeRousseau2} that we rewrite below within our setting and using our notations. 

	Before stating this result, let us make precise how we choose the weight function. In particular, let us emphasize that the weight function in \cite{BoyerHubertLeRousseau2} is assumed to be $C^p([-3, 3]\times \overline{\Omega})$ for $p$ large enough, and this cannot be true with the construction we did for the proof of Theorem~\ref{Thm-Est-By-FBI}, since $\Omega = (0,1)^2$ contains corners.. We thus build the weight function $\psi_{0,\mathrm r}$ as follows (here the subscript `$\mathrm r$' stands for `regularized'): first we conceive an open subset $\omega_{\mathrm r}$ such that $\omega_{\mathrm r} \subset \{x \in \Omega,\,  d(x, \omega) <R_0/2 \}$, $\omega \subset \omega_{\mathrm r}$, and $\partial \omega_{\mathrm r} \setminus \Gamma_+$ is smooth (see Fig.~\ref{fig:weight2}). 	
\begin{figure}[h]
\begin{center}
\scalebox{0.6}{\input{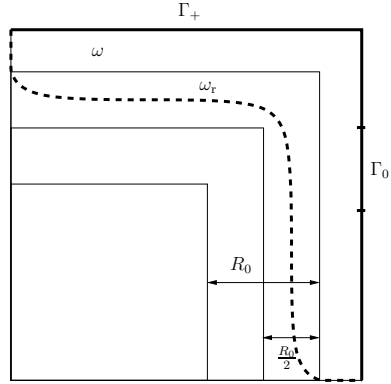}}
\end{center}
\caption{Construction of the weight function $\psi_{0,\mathrm r}(x)$ 
when $\omega$ is a neighborhood of two consecutive edges.
}
\label{fig:weight2}
\end{figure}
	
	\noindent We can then design a smooth weight function $\psi_{0,\mathrm r}$ such that
	\begin{equation}
			\label{Psi-0-discrete}
			\left\{
				\begin{array}{l}
					\forall x \in \overline{ \omega_{\mathrm r} },\, \psi_{0,\mathrm r}(x) \geq 0,
					\\
					\inf_{\overline{\omega_{\mathrm r}}} \{|\nabla \psi_{0,\mathrm r}(x)| \} >0,
					\\
					\forall x \in \partial  \omega_{\mathrm r} \setminus \Gamma_0, \, \partial_\nu \psi_{0,\mathrm r}(x) < 0,
					\\
					\forall x \in \partial  \omega_{\mathrm r} \setminus \Gamma_+, \,  \psi_{0,\mathrm r}(x) = 0,
					\\
					\norm{\psi_{0,\mathrm{r}}}_{L^\infty(\omega_{\mathrm r})} \leq 1/2. 
				\end{array}
			\right.
	\end{equation}

	Again, such a function $\psi_{0,\mathrm r}$ exists according to the construction in \cite{FursikovImanuvilov,TWbook} and it can be extended as a smooth function $\psi_{\mathrm r}$ on $\overline{\Omega}$ satisfying 
		$\norm{\psi_{\mathrm r}}_{L^\infty(\Omega)} \leq 1$. By continuity, there exists $R \in (0, R_0/2)$ such that for the sets 
		$$
			\omega_{\mathrm r,R} = \{ x \in \Omega,\, d(x, \omega_{\mathrm r}) < R\} \quad \hbox{ and } \quad \mathscr{C}_{\mathrm r}  = \{ x \in \Omega, \, R/2 \leq d(x, \omega_{ \mathrm r}) \leq R\}, 
		$$
		 we have 
		\begin{equation}
			\label{ConditionPsi-OcalR-Discrete}
			\inf_{\overline{\omega_{\mathrm r,R}}} \{|\nabla \psi_{ \mathrm r}(x)| \} >0, \quad \hbox{ and } \quad \inf_{\overline{\omega_{\mathrm r}}} \psi_{ \mathrm r} > \sup_{\overline{\mathscr{C}_{\mathrm r}}} \psi_{\mathrm r}.
		\end{equation}	
		We then define $\varphi_{ \mathrm r}$ as in \eqref{DefPhi-Bel} but with this function $\psi_{\mathrm r}$: for $\mu \geq 1$, 
		$$
			\varphi_{\mathrm r} := \varphi_{\mathrm r}(s,x) = \exp( \mu (\psi_{ \mathrm r}(x) - s^2))
			\quad (s,x) \in [-3, 3] \times \overline{\Omega}.
		$$
	\begin{theorem}[\cite{BoyerHubertLeRousseau2}]\label{Thm-Carl-BHLR}
		Let $\varphi_{\mathrm r}$ be as above and its restriction on the mesh $\varphi_{\mathrm r,h} = \rh \varphi_{\mathrm r}$.\\
		There exist $\mu \geq 1$, $C>0$, $h_0>0$ and $\varepsilon_0 >0$ such that for all $h \in (0,h_0)$, $\tau \geq 1$ with $\tau h \leq \varepsilon_0$, for all $g_h\in L^2((-3, 3); L^2_h( \Omega_h))$ and $w_h$ solution of
		$$%\begin{equation}\label{EllipticEq-Bel-Dis}
				\left\{
					\begin{array}{ll}
						(- \partial_{ss} - \Delta_h + q_h ) w_h = g_h  &\qquad\hbox{ in } (-3,3) \times \Omega_h, 
						\\
						w_h = 0  &\qquad \hbox{ on } ( (-3, 3) \times \partial \Omega_h) \cup (\{-3,3\}\times \Omega_h),
					\end{array}
				\right.
		$$%\end{equation}
		 supported in $(-3, 3) \times \omega_{\mathrm{r},R}$, 
			\begin{multline}
				\label{Carleman-Ass-Bel-Disc}
					\tau^3 \norm{e^{\tau \varphi_{\mathrm{r},h}} w_h}_{L^2(-3, 3;L^2_h(\Omega_h))}^2
					%\\
					+ \tau \norm{e^{\tau \varphi_{\mathrm{r},h}} \nabla_{s} w_h}_{L^2(-3, 3; L_h^2(\Omega_h))}^2 
					+ \tau \sum_{k=1,2} \norm{e^{\tau \varphi_{\mathrm{r},h}} \partial_{h,k}^+ w_h}_{L^2(-3, 3; L_h^2(\Omega_{k,h}^-))}^2 
					\\
					\leq C \norm{e^{ \tau \varphi_{\mathrm{r},h}} g_h}_{L^2(-3, 3; L^2_h(\Omega_h))}^2  + C \tau \norm{e^{\tau \varphi_{\mathrm{r},h}} \partial_{h,2}^- w_h}_{L^2(-3,3; L^2_h( \Gamma_{0,h}))}^2.
			\end{multline}
		Besides, the constant $C$ can be taken uniformly with respect to $q_h \in L^\infty_h (\Omega_h)$ with $\norm{q_h}_{L^\infty_h} \leq m$.
	\end{theorem}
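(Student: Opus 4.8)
The plan is to follow the by-now-standard route for semi-discrete Carleman estimates, noting first that \eqref{Carleman-Ass-Bel-Disc} is essentially the content of \cite[Theorem~1.4]{BoyerHubertLeRousseau2}; the only structural difference with that reference is that here the variable $s \in (-3,3)$ stays continuous while only the space variable $x \in \Omega_h$ is discretized, so that $-\partial_{ss} - \Delta_h + q_h$ is a mixed continuous/semi-discrete elliptic operator. Since $q_h$ enters only as a zeroth-order coefficient with $\norm{q_h}_{L^\infty_h(\Omega_h)} \leq m$, I would first establish the estimate for $q_h = 0$: moving $q_h w_h$ to the right-hand side costs a term bounded by $m^2 \norm{e^{\tau \varphi_{\mathrm r,h}} w_h}^2_{L^2}$, which is absorbed by the leading left-hand term $\tau^3 \norm{e^{\tau \varphi_{\mathrm r,h}} w_h}^2_{L^2}$ as soon as $\tau$ is large, uniformly in $q_h$; this yields the claimed uniformity with respect to $q_h$.

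For $q_h = 0$, I set $v_h = e^{\tau \varphi_{\mathrm r,h}} w_h$ and introduce the conjugated operator $\mathscr{L}_h v_h = e^{\tau \varphi_{\mathrm r,h}}(-\partial_{ss} - \Delta_h)(e^{-\tau\varphi_{\mathrm r,h}} v_h)$, whose explicit form I would compute exactly as in Proposition \ref{Prop-B} (with the hyperbolic $\partial_{tt}$ replaced by the elliptic $-\partial_{ss}$). Conjugating $-\partial_{ss}$ is classical and produces, besides $-\partial_{ss}$, the first-order term $\propto \tau \partial_s \varphi_{\mathrm r}\,\partial_s$ and the zeroth-order terms $\propto \tau^2 (\partial_s \varphi_{\mathrm r})^2$ and $\propto \tau \partial_{ss}\varphi_{\mathrm r}$; conjugating each $-\Delta_{h,k}$ produces the analogous continuous symbols $\propto \tau \partial_{x_k}\varphi_{\mathrm r}\,\partial_{h,k}$, $\tau^2(\partial_{x_k}\varphi_{\mathrm r})^2$, $\tau \partial_{x_k x_k}\varphi_{\mathrm r}$ plus genuinely discrete corrections, encoded through averaged coefficients $A_{\ell,k}$ of the form \eqref{A1}--\eqref{A4} that differ from their continuous counterparts $f_{\ell,k}$ by $O_\mu(\tau h)$ in $C^2$, as in \eqref{ProxC2}. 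The point is that for $\tau h \leq \varepsilon_0$ small the conjugated discrete operator is a genuine perturbation of its continuous analogue.

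Next I would split $\mathscr{L}_h = \mathscr{L}_{h,1} + \mathscr{L}_{h,2} - \mathscr{R}_h$ into a discrete self-adjoint part $\mathscr{L}_{h,1}$, a skew-adjoint part $\mathscr{L}_{h,2}$ and a zeroth-order remainder $\mathscr{R}_h$, and exploit the elementary bound $\norm{\mathscr{L}_{h,1}v_h}^2 + \norm{\mathscr{L}_{h,2}v_h}^2 + 2\langle \mathscr{L}_{h,1}v_h, \mathscr{L}_{h,2}v_h\rangle \leq 2\norm{\mathscr{L}_h v_h}^2 + 2\norm{\mathscr{R}_h v_h}^2$. The heart of the proof is the computation of the cross term $\langle \mathscr{L}_{h,1}v_h, \mathscr{L}_{h,2}v_h\rangle$, carried out by integration by parts, continuously in $s$ and discretely in $x$, the latter generating also averaging-operator contributions as in the formulas of Appendix \ref{Sec-IPP} and Lemma \ref{LemIPP2}. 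With $\mu$ chosen large, so that $\varphi_{\mathrm r}$ is sub-elliptic (here one uses $\inf_{\overline{\omega_{\mathrm r,R}}} |\nabla \psi_{\mathrm r}| > 0$ from \eqref{ConditionPsi-OcalR-Discrete}), and $\tau$ large, the leading-order terms of this cross product are positive and reproduce the three left-hand side quantities $\tau^3\norm{e^{\tau\varphi_{\mathrm r,h}}w_h}^2$, $\tau\norm{e^{\tau\varphi_{\mathrm r,h}}\nabla_s w_h}^2$ and $\tau\sum_k\norm{e^{\tau\varphi_{\mathrm r,h}}\partial_{h,k}^+ w_h}^2$. The discrete boundary contributions in $x$ carry the favorable sign on $\partial\omega_{\mathrm r}\setminus\Gamma_0$ thanks to $\partial_\nu \psi_{\mathrm r} < 0$ there \eqref{Psi-0-discrete} and to the support hypothesis $w_h$ supported in $(-3,3)\times\omega_{\mathrm r,R}$, so only the term on $\Gamma_{0,h}$ survives and is placed on the right-hand side as the observation $\tau\norm{e^{\tau\varphi_{\mathrm r,h}}\partial_{h,2}^- w_h}^2_{L^2(\Gamma_{0,h})}$; the contributions at $s = \pm 3$ vanish since $w_h$ and $\partial_s w_h$ vanish there.

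The main obstacle, and the reason for the restriction $\tau h \leq \varepsilon_0$, is the control of the purely discrete error terms. These arise both from the $O(\tau h)$ discrepancy between $A_{\ell,k}$ and $f_{\ell,k}$ and from the extra averaging terms and $h^2$-weighted quantities (such as the term $-\frac{h^2}{4}\int_{\Omega_h^-} |\partial_{h,1}^+\partial_{h,2}^+ v_h|^2 \,\partial_{h,2}^+(m_{h,1}^+ g_h)$ appearing in \eqref{New-1}) produced by discrete integration by parts; each such error scales like a positive power of $\tau h$ times one of the leading quadratic quantities, so choosing $\varepsilon_0$ small absorbs all of them into the positive left-hand side. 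Finally, undoing the substitution $w_h = e^{-\tau\varphi_{\mathrm r,h}} v_h$ via the elementary comparisons between $\partial_{h,k}^+(e^{\tau\varphi_{\mathrm r,h}} w_h)$ and $e^{\tau\varphi_{\mathrm r,h}}\partial_{h,k}^+ w_h$ valid for $\tau h$ small, exactly as at the end of the proof of Theorem \ref{Thm-CarlemanDisc-Boundary}, converts the estimate on $v_h$ into \eqref{Carleman-Ass-Bel-Disc} for $w_h$, and the perturbation argument of the first paragraph reinstates $q_h$. The exhaustive bookkeeping of these discrete remainders is precisely what is performed in \cite{BoyerHubertLeRousseau2}, to which I would refer for the complete details.
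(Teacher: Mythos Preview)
Your sketch is methodologically correct and follows exactly the route of \cite{BoyerHubertLeRousseau2}: conjugation, self-adjoint/skew-adjoint splitting, cross-product computation via discrete integration by parts, absorption of $O_\mu(\tau h)$ remainders under $\tau h \leq \varepsilon_0$, and a final perturbation argument for $q_h$. There is nothing wrong with it as a proof outline.

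That said, the paper itself does not prove Theorem~\ref{Thm-Carl-BHLR} at all: it is stated as a citation from \cite{BoyerHubertLeRousseau2}, and the only argumentative content in the paper is the Remark immediately following the statement, which explains why three discrepancies between the present formulation and \cite[Theorem~1.4]{BoyerHubertLeRousseau2} are harmless. Specifically: (i) the observation here is taken on the \emph{discrete} boundary $\Gamma_{0,h}$ rather than on the boundary of the continuous variable $s=\pm 3$, which requires the sign condition $\partial_\nu \psi_{0,\mathrm r}<0$ on $\partial\omega_{\mathrm r}\setminus\Gamma_0$ in \eqref{Psi-0-discrete} and a check that the boundary terms $Y$ of \cite[Lemma~3.7]{BoyerHubertLeRousseau2} absorb the $J_{11}$ contributions of \cite[Lemma~3.3]{BoyerHubertLeRousseau2} on $\partial\Omega\setminus\Gamma_0$; (ii) the convexity assumption near the boundary in \cite[Assumption~1.3]{BoyerHubertLeRousseau2} is dropped, by invoking \cite[Remark~1.3]{BLR-ANIHP13} and a modified \cite[Lemma~C.4]{BoyerHubertLeRousseau2}; (iii) the weight may degenerate outside $(-3,3)\times\omega_{\mathrm r,R}$, which is irrelevant since $w_h$ is supported there. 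Your sketch implicitly handles (iii) and mentions the sign condition relevant to (i), but does not address (ii) or the precise boundary-term bookkeeping for (i). If you want your write-up to match the paper's level of justification, replacing your full sketch by these three targeted remarks would be both shorter and more to the point.
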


	\begin{remark}
		Before going further, let us comment more precisely Theorem \ref{Thm-Carl-BHLR}, which cannot be found under that precise form in \cite{BoyerHubertLeRousseau2} and differs from \cite[Theorem 1.4]{BoyerHubertLeRousseau2} at three levels.
		
		The first issue is that Theorem 1.4 in \cite{BoyerHubertLeRousseau2} concerns the case of an observation on the boundary of the continuous variable, corresponding here to $s = \pm 3$. Therefore, Assumption 1.3 on the weight function in \cite{BoyerHubertLeRousseau2} is designed to yield observations on the boundary of the continuous variable, and in our case, they are replaced by the condition $\forall x \in \partial  \omega_{\mathrm r} \setminus \Gamma_0, \, \partial_\nu \psi_{0,\mathrm r}(x) < 0$ in \eqref{Psi-0-discrete}. 
		We claim that this condition is enough to guarantee a Carleman estimate with an observation on the boundary of the discrete variables. This can be proved following the lines of \cite{BoyerHubertLeRousseau2} in that case and looking at the boundary terms denoted $Y$ and estimated in \cite[Lemma 3.7]{BoyerHubertLeRousseau2}, which are strong enough to absorb the boundary terms in $J_{11}$ in \cite[Lemma 3.3]{BoyerHubertLeRousseau2} on $\partial \Omega \setminus \Gamma_0$.

		The second issue is that Assumption 1.3 in \cite{BoyerHubertLeRousseau2} requires some convexity condition in the neighborhood of the boundary. But, as mentioned in \cite[Remark 1.3]{BLR-ANIHP13}, this can be avoided by suitably modifying the proof of Lemma C.4 in \cite{BoyerHubertLeRousseau2}.

		The third and last issue is that our weight function may degenerate outside $(-3, 3) \times \omega_{\mathrm{r},R}$. But, as in the continuous case, this actually does not come into play as we apply Carleman estimate \eqref{Carleman-Ass-Bel-Disc} to discrete functions $w_h$ supported in $(-3, 3) \times \omega_{\mathrm{r},R}$.
	\end{remark}
	Note that the main difference in the discrete Carleman estimate of Theorem \ref{Thm-Carl-BHLR} with respect to the one in Lemma \ref{Lemma-Carleman} is the fact that the parameter $\tau$ is assumed to satisfy $\tau h \leq \varepsilon_0$. The proof of Theorem \ref{Thm-Est-By-FBI} shall then be modified to keep track on this restriction. 
	Thus, Step 3 can be done as in the proof of Theorem \ref{Thm-Est-By-FBI}, except that the construction of the cut-off function $\chi_R$ is now based on $\omega_{\mathrm r}$, and the existence of $\varepsilon_0>0$ such that 
	$$
		\inf_{|s| \leq \varepsilon_0,\, x \in \omega_{\mathrm r}} \psi_{\mathrm r}(s,x) > \sup_{|s| \leq 3, \, x \in \mathscr{C}_{\mathrm r}} \psi_{\mathrm r}(s,x)
	$$
	is granted by \eqref{ConditionPsi-OcalR-Discrete}. Then, all the constants $\mathscr{I}_\omega$, $\mathscr{S}$,  $\mathscr{S}_{(2,3)}$, $ \mathscr{S}_{\mathscr{C}}$ in \eqref{Notations-Sup-Inf-Bel}, now denoted $\mathscr{I}_{\omega_{\mathrm r}}$,  $\mathscr{S}$, $ \mathscr{S}_{(2,3)}$, $ \mathscr{S}_{\mathscr{C}_{\mathrm r}}$, are defined by replacing $\omega$ by $\omega_{\mathrm r}$, $\varphi$ by $\varphi_{\mathrm r}$ and $\mathscr{C}$ by $\mathscr{C}_{\mathrm r}$. 
	Hence, instead of \eqref{Est-v-a-lambda-obs-f}, we obtain the following: for all $h \in (0,h_0)$, $ \tau \geq 1$ with $\tau h \leq \varepsilon_0$, for all $\lambda \geq 1$,
	\begin{multline*}
			 \norm{\zeta_h}_{H^1((-T/8, T/8)\times \omega_{{\mathrm r},h})}^2 
			 \leq 
			\frac{C}{\lambda^{2\gamma}} \mathscr{D}^2
			+
			C  e^{2 \tau (\mathscr{S}- \mathscr{I}_{\omega_{\mathrm r}} )}  \lambda^{2\gamma} e^{ -2 c_1 \lambda  (T /2)^{1/\gamma}} \mathscr{D}^2
			\\
			+
			C e^{ -2 \tau \mathscr{I}_{\omega_{\mathrm r}}} \left(e^{2 \tau \mathscr{S}_{(2,3)} }  + e^{2 \tau \mathscr{S}_{\mathscr{C}_{\mathrm r}}} \right)	\lambda^{4\gamma} e^{c_3 \lambda} \mathscr{D}^2
			+ ~
			C \tau e^{ 2 \tau (\mathscr{S}- \mathscr{I}_{\omega_{\mathrm r}}) } \lambda^{4\gamma} e^{c_3 \lambda } \norm{\partial_{h,2}^-  \zeta_h}_{L^2(-T,T ; L^2_h(\Gamma_{0,h}))}^2.
	\end{multline*}
	The discussion then follows the same path as in the Step 5 of the proof of Theorem \ref{Thm-Est-By-FBI}: the natural choice is to take $\tau$ as a linear function of $\lambda$ as in \eqref{Tau(Lambda)}. Thereby, we get the following discrete counterpart of \eqref{AlmostOver}: there are constants $C>0$ and $\varepsilon_*>0$ independent of $h>0$ such that for all $h \in (0,h_0)$ and for all $\lambda \in (\lambda_*, \varepsilon_*/h)$, 
	\begin{equation}
		\label{Step-5-Modified}
			\norm{\zeta_h}_{H^1_h((-T/8, T/8)\times \omega_h)} 
			\leq 
			\frac{C}{\lambda^{\gamma}} \mathscr{D} + C  e^{c_6 \lambda/2 } \norm{\partial_{h,2}^-  \zeta_h}_{L^2(-T,T ; L^2_h(\Gamma_{0,h}))}.
	\end{equation}
	Introducing the ratio
	$$
			\rho_h=\frac{\mathscr{D}}{\norm{\partial_{h,2}^-  \zeta_h}_{L^2(-T,T ; L^2_h(\Gamma_{0,h}))}}, 
	$$
	the optimal value of the parameter $\lambda$ is 
	$$
		\lambda_{0,h} =  \frac{1}{c_6}\log\left(2+ \rho_h \right),
	$$ corresponding to the choice \eqref{Opt-Lambda-0} in the proof of Theorem \ref{Thm-Est-By-FBI}. We then have to discuss the cases $\lambda_{0,h} \leq \lambda_*$, $\lambda_{0,h} \in (\lambda_*, \varepsilon_*/h)$ and $\lambda_{0,h} \geq \varepsilon_*/h$. Of course, the first two cases can be handled as in the continuous setting. There only remains the last case $\lambda_{0,h} \geq \varepsilon_*/h$. But this corresponds to
	$
		\rho_h \geq \exp( c_6 \varepsilon_*/h) - 2 \geq \exp( c_6 \varepsilon_*/h)/2,
	$
	for $h$ small enough, which in particular implies
	$$
		2 \norm{\partial_{h,2}^-  \zeta_h}_{L^2(-T,T ; L^2_h(\Gamma_{0,h}))} \leq  \mathscr{D} \exp( - c_6 \varepsilon_*/h).
	$$
	Thus, taking $\lambda = \varepsilon_*/h$ in \eqref{Step-5-Modified}, we obtain 
	$$
		\norm{\zeta_h}_{H^1_h((-T/8, T/8)\times \omega_h)}\leq C h^\gamma \mathscr{D}.
	$$
	This explains the presence of the last term in \eqref{Est-on-Ocal-zeta-h}.
\end{proof}
We finally conclude this section with the proof of Theorem \ref{TCWElog}.
\begin{proof}[Proof of Theorem \ref{TCWElog}]
	As for the proof of Theorem \ref{ThmBellassoued} from \eqref{Thm-Est-By-FBI}, it follows immediately by applying Theorem \ref{Thm-Est-By-FBI-Discrete} to $\zeta_h = \partial_t y_h[q_h^a] - \partial_t y_h[q_h^b]$. The use of estimate \eqref{UniformStability-3Loc} of Theorem~\ref{TCWE1} then completes the proof. Details are left to the reader.
\end{proof}
\begin{remark}
	\label{Rem-Phung09-discrete}
	Following Remark \ref{Rem-Phung09}, we can derive a quantification of a kind of unique continuation result for solutions $\zeta_h$ of discrete wave equations \eqref{Eq-zeta-Thm-Discrete} with no source term: For all $\alpha >0$ and $T>0$ large enough, there exists a constant $C$ independent of $h>0$ such that for all $\zeta_h $ solution of the wave equation \eqref{Eq-zeta-Thm-Discrete} with $f_h = 0$ and initial data $(\zeta^0_h, \zeta^1_h) \in H^2_h\cap H^1_{0,h}(\Omega_h)\times H^1_{0,h}(\Omega_h)$,
	\begin{multline}
		\label{Discrete-Weak-Obs}
		\norm{ (\zeta^0_h, \zeta^1_h)}_{H^1_{0,h}(\Omega_h)\times L^2_h(\Omega_h)}\leq C e^{C \Lambda_h^{1+\alpha}} \norm{\partial_{h,2}^- \zeta}_{L^2(-T,T; L^2_h (\Gamma_{0,h}))} 
		\\
		+ C h^{1/(1+\alpha)} \norm{ (\zeta^0_h, \zeta^1_h)}_{H^2_h\cap H^1_{0,h}(\Omega_h)\times H^1_{0,h}(\Omega_h)},
	\end{multline}
	where
	$
		\Lambda_h = \dfrac{ \norm{ (\zeta^0_h, \zeta^1_h)}_{H^2_h\cap H^1_{0,h}(\Omega_h)\times H^1_{0,h}(\Omega_h)} }{ \norm{ (\zeta^0_h, \zeta^1_h)}_{ H^1_{0,h}(\Omega_h) \times L^2_h(\Omega_h)} }
	$
	or, equivalently,
	$$
		(1 - C h^{1/(1+\alpha)} \Lambda_h)\norm{ (\zeta^0_h, \zeta^1_h)}_{H^1_{0,h}(\Omega_h)\times L^2_h(\Omega_h)}\leq C e^{C \Lambda_h^{1+\alpha}} \norm{\partial_{h,2}^- \zeta}_{L^2((-T,T); L^2_h (\Gamma_{0,h}))}.
	$$
	Note that \eqref{Discrete-Weak-Obs} only yields an ``almost uniqueness'' result in the sense that it does not imply $\zeta_h \equiv 0$ when the discrete normal derivative $\partial_{h,2}^- \zeta_h$ vanishes on $(-T,T) \times \Gamma_{0,h}$. Recall here that this term is needed as unique continuation for the discrete wave equations does not hold as shown by the counterexample of O. Kavian of an eigenfunction of the discrete Laplace operator which is localized on the diagonal of the square.
\end{remark}
%
%%%%%%%%%%%%%
\section{Convergence and consistency issues}\label{SecCV}
%%%%%%%%%%%%%
This last section is devoted to the proof of the convergence results stated in Theorem~\ref{Thm-Cvg}.  
%%%%%%%%%%%%%
\subsection{Convergence results for the inverse problem}
%%%%%%%%%%%%% 
We will first state and prove two theorems of convergence under more detailed consistency assumptions.
The feasibility of these assumptions will be studied next.
Under the Gamma-conditions, and more specifically in the geometric setting \eqref{Gamma-+-T-sqrt2}, we obtain:
\begin{theorem}[Convergence under Gamma-conditions]
	\label{Thm-Cvg-Gamma}
	Assume that $(\Omega,  \Gamma_0,T)$ satisfies the configuration \eqref{Gamma-+-T-sqrt2} and that $(y^0,y^1, f, f_\partial)$ follows the conditions \eqref{SmallerPossibleClass}.
	Let $q \in L^\infty(\Omega)$ and assume that there exist sequences $q_h^a \in L^\infty_h(\Omega_h)$, and $(y^0_h,y^1_h, f_h, f_{\partial,h})$ of discrete functions in $L^2_h(\Omega_h)^2 \times L^1(0,T; L^2_h(\Omega_h)) \times L^2(0,T; L^2_h( \partial \Omega_h))$ such that
	\begin{eqnarray}
		&& \lim_{h \to 0} \norm{\eh^0 (q_h^a)-q }_{L^2(\Omega)} = 0, \label{Cond1-ConsGamma}
		\\
		&& \lim_{h \to 0} \norm{\widetilde{\mathscr{M}_h}[q_h^a] - \widetilde{\mathscr{M}_0}[q]}_{ H^1(0,T;L^2(\Gamma_0))\times L^2((0,T)\times\Omega)} = 0, \label{Cond3-ConsGamma}
		\\
		&& \limsup_{h \to 0} \norm{q_h^a}_{L^\infty_h(\Omega_h)} < \infty, \label{Cond2-ConsGamma}
		\\
		&& \limsup_{h \to 0} \| y_h[q_h^a] \|_{H^1(0,T;L^\infty_h(\Omega_h))} < \infty, \label{Cond4-ConsGamma}
		\\
		&& \exists \alpha_0 >0, \, \forall h >0, \quad  \inf_{\Omega_h} |y^{0}_h|\geq \alpha_0. \label{Cond5-ConsGamma}
	\end{eqnarray}
	Then for all sequence $(q_h^b)_{h>0}$ of potentials satisfying 
	$$
		\limsup_{ h \to 0}\norm{q_h^b}_{L^\infty_h(\Omega_h)} <\infty, 
		\quad\hbox{ and }\quad \lim_{h \to 0} \norm{\widetilde{\mathscr{M}_h}
		[q_h^b] - \widetilde{\mathscr{M}_0}
		[q]}_{ H^1(0,T;L^2(\Gamma_0))\times L^2((0,T)\times\Omega)} = 0,
	$$
	we have
	$$
		 \lim_{h \to 0} \norm{\eh^0 (q_h^b)-q }_{L^2(\Omega)} = 0.
	$$	
\end{theorem}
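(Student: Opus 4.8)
The plan is to apply the uniform stability estimate of Theorem~\ref{TCWE1} to the pair $(q_h^a, q_h^b)$, and then pass to the limit $h\to 0$ using the hypotheses. First I would observe that both sequences $q_h^a, q_h^b$ have uniformly bounded $L^\infty_h(\Omega_h)$-norms, so there is some $m>0$ with $\norm{q_h^a}_{L^\infty_h(\Omega_h)}, \norm{q_h^b}_{L^\infty_h(\Omega_h)}\le m$ for $h$ small. Likewise, by \eqref{Cond5-ConsGamma} and \eqref{Cond4-ConsGamma} there is $\alpha_0>0$ with $\inf_{\Omega_h}|y_h^0|\ge\alpha_0$ and $K>0$ with $\norm{y_h[q_h^a]}_{H^1(0,T;L^\infty_h(\Omega_h))}\le K$ for $h$ small. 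Hence the hypotheses of Theorem~\ref{TCWE1} are satisfied with $h$-independent constants, and since the configuration \eqref{Gamma-+-T-sqrt2} holds, estimate \eqref{UniformStability-3} gives a constant $C=C(T,m,K,\alpha_0)$, independent of $h$, with
\begin{multline*}
	\norm{q_h^a - q_h^b}_{L^2_h(\Omega_h)}
	\le C \norm{\mathscr{M}_h[q_h^a] - \mathscr{M}_h[q_h^b]}_{H^1(0,T;L^2_h(\Gamma_{0,h}))}
	\\
	+ Ch \sum_{k=1,2} \norm{\partial_{h,k}^+\partial_{tt} y_{h}[q_h^a] - \partial_{h,k}^+\partial_{tt} y_h[q_h^b]}_{L^2(0,T;L^2_h(\Omega_{h,k}^-))}.
\end{multline*}

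Next I would recognize the right-hand side as being controlled, uniformly in $h$, by $\norm{\widetilde{\mathscr{M}_h}[q_h^a] - \widetilde{\mathscr{M}_h}[q_h^b]}_{H^1(0,T;L^2(\Gamma_0))\times L^2((0,T)\times\Omega)}$, by virtue of the equivalence of norms pointed out just before the definition \eqref{DiscreteFh}--\eqref{contF} in the introduction (the quantity $h\sum_k\norm{\partial_{h,k}^+\partial_{tt}y_h}_{L^2((0,T)\times\Omega_{h,k}^-)}$ is equivalent, uniformly in $h$, to $\norm{h\nabla_x\eh(\partial_{tt}y_h)}_{L^2((0,T)\times\Omega)}$, and similarly the boundary term equals $\norm{\partial_\nu\eh(y_h)}_{H^1(0,T;L^2(\Gamma_0))}$ up to the identity recalled at the start of the proof of Theorem~\ref{TCWE1}). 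Therefore there is $C'>0$, independent of $h$, with
$$
	\norm{q_h^a - q_h^b}_{L^2_h(\Omega_h)} \le C'\norm{\widetilde{\mathscr{M}_h}[q_h^a] - \widetilde{\mathscr{M}_h}[q_h^b]}_{H^1(0,T;L^2(\Gamma_0))\times L^2((0,T)\times\Omega)}.
$$

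Then I would use the triangle inequality
$$
	\norm{\widetilde{\mathscr{M}_h}[q_h^a] - \widetilde{\mathscr{M}_h}[q_h^b]}_{\ast}
	\le \norm{\widetilde{\mathscr{M}_h}[q_h^a] - \widetilde{\mathscr{M}_0}[q]}_{\ast}
	+ \norm{\widetilde{\mathscr{M}_h}[q_h^b] - \widetilde{\mathscr{M}_0}[q]}_{\ast},
$$
where $\ast$ abbreviates the norm on $H^1(0,T;L^2(\Gamma_0))\times L^2((0,T)\times\Omega)$. By hypothesis \eqref{Cond3-ConsGamma} the first term tends to $0$, and by the assumption made on the sequence $(q_h^b)$ the second term tends to $0$ as well; hence $\norm{q_h^a-q_h^b}_{L^2_h(\Omega_h)}\to 0$. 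Using the isometry $\norm{\eh^0(f_h)}_{L^2(\Omega)}=\norm{f_h}_{L^2_h(\Omega_h)}$ recalled after \eqref{eh0}, this reads $\norm{\eh^0(q_h^a)-\eh^0(q_h^b)}_{L^2(\Omega)}\to 0$. Combining with hypothesis \eqref{Cond1-ConsGamma}, namely $\norm{\eh^0(q_h^a)-q}_{L^2(\Omega)}\to 0$, and one more triangle inequality
$$
	\norm{\eh^0(q_h^b)-q}_{L^2(\Omega)} \le \norm{\eh^0(q_h^b)-\eh^0(q_h^a)}_{L^2(\Omega)} + \norm{\eh^0(q_h^a)-q}_{L^2(\Omega)}
$$
yields $\norm{\eh^0(q_h^b)-q}_{L^2(\Omega)}\to 0$, which is the desired conclusion.

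There is no real obstacle in this argument; it is a clean Lax-type deduction (uniform stability plus convergence of the measurements gives convergence of the reconstruction). The only point requiring a little care is making explicit and uniform the passage between the two norms appearing in \eqref{UniformStability-3} and in the definition of $\widetilde{\mathscr{M}_h}$ — i.e. checking that the penalization term written with the discrete operators $h\partial_{h,k}^+\partial_{tt}y_h$ is indeed equivalent, with $h$-independent constants, to $\norm{h\nabla_x\eh(\partial_{tt}y_h)}_{L^2((0,T)\times\Omega)}$ — but this is the elementary equivalence between a piecewise-affine extension's gradient and the corresponding difference quotients, already used implicitly throughout the paper. All the genuine difficulty has been packed into Theorem~\ref{TCWE1}, whose proof is given earlier.
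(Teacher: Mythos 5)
Your proof is correct and follows essentially the same route as the paper: a triangle inequality on the measurements, the uniform stability estimate of Theorem~\ref{TCWE1} (with constants made $h$-independent via the hypotheses), and a final triangle inequality through $\eh^0(q_h^a)$. The norm-equivalence step you single out is exactly the observation the paper records just before the statement of Theorem~\ref{Thm-Cvg}, so nothing is missing.
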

When no geometric condition on the observation domain is satisfied, we get:
\begin{theorem}[Convergence under weak geometric conditions]
	\label{Thm-Cvg-log}
	Assume the geometric configuration \eqref{Config-Bellassoued} for $(\Omega,  \Gamma_0,  \Gamma_+)$, the conditions \eqref{SmallerPossibleClass} for $(y^0,y^1, f, f_\partial)$, and let $\mathcal{O}$ be a neighborhood of $\Gamma_+$. 
	Let $q \in L^\infty(\Omega)$ and assume that there exist sequences $q_h^a \in L^\infty_h(\Omega_h)$, and $(y^0_h,y^1_h, f_h, f_{\partial,h})$ of discrete functions in $L^2_h(\Omega_h)^2 \times L^1(0,T; L^2_h(\Omega_h)) \times L^2(0,T; L^2_h(\partial \Omega_h))$ such that \eqref{Cond1-ConsGamma}, \eqref{Cond3-ConsGamma} and \eqref{Cond2-ConsGamma} are fulfilled, along with 
	\begin{eqnarray}
		&& \limsup_{h \to 0} \| y_h[q_h^a] \|_{H^1(0,T;L^\infty_h(\Omega_h))\cap W^{2,1}(0,T; L^2_h(\Omega_h))} < \infty, \label{Cond4-Conslog}
		\\
		&& \exists \alpha_0 >0, \, \forall h >0, \quad  \inf_{\Omega_h} |y^{0}_h|\geq \alpha_0 \quad \hbox{ and } \quad \limsup_{h >0} \norm{y^0_h}_{H^1_{h}(\Omega_h)} < \infty. \label{Cond5-Conslog}
	\end{eqnarray}
	Then for $T>0$ large enough, for all sequence $(q_h^b)_{h>0}$ of potentials satisfying 
	\begin{eqnarray*}
		&& q_h^b = q_h^a \hbox{ in } \mathcal{O}_h \quad  \hbox{ and } \quad
		q_h^a - q_h^b \in H^1_{0,h}(\Omega_h) \quad \hbox{ with } \quad \limsup_{h\to 0} \norm{q_h^b - q_h^a}_{H^1_{0,h}(\Omega_h)} < \infty,
		\\
		&&\limsup_{h \to \infty} \norm{q_h^b}_{L^\infty_h(\Omega_h)} <\infty, 
		\, \hbox{ and }\, 
		 \lim_{h \to 0} \norm{\widetilde{\mathscr{M}_h}[q_h^b] - \widetilde{\mathscr{M}_0}[q]}_{ H^1(0,T;L^2(\Gamma_0))\times L^2((0,T)\times\Omega)} = 0,
	\end{eqnarray*}
	we have
	$$
		 \lim_{h \to 0} \norm{\eh^0 (q_h^b)-q }_{L^2(\Omega)} = 0.
	$$	
\end{theorem}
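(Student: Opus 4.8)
\textbf{Proof plan for Theorem~\ref{Thm-Cvg-log}.}
The plan is to run the usual \emph{consistency plus uniform stability implies convergence} argument, exactly as will be done for Theorem~\ref{Thm-Cvg-Gamma}, but with the Lipschitz estimate \eqref{UniformStability-3} replaced by the logarithmic estimate \eqref{Stability-Bellassoued-h}. Concretely, I would apply Theorem~\ref{TCWElog} to the pair $(q_h^a, q_h^b)$, where $q_h^a$ is the reference sequence provided by the consistency hypotheses and $q_h^b$ is an arbitrary admissible sequence, and then show that every term in the right-hand side of \eqref{Stability-Bellassoued-h} tends to $0$ as $h \to 0$.

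First I would check that the hypotheses of Theorem~\ref{TCWElog} hold uniformly in $h$. I set $m := \sup_{h>0} \max\{ \norm{q_h^a}_{L^\infty_h(\Omega_h)}, \norm{q_h^b}_{L^\infty_h(\Omega_h)} \}$ and $M := \sup_{h>0} \norm{q_h^a - q_h^b}_{H^1_{0,h}(\Omega_h)}$, both finite by \eqref{Cond2-ConsGamma} and the assumptions on $q_h^b$. Taking $Q_h := q_h^a|_{\Ocal_h} = q_h^b|_{\Ocal_h}$ (the equality holds since $q_h^b = q_h^a$ in $\Ocal_h$), one has $q_h^a, q_h^b \in \Lambda_h(Q_h, m)$ in the sense of \eqref{Class-Pot-Bellassoued-h}, and \eqref{Error-W-1-2-+-h} holds with this $M$; note also that $q_h^a - q_h^b \in H^1_{0,h}(\Omega_h)$ forces the two potentials to coincide on $\partial\Omega_h$, so the knowledge of $q_h$ on $\partial\Omega_h$ demanded in Theorem~\ref{TCWElog} is consistent. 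The positivity $\inf_{\Omega_h} |y^0_h| \geq \alpha_0$ and the a priori bound on $\norm{y^0_h}_{H^1_h(\Omega_h)} + \norm{y_h[q_h^a]}_{H^1(0,T;L^\infty_h(\Omega_h)) \cap W^{2,1}(0,T;L^2_h(\Omega_h))}$ are exactly \eqref{Cond5-Conslog} and \eqref{Cond4-Conslog}. Hence, for $T$ large enough, there are $C>0$ and $h_0>0$ \emph{independent of $h$ and of the particular admissible sequence $q_h^b$} such that \eqref{Stability-Bellassoued-h} holds for $q_h^a - q_h^b$ and all $h \in (0, h_0)$.

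Next I would pass to the limit $h \to 0$ in \eqref{Stability-Bellassoued-h}. The term $C h^{1/(1+\alpha)}$ trivially vanishes. For the logarithmic term I use that $\epsilon \mapsto [\log(2 + C/\epsilon)]^{-1/(1+\alpha)}$ is nondecreasing and tends to $0$ as $\epsilon \to 0^+$ (with the convention $C/0 = +\infty$), together with $\norm{\mathscr{M}_h[q_h^a] - \mathscr{M}_h[q_h^b]}_{H^1(0,T;L^2(\Gamma_0))} \to 0$, which follows by the triangle inequality from \eqref{Cond3-ConsGamma} applied both to $q_h^a$ and to $q_h^b$ (the first components of $\widetilde{\mathscr{M}_h}[q_h^a]$ and $\widetilde{\mathscr{M}_h}[q_h^b]$ both converge to $\partial_\nu y[q]$). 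For the penalization term \eqref{PenalizationTerm}, I invoke the uniform-in-$h$ equivalence recorded just after \eqref{SmallerPossibleClass} between that term added to $\norm{\mathscr{M}_h[q_h^a] - \mathscr{M}_h[q_h^b]}_{H^1((0,T);L^2_h(\Gamma_{0,h}))}$ and the full norm $\norm{\widetilde{\mathscr{M}_h}[q_h^a] - \widetilde{\mathscr{M}_h}[q_h^b]}_{H^1(0,T;L^2(\Gamma_0)) \times L^2((0,T)\times\Omega)}$, which again tends to $0$ by the triangle inequality and \eqref{Cond3-ConsGamma}. Consequently $\norm{q_h^a - q_h^b}_{L^2_h(\Omega_h)} \to 0$.

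Finally, since $\eh^0$ is an isometry from $L^2_h(\Omega_h)$ onto its image in $L^2(\Omega)$, one has $\norm{\eh^0(q_h^b) - \eh^0(q_h^a)}_{L^2(\Omega)} = \norm{q_h^b - q_h^a}_{L^2_h(\Omega_h)} \to 0$, and combining with \eqref{Cond1-ConsGamma} via the triangle inequality gives $\norm{\eh^0(q_h^b) - q}_{L^2(\Omega)} \to 0$, which is the claim. The only delicate point is the uniformity of $C$ in \eqref{Stability-Bellassoued-h} over the whole family of admissible $q_h^b$ — this is precisely why the hypotheses are phrased as uniform bounds on $\norm{q_h^b}_{L^\infty_h(\Omega_h)}$ and $\norm{q_h^a - q_h^b}_{H^1_{0,h}(\Omega_h)}$ — together with the observation that the logarithmic modulus of continuity still produces convergence (being monotone with limit $0$ at $0$); no new hard estimate enters beyond Theorem~\ref{TCWElog}.
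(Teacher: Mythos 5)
Your proposal is correct and follows essentially the same route as the paper: obtain $\norm{\widetilde{\mathscr{M}_h}[q_h^a]-\widetilde{\mathscr{M}_h}[q_h^b]}\to 0$ by the triangle inequality from \eqref{Cond3-ConsGamma}, apply the uniform stability estimate of Theorem~\ref{TCWElog} (with $m$, $M$ chosen from the uniform bounds so the constant is $h$-independent), and conclude with the triangle inequality and the isometry property of $\eh^0$. The additional care you take with the monotonicity of the logarithmic modulus and the norm equivalence for the penalization term is sound and merely makes explicit what the paper leaves implicit.
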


Theorems \ref{Thm-Cvg-Gamma} and \ref{Thm-Cvg-log} follow from the same arguments and can be proved simultaneously.
\begin{proof}[Proof of Theorems \ref{Thm-Cvg-Gamma} and \ref{Thm-Cvg-log}]
	Let $q_h^a$ and $q_h^b$ be as assumed in Theorem \ref{Thm-Cvg-Gamma} (resp. Theorem~\ref{Thm-Cvg-log}). One easily gets
	$$
		\lim_{h \to 0} \norm{\widetilde{\mathscr{M}_h}[q_h^a] -\widetilde{\mathscr{M}_h}[q_h^b]}_{ H^1(0,T;L^2(\Gamma_0))\times L^2((0,T)\times\Omega)} = 0.
	$$	
	Since one can find $m >0$ larger than $\norm{q}_{L^\infty(\Omega)}$ and  $\limsup_{h \to 0} (\norm{q_h^a}_{L^\infty_h(\Omega_h)}+\norm{q_h^b}_{L^\infty_h(\Omega_h)})$, according to Theorem \ref{TCWE1}, (resp. Theorem \ref{TCWElog}), we get
	$$
		\lim_{h \to 0} \norm{q_h^a - q^b_h}_{L^2_h(\Omega_h)} = 0,
	\quad \hbox{ or equivalently, } \quad 
		\lim_{h\to 0}\norm{\eh^0(q_h^b) - \eh^0(q_h^a)}_{L^2(\Omega)}= 0.
	$$
	 We then conclude by the triangular inequality
	$$
		\norm{\eh^0(q_h^b) - q}_{L^2(\Omega)}\leq \norm{\eh^0(q_h^b) - \eh^0(q_h^a)}_{L^2(\Omega)} + \norm{\eh^0(q_h^a) - q}_{L^2(\Omega)},
	$$
	since each term in the right hand-side converges to zero as $h \to 0$.
\end{proof}
Of course, Theorems \ref{Thm-Cvg-Gamma} and \ref{Thm-Cvg-log} are based on the strong assumption that there exists a sequence of potentials $q_h^a$ satisfying suitable convergence assumptions for some $(y^0_h,y^1_h, f_h, f_{\partial,h})$ that are not even supposed to be convergent to their continuous counterpart. This rises the natural question: given $(y^0, y^1, f, f_\partial)$ satisfying \eqref{SmallerPossibleClass}, can we guarantee that the natural approximations $(y^0_h,y^1_h, f_h, f_{\partial,h})$ of $(y^0, y^1, f, f_\partial)$ yields the existence of a sequence of potentials $q_h^a$ satisfying the convergence conditions of Theorem \ref{Thm-Cvg-Gamma} or Theorem \ref{Thm-Cvg-log} ?

This is the consistency of the inverse problem, and the cornerstone of the proof of Theorem~\ref{Thm-Cvg} once stability results are proved. These consistency issues are  discussed in the following subsection. 
\subsection{Consistency issues}

The difficulty to derive the consistency of the inverse problem is the condition \eqref{Cond4-ConsGamma} (or \eqref{Cond4-Conslog} in the case of Theorem \ref{Thm-Cvg-log}). Indeed, passing to the limit, it indicates that $y[q]$ should belong to $H^1((0,T); L^\infty(\Omega))$. But there is no simple way to guarantee this condition, since the ``natural'' spaces for the wave equation are the $H^s(\Omega)$-spaces.

Let us remind the reader that we consider $\Omega = (0,1)^2 \subset \mathbb R^2$. We recall this setting here  because of its influence on the Sobolev's embeddings we will repeatedly use in this last section. 

Besides that, as our theorems of stability are given with conditions on $y[q]$ instead of conditions on the coefficients $(y^0, y^1, f, f_\partial)$, we will stick to that approach. We claim the following result:
\begin{lemma}\label{Lem-Consistency-Main}
	Assume $q \in H^1\cap L^\infty(\Omega) $ and that we know $q_\partial = q|_{\partial \Omega}$. 
	Furthermore, assume that the trajectory $y[q]$ solution of \eqref{WaveEq-Q} satisfies the regularity given in  \eqref{ConditionOnYq-1}.
	Finally, assume there exists $\alpha_0>0$ such that $\inf_{\overline{\Omega} } |y^0| \geq \alpha_0$.
	\\
	Then we can construct discrete sequences $(y^0_h, y^1_h, f_h, f_{\partial,h})$ depending only on $(y^0, y^1, f, f_\partial, q_\partial)$ such that the corresponding sequence $y_h[q_h]$ solution of \eqref{DisWaveEq} for  $q_h = \trh(q)$ satisfies conditions \eqref{Cond1-ConsGamma}--\eqref{Cond5-Conslog}. In particular, if $q$ is known on some open set $\Ocal$ and takes value $q|_{\Ocal} = Q$, we can further impose $q_h = \trh(Q)$ in $\Ocal_h$.
\end{lemma}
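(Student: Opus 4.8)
The plan is to build the discrete data by restriction/averaging of the continuous data and then to verify, one by one, the convergence conditions \eqref{Cond1-ConsGamma}--\eqref{Cond5-Conslog}, using the regularity \eqref{ConditionOnYq-1} as the essential ingredient. Concretely, I would set $y^0_h = \rh(y^0)$ (or $\trh(y^0)$ — the choice is immaterial since $y^0 \in H^1(\Omega) \hookrightarrow C(\overline\Omega)$ in two dimensions), $q_h = \trh(q)$ (with $q_h = \trh(Q)$ on $\Ocal_h$ if $q$ is known there, which is compatible since the two definitions agree on $\Ocal_h$ up to a layer of width $h$ that we can absorb), and $q_h|_{\partial\Omega_h} = \rhB(q_\partial)$. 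For $y^1_h$, $f_h$ and $f_{\partial,h}$ I would \emph{not} simply restrict $y^1$, $f$, $f_\partial$; instead, to get \eqref{Cond3-ConsGamma} (convergence of $\widetilde{\mathscr{M}_h}[q_h^a]$ to $\widetilde{\mathscr{M}_0}[q]$, which includes the term $h\nabla_x\eh(\partial_{tt}y_h[q_h])$), I would choose them so that $y_h := \rh(y[q])$ (the pointwise restriction of the \emph{continuous} trajectory) is \emph{exactly} the solution of \eqref{DisWaveEq} with potential $q_h$; that is, define
\begin{equation}
	\label{Cons-Defs}
	y^1_h = \rh(\partial_t y[q]|_{t=0}), \quad f_{\partial,h} = \rhB(f_\partial), \quad f_h = \partial_{tt}\rh(y[q]) - \Delta_h \rh(y[q]) + q_h\, \rh(y[q]).
\end{equation}
With this choice $y_h[q_h] = \rh(y[q])$ by uniqueness for \eqref{DisWaveEq}, so all the quantities appearing in \eqref{Cond1-ConsGamma}--\eqref{Cond5-Conslog} become explicit restrictions of the continuous trajectory.

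Granting \eqref{Cons-Defs}, the verification proceeds as follows. Condition \eqref{Cond1-ConsGamma}: $\eh^0(q_h) \to q$ in $L^2(\Omega)$ follows from $q \in L^2(\Omega)$ and standard properties of $\trh$/$\eh^0$ (and, if $q$ is known on $\Ocal$, the imposed value $\trh(Q)$ is consistent with this convergence). Condition \eqref{Cond2-ConsGamma}: $\norm{q_h}_{L^\infty_h(\Omega_h)} \leq \norm{q}_{L^\infty(\Omega)}$ since averaging does not increase the sup-norm. Conditions \eqref{Cond4-ConsGamma}/\eqref{Cond4-Conslog}: since $y[q] \in H^1(0,T;H^2(\Omega)) \hookrightarrow H^1(0,T;L^\infty(\Omega))$ by the Sobolev embedding $H^2(\Omega) \hookrightarrow L^\infty(\Omega)$ in dimension two, the discrete norms $\norm{\rh(y[q])}_{H^1(0,T;L^\infty_h(\Omega_h))}$ are bounded uniformly in $h$ by $C\norm{y[q]}_{H^1(0,T;L^\infty(\Omega))}$; for \eqref{Cond4-Conslog} one additionally uses $y[q] \in H^2(0,T;H^1(\Omega)) \hookrightarrow W^{2,1}(0,T;L^2(\Omega))$. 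Condition \eqref{Cond5-ConsGamma}/\eqref{Cond5-Conslog}: $\inf_{\Omega_h}|y^0_h| \geq \inf_{\overline\Omega}|y^0| \geq \alpha_0$ is immediate for $\rh(y^0)$, and $\norm{y^0_h}_{H^1_h(\Omega_h)} \leq C\norm{y^0}_{H^1(\Omega)}$ (using here that $y^0 \in H^1(\Omega)$, more precisely that $\eh(\rh(y^0))$ is controlled in $H^1$, for which one may need $y^0 \in H^2$ or at least a mild extra regularity — this is consistent with \eqref{ConditionOnYq-1} which forces $y^0 = y[q](0) \in H^2(\Omega)$).

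The two delicate points, and the ones I would spend most effort on, are \eqref{Cond3-ConsGamma} and the consistency bound on $f_h$. For \eqref{Cond3-ConsGamma}: the first component $\partial_\nu \eh(y_h[q_h]) = \partial_\nu\eh(\rh(y[q]))$ converges to $\partial_\nu y[q]$ in $H^1(0,T;L^2(\Gamma_0))$ because $y[q] \in H^1(0,T;H^2(\Omega))$ and $\eh\rh$ commutes with the trace up to an $O(h)$ error in the relevant norm; the second component $h\nabla_x\eh(\partial_{tt}y_h[q_h]) = h\nabla_x\eh(\partial_{tt}\rh(y[q]))$ must converge to $0$ in $L^2((0,T)\times\Omega)$, which holds precisely because $\partial_{tt}y[q] \in L^2(0,T;H^1(\Omega))$ gives $\norm{\nabla_h \rh(\partial_{tt}y[q])}_{L^2} \leq C\norm{\partial_{tt}y[q]}_{L^2(0,T;H^1(\Omega))}$, so the prefactor $h$ forces the term to vanish. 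The consistency of $f_h$ — checking that $f_h$ defined by \eqref{Cons-Defs} is a legitimate approximation, i.e. bounded in $L^1(0,T;L^2_h(\Omega_h))$ — is where the truncation error of $\Delta_h$ enters: $f_h = \rh(f) + (\Delta\rh(y[q]) - \Delta_h\rh(y[q])) + (q_h\rh(y[q]) - \rh(qy[q]))$, and each correction term is $O(h)$ in $L^2_h$ because $y[q] \in H^1(0,T;H^2(\Omega))$ controls the second-order finite-difference consistency error and $q \in H^1(\Omega)$ controls the quadrature error in the potential term. I would present this as the main obstacle, since it is the only place where one genuinely combines the discretization error of the scheme with the Sobolev regularity of both $q$ and $y[q]$, and it is what dictates the hypotheses $q \in H^1(\Omega)$ and \eqref{ConditionOnYq-1} in the statement.
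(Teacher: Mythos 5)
There is a genuine gap, and it is structural rather than technical: your choice \eqref{Cons-Defs} defines $y^1_h$, $f_{\partial,h}$ and especially $f_h$ through the restriction $\rh(y[q])$ of the \emph{unknown} trajectory. Since $y[q]$ depends on the potential $q$ in the whole of $\Omega$, your discrete data depend on $q$ itself, whereas the statement explicitly requires $(y^0_h, y^1_h, f_h, f_{\partial,h})$ to depend only on $(y^0, y^1, f, f_\partial, q_\partial)$ --- i.e.\ only on the trace of $q$ on $\partial\Omega$ (and possibly on $Q=q|_{\Ocal}$). In an inverse problem this is the whole point: one cannot feed the unknown into the construction of the numerical scheme. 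The device of ``defining $f_h$ as whatever makes a restricted trajectory an exact discrete solution'' is in fact the right idea, but it must be applied to a \emph{computable} trajectory. The paper does this by picking an auxiliary potential $\tilde q \in H^1\cap L^\infty(\Omega)$ which extends the known trace $q_\partial$ into $\Omega$, letting $\tilde y = y[\tilde q]$ (a solution of a fully known wave equation), and setting $f_h = \partial_{tt}\trh(\tilde y) - \Delta_h\trh(\tilde y) + \trh(\tilde q)\trh(\tilde y)$, $f_{\partial,h} = \trh(\tilde y)|_{\partial\Omega_h}$, so that $\trh(\tilde y)$ solves the discrete equation with potential $\trh(\tilde q)$, not with $q_h$.

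This changes the rest of the argument substantially, because $y_h[q_h]$ is then \emph{not} a restriction of a continuous function: it equals $\trh(\tilde y) + z_h$ where $z_h$ is a genuinely discrete solution of \eqref{DisWaveEq-zh} with source $(\tilde q_h - q_h)\trh(\tilde y)$. The uniform bounds \eqref{Cond4-ConsGamma}--\eqref{Cond4-Conslog} then require discrete energy estimates on $\partial_{tt}z_h$ together with a uniform discrete elliptic regularity result (Lemma \ref{Lem-Elliptic-Reg}) to place $\partial_t z_h$ in $C([0,T];H^2_h(\Omega_h))\hookrightarrow C([0,T];L^\infty_h(\Omega_h))$, and the convergence \eqref{Cond3-ConsGamma} requires a compactness/energy-identity argument for $z_h$ --- none of which appears in your plan because your (inadmissible) construction makes $y_h[q_h]$ an exact restriction. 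A secondary point: even where restrictions of continuous functions are used, the consistency claim that $\Delta\rh(u) - \Delta_h\rh(u)$ is $O(h)$ in $L^2_h$ for $u\in H^2(\Omega)$ is not correct ($H^2$ gives uniform boundedness of $\Delta_h\trh(u)$ with the averaged restriction $\trh$, but an $O(h)$ truncation error needs $H^3$); fortunately neither the paper nor a corrected version of your argument actually needs that rate.
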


\begin{proof}[Proof of Theorem \ref{Thm-Cvg}]
Taking the discrete sequence $(y^0_h, y^1_h, f_h, f_{\partial,h})$ given by Lemma \ref{Lem-Consistency-Main}, the sequence $q_h^a = \trh(q)$ satisfies the assumption of Theorem \ref{Thm-Cvg-Gamma}, or Theorem \ref{Thm-Cvg-log} if $q$ is known in some open set $\Ocal$, which corresponds to the first item of Theorem~\ref{Thm-Cvg}.  The second item of Theorem~\ref{Thm-Cvg} thus follows immediately from Theorems~\ref{Thm-Cvg-Gamma} and \ref{Thm-Cvg-log}.
\end{proof}

\begin{proof}[Proof of Lemma \ref{Lem-Consistency-Main}]
	We split it in two steps. First, we will construct $(y^0_h, y^1_h, f_h, f_{\partial,h})$ and $q_h$; Second, we will explain why our construction is suitable for conditions \eqref{Cond1-ConsGamma}--\eqref{Cond5-Conslog}.\\

	Let us choose $\tilde q \in H^1\cap L^\infty(\Omega)$ with $\tilde q|_{\partial \Omega} = q_\partial$ (note that such $\tilde q$ exists since $q_\partial$ is the trace of $q \in H^1\cap L^\infty(\Omega)$ by assumption).
	We define $\tilde y = y[\tilde q]$ the solution of \eqref{WaveEq-Q} with potential $\tilde q$. Then, setting $z = y[q] - \tilde y$, it satisfies
	\begin{equation}
	\label{WaveEq-z}
		\left\{\begin{array}{ll}
			\partial_{tt} z - \Delta z+ \tilde q z=  ( \tilde q-q ) y[q],  &\qquad \hbox{ in }  (0,T)\times \Omega,
				\\
			z = 0, &\qquad\hbox{ on } (0,T) \times \partial \Omega,
				\\
			z(0,\cdot)= 0, \quad \partial_t z(0,\cdot)= 0. &\qquad \hbox{ in } \Omega,
		\end{array}
		\right.
	\end{equation}
	Hence $z_2 = \partial_{tt} z$ solves
	\begin{equation}
	\label{WaveEq-dtt-z}
		\left\{\begin{array}{ll}
			\partial_{tt} z_2 - \Delta z_2+ \tilde q z_2=  (  \tilde q- q) \partial_{tt} y[q],  &\qquad \hbox{ in }  (0,T)\times \Omega,
				\\
			z_2 = 0, & \qquad \hbox{ on }  (0,T)\times \partial\Omega,
				\\
			z_2(0,\cdot)= (\tilde q- q) y^0, \quad \partial_t z_2(0,\cdot)= ( \tilde q - q) y^1. & \qquad \hbox{ in } \Omega.
		\end{array}
		\right.
	\end{equation}
	Since \eqref{ConditionOnYq-1} implies $y^0 \in H^1 \cap L^\infty(\Omega)$, $y^1 \in L^2(\Omega)$ and $\partial_{tt}y[q] \in L^1(0,T; L^2(\Omega))$, and since $q - \tilde q \in H^1_0 \cap L^\infty( \Omega)$, we have that $z_2 = \partial_{tt} z$ belongs to $C([0,T]; H^1_0(\Omega)) \cap C^1([0,T]; L^2(\Omega))$.  In particular, since $z(0, \cdot ) = \partial_t z(0, \cdot) = 0$, we have $z \in H^2(0,T; H^1_0(\Omega))$. 
	
	Besides, by differentiating \eqref{WaveEq-z} once with respect to time, we get that $\partial_t z$ solves
	$$
		(- \Delta + \tilde q)\partial_t z = (\tilde q -  q) \partial_t y[q] - \partial_{ttt} z \in C([0,T]; L^2(\Omega)), \quad \hbox{ with } \partial_t z = 0 \hbox{ for } (t,x) \in (0,T) \times \partial \Omega.
	$$
	Therefore, by elliptic regularity estimates, see \cite[Theorem 3.2.1.2]{Grisvard}, $\partial_t z \in C([0,T]; H^2(\Omega))$, thus $z \in H^1(0,T; H^2(\Omega))$. 
	\\
	Recalling that $\tilde y = y[q] - z$ and $y[q]$ satisfies \eqref{ConditionOnYq-1}, $\tilde y$ belongs to $H^{2} (0,T; H^1(\Omega) )\cap H^1(0,T; H^2(\Omega))$.
	\\
	We then define $\tilde y_h = \trh(\tilde y)$ and, for $\tilde q_h = \trh(\tilde q)$, we set 
	\begin{eqnarray}
		\label{Init-Data}
		 y^0_h = \tilde y_h(0) =  \trh(y^0), & \quad  & y^1_h = \partial_t \tilde y_h(0) = \trh (y^1),
		 \\
		\label{SourceTerms}
		 f_h = \partial_{tt} \tilde y_h - \Delta \tilde y_h + \tilde q_h \tilde y_h,&\quad & f_{\partial, h} (t)= \tilde y_h(t)|_{\partial \Omega_h}.
	\end{eqnarray}
	Note that this choice immediately implies that conditions \eqref{Cond1-ConsGamma}, \eqref{Cond2-ConsGamma} and \eqref{Cond5-Conslog} (thus also \eqref{Cond5-ConsGamma}) are satisfied.\\
	
	We now prove that this construction yields condition \eqref{Cond4-Conslog}. This is based on the remark that by construction, for $q_h = \trh(q)$ we have $y_h[q_h] = \tilde y_h + z_h$, where $z_h$ solves
	 \begin{equation}
	\label{DisWaveEq-zh}
	\left\{
		\begin{array}{ll}
			\partial_{tt} z_h - \Delta_h z_h + q_h z_h= (\tilde q_h - q_h) \tilde y_h, &\qquad \hbox{ in } (0,T) \times \Omega_h, 
			\\
			z_h =0, &\qquad \hbox{ on } (0, T) \times \partial \Omega_h, 
			\\
			(z_h(0), \partial_t z_h(0) ) = (0, 0), &\qquad \hbox{ in } \Omega_h.
		\end{array}
	\right.
	\end{equation} 
	Then $z_{2,h} = \partial_{tt} z_h$ solves 
	 \begin{equation}
	\label{DisWaveEq-dtt-zh}
	\left\{
		\begin{array}{ll}
			\partial_{tt} z_{2,h} - \Delta_h z_{2,h} + q_h z_{2,h}= (\tilde q_h - q_h) \partial_{tt} \tilde y_h, & \hbox{ in } (0,T) \times \Omega_h, 
			\\
			z_{2,h} =0, & \hbox{ on } (0, T) \times \partial \Omega_h, 
			\\
			(z_{2,h}(0), \partial_t z_{2,h}(0) ) = ((\tilde q_h - q_h) y_h^0, (\tilde q_h- q_h)y_h^1), & \hbox{ in } \Omega_h.
		\end{array}
	\right.
	\end{equation} 
	One easily checks that with our construction 
	\begin{align*}
		& \tilde q_h - q_h \in H^1_{0,h}(\Omega_h) \cap L^\infty_h(\Omega_h), 
		\\
		& \tilde y_h \in H^{2}(0,T; H^1_h(\Omega_h)) \cap H^1(0,T; H^2_h(\Omega_h)), 
		\\
		& y^0_h \in H^1_h(\Omega_h) \cap L^\infty_h(\Omega_h), \quad y^1_h \in L^2_h(\Omega_h),
	\end{align*}
	where all these estimates stand with bounds uniform with respect to $h>0$. Hence $z_{2,h}$ is uniformly bounded in $ C([0,T]; H^{1}_{0,h} (\Omega_h)) \cap C^1([0,T]; L^2_h(\Omega_h))$ by energy estimates, so that $\partial_{ttt} z_h \in C([0,T]; L^2_h(\Omega_h))$ and thus $\partial_t z_h$ solves
	$$
		-\Delta_h \partial_t z_h + q_h \partial_t z_h =  (\tilde q_h - q_h) \partial_t \tilde y_h - \partial_{ttt} z_h \in C([0,T]; L^2_h(\Omega_h)) \hbox{ with } \partial_t z_h = 0 \hbox{ on } \partial \Omega_h.
	$$
	We use the following lemma, whose proof is postponed to Appendix \ref{Appendix-Elliptic}.
	\begin{lemma}
		\label{Lem-Elliptic-Reg}
		Let $w_h \in L^2_h(\Omega_h)$ be a solution of 
		\begin{equation}
			\label{Elliptic-Eq}
			-\Delta_h w_h +q_h w_h = g_h \hbox{ in } \Omega_h \quad \hbox{ and } \quad w_h = 0 \hbox{ on } \partial \Omega_h
		\end{equation}
		with $g_h \in L^2_h(\Omega_h)$ and $q_h \in L^\infty_h(\Omega_h)$. Let $m>0$ and assume $\norm{q_h}_{L^\infty_h(\Omega_h)}\leq m$. Then, $w_h \in H^2_h\cap H^1_{0,h}(\Omega_h)$ and there exists a constant $C = C(m)>0$ independent of $h>0$ such that
		\begin{equation}
			\label{Elliptic-Reg}
			\norm{w_h}_{H^2_h \cap H^1_{0,h} (\Omega_h)} \leq C \norm{g_h}_{L^2_h(\Omega_h)}.
		\end{equation}
	\end{lemma}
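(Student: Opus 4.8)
The plan is to prove Lemma~\ref{Lem-Elliptic-Reg} in two stages: first a discrete $H^1$ energy estimate for $w_h$, and then an upgrade to the full discrete $H^2_h$ bound obtained from a discrete integration-by-parts identity that decomposes $\|\Delta_h w_h\|_{L^2_h(\Omega_h)}^2$ into the three second-order pieces $\|\Delta_{h,1} w_h\|^2$, $\|\Delta_{h,2} w_h\|^2$ and $\|\partial_{h,1}^+\partial_{h,2}^+ w_h\|^2$ that appear in the definition of the $H^2_h(\Omega_h)$-norm.

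\emph{Step 1 (energy estimate).} Multiplying \eqref{Elliptic-Eq} by $w_h$, summing over $\Omega_h$, and using the discrete Green formula recalled in \ref{Sec-IPP} together with $w_h|_{\partial \Omega_h} = 0$, one gets $\sum_{k=1,2}\|\partial_{h,k}^+ w_h\|_{L^2_h(\Omega_{h,k}^-)}^2 = \int_{\Omega_h} g_h w_h - \int_{\Omega_h} q_h|w_h|^2$. Invoking the discrete Poincar\'e inequality on the square $\Omega=(0,1)^2$, whose constant is uniform in $h$, together with $\|q_h\|_{L^\infty_h(\Omega_h)}\le m$ to control the lower-order contribution, this yields $\|w_h\|_{H^1_{0,h}(\Omega_h)} \le C(m)\|g_h\|_{L^2_h(\Omega_h)}$ with $C(m)$ independent of $h$. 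In particular, from the equation rewritten as $\Delta_h w_h = q_h w_h - g_h$, we deduce $\|\Delta_h w_h\|_{L^2_h(\Omega_h)} \le m\|w_h\|_{L^2_h(\Omega_h)} + \|g_h\|_{L^2_h(\Omega_h)} \le C(m)\|g_h\|_{L^2_h(\Omega_h)}$.

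\emph{Step 2 (the $H^2_h$ identity).} Since $\Delta_h = \Delta_{h,1}+\Delta_{h,2}$, expanding the square gives $\|\Delta_h w_h\|_{L^2_h(\Omega_h)}^2 = \|\Delta_{h,1}w_h\|_{L^2_h(\Omega_h)}^2 + \|\Delta_{h,2}w_h\|_{L^2_h(\Omega_h)}^2 + 2\int_{\Omega_h}\Delta_{h,1}w_h\,\Delta_{h,2}w_h$. Writing $\Delta_{h,1} = \partial_{h,1}^-\partial_{h,1}^+$, $\Delta_{h,2}=\partial_{h,2}^-\partial_{h,2}^+$, integrating by parts successively in the two directions with the formulas of \ref{Sec-IPP}, using that $\partial_{h,1}^+$ commutes with $\partial_{h,2}^{\pm}$ and that $w_h$ vanishes on $\partial\Omega_h$ so that all boundary contributions drop, one checks the discrete analogue of $\int \partial_{11}u\,\partial_{22}u = \int(\partial_{12}u)^2$, namely $\int_{\Omega_h}\Delta_{h,1}w_h\,\Delta_{h,2}w_h = \|\partial_{h,1}^+\partial_{h,2}^+ w_h\|_{L^2(\Omega_h^-)}^2$. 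Hence $\|\Delta_{h,1}w_h\|_{L^2_h(\Omega_h)}^2 + \|\Delta_{h,2}w_h\|_{L^2_h(\Omega_h)}^2 + 2\|\partial_{h,1}^+\partial_{h,2}^+ w_h\|_{L^2(\Omega_h^-)}^2 = \|\Delta_h w_h\|_{L^2_h(\Omega_h)}^2 \le C(m)\|g_h\|_{L^2_h(\Omega_h)}^2$, which combined with Step~1 and the definition of $\|\cdot\|_{H^2_h(\Omega_h)}$ yields \eqref{Elliptic-Reg}.

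\emph{Main obstacle.} The only delicate point is the bookkeeping of the boundary terms in the double discrete integration by parts of Step~2: near the corners of the square the shifted stencils defining $\partial_{h,k}^{\pm}$ and $m_{h,k}^{\pm}$ reach nodes of $\partial\Omega_h$, and one must verify that each such contribution cancels thanks to $w_h|_{\partial\Omega_h}=0$ --- exactly the kind of verification underlying the $2$-d integration-by-parts identity of Lemma~\ref{LemIPP2}. Everything else (the discrete Poincar\'e inequality on $(0,1)^2$ and the $L^\infty$-absorption in Step~1) is routine.
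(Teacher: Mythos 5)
Your argument is correct, but it takes a genuinely different route from the paper's. The paper first reduces to $q_h=0$, extends $w_h$ and $g_h$ by odd reflection to the lattice in $(-1,2)^2$ (setting the extension to zero at the four corners), and then tests the extended equation against $-\chi_h\Delta_{h,1}\tilde w_h$ for a cutoff $\chi$ equal to $1$ on $[0,1]^2$; this produces a one-sided control of $\norm{\Delta_{h,1}w_h}_{L^2_h}$ and $\norm{\partial_{h,1}^+\partial_{h,2}^+w_h}_{L^2}$ by $\norm{g_h}_{L^2_h}+\norm{w_h}_{H^1_{0,h}}$, and $\Delta_{h,2}w_h$ is recovered at the end from the equation itself. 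You instead exploit the convexity of the square directly: expanding $\norm{\Delta_h w_h}^2_{L^2_h(\Omega_h)}$ and showing that the cross term equals $\norm{\partial_{h,1}^+\partial_{h,2}^+w_h}^2$ is exactly the discrete analogue of the classical identity $\int\partial_{11}u\,\partial_{22}u=\int|\partial_{12}u|^2$ on a convex domain, and it does hold here: a first application of \eqref{IPP4} in the $x_1$-direction produces boundary terms carrying $(\Delta_{h,2}w_h)_{0,j}$ and $(\Delta_{h,2}w_h)_{N+1,j}$, which vanish because $\Delta_{h,2}$ is a purely tangential difference on $\{0,1\}\times(0,1)$; a second application of \eqref{IPP5} in the $x_2$-direction applies because $(\partial_{h,1}^+w_h)_{i,0}=(\partial_{h,1}^+w_h)_{i,N+1}=0$. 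Your approach avoids the reflection and the cutoff entirely and yields an exact identity rather than an inequality, which is arguably cleaner. The one point you must make explicit (and which you correctly flag) is the corner convention: $\partial\Omega_h$ as defined in \eqref{esph} does not contain the four corners of $\overline{\Omega_h}$, yet both boundary-term cancellations above and the quantity $\partial_{h,1}^+\partial_{h,2}^+w_h$ on $\Omega_h^-$ involve the corner values; one has to set $w_h=0$ there, exactly as the paper does for its reflected extension. With that convention fixed, Step~1 (energy estimate, discrete Poincar\'e with $h$-uniform constant, and $\norm{\Delta_h w_h}_{L^2_h}\leq C(m)\norm{g_h}_{L^2_h}$ from the equation) combined with your identity gives \eqref{Elliptic-Reg}.
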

	Accordingly, $\partial_t z_h$ is uniformly bounded in $C([0,T]; H^2_h\cap H^1_{0,h}(\Omega_h))$. Thus, $y_h[q_h] = \tilde y_h + z_h$ is uniformly bounded in $H^2(0,T; H^1_h(\Omega_h)) \cap H^1(0,T; L^\infty_h(\Omega_h))$, yielding \eqref{Cond4-Conslog} (and \eqref{Cond4-ConsGamma}). \\
	
	We finally focus on the proof of the convergence condition \eqref{Cond3-ConsGamma}. As $\tilde y \in H^1(0,T; H^2(\Omega))$, $\tilde y_h$ is uniformly bounded in $ H^1(0,T; H^2_h(\Omega_h))$. In particular, for $k \in \{1, 2\}$, $\partial_{h,k}^\mp \tilde y_h$ is uniformly bounded in $H^1(0,T; H^1_h(\Omega_{h,k}^\pm))$, so $\eh(\partial_{h,k}^\mp \tilde y_h)$ is uniformly bounded in $H^1(0,T; H^1(\Omega))$. Besides, it is easy to check that, since $\tilde y \in H^1(0,T; H^2(\Omega))$,  $\eh(\partial_{h,k}^\mp \tilde y_h)$ strongly converges to $\partial_{x_k} \tilde y$ in $H^1(0,T;L^2(\Omega))$. Hence we get the strong convergence of $\eh(\partial_{h,k}^\mp \tilde y_h)$ to $\partial_{x_k} \tilde y$ in all spaces $H^1(0,T;H^s(\Omega))$ with $s <1$.
	We then remark that
	\begin{equation}
		\label{NormalDerivative}
		\partial_\nu \eh(\tilde y_h) = \left( \begin{array}{c} \eh( \partial_{h,1}^{\mp} \tilde y_h) \\ \eh (\partial_{h,2}^{\mp} \tilde y_h) \end{array}\right)\cdot \nu \quad \hbox{ on } \Gamma_{\pm}, 
	\end{equation}
	where $\nu$ is the normal vector to $\Omega$ on $\Gamma_{\pm}$. But the sequence $\eh(\partial_{h,k}^\mp \tilde y_h)$ strongly converges to $\partial_{x_k} \tilde y$ in $H^1(0,T; H^{3/4}(\Omega))$ and the trace operator is continuous from $H^{3/4}(\Omega)$ to $L^2(\partial \Omega)$ (see \cite[Thm 1.5.2.1]{Grisvard}). Therefore, $\partial_\nu \eh \tilde y_h$ strongly converges to $\partial_\nu y$ in $H^1(0,T; L^2(\partial \Omega))$. 
	
	One also easily checks that, since $\tilde y \in H^2(0,T; H^1(\Omega))$, the discrete function $\partial_{h,k}^+ \partial_{tt} \tilde y_h$ ($k \in \{1, 2\}$) is uniformly bounded in $L^2(0,T;L^2_h(\Omega_{h,k}^-))$. Hence $h \nabla \eh(\partial_{tt} \tilde y_h) $ strongly converges to~$0$ as $h \to 0$ in $L^2((0,T) \times \Omega)$.
	
	We then study the convergence of the normal derivative of $z_h$ and of $h \nabla \eh(\partial_{tt} z_h) $. We have seen that $z_h $ is uniformly bounded in $H^2(0,T; H^1_{0,h}(\Omega_h))\cap H^1(0,T; H^2_h(\Omega_h))$.  This immediately implies that $\partial_{h,k}^+ \partial_{tt} z_h$ is uniformly bounded in $L^2(0,T;L^2_h(\Omega_{h,k}^-))$ for $k \in \{1, 2\}$ and, following, $h \nabla \eh(\partial_{tt} z_h)$ strongly converges to $0$ in $L^2((0,T) \times \Omega)$ as $h \to 0$. Let us then remark that $\eh(q_h)$ and $ \eh(\tilde q_h - q_h)$ respectively converges to $q, \, \tilde q - q$ as $h \to 0$ strongly in $L^2(\Omega)$, weakly in $H^1(\Omega)$ and weakly-$*$ in $L^\infty(\Omega)$. Besides, as $\tilde y \in H^2(0,T;H^1(\Omega))$, $\eh(\tilde y_h )$ strongly converges to $\tilde y$ in $H^2(0,T; H^s(\Omega))$ for all $s \in [0,1)$. Following,
	\begin{align}
		& \eh (q_h ) \underset{h \to 0}{\longrightarrow} q 
		\quad \hbox{ strongly in all } L^p(\Omega) \hbox{ with } p <\infty,
		\\
		& \eh((\tilde q_h - q_h) \tilde y_h ) \underset{h \to 0}{\longrightarrow} (\tilde q -q)  \tilde y 
		\quad \hbox{ strongly in } H^2(0,T; L^2( \Omega)),
		\\
		& \eh((\tilde q_h - q_h) y_h^0) \underset{h \to 0}{\longrightarrow} (\tilde q -q) y^0 
		\quad \hbox{ strongly in } L^2(\Omega).
	\end{align}
	Easy computations then yields that $\eh(z_h)$ and  $\eh(\partial_t z_h)$ strongly converge in $H^1((0,T) \times \Omega)$ to $z$ and $\partial_t z$, where $z$ is the solution of \eqref{WaveEq-z}. This can indeed be done in three steps: First show that it converges weakly in $\mathcal{D}'((0,T) \times \Omega)$ toward $z$ and $\partial_t z$; Second, use that the energy estimates imply that the convergence is actually weak in $H^1((0,T) \times \Omega)$ and in particular strong in $L^2(0,T; L^p( \Omega))$ for any $p < \infty$; Third, use the energy identity to show the convergence of the $H^1((0,T) \times \Omega)$ norm. 
	
	Hence $\eh(\partial_{h,k}^\mp z_h)$ strongly converges to $\nabla z$ in $H^1(0,T; L^2(\Omega))$. Recall that $z_h$ is also uniformly bounded in $H^1(0,T; H^2_h(\Omega_h))$, so that $\eh(\partial_{h,k}^\mp z_h)$ is uniformly bounded in $H^1(0,T; H^1(\Omega))$. Thus $\eh(\partial_{h,k}^\mp z_h)$ strongly converges to $\nabla z$ in $H^1(0,T; H^{3/4}(\Omega))$, so that formula \eqref{NormalDerivative} and the continuity of the trace operator from $H^{3/4}(\Omega)$ to $L^2(\partial \Omega)$ show the strong convergence of $\partial_\nu \eh(z_h)$ to $\partial_\nu z$ in $H^1(0,T; L^2(\partial\Omega))$. 
	
	Since $y[q] = \tilde y + z$, we have proved the convergence \eqref{Cond3-ConsGamma} for the sequence $y_h[q_h] = \tilde y_h + z_h$. 
\end{proof} 	

\begin{remark}\label{Rem-Reg-Y[Q]}
In this proof, let us emphasize that the construction of the sequence of source terms $\tilde f_h$ and $\tilde f_{\partial,h}$ in \eqref{SourceTerms} is not straightforward. But we point out that this is done explicitly from the knowledge of the trace $q_\partial$ of $q$ on $\partial \Omega$.

Note however that this happens because we have chosen to keep a presentation where the assumptions are set on the trajectory $y[q]$, and not directly on the data $(y^0, y^1), f, f_\partial$. But this other choice would not yield any improvement as the natural space to get $y[q] \in H^1(0,T; L^\infty(\Omega))$ in $2$-d is $y[q] \in H^1(0,T; H^2(\Omega))$, or $H^3((0,T) \times \Omega)$. According to \cite{LasieckaLionsTriggiani}, this would correspond to 
$$
	 y^0 \in H^3(\Omega),  \quad y^1 \in H^2(\Omega),
	\quad f \in \cap_{k = 0,1,2} W^{k,1}(0,T; H^{2-k}(\Omega)), \quad f_\partial \in H^3((0,T)\times\partial \Omega),
$$
with the compatibility conditions 
$$	\left.y^0\right|_{\partial \Omega} = f_\partial (t = 0), \quad \left.y^1\right|_{\partial \Omega}= \partial_t f_\partial(t = 0),
	\hbox{ and } \left.(f(t = 0) + \Delta y^0 - q y^0)\right|_{\partial \Omega} = \partial_{tt} f_\partial (t = 0).
$$
Of course, this latest compatibility condition is very strong and requires in particular the knowledge of $q$ on the boundary, as we also assumed in the approach of Lemma \ref{Lem-Consistency-Main}. But very likely, taking projections of all these data on the discrete mesh $\overline{\Omega_h}$ also yields a suitable sequence $(y^0_h, y^1_h, f_h, f_{\partial,h})$ satisfying conditions \eqref{Cond3-ConsGamma}--\eqref{Cond5-Conslog}, even if one would have to study in that case the convergence of the discrete wave equations with non-homogeneous boundary conditions, which to our knowledge has only been done in 1-d so far in \cite{ErvZuaContinuousApproach}.
\end{remark}

%%%%%%%%%%%%%%%%%%%%%%%%%%
\bibliographystyle{elsarticle-num}

\appendix

\section{Discrete integration by parts formula in 1-d}\label{Sec-IPP}

For the sake of completeness, we mention the basic discrete integration by parts formula obtained in \cite[Lemma 2.6]{BaudouinErvedoza11} in the 1-d setting as they are the main ingredients used to perform integration by parts on 2-d (and higher dimensional) domains. To do so, we shall make precise some 1-d notations.

We assume that we consider integration by parts on discretized versions of $(0,1)$. For $N \in \mathbb{N}$, we introduce $h = 1/(N+1)$ and the discrete sets
$$
	(0,1)_h = \{jh, \, j \in \llbracket 1,  N\rrbracket \}, \quad [0,1)_h = \{jh, \, j \in \llbracket 0,  N\rrbracket \}, \quad (0,1]_h = \{jh, \, j \in \llbracket 1,  N+1\rrbracket \}.
$$
Here, discrete functions $f_h$ are functions $f_h = (f_{j})_{j \in \{0,\cdots, N+1\}}$ for which we define
$$
	\int_{(0,1)_h} f_h =h \sum_{j\in \{1,\cdots, N\} } f_j, \quad \int_{[0,1)_h} f_h = h\sum_{j\in \{0,\cdots, N\} } f_j, \quad \int_{(0,1]_h} f_h = h\sum_{j\in \{1,\cdots, N+1\} } f_j.
$$
We also introduce the discrete operators for $j\in\{1,\ldots,N\}$: 
\begin{align*}
	& (m^+_h f_h)_j = (m_h^- f_h)_{j+1} = \dfrac{f_{j+1} + f_{j}}{2}~;
\\
	& (\partial_h f_h)_{j} = \dfrac{f_{j+1} -f_{j-1}}{2h}~; \quad
	(\partial^+_h f_h)_{j} = (\partial^-_h f_h)_{j+1} = \dfrac{f_{j+1} -f_{j}}{h}  ~; \quad
	(\Delta_h f_h)_j  = \dfrac{f_{j+1} - 2 f_j+f_{j-1}}{h^2}.
\end{align*}

\begin{lemma}[\cite{BaudouinErvedoza11}, 1-d discrete integration by parts formulas]
	\label{LemIPP2-fromBE}
	Let $v_h,f_h,g_h$ be discrete functions such that $v_0=v_{N+1}=0$. Then we have the following identities:
	\begin{align}
		\label{IPP1}
		&\bullet~ \int_{[0,1)_h} g_h (\partial_h^+ f_h)=  - \int_{(0,1]_h} (\partial_h^- g_h) f_h  + g_{N+1} f_{N+1}  - g_0 f_0 ~ ;
		\\
		\label{IPP2}
		&\bullet~  \int_{(0,1)_h} g_h (\partial_h f_h)=  \int_{[0,1)_h} (m_h^+ g_h)(\partial_h^+ f_h)
		 - \frac{h}{2} g_0 (\partial_h^+ f)_0- \frac{h}{2} g_{N+1} (\partial_h^- f)_{N+1} ~ ;
		\\
		\label{IPP3}
		&\bullet~   2 \int_{(0,1)_h } g_h v_h (\partial_h v_h)  = - \int_{(0,1)_h}|v_h|^2 ~\partial_h g_h
		+ \dfrac{h^2}2 \int_{[0,1)_h} |\partial_h^+ v_h|^2 \partial_h^+g_h ~ ;
		\\
		\label{IPP4}
		 &\bullet~ \int_{(0,1)_h} g_h(\Delta_h v_h)  = - \int_{[0,1)_h}(\partial^+_h v_h) ~(\partial^+_h g_h)
		 - (\partial_h^+ v)_0 g_0+  (\partial_h^- v)_{N+1} g_{N+1} ~ ;
		\\
		\label{IPP5}
		 &\bullet~ \int_{(0,1)_h} g_h v_h(\Delta_h v_h)  = - \int_{[0,1)_h}(\partial^+_h v_h)^2 ~(m^+_h g_h) + \frac{1}{2} \int_{(0,1)_h} |v_h|^2 \Delta_h g_h ~ ;
		\\
		\label{IPP6}
		 &\bullet~ \int_{(0,1)_h} g_h \Delta_h v_h \partial_h v_h =- \dfrac 12\int_{[0,1)_h} |\partial_h^+ v_h|^2 \partial_h^+g_h 
		 + \dfrac 12 \left|(\partial_h^- v)_{N+1} \right|^2 g_{N+1} - \dfrac 12\left|(\partial_h^+ v)_0 \right|^2 g_{0}.
	\end{align}
\end{lemma}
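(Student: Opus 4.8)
The plan is to obtain all six identities by elementary discrete summation by parts (Abel's summation formula), which plays here the role of the continuous integration by parts $\int g f' = [gf] - \int g' f$. There is no deep idea involved; the only genuine subtlety is purely combinatorial: the discrete derivatives $\partial_h^{\pm}$, $\partial_h$, $\Delta_h$ shift the natural index range, so one must pass carefully between $\int_{(0,1)_h}$, $\int_{[0,1)_h}$, $\int_{(0,1]_h}$, collect the boundary contributions at $j=0$ and $j=N+1$ (invoking $v_0=v_{N+1}=0$ exactly where it helps), and keep track of the discrete ``quantum'' correction terms carrying explicit powers of $h$, which have no continuous counterpart.

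First I would prove the master identity \eqref{IPP1}: writing its left-hand side as $\sum_{j=0}^{N} g_j (f_{j+1}-f_j)$, shifting the index in $\sum_j g_j f_{j+1}$ and telescoping gives $-\sum_{k=1}^{N+1}(g_k-g_{k-1})f_k + g_{N+1}f_{N+1} - g_0 f_0$, which is exactly \eqref{IPP1}. The same one-line telescoping applied to $\Delta_h v_h = \partial_h^-(\partial_h^+ v_h)$ yields \eqref{IPP4}. Then \eqref{IPP2} follows by using $(\partial_h f)_j = \tfrac12\big((\partial_h^+ f)_j + (\partial_h^+ f)_{j-1}\big)$, re-summing, and observing that the average operator $m_h^+$ emerges naturally, the leftover terms being precisely the half-weighted boundary contributions $\tfrac h2 g_0 (\partial_h^+ f)_0$ and $\tfrac h2 g_{N+1}(\partial_h^- f)_{N+1}$.

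For the ``energy-type'' identities \eqref{IPP3}, \eqref{IPP5}, \eqref{IPP6} I would first record the discrete chain rules obtained by direct pointwise computation,
\begin{gather*}
	2 v_j(\partial_h v)_j = \left(\partial_h(|v|^2)\right)_j - \frac{h^2}{2}\left(\partial_h^-[(\partial_h^+ v)^2]\right)_j, \\
	(\Delta_h v)_j(\partial_h v)_j = \frac12 \left(\partial_h^-[(\partial_h^+ v)^2]\right)_j,
\end{gather*}
where the $h^2$-term in the first line is exactly the failure of the discrete Leibniz rule to be exact. Plugging these into the sums defining the left-hand sides of \eqref{IPP3} and \eqref{IPP6}, applying \eqref{IPP2} (for the centered difference) and one further Abel summation, and using that $(\partial_h^+ v)_0 = v_1/h$, $(\partial_h^+ v)_N = -v_N/h$ because $v_0 = v_{N+1}=0$, one sees the boundary terms reorganize exactly into the $\tfrac12|(\partial_h^- v)_{N+1}|^2 g_{N+1} - \tfrac12|(\partial_h^+ v)_0|^2 g_0$ of \eqref{IPP6} and into the $\tfrac{h^2}{2}\int_{[0,1)_h}|\partial_h^+ v_h|^2 \partial_h^+ g_h$ of \eqref{IPP3}. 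Finally, \eqref{IPP5} is obtained by applying \eqref{IPP4} with $g_h v_h$ in place of $g_h$ (all boundary terms vanishing since $v_0 = v_{N+1}=0$), expanding $\partial_h^+(g_h v_h) = (m_h^+ g_h)\,\partial_h^+ v_h + (m_h^+ v_h)\,\partial_h^+ g_h$, and identifying the cross term $-\int_{[0,1)_h}(m_h^+ v_h)(\partial_h^+ g_h)(\partial_h^+ v_h)$ with $\tfrac12\int_{(0,1)_h}|v_h|^2 \Delta_h g_h$ by one more summation by parts, whose boundary terms drop because they involve only $|v_0|^2$ and $|v_{N+1}|^2$.

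The main obstacle will not be conceptual but strictly combinatorial: keeping the index ranges consistent whenever a discrete derivative acts, and making sure every boundary term and every $O(h^2)$ correction lands on the correct side with the correct sign. In particular, the interaction between the discrete product/chain rules and the range shifts in \eqref{IPP3} and \eqref{IPP5} requires invoking $v_0 = v_{N+1}=0$ at precisely the right moments, so as to cancel exactly those boundary terms that are absent from the stated identities while preserving those that are present. Since all of this is carried out in detail in \cite[Lemma~2.6]{BaudouinErvedoza11}, the verification here amounts to transcribing that computation into the present notation.
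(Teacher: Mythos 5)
Your proposal is correct: all six identities follow exactly as you describe from Abel summation, the index shifts between $\int_{(0,1)_h}$, $\int_{[0,1)_h}$, $\int_{(0,1]_h}$, the exact discrete chain/Leibniz rules you record (including the $\tfrac{h^2}{2}\partial_h^-[(\partial_h^+v)^2]$ correction in the first one and the identity $(m_h^+v)(\partial_h^+v)=\tfrac12\partial_h^+(|v|^2)$ needed for \eqref{IPP5}), and the use of $v_0=v_{N+1}=0$ to kill precisely the boundary terms absent from the statements. The paper gives no proof of this lemma --- it is imported verbatim from \cite[Lemma~2.6]{BaudouinErvedoza11} --- and your argument is the same elementary summation-by-parts computation carried out there, so there is nothing further to compare.
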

In a square in dimension 2, we will apply Lemma \ref{LemIPP2-fromBE} when doing integrations by part in each direction. For instance, identity \eqref{IPP3} 
easily yields, for $k \in \{1, 2\}$:
$$
	2\int_{\Omega_h} g_h v_h (\partial_{h,k} v_h)=  -\int_{\Omega_{h}} (\partial_{h,k} g_h) |v_h|^2
		+ \dfrac{h^2}2 \int_{\Omega_{h,k}^-} |\partial_{h,k}^+ v_h|^2 \partial_{h,k}^+g_h.
$$
For convenience, we will also use the formula $\int_{[0,1)_h} m_h^+ v_h f_h = \int_{(0,1]_h} v_h m_h^- f_h$, valid for $v_h$ vanishing on the boundary, and its consequence
\begin{equation}
	\label{IPPnew}
		  \int_{[0,1)_h} m_h^+ v_h (\partial_h^+ f_h) (\partial_h^+ g_h) = \int_{(0,1)_h } v_h (\partial_h f_h)  (\partial_h g_h)  
		+ \dfrac{h^2}4 \int_{(0,1)_h} v_h(\Delta_h f_h)(\Delta_h g_h), 
\end{equation}
whose proof is left to the reader.

\section{Proof of a conjugate Carleman estimate
}\label{Sec-Proof-Prop-Decompo}
\begin{proof}[Proof of Proposition \ref{PropDecompo}]

{\it Notations.} In this proof, we will use the Landau notation $\mathcal{O}_{\mu}(\tau h)$ to denote discrete functions of $(t,x_h)$ depending on $\mu$ satisfying for some constant $C_\mu>0$ that 
$$\norm{\mathcal{O}_\mu(\tau h)}_{L^\infty( L^\infty_h)} \leq C_\mu \tau h.$$ We will also use the shortcut $\mathcal{O}_{\mu}(1)$ to denote bounded functions. Moreover, we will write $v$ instead of $v_h$ as no confusion can occur: here, $v$ is always a discrete function defined on $(-T,T) \times \overline{\Omega_h}$ satisfying $v(\pm T, x_h) = \partial_t v(\pm T, x_h) = 0$ for all $x_h \in \Omega_h$ and $v(t,x_h) = 0$ for all $t \in (-T,T)$ and $x_h \in \partial \Omega_h$.
In order to simplify the integrals, we will also set $Q_h=(-T,T)\times\Omega_h$, $Q_{h,k}^\pm=(-T,T)\times\Omega_{h,k}^\pm$,
$\Sigma_h=(-T,T)\times\Gamma_h$, $\Sigma_{h,k}^\pm=(-T,T)\times\Gamma_{h,k}^\pm$ and use the notations
$$
	\int_{Q_h} = \int_{-T}^T \int_{\Omega_h}, \qquad \int_{Q_{h,k}^\pm} = \int_{-T}^T \int_{\Omega_{h,k}^\pm}, 
	\qquad
	\int_{\Sigma_h} = \int_{-T}^T \int_{\Gamma_h}, \qquad \int_{\Sigma_{h,k}^\pm} = \int_{-T}^T \int_{\Gamma_{h,k}^\pm}.
\medskip
$$

In the following we will use the estimates of Proposition \ref{Prop-B}, in particular \eqref{ProxC2}, and the discrete integration by parts formula in Lemma \ref{LemIPP2-fromBE} and Lemma \ref{LemIPP2}. Finally, let us emphasize that all the constants below are independent of $h \in (0,1)$ and $\tau\geq 1$.\\

\noindent {\it $\bullet$ Step 1. Explicit computations of the cross product.}
The proof of estimate \eqref{decompo} relies first of all on the computation of the multiplication of each term of $\mathscr{L}_{h,1}v$ by each term of $\mathscr{L}_{h,2}v$:
$$
	\ds\int_{Q_h} \mathscr{L}_{h,1}v \, \mathscr{L}_{h,2}v\, dt  = \sum_{n,m=1}^3 I_{nm},
$$ 
where $I_{nm}$ denotes the product between  the $n$-th term of $\mathscr{L}_{h,1}$ in \eqref{P1} and the $m$-th term of $\mathscr{L}_{h,2}$ in \eqref{P2}.
We now perform the computation of each $I_{nm}$ term. 
\\
\noindent{\it Computation of $I_{11}$.} As in \cite{BaudouinErvedoza11}, we integrate by parts in time:
\begin{align*}
	I_{11}
	=~&
	(\alpha_1  - 1)\tau \mu \int_{Q_h}\!\partial_{tt} v \left( \varphi \partial_{tt} \psi -A_{4} \right)v 
	\\	
%	=~&
%	(1-\alpha_1 )\tau\mu \int_{Q_h}\!| \partial_t v|^2 (\varphi\partial_{tt} \psi - {\textstyle\sum_{k}} A_{4,k} ) 
%	\\
%	&
%	+ \dfrac {(\alpha_1 -1)}2 \tau\mu \int_{Q_h}\!|v|^2\partial_{tt}(\varphi\partial_{tt} \psi - {\textstyle\sum_{k}} A_{4,k} ) 
%	\\
	%=&~(1-\alpha_1 )\tau\mu \int_{Q_h}\!| \partial_t v|^2 \varphi(\partial_{tt} \psi - \Delta \psi) 
	%+ \dfrac {(\alpha_1 -1)}2 \tau\mu \int_{Q_h}\!|v|^2 \partial_{tt}(\varphi(\partial_{tt} \psi - \Delta \psi)  )\\
	%&	+ \tau \int_{Q_h}\! \mathcal O_\mu(\tau h)|v|^2  + \tau \int_{Q_h}\! \mathcal O_\mu(\tau h)|\partial_t v|^2.
	=&~
	(1-\alpha_1 )\tau\mu \int_{Q_h}\!| \partial_t v|^2 \varphi(\partial_{tt} \psi - \Delta \psi) 
	+  \tau \int_{Q_h}\! \mathcal O_\mu(1) |v|^2  + \tau \int_{Q_h}\! \mathcal O_\mu(\tau h)|\partial_t v|^2.
\end{align*}
Here, we used $ A_{4}  = \varphi \Delta \psi +  \mathcal O_{\mu}(\tau h)$ and 
$\partial_{tt} A_{4} =\partial_{tt}(\varphi \Delta \psi)   +  \mathcal O_{\mu}(\tau h)$.
\\
\noindent{\it Computation of $I_{12}$.} Similarly,
\begin{align*}
	I_{12}
	=~&
	-\tau\mu^2\int_{Q_h}\!\partial_{tt}v \left(\varphi |\partial_t \psi|^2- A_3 \right)v 
	\\
	%=~&\tau \mu ^2 \int_{Q_h}\!|\partial_t v|^2\varphi(|\partial_t \psi|^2-|\nabla \psi|^2)
	%-\dfrac{\tau\mu^2}2 \int_{Q_h}\!|v|^2\partial_{tt}(\varphi |\partial_t \psi|^2-\varphi |\nabla \psi|^2) 
	%\\	& + \tau \int_{Q_h}\!\mathcal O_\mu(\tau h)|\partial_t v|^2 + \tau \int_{Q_h}\!  \mathcal O_\mu(\tau h)|v|^2,
	=~&
	\tau \mu ^2 \int_{Q_h}\!|\partial_t v|^2\varphi(|\partial_t \psi|^2-|\nabla \psi|^2)+ \tau \int_{Q_h}\!  \mathcal O_\mu(1)|v|^2
	+ \tau \int_{Q_h}\!\mathcal O_\mu(\tau h)|\partial_t v|^2 ,
\end{align*}
where we used $ A_{3} = \varphi |\nabla \psi|^2+  \mathcal O_{\mu}(\tau h)$ and $\partial_{tt} A_{3} =\partial_{tt}\left(\varphi |\nabla \psi|^2\right) +  \mathcal O_{\mu}(\tau h).$
\\
\noindent{\it Computation of $I_{13}$.} 
Using $ {\textstyle\sum_{k}} \partial_{h,k} A_{1,k} = \mu \varphi |\nabla \psi|^2 + \varphi \Delta \psi + \mathcal O_\mu(\tau h)$,  $  \partial_{t} A_{1,k} = \mu \varphi \partial_{x_k}\psi \partial_t \psi+ \mathcal O_\mu(\tau h)$, and \eqref{IPP3}, we obtain:
\begin{align*}
	I_{13}
		=~&-2\tau\mu \int_{Q_h}\!\partial_{tt}v \left( \varphi \partial_t \psi  \partial_t v - {\textstyle\sum_{k}} A_{1,k} \partial_{h,k} v\right) 
	\\
		=~& \tau\mu \int_{Q_h}\! | \partial_t v|^2\varphi (\partial_{tt} \psi +\Delta\psi)
	+\tau\mu^2 \int_{Q_h}\! | \partial_t v|^2\varphi (|\partial_t \psi|^2 + |\nabla \psi|^2 )
	\\
	&-2\tau\mu^2 \int_{Q_h}\!  \partial_t v\, \partial_t\psi\, \varphi~\nabla_h v\cdot \nabla \psi 
	-  \dfrac{\tau\mu}2 {\textstyle\sum_{k}} \int_{Q_{h,k}^-}  |h\partial_{h,k}^+ \partial_t v|^2 \partial_{h,k}^+ A_{1,k}
	\\
	&
	+\tau \int_{Q_h}\!\mathcal O_\mu(\tau h)| \partial_t v|^2 
	+\tau\int_{Q_h}\! \partial_t v \left({\textstyle\sum_{k}} \mathcal O_\mu(\tau h) \partial_{h,k} v\right).
\end{align*}
\noindent{\it Computation of $I_{21}$.} Since $A_{4} = \varphi \Delta \psi + \mathcal O_\mu(\tau h)$ and $A_{0,k}= \mathcal O_\mu(\tau h)$, we get:
\begin{align*}
	I_{21}
	=~&
	(1- \alpha_1 )\tau \mu \int_{Q_h}\! {\textstyle\sum_{k}} (1+A_{0,k})\Delta_{h,k} v \left( \varphi \partial_{tt} \psi -A_{4} \right)v 
	\\
	= ~&(\alpha_1 -1) \tau \mu  {\textstyle\sum_{k}} \int_{Q_{h,k}^-} |\partial_{h,k}^+ v|^2 \varphi (\partial_{tt} \psi - \Delta \psi ) 
	+ \tau \int_{Q_h}\!  \mathcal{O}_\mu(1)|v|^2
	+\tau  {\textstyle\sum_{k}}\int_{Q_{h,k}^-}\mathcal O_\mu(\tau h)|\partial_{h,k}^+ v|^2.
%	= ~&(\alpha_1 -1) \tau \mu  {\textstyle\sum_{k}} \int_{Q_{h,k}^-} |\partial_{h,k}^+ v|^2 \varphi (\partial_{tt} \psi - \Delta \psi ) 	\\	
%	&+ \dfrac {(1-\alpha_1 )}2 \tau \mu \int_{Q_h}\! |v|^2 \Delta(\varphi\partial_{tt} \psi - \varphi \Delta \psi) 
%	+\tau  {\textstyle\sum_{k}}\int_{Q_{h,k}^-}\mathcal O_\mu(\tau h)|\partial_{h,k}^+ v|^2 + \tau \int_{Q_h}\!  \mathcal{O}_\mu(\tau h)|v|^2.
\end{align*}
%Moreover, there is no need to develop $\Delta(\varphi\partial_{tt} \psi -\varphi\Delta \psi)$, that brings some extra $\mu$ factor but no extra power of $\tau$, since this term will be dominated with respect to the power of $\tau$. 
\\
\noindent{\it Computation of $I_{22}$.}  Using $A_{3} = \varphi |\nabla\psi|^2 +  \mathcal{O}_\mu(\tau h)$ and \eqref{IPP5}, we obtain
\begin{align*}
	I_{22} =~&
		\tau\mu^2\int_{Q_h}\!{\textstyle\sum_{k}} (1+A_{0,k})\Delta_{h,k} v  \left( \varphi |\partial_t \psi|^2 -  A_{3} \right)v 
		\\
		=~&-\tau\mu^2 {\textstyle\sum_{k}} \int_{Q_{h,k}^-} |\partial_{h,k}^+ v|^2 \varphi (|\partial_t \psi|^2 - |\nabla \psi|^2) 
		+ \tau  \int_{Q_h}\! \mathcal{O}_\mu(\tau h)|v|^2
		+\tau {\textstyle\sum_{k}} \int_{Q_{h,k}^-} \mathcal O_\mu(\tau h)|\partial_{h,k}^+ v|^2  .
		%=~&-\tau\mu^2 {\textstyle\sum_{k}} \int_{Q_{h,k}^-} |\partial_{h,k}^+ v|^2 \varphi (|\partial_t \psi|^2 - |\nabla \psi|^2) 
		%+\dfrac {\tau\mu^2}2 \int_{Q_h}\!  |v|^2 \Delta (\varphi (|\partial_t \psi|^2- |\nabla \psi|^2)) \\
	%	&+\tau {\textstyle\sum_{k}} \int_{Q_{h,k}^-} \mathcal O_\mu(\tau h)|\partial_{h,k}^+ v|^2  	+ \tau  \int_{Q_h}\! \mathcal{O}_\mu(\tau h)|v|^2.
\end{align*}
\noindent{\it Computation of $I_{23}$.}
We can split this term in two parts as follows
\begin{align*}
	I_{23} &= \underbrace{2\tau\mu\int_{Q_h}\!{\textstyle\sum_{k}} (1+A_{0,k})\Delta_{h,k} v\,  \varphi \, \partial_t \psi \, \partial_t v}_{I_{23a}}
		 \ \underbrace{- 2\tau\mu\int_{Q_h}\!{\textstyle\sum_{k}} (1+A_{0,k})\Delta_{h,k} v  \left({\textstyle\sum_{\ell}} A_{1,\ell} \partial_{h,\ell}  v\right).}_{I_{23b}}
\end{align*}

For $I_{23a}$ we use $\Delta_{h,k} = \partial_{h,k}^- \partial_{h,k}^+$ and the zero boundary conditions on $v$. Setting $g_{0,k} =   (1+A_{0,k})\,  \varphi \, \partial_t \psi$ and using \eqref{IPP1}, we get:
\begin{align*}
	I_{23a} =~& - 2 \tau \mu {\textstyle\sum_{k}} \int_{Q_{h,k}^-}  \partial_{h,k}^+ v\,  \partial_{h,k}^+( g_{0,k}  \partial_t v)
	\\
	=~& - 2 \tau \mu {\textstyle\sum_{k}} \int_{Q_{h,k}^-}  \partial_{h,k}^+ v\,  \partial_{h,k}^+( \partial_t v) m_{h,k}^+g_{0,k}
	 -  2 \tau \mu {\textstyle\sum_{k}} \int_{Q_{h,k}^-}  \partial_{h,k}^+ v\,  m_{h,k}^+( \partial_t v) \partial_{h,k}^+ g_{0,k}
\end{align*}	
%	=~&- 2 \tau \mu^2 {\textstyle\sum_{k}} \int_{Q_{h,k}^-}  
%		\partial_{h,k}^+ v\,  m_{h,k}^+(\partial_t v) \, \varphi \, \partial_{x_k}\psi\,  \partial_t \psi\\
%	& + \tau \mu^2 {\textstyle\sum_{k}} \int_{Q_{h,k}^-}  |\partial_{h,k}^+ v|^2 \varphi |\partial_t \psi|^2
%	 + \tau \mu {\textstyle\sum_{k}} \int_{Q_{h,k}^-}  |\partial_{h,k}^+ v|^2 \varphi \partial_{tt} \psi
%	\\
%	& + \tau {\textstyle\sum_{k}} \int_{Q_{h,k}^-} \mathcal{O}_{\mu} (\tau h)|\partial_{h,k}^+ v|^2  
%	+ \tau {\textstyle\sum_{k}} \int_{Q_{h,k}^-}  \mathcal{O}_{\mu} (\tau h)\partial_{h,k}^+ v \, m_{h,k}^+(\partial_t v).
%\end{align*}
Noticing that, on the one hand,
\begin{align*}
	 - 2 \tau \mu {\textstyle\sum_{k}} \int_{Q_{h,k}^-}  \partial_{h,k}^+ v\,  \partial_{h,k}^+( \partial_t v) m_{h,k}^+g_{0,k}
	& =
	   \tau \mu {\textstyle\sum_{k}} \int_{Q_{h,k}^-}  |\partial_{h,k}^+ v|^2 \partial_t (m_{h,k}^+ g_{0,k})
	  \\
	 & =  \tau \mu {\textstyle\sum_{k}} \int_{Q_{h,k}^-}  |\partial_{h,k}^+ v|^2 (\mu \varphi |\partial_t \psi|^2 + \varphi \partial_{tt} \psi+ \mathcal{O}_\mu(\tau h)),
\end{align*}
and on the other hand (using \eqref{IPPnew}),
\begin{align*}
	- 2 \tau \mu {\textstyle\sum_{k}} \int_{Q_{h,k}^-}  &
		\partial_{h,k}^+ v\,  m_{h,k}^+(\partial_t v)  \partial_{h,k}^+g_{0,k}
	\\
	=~&
	 -2 \tau \mu \sum_k \int_{Q_{h}}  
		\partial_{h,k} v\, \partial_t v \, \partial_{h,k}g_{0,k}
	- \frac{\tau \mu h^2}{2} 
		 \sum_k \int_{Q_{h}}  
		\Delta_{h,k} v\, \partial_t v \, \Delta_{h,k} g_{0,k}
	\\
	=~&
	 -2 \tau \mu^2 \int_{Q_{h}}  \partial_t v\, \partial_t \psi\, \varphi  \, \nabla_h v \cdot (\nabla \psi+ \mathcal{O}_\mu(\tau h)) 
	 - \tau h^2 \sum_k \int_{Q_{h}}  \mathcal{O}_\mu(1)\Delta_{h,k} v\, \partial_t v, 
\end{align*}
the term $I_{23a}$ takes the form
\begin{align*}
	I_{23a} =~& 
	\tau \mu^2 {\textstyle\sum_{k}} \int_{Q_{h,k}^-}  |\partial_{h,k}^+ v|^2  \varphi |\partial_t \psi|^2 
	+ 	\tau \mu {\textstyle\sum_{k}} \int_{Q_{h,k}^-}  |\partial_{h,k}^+ v|^2 ( \varphi \partial_{tt} \psi+ \mathcal{O}_\mu(\tau h))
	\\
	 & -2 \tau \mu^2 \int_{Q_{h}}  \partial_t v\, \partial_t \psi\, \varphi  \, \nabla_h v \cdot (\nabla \psi+ \mathcal{O}_\mu(\tau h)) 
	 - \tau h^2 \sum_k \int_{Q_{h}}  \mathcal{O}_\mu(1)\Delta_{h,k} v\, \partial_t v.
\end{align*}

To compute $I_{23b}$, we consider the integrals $I_{23b, k , \ell}$ indexed by $(k,\ell) \in \{1,2\}^2$ and defined by
$$
	I_{23b, k, \ell} = - 2\tau\mu\int_{Q_h}\! (1+A_{0,k})\Delta_{h,k} v\, A_{1,\ell} \, \partial_{h,\ell}  v.
$$
When $k = \ell$, using formula \eqref{IPP6} with $g_{k} = (1+A_{0,k})A_{1,k} = \varphi \partial_{x_k}\psi (1+\mathcal O_\mu(\tau h))$, we obtain
\begin{align*}
	I_{23b, k, k} =~& \tau \mu \int_{Q_{h,k}^-} |\partial_{h,k}^+ v|^2 \partial_{h,k}^+g_k - \tau \mu \int_{\Sigma_{h,k}^+}g_k |\partial_{h,k}^- v|^2 + \tau \mu \int_{\Sigma_{h,k}^-} g_k |\partial_{h,k}^+ v|^2
	\\
	=~& \tau \mu \int_{Q_{h,k}^-} |\partial_{h,k}^+ v|^2( \partial_{x_k}(\varphi \partial_{x_k} \psi) + \mathcal{O}_\mu(\tau h))- \tau \mu \int_{\Sigma_{h,k}^+}g_k |\partial_{h,k}^- v|^2 + \tau \mu \int_{\Sigma_{h,k}^-} g_k |\partial_{h,k}^+ v|^2.
%	=~& \tau \mu^2 \int_{Q_{h,k}^-}\! |\partial_{h,k}^+ v|^2 \varphi (\partial_{x_k} \psi)^2 + \tau \mu  \int_{Q_{h,k}^-} \! |\partial_{h,k}^+ v|^2 \varphi \partial_{x_k x_k} \psi + \tau \int_{Q_{h,k}^-} \! \mathcal{O}_\mu(\tau h) |\partial_{h,k}^+ v|^2
%	\\
%	&  - \tau \mu \int_{\Sigma_{h,k}^+} (1 +A_{0,k}) A_{1,k} |\partial_{h,k}^- v|^2 + \tau \mu \int_{\Sigma_{h,k}^-} (1 +A_{0,k}) A_{1,k} |\partial_{h,k}^+ v|^2.
\end{align*}
When $k \neq \ell$, we use Lemma \ref{LemIPP2} with $g_{k, \ell} = (1+A_{0,k})A_{1, \ell}$:
\begin{align*}
	I_{23b, k, \ell} =~& - \tau \mu \int_{Q_{h,k}^-} |\partial_{h,k}^+ v|^2 \partial_{h,\ell} (m_{h,k}^+ g_{k, \ell})  + 2 \tau \mu  \int_{Q_{h,k}^-} \partial_{h,k}^+ v\, m_{h,k}^+ (\partial_{h, \ell} v) \partial_{h,k}^+ g_{k, \ell}   
	\\
	& + \frac{\tau \mu h^2}{2} \int_{Q_h^-} |\partial_{h,k}^+ \partial_{h, \ell}^+ v|^2 \partial_{h, \ell}^+ (m_{h,k}^+ g_{k, \ell}).
\end{align*}
Using \eqref{IPPnew} for  $v_h$ replaced by $\partial_{h, \ell} v$, which vanishes on the boundary $\Sigma_{h,k}$ as $k\neq \ell$,
%$$	\int_{Q_{h,k}^-} \partial_{h,k}^+ v\, m_{h,k}^+ (\partial_{h, \ell} v) \partial_{h,k}^+ g_{k, \ell}   	=
%	\int_{Q_{h}} \partial_{h,k} v\,  \partial_{h, \ell} v \, \partial_{h,k} g_{k, \ell} + \frac{h^2}{4} \int_{Q_h} \Delta_{h,k} v \, \partial_{h, \ell} v \, \Delta_{h,k} g_{k,\ell}$$
we get:
\begin{align*}
	I_{23b,k, \ell} =~&- \tau \mu \int_{Q_{h,k}^-} |\partial_{h,k}^+ v|^2 \left( \partial_{x_\ell} (\varphi \partial_{x_\ell} \psi)  + \mathcal{O}_\mu(\tau h) \right)
	+ 2 \tau \mu \int_{Q_h}  \partial_{h,k} v\,  \partial_{h, \ell} v \, \left(\partial_{x_k}(\varphi \partial_{x_\ell} \psi)+ \mathcal{O}_\mu(\tau h) \right)
	\\
	& 
	+\frac{\tau \mu h^2}{2} \int_{Q_{h}} \Delta_{h,k} v\,  \partial_{h, \ell} v  \left(\Delta_{x_k} (\varphi \partial_{x_\ell} \psi) + \mathcal{O}_\mu(\tau h)\right)
	+ \frac{\tau \mu h^2}{2} \int_{Q_h^-} |\partial_{h,k}^+ \partial_{h, \ell}^+ v|^2 \partial_{h, \ell}^+ (m_{h,k}^+ g_{k, \ell}).
%	\\
%	=~& 
%	-  \tau \mu^2 \int_{Q_{h,k}^-} |\partial_{h,k}^+ v|^2 \varphi (\partial_{x_\ell} \psi)^2 
%	-  \tau \mu \int_{Q_{h,k}^-} |\partial_{h,k}^+ v|^2 \varphi \partial_{x_\ell x_\ell} \psi + \tau \int_{Q_{h,k}^-} \mathcal{O}_\mu(\tau h) |\partial_{h,k}^+ v|^2 
%	\\
%	& +  2 \tau \mu^2  \int_{Q_{h,k}^-} \partial_{h,k}^+ v\, m_{h,k}^+ (\partial_{h, \ell} v)\, \varphi\, \partial_{x_k} \psi\, \partial_{x_\ell} \psi 
%	+ 2 \tau \mu  \int_{Q_{h,k}^-} \partial_{h,k}^+ v\, m_{h,k}^+ (\partial_{h, \ell} v)\,  \varphi\, \partial_{x_k x_\ell} \psi
%	\\
%	&
%	+ \tau \int_{Q_{h,k}^-} \mathcal{O}_\mu(\tau h) \partial_{h,k}^+ v\, m_{h,k}^+ (\partial_{h, \ell} v)
%	+ \frac{\tau \mu h^2}{2} \int_{Q_h^-} |\partial_{h,k}^+ \partial_{h, \ell}^+ v|^2 \partial_{h, \ell}^+ (m_{h,k}^+ g_{k, \ell}).
\end{align*}
Hence we obtain
\begin{align*}
	I_{23b} =~& 
	\tau \mu {\textstyle\sum_{k}} \int_{Q_{h,k}^-} |\partial_{h,k}^+ v|^2 \left(\partial_{x_k} (\varphi  \partial_{x_k} \psi) - {\textstyle \sum_{\ell \neq k}} \partial_{x_\ell} (\varphi \partial_{x_\ell} \psi) + \mathcal{O}_\mu(\tau h) \right)
	\\
	& + 
	2 \tau \mu \int_{Q_h} \partial_{h,1} v\, \partial_{h,2} v \left(\partial_{x_1}(\varphi \partial_{x_2} \psi)+ \partial_{x_2}(\varphi \partial_{x_1} \psi) + \mathcal{O}_\mu(\tau h)\right)
	\\
	& 
	+ \tau h^2 \int_{Q_{h}} \mathcal{O}_\mu(1) (\Delta_{h,1} v \, \partial_{h,2} v + \Delta_{h,2} v \, \partial_{h,1} v)
	+ \frac{\tau \mu h^2}{2} \int_{Q_h^-}  |\partial_{h,1}^+ \partial_{h, 2}^+ v|^2 \left( \hbox{div} (\varphi \nabla \psi) + \mathcal{O}_\mu(\tau h) \right)
	\\
	& - \tau \mu \sum_k \int_{\Sigma_{h,k}^+}  |\partial_{h,k}^- v|^2 \varphi \partial_{x_k} \psi (1+ \mathcal{O}_\mu(\tau h))+ \tau \mu \sum_k \int_{\Sigma_{h,k}^-}  |\partial_{h,k}^+ v|^2\varphi \partial_{x_k} \psi (1+ \mathcal{O}_\mu(\tau h)).
\end{align*}
We now remark that $\partial_{x_1}(\varphi \partial_{x_2} \psi)+ \partial_{x_2}(\varphi \partial_{x_1} \psi) = 2 \mu \varphi \partial_{x_1} \psi \partial_{x_2} \psi$, and that we can write
$$
	4 \tau \mu^2 \int_{Q_h} \partial_{h,1} v\, \partial_{h,2} v\, \varphi \partial_{x_1} \psi \partial_{x_2} \psi 
	= 
	2 \tau \mu^2 \int_{Q_h} \varphi |\nabla_h v\cdot \nabla \psi|^2 - 2 \tau \mu^2 \sum_k \int_{Q_h} |\partial_{h,k} v|^2 |\partial_{x_k} \psi|^2.
$$
Therefore,
\begin{align*}
	I_{23b} =~& 
	\tau \mu {\textstyle\sum_{k}} \int_{Q_{h,k}^-} |\partial_{h,k}^+ v|^2 \left( {2} \partial_{x_k} (\varphi  \partial_{x_k} \psi) - \hbox{div} (\varphi \nabla \psi) + \mathcal{O}_\mu(\tau h) \right)
	\\
	& 
	+ 2 \tau \mu^2 \int_{Q_h} \varphi |\nabla_h v\cdot \nabla \psi|^2 - 2 \tau \mu^2 \sum_k \int_{Q_h} |\partial_{h,k} v|^2 \varphi |\partial_{x_k} \psi|^2 + \tau \int_{Q_h} \mathcal{O}_\mu(\tau h ) \partial_{h,1} v \, \partial_{h,2} v 
	\\
	&
	+ \tau h^2 \int_{Q_{h}} \left(\mathcal{O}_\mu(1) \Delta_{h,1} v \,  \partial_{h,2} v   + \mathcal{O}_\mu(1) \Delta_{h,2} v \, \partial_{h,1} v\right)
	\\
	& 
	+ \frac{\tau \mu h^2}{2} \int_{Q_h^-}  |\partial_{h,1}^+ \partial_{h, 2}^+ v|^2 \left( \hbox{div} (\varphi \nabla \psi) + \mathcal{O}_\mu(\tau h) \right)
	\\
	& - \tau \mu \sum_k \int_{\Sigma_{h,k}^+}  |\partial_{h,k}^- v|^2 (\varphi \partial_{x_k} \psi + \mathcal{O}_\mu(\tau h))+ \tau \mu \sum_k \int_{\Sigma_{h,k}^-}  |\partial_{h,k}^+ v|^2( \varphi \partial_{x_k} \psi+ \mathcal{O}_\mu(\tau h)).
\end{align*}
Of course, this yields $I_{23}$ as $I_{23} = I_{23a}+ I_{23b}$.\\
\noindent{\it Computation of $I_{31}$.}
Using $\ds  A_{2} = \varphi^2 |\nabla \psi|^2 + O_\mu(\tau h)$ and $\ds A_{4} = \varphi \Delta\psi + O_\mu(\tau h)$, 
one easily obtains:
\begin{align*}
	I_{31}
	=~&(\alpha_1  - 1)\tau^3\mu^3\int_{Q_h}\!  |v|^2 \left(\varphi^2 \left( \partial_t \psi \right)^2 - A_{2} \right)
		\left( \varphi \, \partial_{tt} \psi - A_{4}\right)
	\\
	=~&(\alpha_1 -1) \tau^3\mu^3 \int_{Q_h}\!|v|^2\varphi^3(|\partial_t \psi|^2-|\nabla\psi|^2) (\partial_{tt} \psi - \Delta \psi) 
	+\tau^3\int_{Q_h}\! \mathcal O_\mu(\tau h) |v|^2. 
\end{align*}
\noindent{\it Computation of $I_{32}$.} Using here $\ds A_{3} = \varphi |\nabla \psi|^2 + O_\mu(\tau h)$, 
\begin{align*}
	I_{32}
		=~&
		-\tau^3\mu^4\int_{Q_h}\!  |v|^2 \left(\varphi^2 \left( \partial_t \psi \right)^2 - A_{2} \right)
		\left( \varphi |\partial_t \psi|^2 - A_{3} \right)
		\\
		=~&
		-\tau^3\mu^4 \int_{Q_h}\! |v|^2 \varphi^3 (|\partial_t \psi|^2-|\nabla\psi|^2)^2
		+\tau^3 \int_{Q_h}\! \mathcal{O}_{\mu} (\tau h)|v|^2 .
\end{align*}
\noindent{\it Computation of $I_{33}$.}
Finally, using \eqref{IPP3} we get
\begin{align*}
	I_{33}
		=~&
		-2\tau^3\mu ^3\int_{Q_h}\!  \left(\varphi^2 \left( \partial_t \psi \right)^2 -  A_{2} \right)v
		\left( \varphi \, \partial_t \psi  \, \partial_t v - {\textstyle\sum_{k}} A_{1,k} \partial_{h,k} v\right)
		\\
		=~&
		\tau^3\mu ^3 \int_{Q_h}\!|v|^2 \partial_t\left((\varphi^2|\partial_t \psi|^2- A_{2})\, \varphi \, \partial_t \psi\right) 
		- \tau^3\mu ^3  \int_{Q_h}\! |v|^2  {\textstyle\sum_{k}}\partial_{h,k}\left( A_{1,k}(\varphi^2|\partial_t \psi|^2- A_{2})\right) 
		\\
		&
		+\dfrac{\tau^3\mu ^3h^2}2 {\textstyle\sum_{k}} \int_{Q_{h,k}^-} |\partial_{h,k}^+v|^2 
			\partial_{h,k}^+ \left(  A_{1,k} (\varphi^2|\partial_t \psi|^2- A_{2})\right).
\end{align*}
But we have
\begin{align*}
	&\partial_t((\varphi^2|\partial_t \psi|^2  -A_2) \varphi \partial_t \psi) 
	\\
	&=
	~3\mu \varphi^3 |\partial_t \psi|^2 \left(  |\partial_t \psi|^2 - |\nabla \psi|^2 \right)
	+ \varphi^3 \partial_{tt}\psi  \left(  |\partial_t \psi|^2 - |\nabla \psi|^2 \right)
	+ 2 \varphi^3 |\partial_t \psi|^2 \partial_{tt}\psi  + \mathcal O_\mu(\tau h),
	\smallskip
\\
 	&{\textstyle\sum_{k}}\partial_{h,k}\left(A_{1,k}(\varphi^2|\partial_t \psi|^2-A_{2})\right) 
	\\
	&=
	 ~3\mu\, \varphi^3 |\nabla \psi|^2\left(|\partial_t \psi|^2 - |\nabla \psi|^2\right) 
		+ \varphi^3\Delta \psi \left(|\partial_t \psi|^2 - |\nabla \psi|^2\right) 
		- \varphi^3 \nabla \psi \cdot \nabla (|\nabla \psi|^2) 
		+ \mathcal O_\mu(\tau h),
		\smallskip
\\
	& \partial_{h,k}^+(A_{1,k}(\varphi^2|\partial_t \psi|^2-A_{2})) 
	= \partial_{x_k} (\varphi^3\,  \partial_{x_k}\psi\, (|\partial_t \psi|^2 - |\nabla\psi|^2))
		+ \mathcal O_\mu(\tau h)= \mathcal O_\mu(1), 
\end{align*}
so that we obtain
\begin{align*}
	I_{33}
		=~&
	3\tau^3\mu^4 \int_{Q_h}\! |v|^2 \varphi^3 (|\partial_t \psi|^2-|\nabla\psi|^2)^2
	+\tau^3\mu ^3 \int_{Q_h}\!|v|^2 \varphi^3 \left(\partial_{tt} \psi - \Delta \psi  \right)  \left(|\partial_t \psi|^2 - |\nabla \psi|^2 \right)
	\\
		&
	\hspace{-1cm}+ \tau^3\mu ^3 \int_{Q_h}\!|v|^2 \varphi^3\left( 2  \partial_{tt} \psi |\partial_t \psi|^2 +\nabla \psi \cdot \nabla (|\nabla \psi|^2) \right) 
	+\tau^3 \int_{Q_h}\!  \mathcal{O}_{\mu}(\tau h)|v|^2
	+\tau {\textstyle\sum_{k}} \int_{Q_{h,k}^+} \mathcal{O}_{\mu}(\tau h)|\partial_{h,k}^+v|^2 .
\end{align*}
\noindent{\it Final computation.}
Gathering all the terms, one can write
\begin{equation}
	\int_{Q_h}\! \mathscr{L}_{h,1}v \, \mathscr{L}_{h,2}v  = I_v + I_{\partial v}  + I_{\Gamma}+I_\tn{Tych}, 
\end{equation}
where $I_v = 	\ds\int_{Q_h}\! |v|^2 \mathcal F(\psi) $ contains all the terms in $|v|^2$
%\begin{equation}
	 %I_v= \ds\int_{Q_h}\! |v|^2 \mathcal F(\psi) ,
%\end{equation}
with
\begin{align*}
	\mathcal F(\psi) =~& 
	 \alpha_1  \tau^3\mu^3 \varphi^3(|\partial_t \psi|^2-|\nabla\psi|^2) (\partial_{tt} \psi - \Delta \psi) 
		+ \tau^3\mu ^3 \varphi^3\left( 2  \partial_{tt} \psi |\nabla \psi|^2 +\nabla \psi \cdot \nabla (|\nabla \psi|^2) \right) 
	\\
	& +2 \tau^3\mu^4 \varphi^3 (|\partial_t \psi|^2-|\nabla\psi|^2)^2+ \tau^3 \mathcal{O}_\mu(\tau h)+ \tau \mathcal{O}_\mu(1)~;
\end{align*}
$I_{\partial v}$ contains all the terms involving first-order derivatives of $v$:
\begin{align*}
	\lefteqn{
	I_{\partial v} = 
	2 \tau \mu ^2 \int_{Q_h}\!|\partial_t v|^2\varphi \, |\partial_t \psi|^2
	+2\tau\mu^2 \int_{Q_h}\! |\nabla_{h}  v\cdot \nabla \psi|^2 \varphi
	-4 \tau\mu^2 \int_{Q_h}\!  \partial_t v \, \partial_t\psi \, \varphi~\nabla_h v\cdot \nabla \psi 
	}
	\\
	&
	+ \tau \mu \int_{Q_h} |\partial_t v|^2 \varphi \left(2\partial_{tt} \psi - \alpha_1( \partial_{tt} \psi -\Delta \psi) \right)
	+ \tau \mu \sum_k \int_{Q_{h,k}^-} |\partial_{h,k}^+ v|^2 \varphi \left(\alpha_1 (\partial_{tt} \psi - \Delta \psi) + 2 \partial_{x_kx_k} \psi \right)
	\\
	& 
	+ 2  \tau \mu^2 \sum_k\left(  \int_{Q_{h,k}^-} |\partial_{h,k}^+ v|^2 \varphi |\partial_{x_k}\psi|^2 - \int_{Q_h} |\partial_{h,k} v|^2 \varphi |\partial_{x_k} \psi|^2 \right)
 + I_{\mathcal{O}_\mu}, 
\end{align*}
where $I_{\mathcal{O}_\mu}$ contains all the terms involving $\mathcal{O}_\mu$ terms (and a first-order derivative of $v$); 
%\begin{align*}
%	\lefteqn{I_{\partial v} =~ (2-\alpha_1 )\tau\mu \int_{Q_h}\!| \partial_t v|^2 \, \varphi\, \partial_{tt} \psi
%	+\alpha_1 \tau\mu \int_{Q_h}\!| \partial_t v|^2\, \varphi\, \Delta \psi 
%	}	
%	\\
%	&+(1- \alpha_1) \tau \mu  {\textstyle\sum_{k}} \int_{Q_{h,k}^-} |\partial_{h,k}^+ v|^2 \varphi\, \Delta \psi
%	 +\alpha_1 \tau \mu {\textstyle\sum_{k}} \int_{Q_{h,k}^-}  |\partial_{h,k}^+ v|^2 \varphi\, \partial_{tt} \psi
%	\\			
%	&+ 2 \tau \mu ^2 \int_{Q_h}\!|\partial_t v|^2\varphi \, |\partial_t \psi|^2
%	+2\tau\mu^2 \int_{Q_h}\! |\nabla_{h}  v\cdot \nabla \psi|^2 \varphi
%	\\
%	&-2\tau\mu^2 \int_{Q_h}\!  \partial_t v \, \partial_t\psi \, \varphi~\nabla_h v\cdot \nabla \psi 
%	- 2 \tau \mu^2 {\textstyle\sum_{k}} \int_{Q_{h,k}^-}  
%		\partial_{h,k}^+ v \, m_{h,k}^+(\partial_t v) \,  \varphi \, \partial_{x_k}\psi \, \partial_t \psi
%	\\
%	&+\tau {\textstyle\sum_{k}} \int_{Q_{h,k}^-} \mathcal{O}_{\mu}(\tau h)
%		\left(|\partial_{h,k}^+v|^2 + \partial_{h,k}^+ v\,  m_{h,k}^+(\partial_t v)\right)
%		%\\
%	%&
%	+\tau \int_{Q_h}\!\mathcal O_\mu(\tau h)\left( | \partial_t v|^2 + \partial_t v\, {\textstyle\sum_{k}}\partial_{h,k} v \right) ;
%\end{align*}
%
\\
$I_{\Gamma}$ contains all the boundary terms:
$$
	I_{\Gamma}	=
		- ~ \tau\mu {\textstyle\sum_{k}}  \int_{\Sigma_{h,k}^+} |\partial_{h,k}^-  v|^2 (\varphi \, \partial_{x_k} \psi  +\mathcal{O}_{\mu} (\tau h))
	+~ \tau\mu {\textstyle\sum_{k}}  \int_{\Sigma_{h,k}^-} |\partial_{h,k}^+  v|^2 (\varphi \partial_{x_k} \psi  +\mathcal{O}_{\mu} (\tau h)) ;
$$
$I_\tn{Tych}$ contains the terms corresponding to the Tychonoff regularization:
\begin{align*}
	I_\tn{Tych} =& -  \dfrac{\tau\mu}2 {\textstyle\sum_{k}} \int_{Q_{h,k}^-}  |h\partial_{h,k}^+ \partial_t v|^2 \partial_{h,k}^+ A_{1,k}
	\\
	&+~ \frac{ \tau\mu}2 \int_{Q_h^-} |h\partial_{h,1}^+ \partial_{h,2}^+ v|^2 
	\left(\partial_{h,2}^+m_{h,1}^+((1+A_{0,1})   A_{1,2})+ \partial_{h,1}^+m_{h,2}^+((1+A_{0,2})   A_{1,1})\right).
\medskip
\end{align*}

\noindent {\it $\bullet$ Step 2. Bounding each term from below.}\\
\noindent {\it Step 2.1. Dealing with the $0$ order terms in $v$.} Since
 $ \nabla \psi \cdot \nabla (|\nabla \psi|^2) = 4 |\nabla \psi|^2 =  16|x-x_a|^2$, $\Delta \psi = 4$ and $\partial_{tt} \psi = -2 \beta $ and denoting 
$X= |\partial_t \psi|^2 - |\partial_x \psi|^2$, one can obtain
$$
	\mathcal F(\psi)=  \tau^3 \mu^3 \varphi^3 \underbrace{\Big( 2 \mu X^2 -2 \alpha_1 ( \beta + 2) X+ 16(1-\beta) |x-x_a|^2 \Big)}_{\mathcal G(\psi)} +  \tau^3 \mathcal{O}_\mu(\tau h)+ \tau \mathcal{O}_\mu(1),
$$
Since $x_a \notin \overline\Omega$, $ \inf_{(0,1)^2}  |x-x_a|^2$ is strictly positive and  we have
$$
	\mathcal G(\psi) \geq 2 \mu X^2 - 2 \alpha_1 ( \beta + 2) X + c, \quad \hbox{ with } c =  16(1-\beta) \inf_{(0,1)^2}  |x-x_a|^2>0.
$$
Thus, there exists $\mu_0\geq 1$ such that for $\mu = \mu_0$, $\mathcal G(\psi)>0$ uniformly.
Therefore, we get $c_0>0$ independent of $h$ such that 
\begin{equation}
	I_v 
	\geq 2 c_0 \tau^3 \int_{Q_h}\!  |v|^2 \varphi^3 - (\tau^3 \mathcal{O}_{\mu_0}(\tau h)+ \tau \mathcal{O}_{\mu_0}(1) ) \int_{Q_h}\!  |v|^2
	\label{order0}
	\geq  c_0 \tau^3 \int_{Q_h}\! |v|^2 - \tau^3 \mathcal{O}_{\mu_0}(\tau h) \int_{Q_h}\! |v|^2, 
\end{equation}
where the last line is obtained by bounding $\varphi$ from below by $1$ and by taking $\tau \geq \tau_0$ to absorb the $\mathcal{O}_{\mu_0}(1)$-term.
From now, we fix $\mu = \mu_0$ and we simply write $\mathcal{O}_\mu$ instead of $\mathcal{O}_{\mu_0}$.
\\
\noindent {\it Step 2.2. Dealing with the first-order derivatives.} The first line in $I_{\partial v}$ is positive as 
$$	
	\left|\int_{Q_h}\!  \partial_t v \, \partial_t\psi \, \varphi\, \nabla_h v\cdot \nabla \psi \right|	
	\leq	 \dfrac 12 \int_{Q_h}\!|\partial_t v|^2\varphi |\partial_t \psi|^2
	+\dfrac 12 \int_{Q_h}\! |\nabla_{h}  v\cdot \nabla \psi|^2 \varphi.
$$
The second line of $I_{\partial v}$ can be computed explicitly as $ \partial_{tt} \psi = -2\beta$, $\partial_{x_k x_k} \psi = 2$ and $\Delta\psi  = 4$: 
$$
	 2 \partial_{tt} \psi - \alpha_1( \partial_{tt} \psi -\Delta \psi )=  -4 \beta + 2 \alpha_1 (2+ \beta)  ; 
	\quad
	 \alpha_1 (\partial_{tt} \psi - \Delta \psi) + 2 \partial_{kk} \psi =- 2 \alpha_1 ( 2 + \beta) + 4.  
$$
Hence the choice $\alpha_1 = (\beta +1) / (\beta +2)$ makes each term strictly positive and equal to $2 (1- \beta)$ (recall $\beta \in (0,1)$), so that
\begin{multline*}
	\tau \mu \int_{Q_h} |\partial_t v|^2 \varphi \left(2\partial_{tt} \psi - \alpha_1( \partial_{tt} \psi -\Delta \psi) \right)
	+ \tau \mu \sum_k \int_{Q_{h,k}^-} |\partial_{h,k}^+ v|^2 \varphi \left(\alpha_1 (\partial_{tt} \psi - \Delta \psi) + 2 \partial_{kk} \psi \right)
	\\
	= 2 (1- \beta) \tau \mu \left(\int_{Q_h} |\partial_t v|^2+ \sum_k \int_{Q_{h,k}^-} |\partial_{h,k}^+ v|^2  \right).
\end{multline*}
We now remark that the third line of $I_{\partial v}$ is negligible. Indeed, writing $\partial_{h,k} v = m_{h,k}^-(\partial_{h,k}^+ v)$, one easily checks that 
$$
	\int_{Q_{h,k}^-} |\partial_{h,k}^+ v|^2 \varphi |\partial_{x_k}\psi|^2 - \int_{Q_h} |\partial_{h,k} v|^2 \varphi |\partial_{x_k} \psi|^2
	\geq - \int_{Q_{h,k}^-} \mathcal{O}_\mu(\tau h) |\partial_{h,k}^+ v|^2.
$$
Concerning the terms in $I_{\mathcal{O}_\mu}$, the only term that needs to be discussed are the ones coming from $I_{23}$: But using that $h^2 \Delta_{h,k}$ is a discrete operator with norm bounded by $8$, we get
\begin{multline*}
	\left|
	 \tau h^2 \int_{Q_{h}}  (\Delta_{h,1} v \, (\mathcal{O}_\mu(1) \partial_{h,2} v +\mathcal{O}_\mu(1) \partial_t v)  
	 +
	  \tau h^2 \int_{Q_{h}}  (\Delta_{h,2} v \, (\mathcal{O}_\mu(1) \partial_{h,1} v +\mathcal{O}_\mu(1) \partial_t v)
	  \right|
	\\
	  \leq C \left( \int_{Q_h} |\partial_t v|^2 + \sum_k \int_{Q_{h,k}^-} |\partial_{h,k}^+ v|^2 + \tau^{2} \int_{Q_h} |v|^2\right).
\end{multline*}
Combining these estimates, for $\tau$ large enough, we obtain constants $c_1>0$, $C_0>0$ such that 
\begin{multline}
	I_{\partial v} 
		\geq ~c_1 \tau \int_{Q_h} | \partial_t v|^2 
			+c_1 \tau  {\textstyle\sum_{k}} \int_{Q_{h,k}^-} |\partial_{h,k}^+ v|^2 
		\\
		 -\tau  \int_{Q_h}\mathcal O_{\mu}(\tau h)  |\partial_t v|^2 
		-\tau{\textstyle\sum_{k}} \int_{Q_{h,k}^-}  \mathcal{O}_{\mu}(\tau h) |\partial_{h,k}^+v|^2
		- C_0 \tau^2 \int_{Q_h} |v|^2. \label{order1}
\end{multline}

\noindent {\it Step 2.3. The boundary terms.} Since $\min_{(-T, T) \times \Omega} \{\varphi \partial_{x_k} \psi \}  >0$ (recall $x_a \notin \overline\Omega$), then there exists $\varepsilon_1 >0$ such that taking $\tau h \leq \varepsilon_1$, 
$$
	|\mathcal{O}_{\mu}(\tau h)| \leq \min_{(t,x) \in (-T, T) \times \Omega}\left\{ \varphi(t,x) \partial_{x_k} \psi(t,x)\right\}.
$$
so there exists $C>0$ independent of $\tau$ and $h$ such that
\begin{align}
	I_{\Gamma}	
		\geq 
		& - 2 \tau\mu {\textstyle\sum_{k}}  \int_{\Sigma_{h,k}^+} |\partial_{h,k}^-  v|^2 \varphi \partial_{x_k} \psi
		\geq 
		 -  C \tau {\textstyle\sum_{k}}  \int_{\Sigma_{h,k}^+} |\partial_{h,k}^-  v|^2.
		 \label{boundary}
\end{align}
\noindent {\it Step 2.4. The Tychonoff regularization.} We have
$ \partial_{h,k}^+ A_{1,k} = \mu \, \varphi |\partial_{x_k} \psi|^2 + \varphi \, \partial_{x_kx_k} \psi + \mathcal{O}_{\mu}(\tau h)
= \mathcal{O}_{\mu}(1) $ and 
$\partial_{h,k}^+m_{h,\ell}^+((1+A_{0,\ell})   A_{1,k}) =  \mu\,  \varphi |\partial_{x_k} \psi|^2 + \varphi\, \partial_{x_kx_k} \psi + \mathcal{O}_{\mu}(\tau h)$.
Thus, for $\tau h$ small enough, i.e. $\tau h \leq \varepsilon_2$ for some $\varepsilon_2 \in (0,\varepsilon_1)$, 
$$
	\left(\partial_{h,2}^+m_{h,1}^+((1+A_{0,1})   A_{1,2})+ \partial_{h,1}^+m_{h,2}^+((1+A_{0,2})   A_{1,1})\right) > 0, 
$$
and the term involving $\partial_{h,1}^+\partial_{h,2}^+ v$ is positive, whereas the other term in $I_{Tych}$ is negative.
We bound it directly and get a constant $C>0$ independent of $\tau$ and $h$ such that
\begin{equation} \label{tych}
	I_\tn{Tych} 	 \geq -  ~C \tau  {\textstyle\sum_{k}} \int_{Q_{h,k}^-}  |h\partial_{h,k}^+ \partial_t v|^2.
\medskip
\end{equation}

\noindent {\it $\bullet$ Step 3. End of the proof of Proposition \ref{PropDecompo}.} 
Collecting the results \eqref{order0}--\eqref{tych} of Step 2, we have proved that for $\tau\geq \tau_0$ and $\tau h \leq \varepsilon_2$,
\begin{eqnarray*}
	\int_{Q_h}\! \mathscr{L}_{h,1}v \, \mathscr{L}_{h,2}v
	 &\geq&~ c_0 \tau^3 \int_{Q_h}\! |v|^2 
	 + c_1 \tau \int_{Q_h}\!| \partial_t v|^2 
		+c_1 \tau  {\textstyle\sum_{k}} \int_{Q_{h,k}^-} |\partial_{h,k}^+ v|^2 - C_0 \tau^2 \int_{Q_h} |v|^2\\
	&& -  C \tau {\textstyle\sum_{k}}  \int_{\Sigma_{h,k}^+} |\partial_{h,k}^-  v|^2
	  	-  C \tau  {\textstyle\sum_{k}} \int_{Q_{h,k}^-}  |h\partial_{h,k}^+ \partial_t v|^2\\
		&& - \tau^3  \int_{Q_h} \mathcal{O}_{\mu}(\tau h) |v|^2
		-	\tau  \int_{Q_h} \mathcal{O}_{\mu}(\tau h) |\partial_t v|^2
		-\tau  {\textstyle\sum_{k}} \int_{Q_{h,k}^-} \mathcal O_{\mu}(\tau h)|\partial_{h,k}^+v|^2 . 
\end{eqnarray*}
Therefore, taking $\tau$ large enough so that  $c_0 \tau^3 > 2 C_0 \tau^2$ and $\tau h$ small enough such that 
$
	|\mathcal{O}_{\mu} (\tau h)| \leq\min\left\{ c_0   , c_1 , \varepsilon_2 \right\}, 
$
which defines $\varepsilon_0>0$, we obtain, for some constant $C_1>0$,
\begin{multline*}
	\tau \int_{Q_h}\!| \partial_t v|^2
	+  \tau  {\textstyle\sum_{k}} \int_{Q_{h,k}^-} |\partial_{h,k}^+ v|^2 
	+	\tau^3 \int_{Q_h}\! |v|^2
\\
	\leq 
	C_1 \int_{Q_h}\! \mathscr{L}_{h,1}v \, \mathscr{L}_{h,2}v
	+
	C_1 \tau {\textstyle\sum_{k}}  \int_{\Sigma_{h,k}^+} |\partial_{h,k}^-  v|^2 
	+ 
	C_1 \tau {\textstyle\sum_{k}} \int_{Q_{h,k}^-}  |h\partial_{h,k}^+ \partial_t v|^2.
\end{multline*}
From \eqref{double}, there exists $C_2 >0$ such that
\begin{multline}
	 \tau \int_{Q_h}\!| \partial_t v|^2
	+  \tau  {\textstyle\sum_{k}} \int_{Q_{h,k}^-} |\partial_{h,k}^+ v|^2 
	+	\tau^3 \int_{Q_h}\! |v|^2
	+ \int_{Q_h}\! |\mathscr{L}_{1,h} v|^2 
	\\
	\leq 
	C_2 \int_{Q_h}\! |\mathscr{L}_{h} v|^2
	+
	C_2 \int_{Q_h}\! |\mathscr{R}_h v|^2 
	\label{decompo-00}  
	+
	C_2 \tau {\textstyle\sum_{k}}  \int_{\Sigma_{h,k}^+} |\partial_{h,k}^-  v|^2 
	+ 
	C_2 \tau {\textstyle\sum_{k}} \int_{Q_{h,k}^-}  |h\partial_{h,k}^+ \partial_t v|^2.
	\end{multline}
But
$$
	 \int_{Q_h}\! |\mathscr{R}_h v|^2 
	 \leq C \tau^2  \int_{Q_h}\! |v|^2,
$$
which can also be absorbed by the left hand side of \eqref{decompo-00} by taking $\tau$ large enough, thus yielding to \eqref{decompo}.
\end{proof}

\section{Proof of an elliptic regularity result }\label{Appendix-Elliptic}
	\begin{proof}[Proof of Lemma \ref{Lem-Elliptic-Reg}]
		Multiplying the equation \eqref{Elliptic-Eq} by $w_h$, using the discrete Poincar\'e's inequality, one easily obtains that 
		\begin{equation}
			\label{Est-H1-wh}
			w_h \in H^1_{0,h}(\Omega_h) \quad \hbox{ with } \norm{w_h}_{H^1_{0,h} (\Omega_h)} \leq C \norm{g_h}_{L^2_h(\Omega_h)}, 
		\end{equation}
		for some constant $C = C(m)>0$ independent of $h>0$. Accordingly, replacing $g_h$ by $g_h - q_h w_h$, we are reduced to the case $q_h = 0$, that we assume from now. 
		
		Since $\Omega_h =( h \Z)^2 \cap (0,1)^2$, we first propose to extend $w_h$ \emph{a priori} defined on the discrete domain $\Omega_h$ to $\Omega_{ext, h} = (h \Z)^2 \cap (-1, 2)^2$ as follows. First, for $x_h \in \{(0,0), (1,0), (1,1), (0,1)\}$, we set $\tilde w_h(x_h) = 0$. Then, for $x_h  = (x_{h,1}, x_{h,2}) \in [0,1]\times (-1,2) \cap \Omega_{\tn{ext},h}$, we set $\tilde w_h (x_h)=- w_h(x_{h,1}, -x_{h,2})$ for $x_{h,2} \in (-1,0)$ and $\tilde w_h (x_h) = - w_h(x_{h,1}, 1 - (x_{2,h} - 1))$ for $x_{h,2} \in (1,2)$. This defines $\tilde w_h$ on $[0,1]\times (-1,2) \cap \Omega_{\tn{ext},h}$. We then extend it for $x_h = (x_{1,h}, x_{2,h}) \in \Omega_{\tn{ext},h}$ by setting $\tilde w_h(x_h) = -\tilde w_{h}(-x_{h,1}, x_{2,h})$ for $x_{h,1} \in (-1,0)$ and $\tilde w_h(x_h) = -\tilde w_h(1 - (x_{h,1} - 1) , x_{h,2})$ for $x_{h,1} \in (1,2)$. We do a similar extension $\tilde g_h$ of $g_h$ on $\Omega_{\tn{ext},h}$ taking care of choosing $\tilde g_h = 0$ on $\partial \Omega_h \cup \{(0,0), (1,0), (1,1), (0,1) \} $. 
	
		We thus have constructed a solution $\tilde w_h$ of 
		\begin{equation}
			\label{Elliptic-Eq-Extended}
			-\Delta_h \tilde w_h = \tilde g_h \hbox{ in } \Omega_{\tn{ext},h} \quad \hbox{ and } \quad \tilde w_h = 0 \hbox{ on } \partial \Omega_{\tn{ext},h}.
		\end{equation}
		We then choose a function $\chi \in C^{\infty}_c ( (-1,2)^2)$ such that $\chi = 1$ on $[0,1]^2$ and we multiply \eqref{Elliptic-Eq-Extended} by $-\chi_h \Delta_{1,h} \tilde w_h$ with $\chi_h = \rh(\chi)$: After some integrations by parts where all the boundary terms vanish due to the choice of $\chi$, we obtain:
		\begin{gather}
			\int_{\Omega_{\tn{ext},h}} \chi_h |\Delta_{h,1} \tilde w_h|^2 + \int_{\Omega_{\tn{ext},h}} m_{h,1}^+ m_{h,2}^+ \chi_h |\partial_{h,1}^+ \partial_{h,2}^+ \tilde w_h|^2 
						\label{Identity-Delta1}
			\\
			= 
			- \int_{\Omega_{\tn{ext},h}} \hspace{-3ex} \chi_h \tilde g_h \Delta_{h,1} \tilde w_h + 
			\int_{\Omega_{\tn{ext},h}} \hspace{-3ex}  \partial_{h,2}^+ \chi_h \partial_{h,2}^+ \tilde w_h m_{h,2}^+ \Delta_{h,1} w_h 
			- \int_{\Omega_{\tn{ext},h}}  \hspace{-3ex} \partial_{h,1}^+ m_{h,2}^+ \chi_h m_{h,1}^+ \partial_{h,2}^+ w_h \partial_{h,1}^+ \partial_{h,2}^+ \tilde w_h.
			\notag
		\end{gather}
		Of course, since $\chi = 1$ on $[0,1]^2$, the left hand-side of \eqref{Identity-Delta1} is bounded from below by 
		$$ 
			\norm{\Delta_{h,1} w_h}_{L^2_h(\overline{\Omega_h})}^2 + \norm{\partial_{h,1}^+ \partial_{h,2}^+ w_h}_{L^2_h(\overline{\Omega_h})}^2.
		$$
		 On the other hand, using that $\tilde w_h$ and $\tilde g_h$ are symmetric extensions of $w_h$ and $g_h$, the right hand-side of \eqref{Identity-Delta1} is bounded from above by 
		$$
			C \left(\norm{ g_h}_{L^2_h(\Omega_h)} + \norm{ w_h}_{H^1_{0,h}(\Omega_h)}\right) \left(  \norm{\Delta_{h,1} w_h}_{L^2_h(\overline{\Omega_h})} + \norm{\partial_{h,1}^+ \partial_{h,2}^+ w_h}_{L^2_h(\overline{\Omega_h})}\right),
		$$
		for some constant $C$ independent of $h>0$. We thus obtain 
		$$
			\norm{\Delta_{h,1} w_h}_{L^2_h(\overline{\Omega_h})} + \norm{\partial_{h,1}^+ \partial_{h,2}^+ w_h}_{L^2_h(\overline{\Omega_h})} \leq C \left(\norm{ g_h}_{L^2_h(\Omega_h)} +  \norm{ w_h}_{H^1_{0,h}(\Omega_h)} \right),
		$$
		which, together with \eqref{Est-H1-wh} and $-\Delta_{h,2} w_h = (g_h - q_h w_h) + \Delta_{h,1} w_h$ , yields \eqref{Elliptic-Reg}.
	\end{proof}

\end{document}